\documentclass[12pt,a4paper]{article}

\usepackage[english,french]{babel}
\usepackage[T1]{fontenc}
\usepackage{latexsym,amsfonts,amssymb,amsmath,amsthm,mathrsfs, mathtools}
\usepackage{tikz}
\usepackage[all]{xy}
\usepackage{ulem}
\usepackage{tabularx}
\usepackage{times}
\usepackage{pdflscape}
\usepackage{chngcntr}
%\counterwithout{section}{chapter}
%\counterwithin{chapter}{part}
%%%%%%%%%%%%%%%%%%% list of figure
\usepackage{graphicx}
\graphicspath{ {figures/} }
\usepackage{array}
%%%%%%%%%%%%%%%% index
\usepackage{makeidx}
\usepackage[hidelinks, backref=page]{hyperref}
\hypersetup{
    colorlinks = true,
    citecolor={blue},
}

\DeclareFontShape{OT1}{cmtt}{bx}{n}{<5><6><7><8><9><10><10.95><12><14.4><17.28><20.74><24.88>cmttb10}{}

%%%%%%%%%%%%%%%%%%%%%%%%

\newtheorem{thm}{Theorem} %%%%Theorem in Intro
\newtheorem{Theorem}[subsubsection]{Theorem}
\newtheorem{Lemma}[subsubsection]{Lemma}
\newtheorem{Proposition}[subsubsection]{Proposition}
\theoremstyle{definition}
\newtheorem{Definition}[subsubsection]{Definition}
\newtheorem{Corollary}[subsubsection]{Corollary}
\theoremstyle{remark}
\newtheorem{Remark}[subsubsection]{Remark}
\theoremstyle{remark}
\newtheorem{Example}{Example}

%%%%%%%%%%%%%%%%%%%%%%%%%%%%%%% Intro
\def\N{\mathbb{N}}
\def\Q{\mathbb{Q}}
\def\Z{\mathbb{Z}}
\def\S{\mathbb{S}}
\def\R{\mathbb{R}}

\def\A{\mathcal{A}}
\def\Hom{\mathrm{Hom}}

\def\H{\mathrm{H}}
\def\G{\mathbb{G}}

\def\M{\mathcal{M}}
\def\R{\mathrm{R}}
\def\W{\mathbb{W}}

\def\FGL{\mathcal{FGL}}
\def\F{\mathbb{F}_{2}}
\def\FF{\mathbb{F}}
\def\Ext{\mathrm{Ext}}

\def\EG{\mathcal{EG}}
\def\Ho{\mathrm{Ho}}

\def\E{\mathrm{E}}
\def\ovk{\overline{\kappa}}
\def\lim{\mathrm{lim}}

\def\hocolim{\mathrm{hocolim}}
\def\Gal{\mathrm{Gal}}
\def\Def{\mathrm{Def}}
\def\Aut{\mathrm{Aut}}
\def\Ring{\mathfrak{Ring}}
\def\spec{\mathrm{spec}}
\def\Groupoid{\mathfrak{Groupoid}}
\def\aff{Aff^{\acute{e}t}_{\M}}

%\renewcommand{\thechapter}{\Roman{chapter}}

%%%%%%%%%%%%%%%%%%%%
%\usepackage[backref=page]{hyperref}
\renewcommand*{\backref}[1]{}
\renewcommand*{\backrefalt}[4]{%
        \ifcase #1 (Not cited.)%
        \or        (Cited on page~#2.)%
        \else      (Cited on pages~#2.)%
        \fi}
%%%%%%%%%%%%%%%%%%%%%%%%%%%% new chapter

%\newcommand{\chaptertoc}[1]{\chapter*{#1}
%\addcontentsline{toc}{chapter}{#1}
%\markboth{\slshape\MakeUppercase{#1}}{\slshape\MakeUppercase{#1}}}

%%%%%%%%%%%%%%%%%%%%%%%%%% tmf ^A(1)
\def\EC2{E_C^{hC_{2}}}
\def\EG24{E_C^{hG_{24}}}

\def\lim{\mathrm{lim}}

%%%%%%%%%%%%%%%%%%%%%%%%%%%%%%%%%%%%%%%%%%%%

%\makeatletter
%\def\@cite#1#2{{\normalfont[{#1\if@tempswa , #2\fi}]}}
%\makeatother

\makeatletter
\def\@cite#1#2{{\normalfont[{\bfseries#1\if@tempswa , #2\fi}]}}
\makeatother

%\makeindex

%%%%%%%%%%%%%%%%%%%%%%%%%%%%%%%%%%%%%%%%%%%%%%%%%%%%%%%%%%%%%%
%%%%%%%%%%%%%%%%%%%%%%%%%%%%%%%%%%%%%%%%%%%%%%%%%%%%%%%%%%%%%% 
\begin{document}
\selectlanguage{english}
\title{ Homotopy groups of $E_{C}^{hG_{24}}\wedge A_1$ }
\author{Viet-Cuong Pham
}
\date{}
\maketitle
\begin{abstract}
\noindent
  Let $A_1$ be any spectrum in a class of finite spectra whose mod $2$ cohomology is isomorphic to a free module of rank one over the subalgebra $\A(1)$ of the Steenrod algebra. Let $E_{C}$ be the second Morava-$E$ theory associated to a universal deformation of the formal completion of the supersingular elliptic curve $(C) : y^{2}+y = x^{3}$ defined over $\FF_{4}$ and $G_{24}$ a maximal finite subgroup of automorphism group $\mathbb{S}_{C}$ of the formal completion of $C$. In this paper, we compute the homotopy groups of $E_{C}^{hG_{24}}\wedge A_1$ by means of the homotopy fixed point spectral sequence. \\\\
\textit{Keywords:} K(2)-local; Davis-Mahowald spectral sequence;  Topological modular forms; Homotopy fixed point spectral sequence
\end{abstract}
\let\thefootnote\relax\footnote{This work was partially supported by the ANR Project ChroK, ANR-16-CE40-0003 }
\tableofcontents
%%%%%%%%%%%%%%%%%%%%%%%%%%%%%%%%

\section*{Introduction}\label{Chapzero}
\noindent
A central problem in stable homotopy theory is to understand the homotopy groups of the sphere spectrum localised at each prime $p$, $\pi_{*}(S_{(p)}^{0})$. A powerful tool for computing the latter is the Adams spectral sequence, whose $\E_2$-term is given by $\Ext_{\A_{p}}^{*,*}(\FF_p,\FF_p)$, the extension groups over the Steenrod algebra $\A_{p}$. However, this method only allows one to compute $\pi_*(S_{(p)}^{0})$ stem by stem. In the late 1950's, Toda in \cite{Tod59} and in the 1960's, Adams in \cite{Ada66}, in the study of the image of J,  showed the existence of infinite families of elements of $\pi_*(S^0)$ living in arbitrarily large stems. These were the first periodic families discovered, known as the $\alpha$-family, of the stable homotopy groups of the sphere. Adam's work and subsequent work by L. Smith, Toda and Miller-Mahowald-Wilson and others motivated and marked the beginning of chromatic homotopy theory. \\\\
\noindent
In the early 1980's, Ravenel published a series of conjectures which described the global structure of the stable homotopy category. Most of the conjectures were then resolved by Hopkins and his collaborators. In fact, the chromatic point of view offers a promising tool to analyse $\pi_{*}(S^{0}_{(p)})$ in a systematic way by decomposing it into smaller pieces. More precisely, let $L_{n}$ and $L_{K(n)}$ denote the Bousfield localisations with respect to the $n^{th}$ Johnson-Wilson theory $E(n)$ and $n^{th}$-Morava $K$-theory, respectively (here the prime $p$ is implicit in the notation). We have the chromatic convergence theorem.
 \begin{thm}[Hopkins-Ravenel, \cite{Rav92}] Let $X$ be a $p$-local finite spectrum. There is a tower 
 $$ ...\rightarrow L_{n}X\rightarrow L_{n-1}X\rightarrow ...\rightarrow L_0X \cong L_{\H\Q}X,$$
  such that $X$ is homotopy equivalent to its homotopy limit.
 \end{thm}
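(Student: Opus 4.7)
The construction of the tower is formal: since the Bousfield class $\langle E(n-1)\rangle$ is dominated by $\langle E(n)\rangle$, every $E(n-1)$-local spectrum is automatically $E(n)$-local, which yields natural transformations $L_{n}X\to L_{n-1}X$ and hence the chromatic tower. At the bottom, one identifies $E(0)$ rationally with $\H\Q$, giving $L_{0}X\simeq L_{\H\Q}X$. The substantive content is the convergence statement $X\simeq \mathrm{holim}_{n}\, L_{n}X$ for $p$-local finite $X$.

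My plan would be to reduce the problem to the sphere by invoking the Hopkins-Ravenel smash product theorem, which asserts that $L_{n}$ is smashing, i.e.\ $L_{n}X\simeq L_{n}S^{0}\wedge X$. This reduction turns a statement about arbitrary finite $X$ into one about $S^{0}$. Setting $C_{n}X := \mathrm{fib}(X\to L_{n}X)$, the convergence statement is equivalent, via the Milnor $\lim^{1}$ exact sequence, to showing that $\mathrm{holim}_{n}\, C_{n}X\simeq 0$, that is, that the connectivity of the map $X\to L_{n}X$ tends to infinity with $n$.

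To control this connectivity I would analyse the successive fibres of the tower via the chromatic fracture squares, which identify $\mathrm{fib}(L_{n}X\to L_{n-1}X)$ with the monochromatic layer $M_{n}X\simeq \mathrm{fib}(L_{K(n)}X\to L_{n-1}L_{K(n)}X)$. These layers are accessible through $BP$-based computations: one constructs the chromatic spectral sequence with $\E_{1}$-term $v_{n}^{-1}BP_{*}/I_{n}\otimes BP_{*}(X)$, and estimates the bidegrees in which non-zero elements can appear. Combined with the fact that $BP_{*}(X)$ is a finitely generated $BP_{*}$-module (since $X$ is finite $p$-local), one deduces that the cellular filtration of the monochromatic layers shifts upward in connectivity as $n$ grows, yielding the desired vanishing of $\mathrm{holim}_{n}\, C_{n}X$.

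The main obstacle is undoubtedly the smash product theorem itself, which is by far the deepest ingredient. Its proof, due to Hopkins and Ravenel, involves an intricate analysis of $BP$-based Adams-type spectral sequences and ultimately rests on the nilpotence theorem of Devinatz--Hopkins--Smith. Without this result, one cannot compare $L_{n}$ on a general finite spectrum with $BP$-based constructions, and the connectivity estimates driving convergence become inaccessible by elementary means; once the smash product theorem is in hand, however, the remaining arguments are essentially bookkeeping with the chromatic spectral sequence.
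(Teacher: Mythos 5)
The paper states this theorem as background and gives no proof of its own, citing \cite{Rav92}; the argument to measure your sketch against is therefore Ravenel's (Chapter 8 of that book). Your skeleton agrees with it in most respects: the tower exists for formal Bousfield-class reasons, $L_0\simeq L_{\H\Q}$, the Milnor sequence reduces convergence to $\mathrm{holim}_n C_nX\simeq 0$ for $C_nX=\mathrm{fib}(X\to L_nX)$, the smash product theorem is the one deep input, and the fibres are analysed by $BP$-based methods using that $BP_*(X)$ is finitely generated over $BP_*$.

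There is, however, one step that fails as written: convergence is \emph{not} equivalent to, and cannot be proved by showing, that the connectivity of $X\to L_nX$ tends to infinity. The fibres $C_nX$ have non-trivial homotopy in negative degrees for every $n$: already $\pi_{-1}(C_0S^0)\cong \Q/\Z_{(p)}$, and $\pi_{-3}(C_1S^0)\cong \pi_{-2}(L_1S^0)$ is a non-trivial divisible group, and such phenomena persist for all $n$ (this is exactly the content of the chromatic splitting picture). What the Milnor sequence actually requires is that $\lim_n \pi_i(C_nX)$ and $\lim^1_n \pi_i(C_nX)$ vanish for each fixed $i$, and Ravenel obtains this by proving that the towers $\{\pi_i(C_nX)\}_n$ are \emph{pro-trivial} --- the transition maps $\pi_i(C_mX)\to \pi_i(C_nX)$ are zero for $m\gg n$ --- not that the groups themselves become zero. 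The mechanism is a filtration-raising argument: using the smash product theorem to identify $BP\wedge C_nX$ with $C_n(BP\wedge X)$ and computing the latter, one shows the maps $C_nX\to C_{n-1}X$ induce zero on $BP$-homology, hence raise Adams--Novikov filtration; combined with the horizontal vanishing line in the Adams--Novikov spectral sequence of a finite spectrum, iterating the transition maps forces them to vanish in each degree. Your phrase about the layers ``shifting upward in connectivity'' gestures at this but conflates it with a literal connectivity statement that is false. With that replaced by the pro-triviality argument, your outline is the standard proof.
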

 \noindent
 Furthermore, the chromatic fracture square asserts that $L_{n}$ can be inductively determined from the Bousfield localisation $L_{K(m)}$ with respect to the $m^{th}$ Morava $K$-theory for $0\leq m\leq n$, via homotopy pull-back squares
\begin{thm}\cite{Rav92}. For any spectrum $X$ and all positive integers $n$, the following diagram is a homotopy pullback square
$$\xymatrix{ L_{n}X\ar[r]\ar[d] & L_{K(n)}X\ar[d] \\
		    L_{n-1}X\ar[r]	& L_{n-1}L_{K(n)}X.
}$$  
\end{thm}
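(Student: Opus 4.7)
I plan to show that the square is a homotopy pullback by comparing the fibres of the two vertical arrows. Write $M_n Y := \mathrm{fib}(L_n Y \to L_{n-1} Y)$ for the $n$-th monochromatic layer. Since $K(n)$ is a module spectrum over $E(n)$, every $E(n)$-acyclic spectrum is $K(n)$-acyclic, and hence every $K(n)$-local spectrum is $E(n)$-local; in particular $L_n L_{K(n)} X \simeq L_{K(n)} X$, so the fibre of the right-hand vertical map is naturally identified with $M_n L_{K(n)} X$. The problem thus reduces to showing that the comparison $M_n X \to M_n L_{K(n)} X$ induced by the localisation map $X \to L_{K(n)} X$ is an equivalence.

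The main input, which I would cite rather than reprove, is the Hopkins--Ravenel smash product theorem, from which one extracts the Bousfield class identity $\langle E(n)\rangle = \langle E(n-1)\rangle \vee \langle K(n)\rangle$; equivalently, a spectrum is $K(n)$-local if and only if it is both $E(n)$-local and $E(n-1)$-acyclic. I would apply this to $M_n Y$, which is $E(n)$-local (as the fibre of a map between $E(n)$-local spectra) and $E(n-1)$-acyclic (since $L_{n-1}$ annihilates it by construction), to conclude that $M_n Y$ is $K(n)$-local for every spectrum $Y$. In particular both $M_n X$ and $M_n L_{K(n)} X$ are $K(n)$-local, so it suffices to prove that the comparison map is a $K(n)$-equivalence.

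To verify this, I would smash the cofibre sequence $M_n Y \to L_n Y \to L_{n-1} Y$ with $K(n)$ and use that $L_n$ is smashing (again Hopkins--Ravenel). On the right, $K(n) \wedge L_{n-1} Y \simeq L_{n-1}(K(n)\wedge Y) = 0$, because $K(n)$ is $E(n-1)$-acyclic (a consequence of $K(n) \wedge K(m) = 0$ for $m \neq n$ together with $\langle E(n-1)\rangle = \bigvee_{m<n}\langle K(m)\rangle$). In the middle, $K(n)\wedge L_n Y \simeq K(n)\wedge Y$ because $Y \to L_n Y$ is an $E(n)$-equivalence and hence a $K(n)$-equivalence. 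Thus $K(n)_* M_n Y \cong K(n)_* Y$ functorially in $Y$; applied to the tautological $K(n)$-equivalence $X \to L_{K(n)} X$, this yields an isomorphism on $K(n)$-homology, and hence the desired equivalence of $K(n)$-local spectra. The principal obstacle is purely conceptual --- recognising that the $K(n)$-locality of $M_n Y$ is the content that makes the fracture work --- and is precisely where Hopkins--Ravenel enters.
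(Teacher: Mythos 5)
The paper quotes this fracture square from [Rav92] without proof, so I am judging your argument on its own terms. Your overall skeleton is the standard one and most of it is sound: the identification of the right-hand vertical fibre with $M_nL_{K(n)}X$ (using $\langle K(n)\rangle\leq\langle E(n)\rangle$), and the computation $K(n)\wedge M_nY\simeq K(n)\wedge Y$ via the smashing property of $L_n$ and $L_{n-1}$, are both correct and do reduce the problem to showing that the $K(n)_*$-isomorphism $M_nX\to M_nL_{K(n)}X$ is an equivalence. The gap is in the step you flag as the conceptual heart of the proof: the claim that $M_nY$ is $K(n)$-local is \emph{false}, and so is the asserted equivalence ``$Z$ is $K(n)$-local iff $Z$ is $E(n)$-local and $E(n-1)$-acyclic.'' The Bousfield class identity $\langle E(n)\rangle=\langle E(n-1)\rangle\vee\langle K(n)\rangle$ controls \emph{acyclic} spectra (a spectrum is $E(n)$-acyclic iff it is both $E(n-1)$-acyclic and $K(n)$-acyclic); it does not dualise to the statement about local objects that you need, because $K(n)$-locality is tested against the strictly larger class of $K(n)$-acyclics. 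Concretely, at an odd prime $M_1S^0$ is $E(1)$-local and rationally acyclic, yet $\pi_{-1}M_1S^0\cong\mathbb{Q}/\mathbb{Z}_{(p)}$ while $\pi_{-1}L_{K(1)}S^0\cong\mathbb{Z}_p$, so $M_1S^0$ is not $K(1)$-local; in the other direction $L_{K(1)}S^0$ is not rationally acyclic. The subcategories of monochromatic spectra and of $K(n)$-local spectra are equivalent (via $L_{K(n)}$ and $M_n$, as in Hovey--Strickland) but they are not the same subcategory.

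The repair is short and keeps everything else you wrote. Both $M_nX$ and $M_nL_{K(n)}X$ are \emph{monochromatic}, i.e.\ $E(n)$-local and $E(n-1)$-acyclic (the latter exactly as you argue, using that $L_{n-1}L_n\simeq L_{n-1}$). The cofibre $C$ of the comparison map is then again $E(n)$-local and $E(n-1)$-acyclic, and your smash computation shows $K(n)_*C=0$. Now apply the Bousfield decomposition in the direction in which it is actually valid: $E(n-1)_*C=0$ and $K(n)_*C=0$ force $E(n)_*C=0$, and an $E(n)$-local, $E(n)$-acyclic spectrum is contractible. Hence the comparison map is an equivalence and the square is a pullback. (A minor attribution point: the decomposition $\langle E(n)\rangle=\bigvee_{i\leq n}\langle K(i)\rangle$ is Ravenel's 1984 theorem and does not require the smash product theorem; the smash product theorem is what you need, and correctly invoke, for $L_n$ being smashing.)
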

\noindent
Therefore, in the chromatic approach to stable homotopy theory, it is crucial to understand the $K(n)$-local homotopy category at all primes and all natural numbers $n$, referred to as the chromatic level. For this purpose, a general strategy is to study the homotopy type of the $K(n)$-localisation of various finite spectra. A central result of the theory is the work of Devinatz and Hopkins \cite{DH04} which expresses the $K(n)$-localisation of a finite spectrum $X$ as the continuous homotopy fixed point spectrum $$L_{K(n)}X \simeq E_{n}^{h\mathbb{G}_{n}}\wedge X$$ where $\mathbb{G}_{n}$ is the extended Morava stabiliser group, which is profinite, and $E_n$ is the $n^{th}$ Morava $E$-theory. More generally, for any closed subgroup $F$ of $\mathbb{G}_{n}$, the continuous homotopy fixed point spectrum $E_n^{hF}$ can be formed. \\\\
\noindent
The study of chromatic level one was a great success: the homotopy groups of $L_{K(1)}S^{0}$ have been completely computed at all primes and, at the prime $2$, $L_{K(1)}S^0$ detects essentially the image of J.
Chromatic level two has also been thoroughly investigated at odd primes. It started with the computation by Shimomura and his collaborators of the $L_{2}$ localisation of various finite spectra  (see \cite{SY95}, \cite{Shi97}, \cite{Shi00}, \cite{SW02}). Later Goerss-Henn-Mahowald-Rezk in \cite{GHMR05} proposed a conceptual framework to organise the $K(2)$-local homotopy category at the prime $3$, in which the authors constructed a finite resolution of the $K(2)$-local sphere using higher real $K$-theories. See \cite{GHM01}, \cite{HKM13}, \cite{GH16} for further investigations at $n=2$ and $p=3$ and \cite{Beh12} for an exposition of $L_{2}S^{0}$ at $p\geq 5$.\\\\
\noindent
The situation of chromatic level two at the prime $2$ turns out to be much more complicated and we are only beginning to understand it better. Considerable effort has recently been made to understand the $K(2)$-local homotopy category at the prime $2$ by the community. In \cite{BG18}, Bobkova and Goerss established a finite resolution of a spectrum related to the $K(2)$-local sphere at the prime $2$ analogous to that of \cite{GHMR05}, which realised an algebraic resolution of $\mathbb{S}^1_2$, a certain closed subgroup of the second Morava stabiliser group, constructed by Beaudry \cite{Bea15}.\\\\
\noindent
One reason why the latter is hard to deal with lies largely in the fact that the cohomological properties of the group $\mathbb{G}_{2}$ are much more complicated at the prime $2$. However, one exciting feature of chromatic level $2$ is its close relationship with the theory of elliptic curves and modular forms, see Section \ref{Prelim}. At chromatic level $2$ and at the prime $2$, we can choose the Morava $E$-theory to be the Lubin-Tate theory associated to the formal group law of the elliptic curve $C: y^2+y=x^3$ over $\FF_4$. We denote by $E_C$ and $\G_C$ the corresponding Morava $E$-theory and Morava stabiliser group.
One of the main tools used to investigate the $K(2)$-local homotopy category is a certain finite resolution. There is a certain subgroup $\S^1_C$ of $\G_C$; let $G_{24}$ be the automorphism group of $C$ and $C_6$ be a cyclic subgroup of order $6$ of $G_{24}$ (see Section \ref{Prelim} for details).
\begin{thm}\cite{BG18}\label{TDSS Intro} There is a resolution of $E_C^{h\S^1_C}$, in the $K(2)$-local homotopy category at the prime $2$, of the following form
$$E_{C}^{h\mathbb{S}^{1}_{C}}\xrightarrow{\delta_0} \mathcal{E}_0\xrightarrow{\delta_1} \mathcal{E}_1\xrightarrow{\delta_2} \mathcal{E}_2\xrightarrow{\delta_3} \mathcal{E}_3$$
where $\mathcal{E}_0= E_C^{hG_{24}}$, $\mathcal{E}_1=\mathcal{E}_2 = E_C^{hC_{6}}$ and $\mathcal{E}_3 =\Sigma^{48}E_C^{hG_{24}}$.
\end{thm}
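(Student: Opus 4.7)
The plan is to follow the two-step strategy of Bobkova-Goerss: first construct an algebraic resolution of the trivial continuous $\mathbb{Z}_2[[\mathbb{S}^1_C]]$-module $\mathbb{Z}_2$ whose terms are permutation-type modules $\mathbb{Z}_2[[\mathbb{S}^1_C/F]]$ for $F \in \{G_{24}, C_6\}$, and then realize this resolution topologically by $K(2)$-local spectra.

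\textbf{Algebraic step.} Invoke the resolution of $\mathbb{Z}_2$ as a continuous $\mathbb{Z}_2[[\mathbb{S}^1_C]]$-module constructed by Beaudry in \cite{Bea15}, which provides exactly the pattern $\mathbb{Z}_2[[\mathbb{S}^1_C/G_{24}]] \leftarrow \mathbb{Z}_2[[\mathbb{S}^1_C/C_6]] \leftarrow \mathbb{Z}_2[[\mathbb{S}^1_C/C_6]] \leftarrow \mathbb{Z}_2[[\mathbb{S}^1_C/G_{24}]]$ matching the length-three resolution of Theorem~\ref{TDSS Intro}. The final term is twisted by a determinant-type character of $\mathbb{S}^1_C$, which will be responsible for the $\Sigma^{48}$ appearing at the right-hand end. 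Beaudry's construction rests on an explicit analysis of the action of $\mathbb{S}^1_C$ on its Morava lattice together with a Lyndon-Hochschild-Serre spectral sequence argument for a suitable normal filtration.

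\textbf{Topological realization.} The key input is the Morava module functor $X \mapsto (E_C)^{\wedge}_{\ast} X$, which is fully faithful on a convenient subcategory of $K(2)$-local spectra and sends $E_C^{hF}$ to $\mathbb{Z}_2[[\mathbb{S}^1_C/F]]$ with the obvious $\mathbb{S}^1_C$-action. Begin by taking $\delta_0$ to be the Devinatz-Hopkins unit map $E_C^{h\mathbb{S}^1_C} \to E_C^{hG_{24}}$, which realizes the algebraic augmentation. Then build $\delta_1, \delta_2, \delta_3$ inductively, lifting each algebraic boundary to a map of spectra. The existence of such lifts is governed by an Adams-type spectral sequence whose $E_2$-term is given by continuous $\mathrm{Ext}$-groups over $\mathbb{Z}_2[[\mathbb{S}^1_C]]$ between the relevant Morava modules.

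\textbf{Main obstacle.} The hard part is not the existence of lifts individually, but ensuring that consecutive compositions $\delta_{i+1}\circ\delta_i$ vanish and that the resulting complex is genuinely exact in the $K(2)$-local category. The obstructions to $\delta_{i+1}\circ\delta_i \simeq 0$ live in group-cohomological Ext classes that need not vanish a priori; they have to be killed by modifying each lift within the indeterminacy coming from the preceding stage, which is precisely a sequence of Toda bracket computations. Once the sequence is known to be a complex, exactness is verified by applying $(E_C)^{\wedge}_{\ast}$ and comparing with Beaudry's algebraic resolution, and the suspension $\Sigma^{48}$ is identified from the twist on the last algebraic term via the action of $\mathbb{S}^1_C$ on $\pi_\ast E_C$.
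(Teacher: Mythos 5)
You should first be aware that the paper contains no proof of this statement: Theorem \ref{TDSS Intro} is quoted verbatim from \cite{BG18} and used as a black box, so there is no internal argument to compare against. Judged against the published proof of Bobkova--Goerss (which in turn rests on Beaudry's algebraic duality resolution from \cite{Bea15}), your two-step skeleton --- an algebraic resolution of $\Z_2$ by modules $\Z_2[[\S^1_C/F]]$ with $F\in\{G_{24},C_6\}$, followed by a topological realization in which the boundary maps are lifted one at a time and the compositions $\delta_{i+1}\circ\delta_i$ are killed within the indeterminacy of the previous lift --- is the correct outline of their strategy.

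There is, however, a genuine gap in your final step, the identification of $\mathcal{E}_3$ as $\Sigma^{48}E_C^{hG_{24}}$. The last term of Beaudry's algebraic resolution is $\Z_2[[\S^1_C/\pi G_{24}\pi^{-1}]]$ for a conjugate subgroup, which is isomorphic as a $\Z_2[[\S^1_C]]$-module to $\Z_2[[\S^1_C/G_{24}]]$; it carries no determinant-type character twist (the determinant enters only in passing from $\S^1_C$ to $\S_C$, a different step of the program). More seriously, the class $\Delta\in\H^0(G_{24},(E_C)_{24})$ is a unit in $(E_C)_*$, so the Morava modules of $\Sigma^{24k}E_C^{hG_{24}}$ for all $k$ are isomorphic; consequently the suspension $48$ cannot be "identified from the twist on the last algebraic term via the action of $\S^1_C$ on $\pi_*E_C$" --- the Morava module functor literally cannot see it. Pinning down the suspension as $48$ (rather than some other multiple of $24$, modulo the $192$-periodicity of $E_C^{hG_{24}}$) is the single most delicate point of \cite{BG18} and requires additional topological input beyond anything in your sketch; as written, that step would fail.
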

\noindent
This resolution is commonly called the topological duality resolution. The spectrum $E_C^{h\S^1_C}$ is used to build the spectrum $E_C^{h\S_C}$, where $\S_C$ is the Morava stabiliser group, via a certain cofiber sequence $$E_C^{h\S_C}\rightarrow E_C^{h\S_C^1}\xrightarrow{1-\pi} E_C^{h\S^1_C},$$ and $E_C^{h\S_C}$ only differs from $L_{K(2)}S^0$ by the Galois action, i.e., there is a homotopy equivalence $$L_{K(2)}S^0 \simeq (E_C^{h\S_C})^{h\Gal(\FF_4/\F)}.$$
Thus, this theorem offers a useful instrument to study the homotopy type of $L_{K(2)}X$ for finite spectra $X$ at the prime $2$. In particular, it produces a spectral sequence, known as the topological duality spectral sequence, abbreviated by TDSS, converging to $\pi_{*}(E_{C}^{h\mathbb{S}^{1}_{C}}\wedge X)$ 
\begin{equation}\phantomsection \label{TDSS}
\E_1^{p,q} \cong \pi_q (\mathcal{E}_p\wedge X) \Longrightarrow \pi_{q-p}(E_C^{h\S^1_C}\wedge X).
\end{equation} 
By now, it should be clear that judicious choices of finite spectra become important. Main players in this paper are finite spectra constructed by Davis and Mahowald in \cite{DM81}. Let $A_1$ denote a class of finite spectra whose mod $2$ cohomology is isomorphic, as a module over the subalgebra $\A(1)$ generated by $Sq^{1}, Sq^{2}$ of the Steenrod algebra $\A$, to a free module of rank one on a class of degree $0$. As shown in \cite{DM81}, the class $A_1$ contains four different homotopy types, which are distinguished by the structure of their mod-2 cohomology as modules over the Steenrod algebra. They are successively denoted by $A_1[00]$, $A_1[01]$, $A_1[10]$, $A_1[11]$, see Definition \ref{version A1}. The spectra $A_1[01]$ and $A_1[10]$ are Spanier-Whitehead self-dual, i.e., $D(A_1[01]) \simeq \Sigma^{-6}A_1[01]$ and $D(A_1[10]) \simeq \Sigma^{-6}A_1[10]$ and the spectra $A_1[00]$ and $A_1[11]$ are Spanier-Whitehead dual to each other, i.e., $D(A_1[00])\simeq \Sigma^{-6} A_1[11]$ (here $D(-)$ denotes the function spectra $F(-, S^0)$). By an abuse of language, we write $A_1$ to refer to any of these four spectra and refer to any of them as a version of $A_1$. In particular, we use this notation in the statement of results that are true for all versions. We emphasis, however, that all results are \textit{a priori} dependent on the version of $A_1$ and this is the case. 
The spectrum $A_1$ is constructed via three cofiber sequences starting from the sphere spectrum. First, let $V(0)$ be the mod $2$ Moore spectrum, i.e., the cofiber of multiplication by $2$ on the sphere. Next let $Y$ be the cofiber of multiplication by $\eta$, the first Hopf element, on $V(0)$. Davis and Mahowald show that $Y$ admits $v_1$-self maps, $v_1: \Sigma^2 Y\rightarrow Y$. Then $A_1$ is the cofiber of any of these $v_1$-self maps of $Y$. We note also that even though $Y$ admits eight $v_1$-self maps, the associated cofibers only have four different homotopy types.  \\\\
\noindent
One reason for working with $A_1$ is the fact that it is the cofiber of a $v_1$-self map of periodicity $1$, making a few computations simpler; this is in contrast with the generalised Moore spectrum $M(2,v_1^4)$ which is the cofiber of a $v_1$-self map of periodicity $4$ on the Moore spectrum $V(0)$. The second one is that a sufficient understanding of the homotopy type of $L_{K(2)}A_1$ might allow us to determine the Gross-Hopkins duality formula for the $K(2)$-local homotopy category at the prime $2$. In fact, the spectrum $A_1$ can be considered as an analog of the Toda-Smith complex $V(1)$ at the prime $3$ and as demonstrated in \cite{GH16}, computations of the homotopy groups of $L_{K(2)}V(1)$ allows one to characterise the Gross-Hopkins formula for the $K(2)$-local homotopy category at the prime $3$. The third reason is that $A_1$ is a "small" finite spectrum of type $2$ having only eight cells with the top cell being in dimension $6$, hence it is reasonable to expect that a study of the homotopy type of $A_1$ gives us valuable information about the homotopy groups of $S^0$, at least about the $v_2$-periodic families of $S^0$. More precisely, the authors of \cite{BEM17} show that $A_1$ admits a $v_2^{32}$-self map. Let $[(v_2^{32})^{-1}]A_1$ denote the associated telescope, i.e.,
$$[(v_2^{32})^{-1}]A_1 = \hocolim (A_1 \rightarrow \Sigma^{-192} A_1\rightarrow ...\rightarrow \Sigma^{-192k}A_1\rightarrow ...).$$
We note that the homotopy type of this telescope is independent on the choice of $v_2$-self map of $A_1$ by Nilpotence and Periodicity Technology, see \cite{Rav92}. Suppose that $x\in \pi_t([(v_2^{32})^{-1}]A_1)$ is a nontrivial element. This means that the composite 
$$S^{t+192k}\rightarrow \Sigma^{192k}A_1\xrightarrow{v_2^{32k}}A_1$$ 
is essential for $k\in \N$. This gives rise to a nontrivial element of $\pi_*S^0$ in stem $192k +t - i_{k}$ for some $0\leq i_{k}\leq 6$. \\\\
\noindent
Moreover, the $K(2)$-localisation of $A_1$ might be used to detect nontrivial elements of the homotopy groups of $[(v_2^{32})^{-1}]A_1$. In fact, the $K(2)$-localisation map $A_1\rightarrow L_{K(2)}A_1$ factors through $[(v_2^{32})^{-1}]A_1\rightarrow L_{K(2)}A_1.$ Ravenel's Telescope Conjecture predicts that the latter is a homotopy equivalence. As a key step towards the study of $\pi_*(L_{K(2)}A_1)$, as explained in the discussion following Theorem \ref{TDSS Intro}, we study the TDSS for $E_C^{h\S^1_C}\wedge A_1$.\\\\
 In this paper, we study the homotopy fixed point spectral sequence, abbreviated by HFPSS, for $E_C^{hG_{24}}\wedge A_1$, which constitutes an important part of the $\E_1$-term of the TDSS:
\begin{equation}\phantomsection \label{HFPSS2}
\mathrm{H}^{*}(G_{24},(E_{C})_*(A_1))\Longrightarrow \pi_{*}(E_{C}^{hG_{24}}\wedge A_1).
\end{equation} 
Here are qualitative versions of the main results of the paper; see Theorem \ref{Einfty A01} and \ref{Einfty A11} for more precise statements.
\\\\
There are classes $$\Delta^8\in \H^{0}(G_{24}, (E_C)_{192}), \ovk\in \H^{4}(G_{24}, (E_C)_{24}), \nu\in \H^{1}(G_{24}, (E_C)_{4}).$$
\begin{thm} As a module over the ring $\FF_4[\Delta^{\pm 8},\ovk, \nu]/(\nu\ovk)$, the $\E_{\infty}$-term of the HFPSS for $E_C^{hG_{24}}\wedge A_1[01]$ and $E_C^{hG_{24}}\wedge A_1[10]$  is a direct sum of $46$ explicitly known cyclic modules. 
\end{thm}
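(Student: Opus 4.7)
The plan is to proceed in three stages: first compute the $\E_2$-page, then determine the differentials, and finally assemble the $\E_\infty$-page into cyclic summands. For the $\E_2$-page, I would exploit the three cofiber sequences
\[ S^0 \xrightarrow{2} S^0 \to V(0), \qquad V(0) \xrightarrow{\eta} V(0) \to Y, \qquad \Sigma^2 Y \xrightarrow{v_1} Y \to A_1 \]
which inductively build $A_1$ from the sphere. Smashing with $E_C$ and taking continuous $G_{24}$-cohomology produces three Bockstein-style long exact sequences that compute $\H^*(G_{24}, (E_C)_*(A_1))$ from $\H^*(G_{24}, (E_C)_*)$, whose structure as a module over $\FF_4[\Delta^{\pm 8}, \ovk, \nu]/(\nu\ovk)$ is classical and closely related to the homotopy of topological modular forms at the prime $2$. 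At this stage the versions $A_1[01]$ and $A_1[10]$ should produce identical answers, since they are distinguished only by their $\A$-module structure, which is invisible to $\H^*(G_{24},-)$.

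For the differentials I would use naturality under the unit map $S^0 \to A_1$, together with multiplicativity over $E_C^{hG_{24}}$: the known differentials in the HFPSS for $E_C^{hG_{24}}$ propagate to our spectral sequence by the Leibniz rule in $\Delta^8$, $\ovk$, $\nu$. Further differentials are forced by comparison with the HFPSS for $E_C^{hG_{24}} \wedge V(0)$ and $E_C^{hG_{24}} \wedge Y$ through the natural maps between the cofiber sequences, by sparsity of each $\E_r$-page, and by the convergence constraint that every stem of $\pi_*(E_C^{hG_{24}}\wedge A_1)$ is finite because $A_1$ is of type $2$.

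Once all differentials are in hand, the $\E_\infty$-page is a finitely generated module over $\FF_4[\Delta^{\pm 8}, \ovk, \nu]/(\nu\ovk)$. Listing the surviving generators produced by the three cofiber sequences together with their annihilator ideals (which come from $v_1$-, $\eta$- and $2$-Bockstein boundaries) should yield the advertised direct-sum decomposition into exactly $46$ cyclic summands, with explicit bidegrees and annihilators.

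The main obstacle will be the middle step. Algebraic Bockstein differentials used to construct $(E_C)_*(A_1)$ interact nontrivially with topological Morava differentials, and the interactions must be tracked through all three successive cofiber sequences; in particular, a differential that looks like a boundary in the long exact sequence of one cofiber sequence may actually support a higher Morava differential. Moreover, although $A_1[01]$ and $A_1[10]$ have isomorphic $\E_2$-pages, one must verify that their differentials agree; the cleanest route is to invoke the Spanier--Whitehead self-duality of both spectra, which pairs the spectral sequence with itself and forces a symmetric pattern of differentials common to the two cases.
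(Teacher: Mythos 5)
Your outline of the $\E_2$-page is essentially the paper's: one does build $(E_C)_*(A_1)$ through the cofiber sequences and compute $\H^*(G_{24},(E_C)_*(A_1))$ from the known $\H^*(G_{24},\FF_4[u^{\pm1}])$, and indeed the answer is the same for all four versions of $A_1$. But this observation is precisely what sinks your plan for the differentials. Since all four versions have literally the same $\E_2$-page, the same comparison maps from $V(0)$ and $Y$, the same sparsity and the same finiteness constraints, no combination of those tools can distinguish them --- yet the final answers differ ($46$ cyclic summands for $A_1[01],A_1[10]$ versus $48$ for $A_1[00],A_1[11]$). The missing idea is the equivalence $[(\Delta^{8})^{-1}]tmf\wedge A_1 \simeq (E_{C}^{hG_{24}})^{h\Gal}\wedge A_1$, which is the engine of the whole computation: the Adams spectral sequence over $\A(2)$ \emph{does} see the four $\A$-module structures, and the version-dependence enters through an exotic product $\alpha\, e[4,23]=\lambda_{i,j}\, g\, e[3,15]$ (computed via the Davis--Mahowald spectral sequence and a comparison of Hopf algebras $\A(2)_*\to B(2)_*$), which feeds the Leibniz rule $d_2(w_2 e[4,23])=d_2(w_2)e[4,23]=g\beta\alpha e[4,23]$ in the ASS for $tmf\wedge A_1$. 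The resulting knowledge of $\pi_*(tmf\wedge A_1)$ modulo $(\ovk,\nu)$, transported by the injective comparison map $\Theta$ (whose injectivity itself requires proving $w_2^4$ acts freely on the ASS $\E_\infty$-term), is what identifies the permanent cycles and forces the long differentials $d_9,\dots,d_{23}$ in the HFPSS. You also need the structural fact that $(G_{24}, E_C)$ is a regular pair --- $\ovk$-free towers pair up one-to-one, $\ovk$-torsion classes are permanent cycles, and there is a horizontal vanishing line coming from $\ovk^6=0$ --- plus Toda-bracket/Moss arguments to resolve the cases where two candidate sources could kill the same power of $\ovk$ times a given class. None of this is recoverable from naturality against $V(0)$ and $Y$ alone.

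Your proposed fix for the agreement of $A_1[01]$ and $A_1[10]$ is also not sound as stated: Spanier--Whitehead self-duality of each spectrum pairs its own spectral sequence with itself, which constrains each case internally but does not compare the two cases with one another. In the paper the agreement is a numerical coincidence of the comodule invariants, $\lambda_{0,1}=\lambda_{1,0}=1$ (and dually $\lambda_{0,0}=\lambda_{1,1}=0$), so that every downstream differential depends only on the value of $\lambda_{i,j}$.
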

\begin{thm} As a module over the ring $\FF_4[\Delta^{\pm 8},\ovk, \nu]/(\nu\ovk)$, the $\E_{\infty}$-term of the HFPSS for $E_C^{hG_{24}}\wedge A_1[00]$ and $E_C^{hG_{24}}\wedge A_1[11]$  is a direct sum of $48$ explicitly known cyclic modules. 
\end{thm}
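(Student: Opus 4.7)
The plan is to compute the homotopy fixed point spectral sequence \eqref{HFPSS2} for $X\in\{A_1[00], A_1[11]\}$ in three stages: first the $\E_{2}$-page $\H^{*}(G_{24}, (E_C)_{*}(A_1))$, then all differentials, and finally the $\E_{\infty}$-page as an $\FF_4[\Delta^{\pm 8},\ovk,\nu]/(\nu\ovk)$-module. By the Spanier-Whitehead relation $D(A_1[00]) \simeq \Sigma^{-6} A_1[11]$ recalled in the introduction, the HFPSS for the two versions are related by a shift-dualising isomorphism, so it suffices in principle to treat one in detail and transport the other; I would nevertheless run both in parallel as a mutual consistency check.

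To get the $\E_{2}$-page, I would work inductively along the cellular construction $S^{0} \to V(0) \to Y \to A_1$. Each cofiber sequence $\Sigma^{a} W \xrightarrow{\alpha} W \to W'$, with $\alpha \in \{2,\eta,v_{1}\}$, produces a long exact sequence in $\H^{*}(G_{24}, (E_C)_{*}(-))$ that I would organise as an $\alpha$-Bockstein spectral sequence. Starting from the classical HFPSS input for $E_C^{hG_{24}}$, this yields in turn $\H^{*}(G_{24}, (E_C)_{*}(V(0)))$, then $\H^{*}(G_{24}, (E_C)_{*}(Y))$, and finally $\H^{*}(G_{24}, (E_C)_{*}(A_1[ij]))$. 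The $v_{1}$-Bockstein is the step at which the four versions of $A_1$ first diverge, since the Toda-bracket indeterminacy in the $v_{1}$-self map specifying $A_1[00]$ (resp.\ $A_1[11]$) leaves two additional Bockstein cycles alive compared with the self-dual versions $A_1[01]$ and $A_1[10]$, which should be the source of $48$ versus $46$ cyclic summands.

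For the differentials, I would propagate the established HFPSS differentials for $E_C^{hG_{24}}$ itself (the classical differentials on $\Delta$, $\ovk$ and $\nu$) across the three cofiber sequences via Leibniz rule and naturality under the equivariant maps $S^{0} \to V(0) \to Y \to A_1$. Any further differentials on classes originating in the higher cells of $A_1$ are constrained by the connecting maps of the long exact sequences obtained in the previous step and by the fact that $\Delta^{8}$ must be a permanent cycle, since $A_1$ admits a $v_{2}^{32}$-self map by \cite{BEM17}. Once all differentials are pinned down, the $\E_{\infty}$-page is read off and reorganised as a direct sum of cyclic modules over $\FF_4[\Delta^{\pm 8},\ovk,\nu]/(\nu\ovk)$.

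The main obstacle I anticipate is the interaction of the $v_{1}$-Bockstein with the HFPSS differentials for the non-self-dual versions $A_1[00]$ and $A_1[11]$: without the self-duality constraint available for $A_1[01]$ and $A_1[10]$, significantly more bookkeeping is needed to rule out or force hidden differentials on the two extra surviving classes. The dual-pair duality between $A_1[00]$ and $A_1[11]$, combined with comparison to the self-dual versions (which, by the companion theorem, must be computed first), should provide enough structural rigidity to pin down the $48$ cyclic summands unambiguously.
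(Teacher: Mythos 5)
There is a genuine gap, and it sits exactly where your proposal locates the version-dependence. You claim that the four versions of $A_1$ first diverge at the $v_1$-Bockstein stage of the $\E_2$-page computation, with $A_1[00]$ and $A_1[11]$ retaining two extra Bockstein cycles. This is not what happens: for any $v_1$-self map of $Y$, the induced map $(E_C)_*(v_1)$ is multiplication by $u_1u^{-1}$ (this follows from naturality against $BP$ and the $2$-series of the formal group law), so $(E_C)_*(A_1)\cong\FF_4[u^{\pm1}]\{e_0,e_2\}$ with a $G_{24}$-action that is completely independent of the version. Consequently the $\E_2$-term $\H^*(G_{24},(E_C)_*(A_1))$ is \emph{identical} for all four versions. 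The distinction between $46$ and $48$ cyclic summands arises entirely from a different pattern of differentials (eight differentials change), and the data that forces this difference --- the nontrivial $Sq^4$-actions distinguishing $A_1[ij]$ --- is invisible to $E_C$-homology. Your proposed mechanism for computing differentials (Leibniz against the known differentials for $E_C^{hG_{24}}$, plus naturality along $S^0\to V(0)\to Y\to A_1$) therefore cannot distinguish $A_1[00]$ from $A_1[01]$: every input to that procedure is version-independent.

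The missing idea is an external comparison that does see the $\A(2)$-module structure. The paper's route is the equivalence $(\Delta^8)^{-1}tmf\wedge A_1\simeq(E_C^{hG_{24}})^{h\Gal}\wedge A_1$, which injects $\W(\FF_4)\otimes\pi_*(tmf\wedge A_1)$ into $\pi_*(E_C^{hG_{24}}\wedge A_1)$ (even after quotienting by $(\ovk,\nu)$). One then runs the Adams spectral sequence for $tmf\wedge A_1$, where the version-dependence enters through the exotic product $\alpha\, e[4,23]=\lambda_{i,j}\,g\,e[3,15]$ in $\Ext_{\A(2)_*}$ (computed by comparing the Davis--Mahowald spectral sequences for $\A(2)_*$ and a quotient Hopf algebra $B(2)_*$), which via $d_2(w_2)=g\beta\alpha$ produces the $\lambda_{i,j}$-dependent $d_2$-differentials. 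Only by importing the resulting orders of $\pi_*(tmf\wedge A_1)/(\ovk,\nu)$ can one decide which $\ovk$-free towers in the HFPSS are permanent and which support differentials; the remaining bookkeeping then uses $\ovk$- and $\Delta^8$-linearity, the horizontal vanishing line of height $23$ coming from $\ovk^6=0$, and a few Toda brackets via Moss's theorem. Your proposal would need to be supplemented with some such version-sensitive input before the count of $48$ summands could be established; the Spanier--Whitehead duality $D(A_1[00])\simeq\Sigma^{-6}A_1[11]$ you invoke only relates the two members of the pair to each other and cannot substitute for it.
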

\noindent
One of the key ingredients, to this end, is a comparison between $tmf\wedge A_1$ and $E_C^{hG_{24}}\wedge A_1$, where $tmf$ denotes the connective spectrum of topological modular forms. In fact, there is a homotopy equivalence (Theorem \ref{compar}):
 $$(\Delta^{8})^{-1}tmf\wedge A_1 \simeq (E_{C}^{hG_{24}})^{h\Gal(\FF_{4}/\F)}\wedge A_1,$$ where $\Delta^{8}$ is the periodicity generator of $\pi_{*}tmf$. Based on the latter, we first analyse the homotopy groups of $tmf\wedge A_1$ by means of the Adams spectral sequence, abbreviated by ASS , then invert $\Delta^{8}$ to get information about the homotopy groups of $E_{C}^{hG_{24}}\wedge A_1$.  
We note that in \cite{BEM17}, Batacharya, Egger, Mahowald also discuss the $\E_2$-term of the ASS for $tmf\wedge A_1$; our method is, however, different (compare to \cite{BEM17}). Next, we summarise the contents of the paper.\\\\
\noindent
In Section \ref{Prelim} and Section \ref{DMSS_Const}, we discuss some background and tools used in our computation. We recollect on Lubin-Tate theories and topological modular forms; in particular, we sketch a proof of the relationship between topological modular forms and homotopy fixed point spectrum $E_C^{hG_{24}}$. We give a generalisation of the Davis-Mahowald spectral sequence, which is an important tool to analyse the cohomology of various Hopf algebras. In Section \ref{G24 2}, we discuss the Davis-Mahowald spectral sequence for $A_1$ and obtain the $\mathrm{E}_{2}$-term of the Adams spectral sequence for $tmf\wedge A_1$. In Section \ref{G24 3}, we study some differentials in the later and then extract some suitable information about $\pi_{*}(tmf\wedge A_1)$. In Section \ref{G24 4}, we finally study the homotopy fixed point spectral sequence for $E_{C}^{hG_{24}}\wedge A_1$. We emphasise that there are two different outcomes for the $\E_{\infty}$-term of the homotopy fixed point spectral sequence, depending on the version of $A_1$, see Theorem \ref{Einfty A01} and \ref{Einfty A11}.\\\\
\textbf{Convention and Notation.} Unless otherwise stated, all spectra are localised at the prime $2$. $\mathrm{H}^{*}(X)$ and $\mathrm{H}_{*}(X)$ denote the mod-$2$ cohomology and homology of the spectrum $X$, respectively. Given a Hopf algebra $A$ over a field $k$ and $M$ a $A$-comodule, we will often abbreviate $\Ext_{A}^{*}(k,M)$ by $\Ext_{A}^{*}(M)$. In general, we will write $C_{f}$ for the cofiber of a map $f : X\rightarrow Y$, except that we will write $V(0)$ for the Moore spectrum which is the cofiber of the multiplication by $2$ on the sphere.
\\\\
\textbf{Acknowlegements.} I would like to thank my PhD advisor, Hans-Werner Henn, for suggesting this project and for guiding me through many stages of it and for carefully reading earlier drafts of this paper. Special thanks go to Irina Bobkova for many fruitful exchanges, to Agn\`es Beaudry and Paul Goerss for their interest in this work and helpful conversations, to Lennart Meier for answering many of my questions about topological modular forms and for suggesting I include the proof of Theorem \ref{tmf-HFP}. I would also like to thank John Rognes for making available his course notes on "Adams spectral sequence", from which I learned many details about the Davis-Mahowald spectral sequence.
%%%%%%%%%%%%% 

\section{Recollection on chromatic homotopy theory}\label{Prelim}
\subsection{Lubin-Tate theories}
We recall some generalities on the deformation theory of formal group laws and Goerss-Hopkins-Miller theory. Let $\mathcal{FGL}$ be the category whose objects are pairs $(k,\Gamma)$ where $k$ is a perfect field of characteristic $p$ and $\Gamma$ is a formal group law over $k$ and morphisms between $(k,\Gamma)$ and $(k^{'}, \Gamma^{'})$ are pairs $(i, \phi)$ where $i :  k^{'}\rightarrow k$ is a homomorphism of fields and $\phi: \Gamma\xrightarrow{\cong} i^*\Gamma^{'}$ is a morphism of formal group laws. \\\\
Let $(k,\Gamma)\in \mathcal{FGL}$ with $\Gamma$ of height $n$. A deformation of $(k, \Gamma)$ to a complete local ring $R$ with maximal ideal $m$ is a pair $(F,\iota)$ where $F$ is a formal group law over $R$ and $\iota: k\rightarrow R/m$ is a map of fields such that $p^{*}F = \iota^*\Gamma$ with $p$ the canonical projection $R\rightarrow R/m$. A $\star$-isomorphism $\phi$ between two deformations to $R$ is an isomorphism between the underlying formal group laws which reduces to the identity over $R/m$, i.e, $\phi\equiv x\ \mbox{mod}\ (m)$. This defines a functor from the category of complete local rings $\Ring_{c,l}$ to small groupoids $\Groupoid$
$$\Def_{\Gamma}: \Ring_{c,l}\rightarrow \Groupoid$$
which associates to every complete local ring $R$ the category of deformations of $(k,\Gamma)$ over $R$ and $\star$-isomorphisms between them. By Lubin-Tate deformation theory, $\Def_{\Gamma}$ is co-representable, see \cite{LT66}. That is, there exists a complete local ring $E_{k,\Gamma}$, non-canonically isomorphic to $\W(k)[[u_1, u_2,...,u_{n-1}]]$, such that $$\Def_{\Gamma}(R) \cong \Hom_{\Ring_{c,l}}(E_{k,\Gamma}, R).$$ Here $\W(k)$ denotes the ring of Witt vectors on $k$.
Over $E_{k,\Gamma}$ lives a universal deformation $\tilde{\Gamma}$ of $\Gamma$. Consider the graded ring $E_{k,\Gamma}[u^{\pm 1}]$ where $|u_i|=0$ for $1\leq i\leq n-1$ and $|u| = -2$. Let $MU$ be the cobordism spectrum. A famous theorem of Quillen asserts that the coefficient rings $MU_*$ of $MU$ supports the universal group law. Thus, the formal group law $u^{-1}\tilde{\Gamma}(ux,uy)$ is classified by a map of graded rings $MU_*\rightarrow E_{k,\Gamma}[u^{\pm 1}].$ Define a functor from the category of pointed spaces to that of graded abelian groups:
 $$X\mapsto MU_*(X)\otimes_{MU_*} E_{k,\Gamma}[u^{\pm1}].$$
 The formal group $u^{-1}\widetilde{\Gamma}(ux,uy)$ satisfies the Landweber exact functor criterion, see \cite{Rez98}. By the Landweber exact functor theorem, the above functor is a homology functor. Thus, it is represented by a ring spectrum $E(k, \Gamma)$ with $$(E(k,\Gamma))_*\cong \W(k)[[u_1, u_2,..., u_{n-1}]][u^{\pm 1}].$$ The latter is known as a $n^{th}$ Morava $E$-theory or Lubin-Tate theory.
 \begin{Example}\label{Cmpx K} Let $(k,\Gamma) = (\F, \G_m)$ where $\G_m$ is the multiplicative formal group law, i.e, $\G_m(x,y) = x+y + xy$. Then $E(\F,\G_m) \simeq K\Z_{2}$, the $2$-completed complex $K$-theory and $\G(\F,\G_m) = \Z_2^{\times}$, the unit of the $2$-adic integers. Furthermore, the action of $\G(\F,\G_m)$ on $E(\F,\G_m)_*$ is determined by Adams operations.
 \end{Example}
 \noindent
The construction that associates to a formal group law $(k, \Gamma)$ the Morava $E$-theory $E(k,\Gamma)$ defines a functor from $\FGL$ to $\Ho(Sp)$, the stable homotopy category. Let us denote by $\G(k,\Gamma)$ the automorphism group of the pair $(k,\Gamma)$. We note that $\G(k,\Gamma)$ is a profinite group, see \cite{Goe08}, Section 7.2. By functoriality, the group $\G(k,\Gamma)$ acts on $E(k,\Gamma)$. This action is, however, defined only up to homotopy. The Goerss-Hopkins-Miller obstruction theory lifts this action to structured ring spectra.
\begin{Theorem}\cite{GH04} \label{GHM ob}The spectrum $E(k,\Gamma)$ has an essentially unique structure of $E_{\infty}$-ring. Furthermore, $\G(k,\Gamma)$ acts on $E(k,\Gamma)$ via $E_{\infty}$-ring maps.
\end{Theorem}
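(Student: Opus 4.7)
The plan is to invoke the Goerss-Hopkins obstruction theory for $E_\infty$-ring structures, developed in \cite{GH04}. The spectrum $E(k,\Gamma)$ already carries a homotopy-commutative ring structure coming from the Landweber exact functor construction, so the task is to rigidify this structure to an $E_\infty$-structure and then to show that the moduli space of such rigidifications is essentially contractible.

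First, I would identify the algebraic input to the obstruction theory. The pair $\bigl((E(k,\Gamma))_{*},\, (E(k,\Gamma))_{*}(E(k,\Gamma))\bigr)$ is an evenly-graded, Landweber-exact Hopf algebroid over $MU_{*}$; by Lubin-Tate theory \cite{LT66}, it classifies the universal deformation $\tilde{\Gamma}$ together with its $\star$-isomorphisms, and its underlying ring is the Lubin-Tate ring $E_{k,\Gamma}[u^{\pm 1}]$, which is formally smooth over $\W(k)$. The obstruction machinery then presents the moduli space of $E_\infty$-refinements of this datum as the homotopy limit of a Bousfield-Kan tower whose successive layers are controlled by André-Quillen cohomology groups of the form $H^{s}_{\mathrm{AQ}}\bigl((E(k,\Gamma))_{*};\, \Omega^{t}(E(k,\Gamma))_{*}\bigr)$ computed in the category of simplicial comodule algebras over the Hopf algebroid above.

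The core step is to show that these obstruction and uniqueness groups vanish in the bidegrees that detect obstructions to existence and non-uniqueness. The smoothness of the deformation functor $\Def_{\Gamma}$ forces the cotangent complex of $(E(k,\Gamma))_{*}$ (relative to $\W(k)$ and to the comodule structure) to be concentrated in degree zero; this kills $H^{s}_{\mathrm{AQ}}$ for $s\geq 2$, while the remaining low-degree potential obstructions are identified with continuous derivations of the universal deformation and vanish by the same smoothness input. Combined, these vanishings yield both the existence and the essential uniqueness of the $E_\infty$-ring structure.

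For the second statement, the assignment $(k,\Gamma)\mapsto E(k,\Gamma)$ is functorial into $\Ho(\Sp)$, so each $\phi\in \G(k,\Gamma)$ induces a self-equivalence of $E(k,\Gamma)$ as a homotopy ring spectrum. The essential uniqueness established above then implies that every such self-equivalence lifts, up to a contractible space of choices, to an equivalence in the $\infty$-category of $E_\infty$-rings; the coherence of these lifts under composition gives a genuine action of $\G(k,\Gamma)$ by $E_\infty$-ring maps. The principal technical obstacle in the entire argument is the vanishing of the higher André-Quillen cohomology of the Lubin-Tate ring, which is the technical heart of \cite{GH04} and depends crucially on the exceptional smoothness properties of $\Def_{\Gamma}$.
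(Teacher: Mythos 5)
The paper does not prove this statement: it is quoted verbatim from \cite{GH04} as a black box, so there is no internal proof to compare against. Your outline is, at the level of strategy, a faithful summary of what Goerss--Hopkins actually do: set up the moduli space of $E_\infty$-realizations as the limit of a tower whose layers are governed by Andr\'e--Quillen cohomology of $(E(k,\Gamma))_*$ in the category of graded comodule algebras, prove the relevant vanishing, and then deduce the action of $\G(k,\Gamma)$ from the functoriality of the construction on homotopy categories together with the contractibility of the components of the $E_\infty$-mapping spaces.

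Two points in your sketch are more delicate than you make them sound, and they are precisely where the technical weight of \cite{GH04} sits. First, the claim that ``smoothness of $\Def_\Gamma$ forces the cotangent complex to be concentrated in degree zero, which kills $H^s_{\mathrm{AQ}}$ for $s\geq 2$'' elides the real argument: the obstruction groups live in a category of graded $E_*E$-comodule algebras, and one must first know that $E_*E$ is pro-free (so that the algebraic category is well behaved) and then reduce the Andr\'e--Quillen computation to one over the residue field, where the vanishing ultimately rests on the module of continuous K\"ahler differentials of the Lubin--Tate ring being free of finite rank, not merely on the cotangent complex being discrete. Second, the degree-$(0,0)$ derivation/homomorphism group does \emph{not} vanish --- it is exactly what produces $\pi_0$ of the mapping space and hence the group $\G(k,\Gamma)$ acting; only the groups in positive cohomological degree vanish. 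Your phrasing ``the remaining low-degree potential obstructions \dots vanish by the same smoothness input'' conflates these. As a proof these gaps are genuine (the vanishing is asserted, not established), but as a roadmap to the cited result the proposal is accurate.
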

%%%%%%%%%%%%%%%%%% TMF
\subsection{Topological modular forms}\label{sub-tmf} An astute choice of Morava $E$-theory or equivalently a choice of formal group law of height $2$ will make the calculation easier. 
Let $C$ be the supersingular elliptic curve over $\FF_4$ given by the Weierstrass equation $y^2+y = x^3$. Denote by $F_{C}$ the formal completion of $C$ at the origin. The latter is a formal group law of height $2$. We abbreviate $ E(\FF_4, F_C)$ by $E_C$ and $\G(\FF_4, F_C)$ by $\G_C$. Let $\S_C$ denote the automorphism group of $F_C$. Let $\Gal$ denote the Galois group of $\FF_4$ over $\F$. There is a short exact sequence
$$1\rightarrow \S_C\rightarrow \G_C\rightarrow \Gal\rightarrow 1.$$
The image of $\S_C$ in $\G_C$ corresponds to the automorphisms of $(\FF_4, F_C)$ fixing $\FF_4$. 
Since $F_C$ is defined over $\F$, $\Gal$ fixes $F_C$, the above short exact sequence splits, i.e., $\G_C\cong \S_C\rtimes \Gal.$ The automorphism group of $C$ has order $24$ and these are all defined over $\FF_4$, more precisely, 
$$\Aut(C)=\Aut_{\FF_4}(C) \cong SL_2(\Z/3)\cong Q_{8}\rtimes C_3=:G_{24},$$ where $Q_8$ is the quaternion group and $C_3 = \langle \omega\rangle$ is a cyclic group of order $3$, see \cite{Sil09}. The group $Q_8$ has a representation $\langle i,j| i^4=1, i^2=j^2, iji^{-1}=j^{-1}\rangle$. The latter has $8$ elements  $\{1, i, j, k, -1, -i, -j,-k\}$ where $-1$ denotes $i^2=j^2=k^2$. The group $C_3$ acts on $Q_8$ by permuting $i, j $ and $k:=ij$$$\omega i\omega^2 = j,\ \ \ \omega j\omega^2 = k.$$ The elements $\omega$ and $i$ correspond to the automorphisms $\omega (x,y) = (\xi x, \xi^2 y)$ and $i(x,y) = (x+1, y+x+\xi^2)$, respectively.
\\\\
Since $C$ is already defined over $\F$, $\Gal$ acts on $\Aut(C)$. Denote by $G_{48}$ the semi-direct product $G_{24}\rtimes \Gal$. Moreover, the automorphism group $\Aut(C)$ of $C$ maps injectively to $\S_C$,  and $G_{48}$ maps injectively to $\G_C$. We view $G_{24}$ and $G_{48}$ as subgroups of $\S_C$ and $\G_C$, respectively. 
\\\\
%We see that $C_2 = \langle-1\rangle \leq Q_{8}$ is invariant under the action of $C_3$, and so $C_6 := C_{2}\times C_3$ is a subgroup of $G_{24}$. As an automorphism of $C$, $-1$ is given by $(x,y)\mapsto (x, y+1)$. We see immediately that both $C_2$ and $C_3$ are invariant by $\Gal$, and hence $G_{12}:= C_6\rtimes \Gal$ is a subgroup of $G_{48}$
%\\\\
%The homotopy fixed point spectra $E_C^{hG_{24}}$ and $E_{C}^{hC_{6}}$ as well as $E_C^{hG_{48}}$ and $E_C^{hG_{12}}$ will play a central role in this thesis, as already mentioned in the introduction.
%\\\\
The reasons for choosing the formal group law of the supersingular elliptic curve $C$ are two-fold. First, the geometric origin of $G_{48}$ allows one to have an explicit description of its action on $\pi_*(E_C)$, see \cite{Bea17} for more details and further references. Thus, it allows us to adequately compute the $\E_2$-term of various homotopy fixed point spectral sequences. Second, this choice of the Morava $E$-theory enables us to compare the associated homotopy fixed point spectrum with the spectrum of topological modular forms, hence providing us with more tools to understand the formers. 
\\\\
Next, we recall the construction of the spectrum of topological modular forms and show its closed relationship with the homotopy fixed point spectrum $E_C^{hG_{24}}$.
Let $\M$, $\M(3)$ be the moduli stack of elliptic curves and elliptic curves with a full level  $3$ structure over $\Z_{(2)}$, respectively. As functors of points on $\Z_{(2)}$-algebras, the former are described as follows. If $R$ is a $\Z_{(2)}$-algebra, then

- $\M(\spec(R))$ is the groupoid of elliptic curves over $\spec(R)$ and isomorphisms between them.

- $\M(3)(\spec(R))$ is the groupoid of pairs $(E,\phi)$ consisting of an elliptic curve $E$ with an isomorphism of group schemes $\phi: \Z/3\times \Z/3\rightarrow E[3]$ over $\spec(R)$, where $E[3]$ is the subscheme of $3$-torsion points of $E$ and isomorphisms between them. 

\begin{Theorem}[Goerss-Hopkins-Miller, see \cite{DFHH14}] There is an $E_{\infty}$-ring spectra-valued sheaf $\mathcal{O}^{top}$ on the affine \'etale site $\aff$ of $\M$ such that 
\begin{itemize}
\item[1.] The sheafification of $\pi_0\mathcal{O}^{top}$ is the structure sheaf of $\M$.
\item[2.] If $E: \spec(R)\rightarrow \M$ is an \'etale morphism, then $\mathcal{O}^{top}(\spec(R))$ is a spectrum associated to the formal completion of $E$ at its origin via the Landweber exact functor theorem.
\end{itemize}
\end{Theorem}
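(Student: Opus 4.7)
The construction is carried out in two stages: first produce a presheaf of multiplicative cohomology theories, then refine it to a sheaf of $E_\infty$-ring spectra. For the first stage, let $f : \spec(R)\to \M$ be an affine \'etale morphism classifying an elliptic curve $E/R$, and let $\widehat{E}$ denote its formal completion at the origin. The key input is the Landweber exact functor theorem: I would verify that the formal group law associated to $\widehat{E}$ (after a local choice of coordinate, noting that the construction will turn out to be coordinate-independent) satisfies Landweber's criterion. This reduces to showing that $(p, v_1)$ is a regular sequence on $R$, which follows from \'etaleness of $f$ together with the fact that on the moduli stack of formal groups, the corresponding ideals cut out the heights; \'etaleness pulls the regularity back to $R$. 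This produces an even-periodic Landweber spectrum $\mathcal{O}^{top}_{pre}(\spec(R))$ functorially in $f$, giving a presheaf on $\aff$ valued in the homotopy category of spectra, with $\pi_0$ giving back $R$ (which is statement (1) at the presheaf level) and with $\pi_{-2}$ a line bundle identified with the sheaf of invariant differentials.

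The second stage is the hard part, and it is exactly where Goerss--Hopkins--Miller obstruction theory enters. One needs to rigidify the diagram $\mathcal{O}^{top}_{pre}$, which a priori only commutes up to homotopy, into an honest diagram of $E_\infty$-ring spectra. The plan is to apply the obstruction theory of \cite{GH04} (as already invoked for Lubin--Tate spectra in Theorem \ref{GHM ob}): the obstructions to existence and uniqueness of $E_\infty$-lifts of an elliptic spectrum, as well as obstructions to realizing each morphism of the diagram by an $E_\infty$-map, sit in Andr\'e--Quillen cohomology groups of the form $\H^{s}_{\mathrm{AQ}}(E_*; E_*[t])$ computed in the category of continuous $E_*E$-comodule $E_*$-algebras. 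For Morava $E$-theories these groups vanish for formal-smoothness reasons; for the present presheaf one works locally and glues using that \'etale morphisms give formally \'etale extensions of the relevant cooperation algebras, so the same vanishing applies and both the local $E_\infty$-structure and the morphisms in the diagram lift uniquely up to a contractible space of choices. This promotes $\mathcal{O}^{top}_{pre}$ to a functor with values in $E_\infty$-ring spectra.

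To finish, I would verify the sheaf condition. Since the stack $\M$ is nice (Deligne--Mumford and well-understood over $\Z_{(2)}$), it suffices to check descent for \'etale covers $\spec(S)\to \spec(R)$ between affine schemes \'etale over $\M$. The strategy is to compare the descent spectral sequence
\[ \E_2^{s,t} = \H^{s}(\text{\v{C}ech complex of } \pi_t \mathcal{O}^{top}) \Longrightarrow \pi_{t-s}(\text{totalization}) \]
to the fact that $\pi_*\mathcal{O}^{top}$ is a quasi-coherent sheaf of $\pi_*\mathcal{O}^{top}(\spec(R))$-modules on the small \'etale site of $\spec(R)$, which has vanishing higher cohomology. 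Hence the totalization recovers $\mathcal{O}^{top}(\spec(R))$, giving the sheaf property. Statement (1) then follows because at the sheaf level $\pi_0 \mathcal{O}^{top}$ agrees on sections with the values $R$, and statement (2) is a direct consequence of the construction in the first stage.

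\textbf{Main obstacle.} The genuinely hard part is the $E_\infty$-rigidification in the second stage: the vanishing of the relevant Andr\'e--Quillen cohomology groups, and the verification that they organize into obstructions for a diagram (rather than just a single spectrum), requires the full machinery of \cite{GH04}. Everything else — Landweber exactness via \'etaleness, and descent via quasi-coherence of $\pi_*\mathcal{O}^{top}$ — is comparatively formal once the diagram has been rigidified.
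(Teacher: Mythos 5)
Note first that the paper does not prove this statement at all: it is quoted from \cite{DFHH14} as a black box, so the only meaningful comparison is with the construction in the literature. Your first stage (Landweber exactness of the formal group of an elliptic curve over an affine \'etale over $\M$, via regularity of $(p,v_1)$, equivalently flatness of the map from $\M$ to the moduli of formal groups) and your final descent step (\v{C}ech descent for affine \'etale covers plus quasi-coherence of $\pi_*\mathcal{O}^{top}$, which collapses the descent spectral sequence) are correct, and these are indeed the comparatively formal parts of the argument.

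The genuine gap is in your second stage. You propose to rigidify the entire presheaf in one pass by Goerss--Hopkins obstruction theory, asserting that the relevant Andr\'e--Quillen cohomology groups vanish because \'etale morphisms ``give formally \'etale extensions of the relevant cooperation algebras.'' That vanishing is a theorem only in the Lubin--Tate case, where $E_*E\cong \mathrm{Map}_{c}(\mathbb{G},E_*)$ and the completeness of $E_*$ together with the structure of the stabiliser group forces the relevant cotangent complex to vanish. For a general Landweber-exact elliptic spectrum $E=\mathcal{O}^{top}_{pre}(\spec(R))$ the Hopf algebroid $(E_*,E_*E)$ is not \'etale over $E_*$: \'etaleness of $\spec(R)\rightarrow \M$ controls the base, not the cooperations, and the obstruction groups do not vanish in general, so the moduli space of $E_\infty$-realisations of the diagram is not a priori connected. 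This is precisely why the construction in \cite{DFHH14} does not proceed this way. It decomposes $\mathcal{O}^{top}$ by arithmetic and chromatic fracture squares: the $K(2)$-local piece is obtained from the Lubin--Tate sheaf via Serre--Tate theory (the formal completion of $\M$ at a supersingular point is a Lubin--Tate deformation space, so the $K(2)$-localisation is assembled from homotopy fixed point spectra of Morava $E$-theories, where the Goerss--Hopkins vanishing does apply); the $K(1)$-local piece over the ordinary locus is built with a different, $K(1)$-local obstruction theory based on $\theta$-algebras and $p$-adic modular forms; and the chromatic and arithmetic gluing maps must each be produced by a further obstruction-theoretic argument. Without this decomposition your plan stalls exactly at the step you yourself flag as the main obstacle.
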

\begin{Remark} The spectra constructed by point $2.$ of the previous theorem are called elliptic spectra. They are even periodic spectra $R$ whose formal group law on $\pi_0(R)$ is the completion of an elliptic curve. These are $E(2)$-local, see \cite{DFHH14}, Chapter 6, Lemma 4.2.
\end{Remark}
\noindent
Let $G:=GL_2(\Z/3)$ denote the automorphism group of the constant group scheme $\Z/3\times \Z/3$ over $\Z_{(2)}$. Then $G$ acts on $\M(3)$ by precomposition with the level structure. The obvious forgetful functor gives rise to a finite \'etale morphism of stacks (because $3$ is invertible in $\Z_{(2)}$): 
\begin{equation}\label{etale map of stack}
\M(3)\rightarrow \M.
\end{equation}
Thus, one can evaluate $\mathcal{O}^{top}$ at $\M$ and $\M(3)$. Define

$$TMF = \mathcal{O}^{top}(\M):= \underset{U\in \aff}{\mathrm{holim}} \mathcal{O}^{top}(U),$$ 
%$$TMF_0(3) = \mathcal{O}^{top}(\M_0(3)):= \underset{U\in Aff^{\acute{e}t}_{\mathcal{M}_0(3)}}{\mathrm{holim}} \mathcal{O}^{top}(U),$$

$$TMF(3) = \mathcal{O}^{top}(\M(3)):= \underset{U\in Aff^{\acute{e}t}_{\mathcal{M}(3)}}{\mathrm{holim}} \mathcal{O}^{top}(U).$$
The morphism of (\ref{etale map of stack}) is a Galois cover with Galois group $G$, or a $G$-torsor.As a consequence of the fact that $\mathcal{O}^{top}$ satisfies descent, one obtains that 
\begin{equation}\label{descent TMF}
TMF \simeq TMF(3)^{hG}.
\end{equation}
It is known that $\M(3)$ is affine over the ring $\Z_{(2)}[ \zeta]$ where $\zeta$ is a primitive third root of unity, see \cite{DR73}, also \cite{Sto14}. Furthermore, up to isomorphism, there is a unique supersingular elliptic curve with a full level structure over $\FF_4$. This follows from the fact that there is a unique supersingular elliptic curve over $\FF_4$ (up to isomorphism) and that the automorphism group of the supersingular elliptic curve $C$ has order $48$, which is equal to that of $G$, the automorphism group of $\Z/3\times \Z/3$. In other words, the fiber of the morphism $\M(3)\rightarrow \M$ over the supersingular locus of $\M$ is isomorphic to $\spec(\FF_4)$, i.e., the following square is a pullback of stacks
$$\xymatrix{ \spec(\FF_4) \ar[rr]\ar[d]\ar@{}[drr]|{} && \M(3)\ar[d]\\
			\spec(\FF_4)//G_{48} \ar[rr] && \M
}$$
where the bottom is given by specifying a supersingular elliptic curve, for example $C$. Therefore, by the construction of $\mathcal{O}^{top}$, $L_{K(2)}\mathcal{O}^{top}(\M(3))$ is the Lubin-Tate theory associated to the paire $(\FF_4, F_C)$, see \cite{DFHH14}, Chapter 12. This means that there is a homotopy equivalence 
\begin{equation}\label{K(2)-loc TMF(3)}L_{K(2)}TMF(3)\xrightarrow{\simeq} E_C.
\end{equation}
Note that $G$ can be identified with $\Aut(C) = G_{48}$, such that the equivalence (\ref{K(2)-loc TMF(3)}) is equivariant with respect to the action of $G$ on the source and of $G_{48}$ on the target, as follows. Suppose the the map $\spec(\FF_4)\rightarrow \M(3)$ specifies the elliptic curve $C$ and a $3$ level structure $\Z/3^{\times 2}\xrightarrow{\Gamma} C$. Then for any $g\in G$, there is a unique $\phi(g)\in G_{48}$ making the following diagram commute
$$\xymatrix{ \Z/3^{\times 3} \ar[r]^-{\Gamma} &C\\
\Z/3^{\times 3}\ar[u]^{g} \ar[r]^-{\Gamma} & C\ar[u]_{\phi(g)}.
}$$
\begin{Theorem}\label{tmf-HFP} There is a homotopy equivalence
\begin{equation}\label{K(2)-loc TMF}
L_{K(2)}TMF \simeq E_C^{hG_{48}}.
\end{equation} 
\end{Theorem}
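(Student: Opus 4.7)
The plan is to obtain the equivalence by descent: combine the isomorphism (\ref{descent TMF}), $TMF \simeq TMF(3)^{hG}$ for $G = GL_2(\Z/3)$, with the $G$-equivariant $K(2)$-localization (\ref{K(2)-loc TMF(3)}), $L_{K(2)}TMF(3) \simeq E_C$, under the identification $G \cong G_{48}$ coming from the pullback diagram of stacks described above. Concretely, the $G$-equivariant unit $TMF(3) \to E_C$ induces, upon taking $G$-homotopy fixed points, a canonical map
$$TMF \simeq TMF(3)^{hG} \longrightarrow E_C^{hG_{48}},$$
and the goal is to show that this map exhibits $E_C^{hG_{48}}$ as the $K(2)$-localization of $TMF$.

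This splits into two verifications. First, $E_C^{hG_{48}}$ is $K(2)$-local: since $E_C$ is $K(2)$-local by construction and the full subcategory of $K(2)$-local spectra is reflective in $\Ho(\Sp)$, hence closed under arbitrary homotopy limits, the finite-group fixed point spectrum $E_C^{hG_{48}}$ is again $K(2)$-local. Second, the comparison map itself must be a $K(2)$-equivalence. Let $F$ denote the fiber of the $G$-equivariant map $TMF(3) \to E_C$, so that $F$ is $K(2)$-acyclic by construction. Since homotopy fixed points preserve fibers, the fiber of the comparison map is $F^{hG}$, and it suffices to show that $F^{hG}$ is $K(2)$-acyclic.

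The main technical obstacle is precisely this last step: finite-group homotopy fixed points must preserve $K(2)$-acyclicity. The cleanest argument I would use invokes the Hopkins--Lurie ambidexterity theorem for finite groups in the $K(n)$-local category, which supplies a natural $K(2)$-local equivalence between homotopy orbits and homotopy fixed points, namely $L_{K(2)}(F_{hG}) \simeq L_{K(2)}(F^{hG})$. Since $L_{K(2)}$ preserves colimits, one has $L_{K(2)}(F_{hG}) \simeq (L_{K(2)}F)_{hG} \simeq 0$, using $L_{K(2)}F = 0$. Hence $L_{K(2)}(F^{hG}) = 0$, so $F^{hG}$ is indeed $K(2)$-acyclic. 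Combining the two verifications yields the desired equivalence $L_{K(2)}TMF \simeq E_C^{hG_{48}}$.
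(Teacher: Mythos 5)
Your reduction to showing that $F^{hG}$ is $K(2)$-acyclic, where $F$ is the fiber of the $G$-equivariant localization $TMF(3)\to E_C$, is a legitimate strategy and genuinely different from the paper's, but the step that is supposed to finish it does not hold as stated. Ambidexterity (equivalently, the Tate-vanishing theorem of Greenlees--Sadofsky that underlies it) says that the norm map $L_{K(n)}(X_{hG})\to X^{hG}$ is an equivalence for a \emph{$K(n)$-local} spectrum $X$ with $G$-action. Your $F$ is at the opposite extreme: it is $K(2)$-acyclic, and for such spectra the assertion $L_{K(2)}(F_{hG})\simeq L_{K(2)}(F^{hG})$ is false in general. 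For a counterexample, take $F=\mathrm{fib}(S^0\to L_{K(2)}S^0)$ with trivial $C_2$-action: $F$ is $K(2)$-acyclic, so $L_{K(2)}(F_{hC_2})=0$, but applying the Tate construction to the fiber sequence $F\to S^0\to L_{K(2)}S^0$ and using Lin's theorem $(S^0)^{tC_2}\simeq (S^0)^{\wedge}_2$ together with Greenlees--Sadofsky vanishing for the $K(2)$-local spectrum $L_{K(2)}S^0$ gives $L_{K(2)}(F^{tC_2})\simeq L_{K(2)}S^0\neq 0$, hence $L_{K(2)}(F^{hC_2})\neq 0$ as well. So the norm map cannot be invoked for an arbitrary $K(2)$-acyclic $F$, and your argument as written proves nothing about $F^{hG}$.

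The gap is repairable, but only by using a property of your particular $F$ that you never record: $E(2)$-locality. Since elliptic spectra are $E(2)$-local, $TMF(3)$ is $E(2)$-local, and $E_C=L_{K(2)}TMF(3)$ is $K(2)$-local, so $F$ is the fiber of a map of $E(2)$-local spectra; the chromatic fracture square then identifies $F$ with the fiber of $L_{1}TMF(3)\to L_{1}L_{K(2)}TMF(3)$, so $F$ is in fact $E(1)$-local. Consequently $F^{hG}$, being a homotopy limit of $E(1)$-local spectra, is $E(1)$-local, and every $E(1)$-local spectrum is $K(2)$-acyclic --- no norm map is needed at all. (Alternatively, the Hovey--Sadofsky refinement of Tate vanishing applies to $E(2)$-local spectra and would also close the gap.) The paper's own proof sidesteps the fiber entirely: it records that $TMF(3)$ is $E(2)$-local and invokes the fact that $K(2)$-localization commutes with homotopy limits in the category of $E(2)$-local spectra, giving $L_{K(2)}(TMF(3)^{hG})\simeq (L_{K(2)}TMF(3))^{hG}\simeq E_C^{hG_{48}}$ in one line. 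Either way, the $E(2)$-locality of $TMF(3)$ is the essential input, and it is exactly the hypothesis missing from your write-up.
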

\begin{proof} Since an elliptic spectrum is $E(2)$-local, $TMF(3)$ is $E(2)$-local, being a homotopy limit of $E(2)$-local spectra. Using the equivalence (\ref{descent TMF}) and the fact that $K(2)$-localisation commutes with homotopy limit in the category of $E(2)$-local spectra, we obtain that 
$$L_{K(2)}TMF \cong L_{K(2)} (TMF(3)^{hG})\cong (L_{K(2)}TMF(3))^{hG}\cong E_C^{hG_{48}}.$$
\end{proof}
%Both $TMF$ and $TMF_{0}(3)$ have connective models in the following sense. \\\\
\noindent
\textbf{A connective model of $TMF$.} In \cite{DFHH14} a connective ring spectrum $tmf$ was constructed together with a map of ring spectra $tmf\rightarrow TMF$. There is an element $\Delta^8\in \pi_{192} tmf$ such that the latter map extends to a homotopy equivalence 
\begin{equation}\label{tmf connec model}[(\Delta^8)^{-1}] tmf \simeq TMF,
\end{equation} see \cite{DFHH14}. The (co)homology of $tmf$, as a module over the Steenrod algebra $\A$ (c.f Section \ref{DMSS_Const} for a recollection on the Steenrod algebra), was known earlier by Hopkins and Mahowald but its proof remained unpublished until \cite{Mat16}:
\begin{Theorem} There is an isomorphism of modules over the Steenrod algebra:
$$\H^*(tmf)\cong \A//\A(2),$$
where $\A(2)$ is the subalgebra of $\A$ generated by $Sq^1, Sq^2, Sq^4$.
Equivalently, there is an isomorphism of comodules over the dual $\A_*$ of Steenrod algebra 
$$\H_*(tmf)\cong\A_*\square_{\A(2)_*}\F,$$
where $\A(2)_*$ is the dual of $\A(2)$.
\end{Theorem}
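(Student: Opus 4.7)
My plan is to construct a morphism of left $\A$-modules $\varphi: \A//\A(2) \to \H^*(tmf)$ and then show it is an isomorphism. For the construction, I would use the unit $S^0 \to tmf$ composed with the first Postnikov section $tmf \to H\Z_{(2)} \to H\F$ to obtain a map of ring spectra $\iota: tmf \to H\F$. Pulling back along $\iota$ yields a map $\iota^*: \A \cong \H^*(H\F) \to \H^*(tmf)$ of left $\A$-modules sending $1$ to the fundamental class $1 \in \H^0(tmf) = \F$.

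The key step will be to verify that $\iota^*$ factors through $\A//\A(2)$, i.e., that $Sq^1, Sq^2, Sq^4$ annihilate the unit. The vanishing $Sq^1 \cdot 1 = 0$ is immediate since $\iota$ factors through $H\Z_{(2)}$, so the unit class lifts to integral coefficients and the mod-$2$ Bockstein kills it. For $Sq^2 \cdot 1 = 0$ and $Sq^4 \cdot 1 = 0$, I would invoke the Ando--Hopkins--Rezk string orientation $\sigma: MString \to tmf$: combining Stong's description of $\H^*(BString; \F)$ with the Thom isomorphism shows that the unit of $\H^*(MString)$ is annihilated by $\A(2)^+$, and pulling back through $\sigma^*$ transfers this property to the unit of $\H^*(tmf)$.

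Granted the existence of $\varphi$, I would establish bijectivity by a Poincar\'e-series comparison mediated by the Adams spectral sequence. Surjectivity amounts to showing $\H^*(tmf)$ is cyclic over $\A$, which I would ensure by bounding the $\A$-indecomposables of $\H^*(tmf)$ through the requirement that the Adams $E_2$-page $\Ext_\A(\H^*(tmf),\F)$ must detect the known permanent cycles $\eta, \nu, \epsilon, \kappa, \overline{\kappa}, \Delta^8$ and match the known low-dimensional part of $\pi_*(tmf)$. Injectivity then follows by a degree-by-degree dimension count, since $\A//\A(2)$ already accommodates exactly this information. Dually, this gives $\H_*(tmf) \cong \A_* \square_{\A(2)_*} \F$ as $\A_*$-comodules.

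The main obstacle will be the vanishing $Sq^2 \cdot 1 = Sq^4 \cdot 1 = 0$, which is genuinely non-trivial and depends essentially on the $E_\infty$-ring structure of $tmf$. The route via the string orientation relies on the Ando--Hopkins--Rezk construction; Mathew's alternative approach proceeds via power operations and a direct analysis of the Postnikov tower of $tmf$. The concluding isomorphism step is more routine, but requires care to avoid any circular appeal to the full computation of $\pi_*(tmf)$.
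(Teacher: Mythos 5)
First, a point of reference: the paper does not prove this statement. It is quoted as the Hopkins--Mahowald theorem, with the proof explicitly deferred to \cite{Mat16}, so there is no in-paper argument to compare yours against; I can only assess your sketch on its own terms.

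You correctly isolate the crux -- the vanishing of $Sq^{2}$ and $Sq^{4}$ on the unit class $\iota$ -- but the route you propose for it does not close. The string orientation $\sigma\colon MString\rightarrow tmf$ gives $\sigma^{*}(\iota)=U$, the Thom class, and the $7$-connectivity of $BString$ does give $Sq^{i}U=w_{i}U=0$ for $i\leq 7$. This, however, only shows that $Sq^{4}\iota$ lies in the kernel of $\sigma^{*}\colon \H^{4}(tmf)\rightarrow \H^{4}(MString)$. To conclude $Sq^{4}\iota=0$ you would need $\sigma^{*}$ to be injective in that degree, equivalently $\H_{4}(MString)\rightarrow \H_{4}(tmf)$ to be surjective, and surjectivity of the string orientation on mod-$2$ homology is not available a priori: it is essentially a consequence of the theorem being proved (once one knows $\H_{*}(tmf)\cong \A_{*}\square_{\A(2)_{*}}\F$ one checks that the bottom copy of $\A//\A(2)$ in $\H^{*}(MString)$ maps isomorphically). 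This is precisely why the published proof goes through the $E_{\infty}$-structure of $tmf$ itself -- power operations applied to the Postnikov tower, or the identification of $tmf\wedge DA(1)$ with $tmf_{1}(3)$ -- rather than through $MString$. (By contrast, $Sq^{1}\iota=0$ via the integral lift is fine, and $Sq^{2}\iota=0$ does not need $MString$ at all: it follows from the fact that the first $k$-invariant $H\Z\rightarrow \Sigma^{2}H\F$ of $tmf$ is $Sq^{2}$, because $\eta\cdot 1\neq 0$ in $\pi_{1}(tmf)$.) The concluding isomorphism step is also too thin: bounding the $\A$-indecomposables of $\H^{*}(tmf)$ by what the Adams $E_{2}$-page ``must detect'' presupposes control of $\Ext_{\A}(\H^{*}(tmf),\F)$, i.e.\ of the very module being computed, and an extra indecomposable in positive degree is not excluded by knowledge of $\pi_{*}(tmf)$ alone, since it could support a differential or detect a positive-degree Hurewicz image; likewise the dimension count for injectivity needs the Poincar\'e series of $\H_{*}(tmf)$, which is again the content of the theorem. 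As written, the sketch needs either Mathew's descent argument or the original Hopkins--Mahowald argument to be completed.
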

\section{The Davis-Mahowald spectral sequence}\label{DMSS_Const}
We introduce a generalisation of the Davis-Mahowald spectral sequence, which is an useful tool for analysing $\Ext$-groups over various Hopf algebras. Initially, this spectral sequence was used by Davis and Mahowald in \cite{DM82} to compute $\Ext$-groups over the subalgebra $\A(2)$ of the Steenrod algebra.
\subsection{Construction of the Davis-Mahowald spectral sequence} Let $k$ be a field of characteristic $2$. We will later specialise to the case $k=\F$, the field of two elements. Let $(A,\Delta,\mu,\epsilon,\eta, \chi)$ be a commutative Hopf algebra over $k$ with $\Delta, \mu, \epsilon, \eta,\chi$ being coproduct, product, counit, unit, the conjugation, respectively. 
%%%%%%%%%%%%
\begin{Definition} Let $E$ be the graded exterior algebra on a finite dimensional $k$-vector space $V$ with all elements of $V$ having degree $1$. An $A$-comodule algebra structure on $E$ is called almost graded if the natural embedding $k\oplus V\rightarrow E $ is a map of $A$-comodules.
\end{Definition}
\noindent
This definition is motivated by the following examples which are of main interest in this paper. Recall that the Steenrod algebra $\A$ is generated by the Steenrod squares $Sq^{i}$ for $i\geq 0$, subject to the Adem relations 
$$Sq^{a}Sq^{b} = \sum\limits_{i=0}^{\lfloor \frac{a}{2} \rfloor} {b-i-1 \choose a-2i}  Sq^{a+b-i}Sq^{i}$$ 
for all $a,b >0$ and $a<2b$. Let $\A_*$ denote the dual of the Steenrod algebra. In \cite{Mil58}, Milnor determines the Hopf algebra structure of $\A_*$.
As a graded algebra, $\A_{*} = \F[\xi_{i}|i \geq 1]$ where $\xi_{i}$ is in degree $| \xi_{i} | = 2^{i} -1$. The coproduct is given by the formula 
$$\Delta(\xi_{k})=\sum_{i=0}^{k}\xi_{i}^{2^{k-i}}\otimes \xi_{k-i},$$ where $\xi_{0} =1$. Let us denote by $\zeta_i$ the conjugate of $\xi_{i}$. Then we have 
\begin{equation}\phantomsection \label{Diagonal}
\Delta(\zeta_k)=\sum\limits_{i+j=k}\zeta_{i}\otimes\zeta_{j}^{2^{i}}.
\end{equation} 
An Hopf ideal of a Hopf algebra $A$ is an ideal $I$ such that $\Delta(I)\subset I\otimes A + A\otimes I$. If $I$ is a Hopf ideal of $A$, then $A/I$ inherits a structure of Hopf algebra from $A$ such that the natural projection $A\rightarrow A/I$ is a map of Hopf algebras.
\begin{Example}\label{ExampA(n)} Let $\A(n)_{*}$
be the quotient of $\A_{*}$ by the Hopf ideal $I_{n}$ generated by $(\zeta_{1}^{2^{n+1}},\zeta_{2}^{2^{n}},...,\zeta_{n+1}^{2}, \zeta_{n+2},...)$. As an algebra, $$\A(n)_{*} =\mathbb{F}_{2}[\zeta_{1},\zeta_{2},...,\zeta_{n+1}]/(\zeta_{1}^{2^{n+1}},\zeta_{2}^{2^{n}},...,\zeta_{n+1}^{2}).$$
It is dual to the subalgebra $\A(n) = \langle Sq^{1}, Sq^{2},...,Sq^{2^{n}}\rangle$ of the Steenrod algebra $A$.
%%%%%%%%%%%%%%
The canonical projection $\pi :  \A(n)_{*}\rightarrow \A(n-1)_{*}$ induced by the inclusion $I_{n}\subset I_{n-1}$ of Hopf ideals is a map of Hopf algebras, hence induces on $\A(n)_{*}$ a structure of right $\A(n-1)_{*}$-comodule algebra: $$(id\otimes \pi)\Delta: \A(n)_{*}\rightarrow \A(n)_{*}\otimes \A(n)_{*}\rightarrow \A(n)_{*}\otimes \A(n-1)_{*}. $$ 
%%%%%%%%%%%%%%
An easy computation shows that the group of primitives $\A(n)_{*}\square_{\A(n-1)_{*}}\F$ of this coaction is given by
$$\A(n)_{*}\square_{\A(n-1)_{*}}\F = E(\zeta_{1}^{2^{n}},\zeta_{2}^{2^{n-1}},...,\zeta_{n+1})$$ which is abstractly isomorphic to $E_{n}=E(x_{1},...,x_{n+1})$ where $x_{i}$ stands for $\zeta_{i}^{2^{n+1-i}}$. Here and elsewhere in this paper, $E(X)$ denotes the exterior algebra on the $k$-vector space spanned by the set $X$. We see that the algebra $E(x_{1},x_{2},...,x_{n+1})$ inherits a left $\A(n)_{*}$-comodule algebra structure from $\A(n)_{*}$, namely,
$$\Delta(x_{k}) = \sum_{i=0}^{k}\zeta_{i}^{2^{n+1-k}}\otimes x_{k-i}, \ 1\leq k\leq n+1$$ where $x_{0}=1$ by convention. %Therefore, the pair $(\A(n)_{*},E_{n})$ fits into the above framework. 
This means that $ E_{n}$ is an almost graded $\A(n)_{*}$-comodule.
\end{Example}
%%%%%%%%%%%%%%%%%%%%
\begin{Example}\label{ExampB(n)} Let $B(n)_{*}$ be the quotient of $\A_{*}$ by the Hopf ideal $J_{n}$ generated by $(\zeta_{1}^{2^{n}},\zeta_{2}^{2^{n}},\zeta_{3}^{2^{n-1}},...,\zeta_{n+1}^{2}, \zeta_{n+2},...)$, so that  
$$B(n)_{*} = \mathbb{F}_{2}[\zeta_{1},\zeta_{2},...,\zeta_{n+1}]/(\zeta_{1}^{2^{n}},\zeta_{2}^{2^{n}},\zeta_{3}^{2^{n-1}},...,\zeta_{n+1}^{2}).$$
Similarly to \textit{Example} \ref{ExampA(n)}, the projection $B(n)_{*}\rightarrow \A(n-1)_{*}$ induced by the inclusion of Hopf ideals $J_{n}\subset I_{n-1}$ defines a structure of right $\A(n-1)_{*}$-comodule algebra on $B(n)_{*}$. A calculation shows that 
$$B(n)_{*}\square_{\A(n-1)_{*}}\F = E(\zeta_{2}^{2^{n-1}},\zeta_{3}^{2^{n-2}},...,\zeta_{n+1}),$$
which is abstractly isomorphic to $F_{n} := E(x_{2},...,x_{n+1})$. The notation is chosen to be coherent with that of \textit{Example} \ref{ExampA(n)}. We see that $F_{n}$ inherits a structure of left $B(n)_{*}$- comodule algebra from that of $B(n)_{*}$, namely,
$$\Delta(x_{k})=  \sum_{i=0, i\ne 1}^{k}\zeta_{i}^{2^{n+1-k}}\otimes x_{k-i}, \ 2\leq k\leq n+1$$ where $x_{0}=1$. Thus, $F_n$ is a almost graded $B(n)_*$-comodule.
\end{Example} 
\noindent
Let $E$ be an almost graded $A$-comodule exterior algebra on a finite dimensional $k$-vector space $V$. We will construct an $A$-comodule polynomial algebra, called the Koszul dual of $E$ as follows. Let $P$ be the graded polynomial algebra of $V$ with all elements of $V$ having degree $1$. Let us denote by $E_{i}$ and $P_{i}$ the subspace of elements of homogeneous degree $i$ for $i\geq 0$ of $E$ and $P$, respectively. Let us also denote by $E_{\leq i}$ the the direct sum $\bigoplus\limits_{j=0}^{j=i} E_{j}$. Notice that $P_{1}$ sits in a short exact sequence:
\begin{equation}\phantomsection \label{ExSq}
0\rightarrow k\rightarrow k\oplus E_{1}\xrightarrow{p} P_{1}\rightarrow 0.
\end{equation}
The embedding $k\rightarrow k\oplus E_{1}$ is clearly a map of left $A$-comodules. Thus $P_{1}$ admits a (unique) structure of left $A$-comodule such that $p :k\oplus E_{1} \rightarrow P_{1}$ is a map of $A$-comodules. 
%\begin{Lemma} $P_{1}^{\otimes n}$ has a left $A$-comodule structure with coaction given the composite \marginpar[check english]{give a more detailed proof}
%
%$$P_{1}^{\otimes n}\xrightarrow{\Delta^{\otimes n}} (A\otimes P_{1})^{\otimes n}\xrightarrow{shuffle}A^{\otimes n}\otimes P_{1}^{\otimes n}\rightarrow A\otimes P_{1}^{\otimes n}  $$ where the shuffle map keeps copies of $A$ and $P_{1}$ in the same order and the last map is the multiplication on $A$.
%\end{Lemma}
%\begin{proof} This is the usual $A$-comodule structure on the tensor product.
%The coassociativity of the coaction follows from the coassociativity of the coaction map of $P_{1}$ and the commutativity of $A$. 
%\end{proof}
%%%%%%%%%%%%%%%%%%%%%%%
\begin{Lemma} If $P_1^{\otimes n}$ is equipped with the usual structure of $A$-comodule of a tensor product, then $P_n$ admits a unique structure of $A$-comodule making the multiplication $P_1^{\otimes n}\rightarrow P_n$ a map of $A$-comodules.
\end{Lemma}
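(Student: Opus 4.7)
The plan is to exhibit $P_n$ as a quotient of $P_1^{\otimes n}$ by an $A$-subcomodule, namely the kernel of the multiplication map, and to deduce both existence and uniqueness of the desired comodule structure from this.

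First I would recall that in characteristic $2$, $P_n$ is the $n$-th symmetric power $\mathrm{Sym}^n(V)$, obtained as the quotient of $V^{\otimes n} = P_1^{\otimes n}$ by the $k$-subspace $K_n$ generated by all elements of the form $x - \tau_i(x)$, where $\tau_i : P_1^{\otimes n} \to P_1^{\otimes n}$ is the swap of the $i$-th and $(i+1)$-th tensor factors, and $x \in P_1^{\otimes n}$. This identifies $K_n$ with the kernel of the multiplication map $P_1^{\otimes n}\to P_n$, and the quotient projection with the multiplication.

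The key step is to show that $K_n$ is an $A$-subcomodule of $P_1^{\otimes n}$, and for this I would use that $A$ is commutative. Indeed, for any two $A$-comodules $M, N$, the usual swap $\tau: M\otimes N\to N\otimes M$, $m\otimes n \mapsto n\otimes m$, is a morphism of $A$-comodules precisely because $A$ is commutative: writing coactions in Sweedler notation, the two sides of the comodule axiom differ exactly by the transposition $m_{(1)}n_{(1)} = n_{(1)}m_{(1)}$ in $A$. Applied to $M = N = P_1^{\otimes (i-1)}\otimes P_1 \otimes P_1\otimes P_1^{\otimes(n-i-1)}$ (with the natural tensor-product comodule structure, which exists since $P_1$ is already an $A$-comodule via the short exact sequence (\ref{ExSq})), this shows that each $\tau_i$ is an $A$-comodule automorphism of $P_1^{\otimes n}$. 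Consequently $K_n$, being the sum of the images of the endomorphisms $1 - \tau_i$, is an $A$-subcomodule.

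Once $K_n \subset P_1^{\otimes n}$ is known to be a subcomodule, the quotient $P_n = P_1^{\otimes n}/K_n$ inherits a (necessarily unique) left $A$-comodule structure for which the projection is a comodule map; this is the standard fact that the category of $A$-comodules has cokernels computed on the underlying vector spaces. Uniqueness in the statement follows trivially from the surjectivity of $P_1^{\otimes n}\to P_n$: any two $A$-comodule structures on $P_n$ making this surjection a comodule map must agree on all elements in the image, hence everywhere.

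I do not expect a real obstacle here; the only subtlety worth emphasising is that commutativity of $A$ (rather than cocommutativity) is exactly the hypothesis that turns the swaps $\tau_i$ into comodule maps, and this is what makes the symmetric power construction intrinsic to the category of $A$-comodules.
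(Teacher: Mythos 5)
Your proof is correct and follows essentially the same route as the paper: both identify the kernel of the multiplication $P_1^{\otimes n}\to P_n$ as the span of the permutation differences and use the commutativity of $A$ to see that this kernel is a subcomodule, so that $P_n$ inherits the quotient comodule structure, with uniqueness forced by surjectivity. Your write-up merely makes explicit the step the paper leaves implicit, namely that each adjacent swap $\tau_i$ is a comodule automorphism precisely because $A$ is commutative.
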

\begin{proof} This map is surjective and its kernel is spanned by elements of the form $y_{1}\otimes ...\otimes y_{n} - y_{\sigma(1)}\otimes...\otimes y_{\sigma(n)_{*}}$ where $\sigma$ is a permutation of the set $\{1, 2, ..., n\}$. Then, since $A$ is commutative, we see that the kernel is stable under the coaction of $A$. The lemma follows.
\end{proof}
\noindent
This lemma shows that $P=\bigoplus\limits_{i\geq 0}P_{i}$ admits a left $A$-comodule algebra structure.\\\\
Now, let us define a cochain complex, called the Koszul complex, \begin{equation}\phantomsection \label{Kos-CC}(E\otimes P, d)\end{equation}  with %\marginpar{Moche!!! Find a way to express d by a diagram}
\begin{itemize}
  \item[i)] $(E\otimes P)_{-1} = k$
  
  \item[ii)] $(E\otimes P)_{m}= E\otimes P_{m}$ for $m\geq 0$ 
  
  \item[iii)] $d : k=(E\otimes P)_{-1}\rightarrow E = (E\otimes P)_{0}$ being the unit of $E$
  
  \item[iv)] $d(\prod\limits_{j=1}^{n} x_{i_{j}}\otimes z) = \sum\limits_{t=1}^{n} \prod\limits_{j\ne t} x_{i_{j}}\otimes p(x_{i_{t}})z$ where $x_{i_{j}}\in E_{1}$, $z\in P_{m}$ and $p$ is the projection of (\ref{ExSq}).
\end{itemize}

\begin{Remark}  In other words, $d: E_{\leq n}\otimes P_{m}\rightarrow E_{\leq n-1}\otimes P_{m+1}$ is the unique homomorphism making the following diagram commute
\begin{equation}\phantomsection \label{diagram_d}
\xymatrix{ E_{\leq 1}^{\otimes n}\otimes P_{\leq m} \ar[rrrr]^{(\underset{\sigma}{\sum} (Id^{\otimes (n-1)}\otimes p)\circ \sigma)\otimes Id}\ar[d]^{\mu\otimes Id}& &&&E_{\leq 1}^{\otimes (n-1)}\otimes P_{1}\otimes P_{m}\ar[d]^{\mu\otimes \mu}\\
		  E_{\leq n}\otimes P_{m}\ar[rrrr]^{d} &&& &E_{\leq n-1}\otimes P_{m+1},
}
\end{equation}
where in the upper horizontal map, the sum is taken over all cyclic permutations on $n$ factors of $E_{1}$ in the tensor product $E_{1}^{\otimes n}$ and $p$ is the restriction on $E_{1}$ of the map of ($\ref{ExSq}$).
\end{Remark}

\begin{Proposition} The complex $(E\otimes P,d)$ is an exact sequence of $A$-comodules. Furthermore, $(E\otimes P,d)$ has a structure of differential graded algebra induced from the algebra structure of $E$ and $P$.
\end{Proposition}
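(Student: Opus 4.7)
The plan is to split the statement into three independent parts: exactness as a complex of $k$-vector spaces, compatibility of $d$ with the $A$-comodule structures, and the differential graded algebra structure.

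\textbf{Exactness.} I would observe that $(E\otimes P,d)$ is, up to re-indexing by exterior degree, the augmented classical Koszul complex of $k$ over the polynomial algebra $P$, and I would prove exactness by induction on $\dim V$. For the base case $V=kx$, direct inspection shows that $d$ sends $x\otimes \bar x^{m}$ to $1\otimes \bar x^{m+1}$ and $1\otimes \bar x^{m}$ to $0$, from which exactness is immediate. For the inductive step, I would write $V=V'\oplus V''$, which gives $E(V)=E(V')\otimes E(V'')$ and $P(V)=P(V')\otimes P(V'')$, and identify the Koszul complex of $V$ with the tensor product of those of $V'$ and $V''$; since $k$ is a field, Künneth then transports exactness.

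\textbf{$A$-comodule structure.} I would check that each elementary map entering the definition of $d$ is already a map of $A$-comodules: the unit $k\to E$ is a comodule map by the almost-graded hypothesis, the projection $p\colon k\oplus E_1\to P_1$ is a comodule map by the very construction of the $A$-comodule structure on $P_1$ in (\ref{ExSq}), and the multiplications $E_1^{\otimes n}\to E_{\leq n}$ and $P_1\otimes P_m\to P_{m+1}$ are comodule maps because $E$ and $P$ are comodule algebras (the latter by the previous lemma, which uses the commutativity of $A$). Since diagram (\ref{diagram_d}) realises $d$ as a composite of such maps, $d$ is itself a comodule map.

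\textbf{DGA structure.} I would equip $E\otimes P$ with the tensor product algebra structure. As we are in characteristic $2$, all signs vanish, so the Leibniz rule reduces to $d(\xi\eta)=d(\xi)\eta+\xi d(\eta)$. This can be verified on monomial pairs $\xi=x_{i_1}\cdots x_{i_n}\otimes z$ and $\eta=x_{j_1}\cdots x_{j_m}\otimes w$: when the exterior factors share a common generator, both sides vanish because $E$ is exterior; otherwise, both sides expand to the single sum
$$\sum_t x_{i_1}\cdots\widehat{x_{i_t}}\cdots x_{i_n}x_{j_1}\cdots x_{j_m}\otimes p(x_{i_t})zw+\sum_s x_{i_1}\cdots x_{i_n}x_{j_1}\cdots \widehat{x_{j_s}}\cdots x_{j_m}\otimes p(x_{j_s})zw.$$

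\textbf{Main obstacle.} The most delicate step is exactness, because the standard contracting homotopy for the Koszul complex requires dividing by integers and therefore fails in characteristic~$2$; this forces the inductive/Künneth route above. The comodule compatibility is then a bookkeeping exercise once one invokes the commutativity of $A$, and the DGA verification is essentially automatic in characteristic~$2$.
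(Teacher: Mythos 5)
Your proof is correct and follows essentially the same route as the paper's: exactness via decomposing $(E\otimes P,d)$ into a tensor product of rank-one Koszul complexes $(E(x_i)\otimes k[y_i],d_i)$ and applying K\"unneth, comodule compatibility via the defining diagram with its surjective vertical comodule maps, and a direct monomial verification of the Leibniz rule. The only detail worth making explicit is that the upper horizontal map of diagram (\ref{diagram_d}) is built not just from $p$ and the multiplications but also from the cyclic permutations $\sigma$, and it is precisely for these permutations that the commutativity of $A$ is invoked.
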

\begin{proof} Let $x_{1}, ..., x_{n}$ be a basis of $E_{1}$. As a cochain complex over $k$, $(E\otimes P,d)$ is isomorphic to the tensor product of $(E(x_{i})\otimes k[y_{i}],d_{i})$ where $y_{i} = p(x_{i})$ for $1\leq i\leq n$. Here, each $(E(x_{i})\otimes k[y_{i}],d_{i})$ is defined in the same manner as $(E\otimes P,d)$ is. It is not hard to see that the cochain complex $(E(x_{i})\otimes k[y_{i}],d_{i})$ is exact. Hence, $(E\otimes P,d)$ is exact by the K\"unneth theorem. This proves the first part.\\\\
Let us check that $d$ is a map of $A$-comodules. In the diagram (\ref{diagram_d}), the two vertical maps are ones of $A$-comodules because $E$ and $P$ are $A$-comodule algebras. In addition, they are surjective. It remains to check that the upper horizontal map is a map of $A$-comodules. Or equivalently, each map $E_{\leq 1}^{\otimes n}\xrightarrow{(Id^{\otimes(n-1)}\otimes p)\circ \sigma} E_{\leq 1}^{\otimes (n-1)}\otimes P_{1}$ is a map of $A$-comodules where $\sigma$ is a cyclic permutation on $n$ elements. This is true because $\sigma$ is a map of $A$-comodules as $A$ is commutative and $p$ is a map of $A$-comodules by definition. The second part follows. \\\\
\noindent
Finally, it is straightforward from the formula of $d$ in (\ref{Kos-CC}.iv) that $d$ satisfies the Leibniz rule.
\end{proof}
\noindent
This lemma allows us to construct a spectral sequence of algebras converging to $\mathrm{Ext}_{A}^{s}(k)$ see (\cite{Rav86}, Theorem A1.3.2). 
\begin{Proposition} \phantomsection \label{Prop_SS}(1) There is a spectral sequence of algebras converging to $\mathrm{Ext}_{A}^{s}(k):$
\begin{equation}\phantomsection \label{Rav-SS}
\xymatrix{
                   \E_1^{s,t} = \mathrm{Ext}_{A}^{s}(k,E\otimes P_{t})\ar@{=>}[r]&\mathrm{Ext}_{A}^{s+t}(k,k)}.%%
\end{equation}          
%%%%%
(2) If $M$ is a $A$-comodule, then there is a spectral sequence converging to $\mathrm{Ext}_{A}^{s}(M)$
$$\xymatrix{\E_1^{s,t} = \mathrm{Ext}_{A}^{s}(k,E\otimes P_{t}\otimes M)\ar@{=>}[r]&\mathrm{Ext}_{A}^{s+t}(k,M).
}$$
%%%%%
Furthermore, this spectral sequence is a spectral sequence of modules over that of (\ref{Rav-SS}). 
\end{Proposition}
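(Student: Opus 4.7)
The plan is to extract the spectral sequence directly from the exactness of the Koszul complex, following the pattern of Ravenel A1.3.2. The previous proposition shows that
\begin{equation*}
0\to k\to E\otimes P_0\to E\otimes P_1\to E\otimes P_2\to\cdots
\end{equation*}
is an exact sequence of $A$-comodules, i.e., $E\otimes P_\bullet$ is a resolution of $k$ (not injective in general, but that is not needed). I would first pick an injective resolution $k\to I^\bullet$ in the category of $A$-comodules and form the first-quadrant double complex $C^{s,t}=\Hom_A(k, (E\otimes P_t)\otimes I^s)$, together with its augmentation by $\Hom_A(k,I^\bullet)$ coming from $k\to E\otimes P_0$. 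Filtering by the resolution degree $t$ gives a spectral sequence whose $E_1$-page involves the cohomology in the $s$-direction, namely $\Ext_A^s(k, E\otimes P_t)$, because each row $(E\otimes P_t)\otimes I^\bullet$ is an injective resolution of $E\otimes P_t$ (here we use that $-\otimes_k I^s$ preserves injectivity since $k$ is a field, so that the total complex computes $\Ext_A^{s+t}(k,k)$ via the exactness of the augmented resolution).

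Next, I would equip the spectral sequence with the algebra structure. The Koszul complex $(E\otimes P, d)$ is a differential graded $A$-comodule algebra, and the standard cobar/injective resolution of $k$ is a DG-coalgebra compatible with the comodule structure; combining the two multiplications on the double complex yields a pairing of spectral sequences that is compatible with filtrations. On the abutment it recovers the Yoneda/cup product on $\Ext_A^*(k,k)$, so the spectral sequence is multiplicative. This gives part (1).

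For part (2), I would tensor the Koszul resolution over $k$ with $M$. Because $k$ is a field, $-\otimes_k M$ is exact, so
\begin{equation*}
0\to M\to (E\otimes P_0)\otimes M\to (E\otimes P_1)\otimes M\to\cdots
\end{equation*}
is again exact in $A$-comodules (with the diagonal coaction on each tensor factor). Running the same double-complex argument with $k$ replaced by $M$ at the augmentation end produces the spectral sequence converging to $\Ext_A^{s+t}(k,M)$ with $E_1^{s,t}=\Ext_A^s(k,E\otimes P_t\otimes M)$. The pairing $(E\otimes P)\otimes((E\otimes P)\otimes M)\to (E\otimes P)\otimes M$ induced by the multiplication of the Koszul DGA on the first factor and the identity on $M$ endows the resulting spectral sequence with the structure of a module over the one from (1), compatibly with the module structure of $\Ext_A^*(k,M)$ over $\Ext_A^*(k,k)$ on the abutment.

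The main obstacle is bookkeeping rather than conceptual: verifying that the multiplicative (resp.\ module) structure passes from the double complex to the spectral sequence without sign or filtration shifts, and checking that the identification of the $E_1$-page as $\Ext_A^s(k,E\otimes P_t)$ is natural enough for the pairing to be compatible with the DGA structure of $E\otimes P$. Convergence itself is automatic since the double complex is concentrated in the first quadrant.
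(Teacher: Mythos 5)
Your argument is correct and is essentially the paper's: both derive the spectral sequence from the double complex obtained by crossing the exact Koszul complex $E\otimes P_{\bullet}$ (tensored with $M$ for part (2)) with a resolution computing $\Ext_{A}$, with one filtration collapsing to identify the abutment and the other yielding the stated $\E_1$-term, and with the multiplicative and module structures induced from the DGA structure of $E\otimes P$. The only difference is that you use an abstract injective resolution $I^{\bullet}$, whereas the paper cites Ravenel's Theorem A1.3.2 for existence and then fixes the cobar resolution, i.e.\ the double complex $A^{\otimes s}\otimes E\otimes P_{t}$, a specific choice it needs later in order to compute products at the cochain level.
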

\noindent
\textbf{Terminology.} We will call these spectral sequences the Davis-Mahowald spectral sequences or DMSS for short, associated to the almost graded $A$-module algebra $E$. The first grading $s$ of the $\E_n$-term is referred to as the cohomological grading or degree and the second grading $t$ is referred to as the Davis-Mahowald grading or degree (or DM grading or degree for short). 
\\\\
In view of carrying out explicit computations of products in $\mathrm{Ext}_{A}^{*}(k)$ and the action of $\mathrm{Ext}_{A}^{*}(k)$ on $\mathrm{Ext}_{A}^{*}(M)$, we recall a double complex from which the above spectral sequence is derived.
\\\\
For each $t\geq 0$, let $(C^{s}(A,E\otimes P_{t}), d_{v} )_{s\geq 0}$ be the cobar complex whose cohomology is $\Ext_{A}^{*}(E\otimes P_{t})$, i.e., 
$$C^{s}(A,E\otimes P_{t})=A^{\otimes s}\otimes E\otimes P_{t}$$ 
and 
$d_{v} : A^{\otimes s}\otimes E\otimes P_{t}\rightarrow A^{\otimes s+1}\otimes E\otimes P_{t}$ is given by 
\begin{multline*}
d_{v}(a_{1}\otimes ...\otimes a_{s}\otimes m) =1\otimes a_{1}\otimes ...\otimes a_{s}\otimes m+\sum\limits_{i=1}^{s}a_{1}\otimes...\otimes a_{i-1}\otimes \Delta(a_{i})\otimes...\otimes a_{s}\otimes m \\
+ a_{1}\otimes ...\otimes a_{s}\otimes \Delta(m),
\end{multline*} 
where $a_i\in A$ for $1\leq i\leq s$ and $m\in E\otimes P_t$.
We will abbreviate $a_{1}\otimes ...\otimes a_{s}\otimes m$ by $[a_{1}|...|a_{s}|m]$. By an abuse of notation, we will denote by $d_{v}$ the differentials in the cobar complexes associated to $E\otimes P_{t}$ for different $t$. The fact that $d: E\otimes P_{t}\rightarrow E\otimes P_{t+1}$ is a map of $A$-comodules implies that the maps $ d_{h}= Id^{\otimes s}\otimes d: C^{s}(A,E\otimes P_{t}) \rightarrow C^{s}(A,E\otimes P_{t+1})$ assemble to give a map of cochain complexes 
$ d_{h}: (C^{s}(A,E\otimes P_{t}), d_{v})_{s\geq 0} \rightarrow (C^{s}(A,E\otimes P_{t+1}), d_{v} )_{s\geq0}$.
Finally, it is easily seen that the maps of cochain complexes assemble to form a double complex $(C^{s}(A,E\otimes P_{t}),d_{v},d_{h})_{s,t\geq 0}$

$$
\xymatrix{ E\ar[r]^{d_{h}}\ar[d]^{d_{v}}&E\otimes P_{1}\ar[r]^{d_{h}}\ar[d]^{d_{v}}&E\otimes P_{2}\ar[r]^{d_{h}}\ar[d]^{d_{v}}&E\otimes P_{3}\ar[r]^{d_{h}}\ar[d]^{d_{v}}&...\\
		A\otimes E\ar[r]^{d_{h}}\ar[d]^{d_{v}}& A\otimes E\otimes P_{1}\ar[r]^{d_{h}}\ar[d]^{d_{v}}&A\otimes E\otimes P_{2}\ar[r]^{d_{h}}\ar[d]^{d_{v}}&A\otimes E\otimes P_{3}\ar[r]^{d_{h}}\ar[d]^{d_{v}}&...\\
		A^{\otimes 2}\otimes E\ar[r]^{d_{h}}\ar[d]^{d_{v}}& A^{\otimes 2} \otimes E\otimes P_{1}\ar[r]^{d_{h}}\ar[d]^{d_{v}}&A^{\otimes 2}\otimes E\otimes P_{2}\ar[r]^{d_{h}}\ar[d]^{d_{v}}&A^{\otimes 2}\otimes E\otimes P_{3}\ar[r]^{d_{h}}\ar[d]^{d_{v}}&...\\
		...&...&...&...&...\\
}$$
We can see that the spectral sequence associated to the horizontal filtration has $\mathrm{E}_{1}$-term isomorphic to $(A^{s}\otimes k,d_{v})_{s\geq 0}$ which identifies with the cobar complex of the trivial $A$-comodule $k$. Thus this spectral sequence degenerates at the $\mathrm{E}_{2}$-term and the $\mathrm{E}_{\infty}=\mathrm{E}_{2}$-term identifies with $\Ext_{A}^{s}(k)$. Since there are no possible extension problems, the cohomology of the total complex is isomorphic to $\Ext_{A}^{s}(k)$. Now, the spectral sequence associated to the vertical filtration has $\mathrm{E}_{1}$-term isomorphic to $\Ext^{s}_{A}( E\otimes P_{t})$. This spectral sequence is exactly the one appearing in Proposition \ref{Prop_SS}.
\begin{Remark} \phantomsection \label{SSd_1}The differential $d_{1}: \Ext^{0}_{A}(E\otimes P_{t})\rightarrow \Ext^{0}_{A}(E\otimes P_{t+1})$ is the restriction of the derivation $d$ in (\ref{Kos-CC}) on the $A$-primitives of $E\otimes P_{t}$.
\end{Remark}
\subsection{Naturality of the Davis-Mahowald spectral sequence} We notice that the above construction is natural in pairs $(A,E)$ where $A$ is a commutative Hopf algebra and $E$ is an almost graded left $A$-comodule exterior algebra. This allows us to compare Davis-Mahowald spectral sequences associated to different pairs $(A,E)$. We will make use of this property to reduce computations in a crucial way. Let us first define morphisms between such pairs.

\begin{Definition} Let $(A, E)$ and $(B, F)$ be such that $A$ and $B$ are commutative Hopf algebras, $E$ and $F$ are almost graded exterior comodule algebras over $A$ and $B$, respectively. A morphism between $(A,E)$ and $(B,F)$ consists of $f_{1}: A\rightarrow B$ and $f_{2}: E\rightarrow F$ where $f_{1}$ is a map of Hopf algebras and $f_{2}$ is a map of $B$-comodule graded algebras with the $B$-comodule structure on $E$ being induced from $f_{1}$.
\end{Definition}
\begin{Remark} The map $f_2 : E\rightarrow F$ is determined by a map of $B$-comodules $k\oplus E_1\rightarrow k\oplus F_1$.
\end{Remark}
\begin{Proposition} A morphism between $(A,E)$ and $(B,F)$ induces a map between the associated Davis-Mahowald spectral sequences.
\end{Proposition}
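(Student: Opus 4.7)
The plan is to lift the given morphism $(f_1,f_2)\colon (A,E)\to(B,F)$ to a morphism of the associated double complexes $(C^{s}(A,E\otimes P_{t}),d_v,d_h)$ and $(C^{s}(B,F\otimes Q_{t}),d_v,d_h)$, where $Q$ denotes the Koszul dual polynomial algebra of $F$; naturality of the spectral sequence of a double complex then finishes the argument.

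First I would build the induced map of Koszul duals. Since $f_2$ is a map of graded algebras, it restricts to $f_2|_{E_1}\colon E_1\to F_1$, so the map $\mathrm{id}\oplus f_2|_{E_1}\colon k\oplus E_1\to k\oplus F_1$ fits into a ladder of the short exact sequences defining $P_1$ and $Q_1$:
\begin{equation*}
\xymatrix{
0\ar[r]&k\ar[r]\ar@{=}[d]&k\oplus E_1\ar[r]^{p_E}\ar[d]^{\mathrm{id}\oplus f_2|_{E_1}}&P_1\ar[r]\ar@{-->}[d]^{g_1}&0\\
0\ar[r]&k\ar[r]&k\oplus F_1\ar[r]^{p_F}&Q_1\ar[r]&0.
}
\end{equation*}
The dashed arrow $g_1$ exists uniquely, and is a map of $B$-comodules because $\mathrm{id}\oplus f_2|_{E_1}$ is (using that $E_1$ carries the $B$-coaction transported from $A$ via $f_1$, and $f_2|_{E_1}$ is a $B$-comodule map by hypothesis). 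Next I would extend $g_1$ to a map of $B$-comodule algebras $g\colon P\to Q$: the tensor powers $g_1^{\otimes n}\colon P_1^{\otimes n}\to Q_1^{\otimes n}$ are maps of $B$-comodules, are symmetric in the sense required by the preceding lemma, and hence descend to $g_n\colon P_n\to Q_n$, which assemble to the desired algebra map.

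Next I would verify that $f_2\otimes g\colon E\otimes P\to F\otimes Q$ is a map of Koszul complexes, i.e.\ commutes with the differentials defined in (\ref{Kos-CC}). This is a diagram chase using the explicit formula in (\ref{diagram_d}): the left vertical map $\mu\otimes\mathrm{id}$ and right vertical map $\mu\otimes\mu$ are natural in $(E,P)$, and on the top row each cyclic permutation together with $p$ is compatible with $f_2^{\otimes n}$ and $g_1$ by construction of $g_1$. Thus $f_2\otimes g$ is a morphism of $B$-comodule differential graded algebras, where $E\otimes P$ is regarded as a $B$-comodule via $f_1$.

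Finally, the Hopf algebra map $f_1\colon A\to B$ gives $f_1^{\otimes s}\otimes(f_2\otimes g_t)\colon A^{\otimes s}\otimes E\otimes P_t\to B^{\otimes s}\otimes F\otimes Q_t$; compatibility with the vertical cobar differential $d_v$ follows from $f_1$ being a coalgebra map and $f_2\otimes g$ being a comodule map, while compatibility with the horizontal differential $d_h=\mathrm{id}^{\otimes s}\otimes d$ is precisely the content of the previous paragraph. We thus obtain a morphism of double complexes, hence a morphism of the associated vertical-filtration spectral sequences, which is exactly the DMSS map claimed. The only point requiring real care is the descent of $g_1^{\otimes n}$ through the symmetrization in Step~2 and checking that the Koszul differential is genuinely natural; both reduce to the fact that $f_2|_{E_1}$ is a $B$-comodule map and that $f_1$ is a map of \emph{commutative} Hopf algebras, so no further subtlety arises.
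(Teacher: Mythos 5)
Your proposal is correct and follows essentially the same route as the paper's proof: induce the map $P\to Q$ of Koszul duals from the square relating $k\oplus E_1\to P_1$ and $k\oplus F_1\to Q_1$, check that $E\otimes P\to F\otimes Q$ is a map of Koszul complexes, and conclude via the induced map of double complexes $A^{\otimes s}\otimes E\otimes P_t\to B^{\otimes s}\otimes F\otimes Q_t$. You simply spell out the intermediate verifications (descent of $g_1^{\otimes n}$ through symmetrization, compatibility with $d_v$ and $d_h$) that the paper leaves to the reader.
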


\begin{proof} Let $P$ and $Q$ be the Koszul dual of $E$ and $F$, respectively. The map of $B$-comodule algebras $f_{2} : E\rightarrow F$ induces a map of graded $B$-comodule algebras $P\rightarrow Q$ such that the following diagram is commutative
$$\xymatrix{ k\oplus E_{1}\ar[r]^{p}\ar[d]^{f_{2}} &P_{1}\ar[d]\\
		    k\oplus F_{1}\ar[r]^{p}& Q_{1}.\\	
}$$
Then one can check that the induced map $E\otimes P\rightarrow F\otimes Q$ is a map of Koszul complexes. Therefore one obtains a map of double complexes $(A^{\otimes s}\otimes E\otimes P_{t}) \rightarrow (B^{\otimes s}\otimes F\otimes Q_{t})$, hence a map of spectral sequences.
 \end{proof}
 \noindent
\begin{Remark} Although we have only treated the ungraded situation so far, the construction carries over verbatim to the graded one. More precisely, suppose that $A$ and $E$ are graded algebras. We refer to this grading as the internal degree. We require that the structural maps in the $A$-comodule structure of $E$ to preserve the internal degree. Then we see that the Koszul dual $P$ of $E$ is internally graded and the Koszul complex is a graded cochain complex with respect to the internal degree. It follows that the associated DMSS is tri-graded with the third grading associated to the internal grading and the differentials preserve the internal degree.
\end{Remark}
\noindent
We continue with \textit{Example} \ref{ExampA(n)} and \ref{ExampB(n)}.
%%%%%%%%%%%%%%%%%%%%%%%%%%%%%
\begin{Example}\label{ExampA(n)-cont} Recall that $E_{n}$ is an almost graded $\A(n)_{*}$-comodule. Let $R_{n}$ denote the Koszul dual of $E_{n}$. In particular, it follows from Proposition \ref{Prop_SS} that for any graded left $\A(n)_{*}$-comodule $M$, the DMSS converging to $\Ext_{\A(n)_{*}}^{*,*}(\F,M)$ has  $\mathrm{E}_{1}$-term isomorphic to $$\E_1^{s,t,\sigma}\cong \Ext_{\A(n)_{*}}^{s,t}(E_{n}\otimes R_{n}^{\sigma}\otimes M),$$ where $s$ is the cohomological grading, $t$ is the internal grading and $\sigma$ is the Davis-Mahowald grading.
The change-of-rings isomorphism tells us that 
$$\mathrm{Ext}_{\A(n)_{*}}^{s,t}( E_{n}\otimes R_{n}^{\sigma}\otimes M)\cong \mathrm{Ext}_{\A(n-1)_{*}}^{s,t}( R_{n}^{\sigma}\otimes M),$$
see \cite{Rav84}, Appendix A1.3.13 for the change-of-rings isomorphism. This means that the problem of computing $\Ext_{\A(n)_{*}}^{s,t}(-)$ can be reduced to two steps: first computing $\Ext_{\A(n-1)_{*}}^{s,t}(-)$, then studying the corresponding Davis-Mahowald spectral sequence. We will demonstrate the efficiency of this method by carrying out explicit computations in the case $n=2$ and some relevant $M$.
\end{Example}
%%%%%%%%%%%%%%%%%%%%
\begin{Example}\label{ExampB(n)-cont} Recall that $F_n$ is an almost graded $B(n)_*$-comodule. Let $S_{n}$ denote the Koszul dual of $F_{n}$. The DMSS is the spectral sequence of algebras 
$$\xymatrix{ \E_1^{s, t, \sigma} = \mathrm{Ext}^{s,t}_{B(n)_{*}}(F_{n}\otimes S_{n}^{\sigma})\ar@{=>}[r]&\mathrm{Ext}^{s+\sigma,t}_{B(n)_{*}}(\mathbb{F}_{2})}.$$ 
By the change-of-rings theorem, the $\mathrm{E}_{1}$-term is isomorphic to $\mathrm{Ext}_{\A(n-1)_{*}}^{s,t}(S_{n}^{\sigma}),$ because $F_{n} = B(n)_{*}\square_{\A(n-1)_{*}}\F$. Moreover, for any graded left $B(n)_{*}$-comodule $M$, the DMSS for $\mathrm{Ext}^{s+\sigma,t}_{B(n)_{*}}(\mathbb{F}_{2})$ is a spectral sequence of modules over the above spectral sequence
 $$\xymatrix{ \mathrm{Ext}^{s,t}_{B(n)_{*}}(F_{n}\otimes S_{n}^{\sigma}\otimes M)\cong \mathrm{Ext}_{\A(n-1)_{*}}^{s,t}(S_{n}^{\sigma}\otimes M)\ar@{=>}[r]&\mathrm{Ext}^{s+\sigma,t}_{B(n)_{*}}(\mathbb{F}_{2})}.$$
 \end{Example} 
 \noindent
%%%%%%%%%%%%%%%%%%%%
\textbf{Comparison of DMSS.} There is a morphism between $(\A(n)_{*}, E_{n})$ and $(B(n)_{*},F_{n})$ given by the two projections 
$$
\A(n)_{*}\rightarrow B(n)_{*};  \zeta_{i}\mapsto\zeta_{i} $$
$$E_{n}\rightarrow F_{n}; x_{1}\mapsto 0, x_{i}\mapsto x_{i}\ \mbox{for} \ i\geq 2. $$
This induces a map of spectral sequences 
$$\xymatrix{ \mathrm{Ext}_{\A(n)_{*}}^{s,t}(E_{n}\otimes R_{n}^{\sigma}\otimes M)\ar@{=>}[d]\ar[r]&\mathrm{Ext}_{B(n)_{*}}^{s,t}(F_{n}\otimes S_{n}^{\sigma}\otimes M)\ar@{=>}[d]\\
\mathrm{Ext}^{s+\sigma,t}_{\A(n)_{*}}(M)\ar[r]&\mathrm{Ext}^{s+\sigma,t}_{B(n)_{*}}(M).
}$$  
As was mentioned earlier, this comparison allows us to transfer some computations in the former SS to the latter which are simpler because all modules involved in the latter are smaller. This observation will be made concrete in Section \ref{G24 2}. 
%%%%%%%%%%%%%%%%%%%%%%%%%%%%%%%%%%%%%%%%%%%% tmfA1
\section{The Davis-Mahowald spectral sequence for the $\A(2)_{*}$-comodule $A_1$}\label{G24 2}
\noindent
The goal of this section is to describe the structure of $\Ext_{\A(2)_{*}}^{*,*}(\H_*(A_1))$ as a module over $\Ext_{\A(2)_{*}}^{*,*}(\F)$ for different $\A(2)_{*}$-comodules $A_1$ that will be recalled in Subsection \ref{DMSS_A_1}. To achieve a part of this goal, we will study the DMSS 
$$\xymatrix{ \Ext_{\A(2)_{*}}^{s,t}(E_{2}\otimes S_{2}^{\sigma}\otimes A_1)\ar@{=>}[r]&\Ext_{\A(2)_{*}}^{s+\sigma,t}(\H_*(A_1))
}$$ as a spectral sequence of modules over the spectral sequence of algebras
$$\xymatrix{ \Ext_{\A(2)_{*}}^{s,t}(E_{2}\otimes S_{2}^{\sigma})\ar@{=>}[r]&\Ext_{\A(2)_{*}}^{s+\sigma,t}(\F).
}$$ 
We obtain then the structure of  $\Ext_{\A(2)_{*}}^{*,*}(\H_*(A_1))$ as a graded abelian group and a partial action of $\Ext_{\A(2)_{*}}^{*,*}(\F)$ on it. However, there is an important action of an element of $\Ext_{\A(2)_{*}}^{*,*}(\F)$ on some elements of $\Ext_{\A(2)_{*}}^{*,*}(\H_*(A_1))$ that cannot be seen at the $\mathrm{E}_{1}$-term of the DMSS. One way of understanding these exotic products is to carry out computations at the level of double complexes: find representatives of the cohomological classes in question in the double complexes from which the DMSS is derived and carry out products at that level. It turns out that a brute-force attack is messy. Instead, computations are simplified drastically by comparing the DMSS associated to $(\A(2)_{*}, E_{2})$ to that of $(B(2)_{*}, F_{2})$:
$$\xymatrix{ \mathrm{Ext}_{\A(2)_{*}}^{s,t}(E_{n}\otimes R_{2}^{\sigma}\otimes A_1)\ar@{=>}[d]\ar[r]&\mathrm{Ext}_{B(2)_{*}}^{s,t}(F_{n}\otimes S_{2}^{\sigma}\otimes (1))\ar@{=>}[d]\\
\mathrm{Ext}^{s+\sigma,t}_{\A(2)_{*}}(\H_*(A_1))\ar[r]&\mathrm{Ext}^{s+\sigma,t}_{B(2)_{*}}(\H_*(A_1)).
}$$  
\subsection{Recollections on the Davis-Mahowald spectral sequence for the $\A(2)_{*}$-comodule $\F$} 
\noindent
To fix notation, we recollect some information relevant for our purposes. This material was originally treated in \cite{DM82} and reviewed in unpublished course notes of Rognes \cite{Rog12}. As we will specialise to the case $n=2$, we will simplify the notation by writing $R, R_{\sigma}, S, S_{\sigma}$ for $R_{2}, R_{2}^{\sigma}, S_{2}, S_{2}^{\sigma}$ from \textit{Example} \ref{ExampA(n)} and \ref{ExampB(n)}, respectively.\\\\
Recall that $R$ is a homogenous graded polynomial algebra on three generators, say $y_{1}, y_{2}, y_{3}$ and $R_{\sigma}$ is its subspace of homogeneous elements of degree $\sigma$ for $\sigma \geq 0$. Let us first explicitly give the coaction of $\A(2)_{*}$ on $R = \F[y_{1},y_{2},y_{3}]$ with $|y_{1}| = 4$, $|y_{2}|=6$, $|y_{3}|=7$. From \textit{Example} \ref{ExampA(n)-cont}, we have
%%%%%%%%%%%%
\begin{alignat*}{1} 
\Delta(y_{1})&=1\otimes y_{1}\\
\Delta(y_{2})&=\xi_{1}^{2}\otimes y_{1}+1\otimes y_{2}\\
\Delta(y_{3})&=\zeta_2\otimes y_{1}+\xi_{1}\otimes y_{2}+1\otimes y_{3}.
\end{alignat*}
By the change-of-rings theorem, the $\mathrm{E}_{1}$-term of the DMSS for $\Ext_{\A(2)_{*}}^{*,*}(\F)$ is isomorphic to $\Ext_{\A(1)_*}^{s,t}(\bigoplus\limits_{\sigma\geq 0}R_{\sigma})$. The coaction of $\A(1)_*$ on $R_{1}$ is induced from that of $\A(2)_{*}$ and hence is given by
%%%%%%%%%%%%%%%%%
\begin{alignat*}{1}
\Delta(y_{1})&=1\otimes y_{1}\\
\Delta(y_{2})&=\xi_{1}^{2}\otimes y_{1}+1\otimes y_{2}\\
\Delta(y_{3})&=\zeta_2\otimes y_{1}+\xi_{1}\otimes y_{2}+1\otimes y_{3}.
\end{alignat*}
In particular, $y_{1}, y_{2}^{2}, y_{3}^{4}$ are $\A(1)_*$-primitives of $R$. % and so represents a class $v_{2}^{4}\in\Ext^{0,28}_{\A(1)_*}(\F,R_{4})$. 
Let $R_{\sigma}^{'}$ denote the $\A(1)_*$-subcomodule $\{y_{1}^{i}y_{2}^{j}y_{3}^{k}\in R_{\sigma}| k\leq3\}$ of $R_{\sigma}$. 

\begin{Lemma}\label {Lem_Decom} As an $\A(1)_*$-comodule, $R_{\sigma}$ can be decomposed as 

$$R_{\sigma}\cong\bigoplus\limits_{i\equiv \sigma (mod 4), i\leq\sigma}R_{i}^{'}\otimes \F\{y_{3}^{\sigma-i}\}.$$ 
Therefore, $$\bigoplus\limits_{\sigma\geq 0} R_{\sigma} = (\bigoplus\limits_{\sigma\geq 0} R_{\sigma}^{'})\otimes \F[y_{3}^{4}].$$
\end{Lemma}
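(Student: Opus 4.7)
The plan is to exploit the fact that $y_3^4$ is $\A(1)_*$-primitive, so that multiplication by $y_3^{\sigma-i}$ produces sub-comodule inclusions whenever $\sigma - i \in 4\Z$, and then to peel off powers of $y_3^4$ from every monomial. This reduces the whole statement to a single primitivity computation together with a bookkeeping argument on monomials.

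First I would check that $y_3^4$ is $\A(1)_*$-primitive. Starting from $\Delta(y_3) = \zeta_2 \otimes y_1 + \xi_1 \otimes y_2 + 1 \otimes y_3$ and using Frobenius in characteristic $2$ on the commutative algebra $\A(1)_* \otimes R$, I get
$$\Delta(y_3^4) = \zeta_2^4 \otimes y_1^4 + \xi_1^4 \otimes y_2^4 + 1 \otimes y_3^4.$$
Since $\zeta_1 = \xi_1$ and $\A(1)_* = \F[\zeta_1,\zeta_2]/(\zeta_1^4,\zeta_2^2)$, both $\zeta_2^4$ and $\xi_1^4$ vanish, so $\Delta(y_3^4) = 1 \otimes y_3^4$.

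Next I would verify that $R'_\sigma$ is actually an $\A(1)_*$-subcomodule of $R_\sigma$. The coactions on $y_1$ and $y_2$ introduce no $y_3$ on the right, so the only thing to control is the coaction on $y_3^k$ for $0 \leq k \leq 3$. Multinomial expansion gives
$$\Delta(y_3^k) = \sum_{a+b+c=k} \binom{k}{a,b,c}\, \zeta_2^{a}\xi_1^{b} \otimes y_1^a y_2^b y_3^c,$$
and the $y_3$-exponent on the right is $c \leq k \leq 3$. Multiplying by the coactions of $y_1^i y_2^j$ only adds extra $y_1$'s and $y_2$'s to the right-hand tensor factor, so the image stays in $\A(1)_* \otimes R'_\sigma$.

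The decomposition itself now follows by considering the multiplication maps $R'_i \otimes \F\{y_3^{\sigma-i}\} \to R_\sigma$, $r \otimes y_3^{\sigma-i} \mapsto r \cdot y_3^{\sigma-i}$, for $i \leq \sigma$ with $i \equiv \sigma \pmod{4}$. Each $y_3^{\sigma-i} = (y_3^4)^{(\sigma-i)/4}$ is $\A(1)_*$-primitive by the first step, so these are all comodule maps; bijectivity on the direct sum follows from the unique factorisation $y_1^a y_2^b y_3^c = (y_1^a y_2^b y_3^{c_0}) \cdot (y_3^4)^m$ with $c = c_0 + 4m$ and $0 \leq c_0 \leq 3$. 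Summing over $\sigma$ and collecting the resulting $y_3^4$-factors into $\F[y_3^4]$ yields the second assertion.

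The main obstacle is conceptually negligible: once one observes that the Frobenius in characteristic $2$, together with the nilpotence relations in $\A(1)_*$, forces $y_3^4$ to be primitive, the rest is bookkeeping. The only place any structural information beyond the coproduct formulas for $y_1, y_2, y_3$ is used is in that single primitivity check.
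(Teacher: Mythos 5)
Your proof is correct and takes essentially the same approach as the paper: the paper likewise observes that the multiplication maps $R_i'\otimes\F\{y_3^{\sigma-i}\}\to R_\sigma$ give a vector-space isomorphism by unique factorisation of monomials and are comodule maps because $y_3^4$ is $\A(1)_*$-primitive. You merely supply two details the paper leaves implicit, namely the Frobenius computation showing $\Delta(y_3^4)=1\otimes y_3^4$ and the check that $R_\sigma'$ is indeed a subcomodule; both are accurate.
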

\begin{proof} If one views $\F\{y_{3}^{\sigma-i}\}$ as a subvector space of $R_{\sigma-i}$, then the product of $R$ produces an isomorphism of vector spaces
$$\bigoplus\limits_{i\equiv \sigma (mod 4), i\leq\sigma}R_{i}^{'}\otimes \F\{y_{3}^{\sigma-i}\}\xrightarrow{\cong} R_{\sigma}.$$
Since $y_{3}^{4}$ is a $\A(1)_*$-primitive of $R_{\sigma}$, this map is also a map of $\A(1)_*$-comodules. The lemma follows.
\end{proof}
\noindent
Let us denote $\Ext_{\A(1)_*}^{*,*}(R_{\sigma}^{'})$ by $G_{\sigma}$, so that $$\Ext_{\A(1)_*}^{*,*}(R) \cong (\bigoplus\limits_{\sigma\geq 0} G_{\sigma} )\otimes \F[v_{2}^{4}],$$ where $v_2^4\in \Ext^{0,24}_{\A(1)_*}(R_4)$ represented by $y_3^4\in R_4$.
Determining the full multiplicative structure of $\Ext_{\A(1)_*}^{*,*}(R)$ is quite involved. Instead, we will work modulo $(v_{2}^{4})$. This will suffice for us to obtain a set of algebra generators of $\Ext_{\A(1)_*}^{*,*}(R)$. More precisely, since the product $R_{\sigma}^{'}\otimes R_{\tau}^{'}\rightarrow R_{\sigma+\tau}$ factorises through $R_{\sigma+\tau}^{'} \oplus (R_{\sigma+\tau -4}\otimes\F\{y_{3}^{4}\})$, we obtain a map $$G_{\sigma}\otimes G_{\tau}\rightarrow G_{\sigma+\tau}\oplus (G_{\sigma+\tau-4}\otimes\F\{v_{2}^{4}\}).$$ We will analyse the map $G_{\sigma}\otimes G_{\tau}\rightarrow G_{\sigma+\tau}$ which is the composite 
$$G_{\sigma}\otimes G_{\tau}\rightarrow G_{\sigma+\tau}\oplus (G_{\sigma+\tau-4}\otimes\F\{v_{2}^{4}\})\rightarrow G_{\sigma+\tau} $$ where the second map is the projection on the first factor.\\\\
In what follows, we compute $G_{i}$ for $i\geq 0$ as modules over $G_{0}$. For this, we decompose $R_{i}^{'}$ into smaller pieces, compute the $\Ext$ groups over $\A(1)_*$ of these pieces, then determine $G_{i}$ via long exact sequences. Next, we study the pairings $$G_{\sigma}\otimes G_{\tau}\rightarrow G_{\sigma+\tau},$$ which allows us to determine a set of algebra generators of the $\mathrm{E}_{1}$-term. Finally, we compute $d_{1}$-differentials on this set of algebra generators. We do not intend to describe completely the $\Ext_{\A(2)_{*}}^{*,*}(\F)$ but only a subalgebra in which we are interested.\\\\
Since $y_{1}$ is primitive, multiplication by $y_{1}$ induces injections of $\A(1)_*$-comodules 
$$\Sigma^{4}R_{\sigma}^{'}\rightarrow R_{\sigma+1}^{'}.$$
%%%%%%%%%%%%%%%%%
\begin{Lemma}\phantomsection \label{Lem_SES1} There are short exact sequences of $\A(1)_*$-comodules
\begin{itemize}
\item[(a)]$$0\rightarrow  \mathrm{H}_{*}(\Sigma^{12}C_{\eta})\rightarrow R_{2}^{'}\rightarrow  \Sigma^{8} (\A(1)_*\square_{\A(0)_*}\F)\rightarrow 0$$ where $\eta : S^{1}\rightarrow S^{0}$ is the Hopf map and the map $ \mathrm{H}_{*}(\Sigma^{12}C_{\eta})\rightarrow R_{2}^{'}$ sends the generators of $ \mathrm{H}_{12}(\Sigma^{12}C_{\eta})$ and $ \mathrm{H}_{14}(\Sigma^{12}C_{\eta})$  to $y_{2}^{2}$ and $y_{3}^{2}$, respectively.
\item[(b)] $$0\rightarrow \Sigma^{4}R_{1}^{'}\rightarrow R_{2}^{'}\rightarrow \Sigma^{12}V_{3}\rightarrow 0$$ where $V_{3} =\mathrm{H}_{*}(S^{0}\cup_{2}e^{1}\cup_{\eta}e^{2}).$
\end{itemize}
\end{Lemma}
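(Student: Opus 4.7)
The plan is to exhibit explicit maps of $\A(1)_*$-comodules realising both short exact sequences and then to verify the claimed identifications by direct comparison of coactions. Throughout, I will use the relations in $\A(1)_*$ that $\zeta_1^4 = 0$, $\zeta_2^2 = 0$, together with $\xi_1 = \zeta_1$ and $\xi_2 = \zeta_2 + \zeta_1^3$, which imply in particular $\xi_1^4 = 0$ and $\xi_2^2 = \xi_1^6 = 0$. A basis for $R_2'$ is given by the six monomials $y_1^2, y_1 y_2, y_1 y_3, y_2^2, y_2 y_3, y_3^2$ in internal degrees $8, 10, 11, 12, 13, 14$ respectively, and I will compute $\Delta$ on each of them using the formulas for $\Delta(y_i)$ given in Section~\ref{G24 2}, reduced modulo the above relations.

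For part (a), define $\H_*(\Sigma^{12}C_\eta) \to R_2'$ by sending the degree-$12$ and degree-$14$ generators to $y_2^2$ and $y_3^2$ respectively. One checks that $\Delta(y_2^2) = \xi_1^4\otimes y_1^2 + 1\otimes y_2^2 = 1\otimes y_2^2$ and $\Delta(y_3^2) = \zeta_2^2\otimes y_1^2 + \xi_1^2\otimes y_2^2 + 1\otimes y_3^2 = \zeta_1^2\otimes y_2^2 + 1\otimes y_3^2$, which exactly matches the $\A(1)_*$-coaction on $\Sigma^{12}\H_*(C_\eta)$ (where the degree-$14$ class carries a $Sq^2$-operation from the degree-$12$ class). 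Injectivity is immediate. The quotient has basis $\bar y_1^2, \bar{y_1 y_2}, \bar{y_1 y_3}, \bar{y_2 y_3}$, and I will identify it with $\Sigma^8(\A(1)_*\square_{\A(0)_*}\F)$ by first determining that the latter has a basis of right-$\A(0)_*$-primitives $\{1,\zeta_1^2,\zeta_2,\zeta_1^2\zeta_2\}$ in degrees $0,2,3,5$ (checked using $\zeta_1^2 = 0$ in $\A(0)_*$) and then verifying that under the correspondence $\bar y_1^2 \leftrightarrow 1,\ \bar{y_1 y_2}\leftrightarrow \zeta_1^2,\ \bar{y_1 y_3}\leftrightarrow \zeta_2,\ \bar{y_2 y_3}\leftrightarrow \zeta_1^2\zeta_2$ the left $\A(1)_*$-coactions agree on the nose. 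The most delicate verification is on $\bar{y_2 y_3}$, where expanding $\Delta(y_2 y_3) = \Delta(y_2)\Delta(y_3)$ modulo the subcomodule generated by $y_2^2, y_3^2$ yields precisely the same six-term expression as $\Delta(\zeta_1^2)\Delta(\zeta_2)$ in $\A(1)_*$.

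For part (b), since $y_1$ is an $\A(1)_*$-primitive, multiplication by $y_1$ defines an $\A(1)_*$-comodule map $\Sigma^4 R_1' \to R_2'$ sending $y_i \mapsto y_1 y_i$ for $i=1,2,3$; this is visibly injective. The quotient has basis $\bar y_2^2, \bar{y_2 y_3}, \bar y_3^2$ in internal degrees $12, 13, 14$, with coactions computed modulo $y_1 R_1'$ as
\begin{align*}
\Delta(\bar y_2^2) &= 1\otimes \bar y_2^2,\\
\Delta(\bar{y_2 y_3}) &= \zeta_1\otimes \bar y_2^2 + 1\otimes \bar{y_2 y_3},\\
\Delta(\bar y_3^2) &= \zeta_1^2\otimes \bar y_2^2 + 1\otimes \bar y_3^2.
\end{align*}
These are exactly the coactions on $\Sigma^{12}\H_*(V_3)$, where $V_3 = S^0\cup_2 e^1 \cup_\eta e^2$ has its $e^1$-cell attached by $2$ (detected by $Sq^1$, giving the $\zeta_1$-coaction term) and its $e^2$-cell attached by $\eta$ to the bottom cell (detected by $Sq^2$, giving the $\zeta_1^2$-coaction term), establishing the identification.

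The only real obstacle is organising the coaction computations cleanly; the content is entirely a bookkeeping exercise once the relations $\zeta_1^4 = \zeta_2^2 = 0$ in $\A(1)_*$ are in place and once the primitives of $\A(1)_*\square_{\A(0)_*}\F$ are identified. No spectral sequence or homological machinery is required.
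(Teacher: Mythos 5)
Your proposal is correct and follows essentially the same route as the paper's own proof: both arguments exhibit the explicit comodule maps, compute the coactions on the quotients $\F\{y_1^2,y_1y_2,y_1y_3,y_2y_3\}$ and $\F\{y_2^2,y_2y_3,y_3^2\}$ using the relations $\zeta_1^4=\zeta_2^2=0$ in $\A(1)_*$, and match them against $\Sigma^8(\A(1)_*\square_{\A(0)_*}\F)$ and $\Sigma^{12}V_3$ respectively. The only difference is that you spell out the basis of $\A(0)_*$-primitives $\{1,\zeta_1^2,\zeta_2,\zeta_1^2\zeta_2\}$ explicitly where the paper leaves that identification as a check, which is a harmless elaboration.
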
 
\begin{proof} 
For part $(a)$, the map $\Sigma^{12} \mathrm{H}_{*}(C_{\eta})\rightarrow R_{2}^{'}$ described in the statement of the Lemma \ref{Lem_SES1} is a map of $\A(1)_*$-comodules. Its quotient is isomorphic to $\F\{y_{1}^{2},y_{1}y_{2}, y_{1}y_{3}, y_{2}y_{3} \}$ with the $\A(1)_*$-comodule structure given by 
\begin{alignat*}{1}
\Delta(y_{2}y_{3}) &= 1\otimes y_{2}y_{3} + \xi_{1}^{2}\otimes y_{1}y_{3}+\xi_{2}\otimes y_{1}y_{2}+\zeta_2\xi_{1}^{2}\otimes y_{1}^{2}\\
\Delta(y_{1}y_{3}) &= 1\otimes y_{1}y_{3}+\xi_{1}\otimes y_{1}y_{2}+\zeta_2\otimes y_{1}^{2}\\
\Delta(y_{1}y_{2})&=1\otimes y_{1}y_{2}+\xi_{1}^2\otimes y_{1}^{2}\\
\Delta(y_{1}^{2})&=1\otimes y_{1}^{2}.
\end{alignat*} 
We can check that this module is isomorphic to $\Sigma^{8}(\A(1)_*\square_{\A(0)_*}\F)$ as $\A(1)_*$-comodules.\\\\
\noindent
For part $(b)$, the quotient of $R^{'}_{2}$ by $\Sigma^{4}R^{'}_{1}$ is isomorphic to $\F\{y_{2}^{2},y_{2}y_{3},y_{3}^{2}\}$ with $\A(1)_*$-comodule structure given by
\begin{alignat*}{1}
\Delta(y_{2}^{2}) &=1\otimes y_{2}^{2}\\
\Delta(y_{2}y_{3}) &= \xi_{1}\otimes y_{2}^{2}+1\otimes y_{2}y_{3}\\
\Delta(y_{3}^{2}) &= \xi_{1}^{2}\otimes y_{2}^{2}+1\otimes y_{3}^{2}.
\end{alignat*}
One can check that this quotient is isomorphic to $\Sigma^{12}V_{3}$.
\end{proof}

\begin{Lemma}\phantomsection \label{Lem_SES2} For every $\sigma\geq 3$, there is a short exact sequence of $\A(1)_*$-comodules
$$0\rightarrow \Sigma^{4}R_{\sigma-1}^{'}\xrightarrow{\times y_{1}} R_{\sigma}^{'}\rightarrow \Sigma^{6\sigma}V_{4}\rightarrow 0$$ where $V_{4}$ is $\mathrm{H}_{*}(V(0) \wedge C_{\eta})$. 
\end{Lemma}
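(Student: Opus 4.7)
The plan is to verify the sequence termwise: injectivity of the map $\times y_1$, computation of the cokernel, and identification of that cokernel with $\Sigma^{6\sigma} V_4$ as $\A(1)_*$-comodules.

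First, since $y_1$ is primitive in $R$, multiplication by $y_1$ is a map of $\A(1)_*$-comodules, and it is injective because $R$ is a polynomial algebra. To see that it restricts to a map $\Sigma^4 R_{\sigma-1}' \to R_\sigma'$, note that the condition ``exponent of $y_3$ is $\leq 3$'' defining $R_\sigma'$ is preserved under multiplication by $y_1$.

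Next I would identify the cokernel additively. A basis of $R_\sigma'$ is given by monomials $y_1^a y_2^b y_3^c$ with $a+b+c=\sigma$ and $c\leq 3$, and the image of $\times y_1$ consists of such monomials with $a\geq 1$. So the cokernel has a basis indexed by monomials with $a=0$, namely $\{y_2^\sigma,\ y_2^{\sigma-1}y_3,\ y_2^{\sigma-2}y_3^2,\ y_2^{\sigma-3}y_3^3\}$, living in degrees $6\sigma,\ 6\sigma+1,\ 6\sigma+2,\ 6\sigma+3$ respectively. The hypothesis $\sigma\geq 3$ is precisely what ensures that the fourth monomial exists; this gives the four classes matching the four cells of $V(0)\wedge C_\eta$.

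Then I would compute the induced $\A(1)_*$-coaction on this cokernel. Modulo $y_1$, the formulas recalled above become $\Delta(y_2)\equiv 1\otimes y_2$ and $\Delta(y_3)\equiv 1\otimes y_3 + \xi_1\otimes y_2$, so for each of the four monomials one obtains by multiplicativity:
\begin{align*}
\Delta(y_2^\sigma) &\equiv 1\otimes y_2^\sigma,\\
\Delta(y_2^{\sigma-1}y_3) &\equiv 1\otimes y_2^{\sigma-1}y_3 + \xi_1\otimes y_2^\sigma,\\
\Delta(y_2^{\sigma-2}y_3^2) &\equiv 1\otimes y_2^{\sigma-2}y_3^2 + \xi_1^2\otimes y_2^\sigma,\\
\Delta(y_2^{\sigma-3}y_3^3) &\equiv 1\otimes y_2^{\sigma-3}y_3^3 + \xi_1\otimes y_2^{\sigma-2}y_3^2 + \xi_1^2\otimes y_2^{\sigma-1}y_3 + \xi_1^3\otimes y_2^\sigma,
\end{align*}
(the cross terms with even coefficients vanish in characteristic $2$).

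Finally I would identify this with $\Sigma^{6\sigma}V_4$. Write $V_4=\H_*(V(0))\otimes \H_*(C_\eta)$ with basis $e_0=v_0\otimes w_0,\ e_1=v_1\otimes w_0,\ e_2=v_0\otimes w_2,\ e_3=v_1\otimes w_2$, where $\Delta(v_1)=1\otimes v_1+\xi_1\otimes v_0$ and $\Delta(w_2)=1\otimes w_2+\xi_1^2\otimes w_0$. Using the tensor-product coaction a direct expansion gives precisely the same formulas as above for $\Delta(e_i)$. Hence the $\F$-linear map $\Sigma^{6\sigma}e_i\mapsto y_2^{\sigma-i+\lfloor i/2\rfloor}y_3^{i-\lfloor i/2\rfloor}$ (explicitly $e_0\mapsto y_2^\sigma$, $e_1\mapsto y_2^{\sigma-1}y_3$, $e_2\mapsto y_2^{\sigma-2}y_3^2$, $e_3\mapsto y_2^{\sigma-3}y_3^3$) is an isomorphism of $\A(1)_*$-comodules.

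The main obstacle is the last step: one must verify that the ``top-cell'' coefficient $\xi_1^3\otimes e_0$ appearing in $\Delta(e_3)$ really is $\xi_1^3$ and not a different degree-$3$ element of $\A(1)_*$ (such as one involving $\zeta_2$). This is handled by working with the explicit tensor-product model of $V_4$ rather than with cohomology operations, which avoids any Milnor-basis ambiguity.
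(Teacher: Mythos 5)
Your proof is correct and follows essentially the same route as the paper's: identify the cokernel of multiplication by $y_1$ with $\F\{y_2^{\sigma}, y_2^{\sigma-1}y_3, y_2^{\sigma-2}y_3^2, y_2^{\sigma-3}y_3^3\}$, compute the induced coaction modulo $y_1$, and match it with $\Sigma^{6\sigma}V_4$. The only difference is that you spell out the identification with the tensor-product model of $\H_*(V(0)\wedge C_\eta)$ (in particular checking that the top coefficient is $\xi_1^3$ and not $\zeta_2$), a step the paper leaves as "easily seen."
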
 
\noindent
\begin{Remark} The spectrum $V(0) \wedge C_{\eta}$ is homotopy equivalent to $Y$, introduced in the Introduction (c.f Section \ref{DMSS_A_1} for a presentation of $\H^{*}(Y)$.)
\end{Remark}
\begin{proof} The quotient of $R^{'}_{\sigma}$ by $\Sigma^{4}R^{'}_{\sigma-1}$ is isomorphic to $\F\{y_{2}^{\sigma},y_{2}^{\sigma-1}y_{3},y_{2}^{\sigma-2}y_{3}^{2},y_{2}^{\sigma-3}y_{3}^{3}\}$ with $\A(1)_*$-comodule structure given by
\begin{alignat*}{1}
\Delta(y_{2}^{\sigma}) &= 1\otimes y_{2}^{\sigma}\\
\Delta(y_{2}^{\sigma-1}y_{3}) &= \xi_{1}\otimes y_{2}^{\sigma} + 1\otimes y_{2}^{\sigma-1}y_{3}\\
\Delta(y_{2}^{\sigma-2}y_{3}^{2})&=\xi_{1}^{2}\otimes y_{2}^{\sigma}+1\otimes y_{2}^{\sigma-2}y_{3}^{2}\\
\Delta(y_{2}^{\sigma-3}y_{3}^{3})&=\xi_{1}^{3}\otimes y_{2}^{\sigma}+\xi_{1}^{2}\otimes y_{2}^{\sigma-1}y_{3}+\xi_{1}\otimes y_{2}^{\sigma-2}y_{3}^{2}+1\otimes y_{2}^{\sigma-3}y_{3}^{3}.
\end{alignat*} 
It can be easily seen that this quotient is isomorphic to $\Sigma^{6\sigma} V_{4}$.
\end{proof}
\noindent
Next we describe the $\mathrm{Ext}$ groups of some $\A(1)_*$-comodules as basic steps towards computing $G_{\sigma}$. These calculations are elementary and classical. %By Bott's periodicity, the groups $\Ext_{\A(1)_*}^{*,*}(M)$ for any $\A(1)_*$-comodule $M$ is a module over $\F[v_{1}^{4}]$. In charts representing these groups, we will often only plot their $\F[v_{1}^{4}]$-module generators.  
%\marginpar{produce charts for these $\Ext$ groups}
%1)$\Ext^{s,t}_{\A(1)_*}(\F)$, $\Ext^{s,t}_{\A(1)_*}(C_{2})$, $\Ext_{\A(1)_*}^{s,t}( C_{\eta})$ 
\begin{Proposition} There are classes $h_0\in \Ext^{1,1}$, $h_1\in \Ext^{1,2}$, $v\in \Ext^{3,7}$, $v_1^4\in \Ext^{4,12}$ such that there is an isomorphism of algebras
$$G_{0}:=\Ext^{s,t}_{\A(1)_*}(\F)\cong \F[h_{0}, h_{1}, v, v_{1}^{4}]/(h_{1}^{3}, h_{0}h_{1}, h_{1}v, v^{2}-h_{0}^{2}v_{1}^{4}).$$

\begin{figure}[h!]
\begin{center}
\begin{tikzpicture}[scale=0.5]
\clip(-1.5,-1.5) rectangle (10,6);
\draw[color=gray] (0,0) grid [step=1] (10,6);
\foreach \n in {0,1,...,10}
{
\def\nn{\n-0}
\node[below] at (\nn+0.5,0) {$\n$};
}
\foreach \s in {0,1,...,6}
{\def\ss{\s-0};
\node [left] at (-0.4,\ss+0.5,0){$\s$};
}
\foreach \s in {0,1,...,4}
\draw [fill] (0.5,\s+0.5) circle [radius=0.05];
\draw[->] (0.5,0.5)--(0.5,5.5);
\node at (0.2,1.5) {$h_{0}$};
\node at (2.2,1.5){$h_{1}$};
%%%%%%%%%%%%%%%%%%%
\draw [fill] (1.5,1.5) circle [radius=0.05];
\draw [fill] (2+0.5,2+0.5) circle [radius=0.05];
\draw[-] (0.5,0.5)--(2.5,2+0.5);
%%%%%%%%%%%
\draw [fill] (4+0.5,3+0.5) circle [radius=0.05];
\draw [fill] (4+0.5,4+0.5) circle [radius=0.05];
\draw[->] (4.5,3.5)--(4.5,5.5);
\node at (4.5,3) {$v$};
%%%%%%
\draw [fill] (8.5,4.5) circle [radius=0.05];
\draw[->] (8.5,4.5)--(8.5,5.5);
\draw (8.5,4.5) circle [radius=0.4 ];
\node at (8.5,3.5) {$v_{1}^{4}$};
\end{tikzpicture}
\caption{$\Ext^{*,*}_{\A(1)_*}(\F,\F)$ in the range $0\leq t-s \leq 8.$}
\end{center}
\end{figure}
\end{Proposition}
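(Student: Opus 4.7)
The plan is to apply the Davis-Mahowald spectral sequence to the pair $(\A(1)_*, E_1)$, where $E_1 = E(x_1, x_2)$ with $x_1 = \zeta_1^2$ and $x_2 = \zeta_2$ is the almost graded $\A(1)_*$-comodule exterior algebra of Example \ref{ExampA(n)}. By the change-of-rings isomorphism recalled in Example \ref{ExampA(n)-cont}, the $\E_1$-page takes the form
\[
\E_1^{s,t,\sigma} \cong \Ext^{s,t}_{\A(0)_*}(R_1^\sigma),
\]
where $R_1 = \F[y_1, y_2]$ with $|y_1| = 2$, $|y_2| = 3$ is the Koszul dual of $E_1$, carrying the $\A(0)_*$-coaction $\Delta(y_1) = 1 \otimes y_1$, $\Delta(y_2) = 1 \otimes y_2 + \zeta_1 \otimes y_1$.

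First I would decompose each $R_1^\sigma$ as an $\A(0)_*$-comodule. Since $\A(0)_* = E(\zeta_1)$, the only indecomposable $\A(0)_*$-comodules are the trivial $\F$ and the free $\A(0)_*$, so every finite-dimensional comodule splits as a direct sum of these. The multiplicative formula
\[
\Delta(y_1^i y_2^{\sigma-i}) = 1 \otimes y_1^i y_2^{\sigma-i} + (\sigma - i)\, \zeta_1 \otimes y_1^{i+1} y_2^{\sigma-i-1}
\]
shows that the $\A(0)_*$-primitives are the monomials $y_1^i y_2^{\sigma-i}$ with $\sigma - i$ even, while the remaining monomials each pair with a primitive to form a free summand. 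Consequently $\E_1^{s,t,\sigma}$ is concentrated in cohomological degree $s = 0$ except for the trivial summand $\F\{y_2^{2k}\}$ at $\sigma = 2k$, which contributes a copy of $\F[h_0]$.

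Next I would identify the claimed generators with specific permanent cycles: $h_0$ with the usual generator of $\Ext^{*,*}_{\A(0)_*}(\F)$ at $(s,t,\sigma) = (1,1,0)$; $h_1$ with $y_1$ at $(0,2,1)$; $v$ with $h_0 y_2^2$ at $(1,7,2)$; and $v_1^4$ with $y_2^4$ at $(0,12,4)$. Three of the four relations are visible already on the $\E_1$-page: $h_0 h_1 = 0$ because $y_1$ is the bottom of a free $\A(0)_*$-summand in $R_1^1$; $h_1 v = 0$ because $y_1 y_2^2$ lies in a free summand of $R_1^3$; and $v^2 = h_0^2 v_1^4$ follows from the multiplicative structure $(h_0 y_2^2)^2 = h_0^2 y_2^4$.

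The remaining relation $h_1^3 = 0$ is produced by $d_1$, which by Remark \ref{SSd_1} is computed by lifting an $\A(0)_*$-primitive of $R_1^\sigma$ to an $\A(1)_*$-primitive of $E_1 \otimes R_1^\sigma$ and then applying the Koszul derivation. A direct check shows that $\widetilde{y_2^2} := 1 \otimes y_2^2 + x_1 \otimes y_1^2$ is $\A(1)_*$-primitive and $d(\widetilde{y_2^2}) = 1 \otimes y_1^3$, giving $d_1(y_2^2) = y_1^3$. Using characteristic $2$ one then has $(\widetilde{y_2^2})^2 = 1 \otimes y_2^4$, so $y_2^4$ lifts to a genuine primitive and is a permanent cycle together with all its powers $y_2^{4m}$. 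Multiplicativity and $h_0$-, $h_1$-linearity then propagate $d_1$ to kill exactly the remaining classes not accounted for by the stated generators, and a sparseness argument in the tri-grading rules out any $d_r$ for $r \geq 2$ as well as any hidden multiplicative extensions on $\E_\infty$. The main technical obstacle I anticipate is the bookkeeping needed to verify that the $\E_2$-page matches the claimed presentation exactly after all induced $d_1$-differentials have been accounted for.
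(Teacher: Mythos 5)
Your proposal is correct, but it takes a genuinely different route from the paper: the paper does not prove this proposition at all, it simply cites Ravenel (\cite{Rav86}, Theorem 3.1.25), where the computation is done by classical means. What you do instead is run the $n=1$ instance of the Davis--Mahowald machinery that the paper sets up in Section \ref{DMSS_Const} and only ever deploys at $n=2$. Your details check out: the $\A(0)_*$-comodule splitting of $R_1^\sigma$ into free summands plus the single trivial summand $\F\{y_2^{2k}\}$ in even Koszul degree is right, the tridegrees of $h_0$, $h_1=y_1$, $v=h_0y_2^2$, $v_1^4=y_2^4$ land in the correct bidegrees of $\Ext_{\A(1)_*}$, the three relations visible on $\E_1$ are forced by the vanishing of positive-degree $\Ext$ of free $\A(0)_*$-comodules, and the lift $1\otimes y_2^2+x_1\otimes y_1^2$ giving $d_1(y_2^2)=y_1^3$ is exactly the $n=1$ analogue of the paper's Proposition \ref{d1-S}(7); the resulting $\E_2$-term matches the monomial basis of the claimed presentation, and sparseness in the trigrading does kill higher differentials and hidden extensions. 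One point worth a sentence in a final write-up: $v=h_0y_2^2$ is a $d_1$-cycle even though $y_2^2$ is not, because $d_1(h_0y_2^2)=h_0y_1^3$ vanishes by the already-established relation $h_0h_1=0$ on $\E_1$ (equivalently, $R_1^3$ is $\A(0)_*$-free). The trade-off is clear: the citation is shorter, but your argument is self-contained within the paper's framework and doubles as a consistency check and warm-up for the $n=2$ computation that occupies Section \ref{G24 2}.
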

\noindent
See for example (\cite{Rav86}, Theorem 3.1.25).
\begin{Lemma}\phantomsection \label{Lem_Mod} As a module over $\Ext^{s,t}_{\A(1)_*}(\F)$,
\begin{itemize}
\item[(1)] $\Ext^{s,t}_{\A(1)_*}( \mathrm{H}_{*}(V(0)))$ is generated by $h^{0}\in \Ext^{0,0}$, $h^{1}\in \Ext^{1,3}$ with the following relations $h_{0}h^{0} = vh^{0} =v h^{1} =0$ and $h_{1}^{2}.h^{0} = h_{0}h^{1}$.
\item[(2)] $\Ext_{\A(1)_*}^{s,t}( \mathrm{H}_{*}(C_{\eta}))$ is generated by $\{h^{i}\in \Ext^{i,3i}|\ 0\leq i\leq 3\}$ with $h_{1}h^{i} = 0$, $vh^{0} = h_{0}h^{2}$, $vh^{1} = h_{0}h^{3}$.
\item[(3)] $\Ext_{\A(1)_*}^{s,t}(\mathrm{H}_{*}(S^{0}\cup_{2}e^{1}\cup_{\eta}e^{2}))$ is generated by $h^{0}\in \Ext^{0,0}$, $h^{1}\in\Ext^{1,3}$, $a^{1}\in\Ext^{1,3}$, $h^{2}\in\Ext^{2,6}$, $h^{3}\in\Ext^{3,9}$ with $h_{0}h^{0}=h_{1}h^{0}=h_{1}h^{1}=h_{0}a^{1}= va^{1}=h_{1}h^{2}= vh^{2}=h_{1}h^{3}=vh^{3}=0$ and $h_{0}h^{2}=h_{1}^{2}a^{1}$.
\item[(4)] $\Ext_{\A(1)_*}^{s,t}( \mathrm{H}_{*}(Y))$ is generated by $\{h^{i}|\ 0\leq i\leq 3\}$ with $h_{0}h^{i}=h_{1}h^{i} = vh^{i} = 0$.
\end{itemize}
\end{Lemma}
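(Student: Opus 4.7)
The plan is to treat each of the four $\A(1)_*$-comodules in turn by means of a short exact sequence of comodules and the induced long exact sequence in $\Ext_{\A(1)_*}^{*,*}(-)$, using the known structure of $G_0 = \Ext_{\A(1)_*}^{*,*}(\F)$ recalled in the previous proposition as the base of the module structure. For part $(1)$, I would use the short exact sequence
\[
0 \to \F \to \H_*(V(0)) \to \Sigma \F \to 0
\]
arising from the bottom/top cell decomposition. The connecting homomorphism $\Ext_{\A(1)_*}^{s,t}(\Sigma\F) \to \Ext_{\A(1)_*}^{s+1,t}(\F)$ is multiplication by $h_0$ (this is essentially the definition of $h_0$). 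The kernel and cokernel of this map then read off $h^0 \in \Ext^{0,0}$ (image of $1$) and $h^1 \in \Ext^{1,3}$ (lift of $1 \in \Ext^0(\Sigma\F)$, shifted by the internal degree of the top cell), and the relations $h_0 h^0 = 0$, $v h^0 = 0$, $v h^1 = 0$ follow from exactness together with the fact that $h_0 v = 0$ in $G_0$ after the appropriate bidegree check. For the hidden relation $h_1^2 h^0 = h_0 h^1$, I would identify it as a Massey product, namely $h^1 \in \langle h_0, h_0, h^0 \rangle$ (equivalently, express it by a 2-cocycle in the cobar complex of $\H_*(V(0))$ using the coaction $\Delta(a_1) = \xi_1 \otimes a_0$ on the top cell).

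For part $(2)$, the analogous short exact sequence
\[
0 \to \F \to \H_*(C_\eta) \to \Sigma^2 \F \to 0
\]
with connecting homomorphism equal to multiplication by $h_1$ gives the generators $h^0, h^1, h^2, h^3$ in bidegrees $(i, 3i)$ as lifts through the long exact sequence (noting that $h_1^3 = 0$ and $h_1 v = 0$ in $G_0$ force $h_1 h^i = 0$). The hidden products $v h^0 = h_0 h^2$ and $v h^1 = h_0 h^3$ are again Massey products of the shape $\langle h_0, h_1, h_0 \rangle$ evaluated on $h^0, h^1$, and can be verified by a one-line cobar computation using $\Delta(a_2) = \xi_1 \otimes a_1$ and the diagonal of $\zeta_2$. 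Part $(4)$ is the easiest: since $Y \simeq V(0) \wedge C_\eta$ and $\A(1)_*$ is a Hopf algebra, $\H_*(Y) \cong \H_*(V(0)) \otimes \H_*(C_\eta)$ as $\A(1)_*$-comodules, and by the change-of-rings/Künneth identity one has
\[
\Ext_{\A(1)_*}^{*,*}(\H_*(Y)) \cong \Ext_{\A(1)_*}^{*,*}(\H_*(V(0))) \otimes_{G_0} \Ext_{\A(1)_*}^{*,*}(\H_*(C_\eta)),
\]
from which the four generators $h^i$ in bidegree $(i, 3i)$ and the vanishing $h_0 h^i = h_1 h^i = v h^i = 0$ drop out immediately.

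Part $(3)$ is the most delicate. I would use the cofiber-sequence filtration $S^0 \subset V(0) \subset S^0 \cup_2 e^1 \cup_\eta e^2$ giving the short exact sequence
\[
0 \to \H_*(V(0)) \to \H_*(S^0 \cup_2 e^1 \cup_\eta e^2) \to \Sigma^2 \F \to 0,
\]
whose connecting homomorphism is multiplication by $h_1$ into $\Ext_{\A(1)_*}^{*,*}(\H_*(V(0)))$. Using $(1)$, the generator $1 \in \Ext^0(\Sigma^2\F)$ lifts because $h_1 \cdot h^0 = 0$ in $\Ext(\H_*(V(0)))$ is false in general — in fact $h_1 h^0 \ne 0$, so one must also use the complementary filtration $0 \to \H_*(C_\eta) \to \H_*(S^0 \cup_2 e^1 \cup_\eta e^2) \to \Sigma\F \to 0$ (with connecting map $h_0$) to detect the class $a^1$ in $\Ext^{1,3}$, which sits in the kernel of multiplication by $h_0$. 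Comparing the two long exact sequences gives all five generators and the listed relations, in particular the hidden product $h_0 h^2 = h_1^2 a^1$ which again comes from a Massey-product/cobar identification. The main obstacle throughout will be to verify the hidden multiplicative relations, since these are not detected by the long exact sequences alone; the cleanest route is to exhibit explicit cocycle representatives in the cobar complex and compute products there.
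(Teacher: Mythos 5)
The paper offers essentially no proof to compare against: it cites Ravenel for parts $(1)$ and $(4)$ and declares $(2)$ and $(3)$ elementary, so your proposal must stand on its own. Your overall strategy (long exact sequences from the cell filtrations, plus cobar or Massey-product arguments for the hidden products) is the standard and appropriate one, but two of your steps are genuinely broken. In part $(1)$ you assert that $h_0v=0$ in $G_0=\Ext_{\A(1)_*}^{*,*}(\F)$ and deduce $vh^0=0$ ``from exactness''. Neither holds: the relations in $G_0$ are $h_0h_1=h_1^3=h_1v=0$ and $v^2=h_0^2v_1^4$, so $v$ supports an infinite $h_0$-tower. Worse, exactness yields the opposite conclusion: since $\Ext^{2,6}_{\A(1)_*}(\F)=0$, the connecting homomorphism into $\Ext^{3,7}_{\A(1)_*}(\F)=\F\{v\}$ vanishes, the bottom-cell map is injective in that bidegree, and $vh^0\neq 0$ --- it is the unique nonzero class there (equivalently, the annihilator of $h^0$ is exactly the ideal $(h_0)$; homotopically, $vh^0$ detects the nonzero image of $\alpha\in\pi_4(ko)$ in $\pi_4(ko\wedge V(0))\cong\Z/2$). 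So your argument cannot establish that relation, and the correct statement in that bidegree is $vh^0=h_1^2h^1$. A smaller slip in the same part: $\Sigma 1$ does not lift to $\Ext^{0,1}_{\A(1)_*}(\H_*(V(0)))$ because $\partial(\Sigma 1)=h_0\neq 0$; the generator $h^1\in\Ext^{1,3}$ is a lift of $\Sigma h_1$, not of $\Sigma 1$.

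In part $(4)$ the proposed K\"unneth identity $\Ext_{\A(1)_*}(\H_*(Y))\cong\Ext_{\A(1)_*}(\H_*(V(0)))\otimes_{G_0}\Ext_{\A(1)_*}(\H_*(C_{\eta}))$ is not a theorem and is false here: in bidegree $(s,t)=(1,3)$ the right-hand side contains the two independent classes $h^1\otimes h^0$ and $h^0\otimes h^1$, while $\Ext^{1,3}_{\A(1)_*}(\H_*(Y))$ is one-dimensional. The correct route is the one the paper uses later in Lemma \ref{Lem_Y}: $\H_*(Y)\cong\A(1)_*\square_{E(1)_*}\F$, so by change of rings $\Ext_{\A(1)_*}(\H_*(Y))\cong\Ext_{E(1)_*}(\F)$ is polynomial on one generator, and $h_0$, $h_1$, $v$ act trivially for degree reasons. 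Parts $(2)$ and $(3)$ are workable as sketched, but the hidden products arise from brackets of the shape $\langle h_0,h_1^2,h_1\rangle\ni v$ (your bracket $\langle h_0,h_1,h_0\rangle$ does not even lie in the bidegree of $v$), and the vanishing $h_1h^i=0$ in $(2)$ requires the observation that the bottom-cell contribution is $G_0/(h_1)$ and that the relevant degrees of that cokernel are empty, not merely the relations $h_1^3=h_1v=0$.
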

%--------------------------
\begin{figure}[h!]
\begin{minipage}[c]{.46\linewidth}
\begin{tikzpicture}[scale=0.5]
\clip(-1.5,-1.5) rectangle (5,4);
\draw[color=gray] (0,0) grid [step=1] (10,6);
\foreach \n in {0,1,...,5}
{
\def\nn{\n-0}
\node[below] at (\nn+0.5,0) {$\n$};
}
\foreach \s in {0,1,...,5}
{\def\ss{\s-0};
\node [left] at (-0.4,\ss+0.5,0){$\s$};
}
\draw [fill] (0.5,0.5) circle [radius=0.05];
\draw[fill] (1.5,1.5) circle [radius=0.05];
\draw[fill] (2.5,2.5) circle [radius =0.05];
\draw[fill] (2.5,1.5) circle [radius=0.05];
\draw[fill] (3.5,2.5) circle [radius =0.05];
\draw[fill] (4.5,3.5) circle [radius=0.05];
%flech
\draw[-] (0.5,0.5)--(1.5,1.5);
\draw[-](1.5,1.5)--(2.5,2.5);
\draw[-](2.5,1.5)--(3.5,2.5);
\draw[-](3.5,2.5)--(4.5,3.5);
\draw[-](2.5,1.5)--(2.5,2.5);
%%%%%%%%%
\node at (1.2,0.5) {$h^{0}$};
\node at (3.2,1.5) {$h^{1}$};
\end{tikzpicture}
\phantomsection \label{Fig_C2}
\caption{$\Ext^{s,t}_{\A(1)_*}(\H_*(V(0)))$ in the range $0\leq t-s\leq 4$.}
\end{minipage} \hfill
\begin{minipage}[c]{.46\linewidth}
 \begin{tikzpicture}[scale=0.5]
\clip(-1.5,-1.5) rectangle (8,5);
\draw[color=gray] (0,0) grid [step=1] (9,6);
\foreach \n in {0,1,...,6}
{
\def\nn{\n-0}
\node[below] at (\nn+0.5,0) {$\n$};
}
\foreach \s in {0,1,...,4}
{\def\ss{\s-0};
\node [left] at (-0.4,\ss+0.5,0){$\s$};
}
\foreach \s in {0,1,...,4}
\draw [fill] (0.5,\s+0.5) circle [radius=0.05];
\draw[->] (0.5,0.5)--(0.5,5.5);
\node at (1,0.5) {$h^{0}$};

\foreach \s in {1,2,...,4}
\draw [fill] (2.5,\s+0.5) circle [radius=0.05];
\draw[->] (2.5,1.5)--(2.5,5.5);
\node at (3,1.5) {$h^{1}$};

\foreach \s in {2,3,...,4}
\draw [fill] (4.5,\s+0.5) circle [radius=0.05];
\draw[->] (4.5,2.5)--(4.5,5.5);
\node at (5,2.5) {$h^{2}$};

\foreach \s in {3,4}
\draw [fill] (6.5,\s+0.5) circle [radius=0.05];
\draw[->] (6.5,3.5)--(6.5,5.5);
\node at (7,3.5) {$h^{3}$};
\end{tikzpicture}
\caption{$\Ext^{s,t}_{\A(1)_*}(\H_*(C_{\eta}))$ in the range $0\leq t-s\leq 6$.}
 \end{minipage}\hfill
\begin{minipage}[c]{.46\linewidth}
 \begin{tikzpicture}[scale=0.5]
\clip(-1.5,-1.5) rectangle (8,5);
\draw[color=gray] (0,0) grid [step=1] (9,6);
\foreach \n in {0,1,...,6}
{
\def\nn{\n-0}
\node[below] at (\nn+0.5,0) {$\n$};
}
\foreach \s in {0,1,...,4}
{\def\ss{\s-0};
\node [left] at (-0.4,\ss+0.5,0){$\s$};
}

\draw[fill] (0.5,0.5) circle [radius=0.05];
\node at (0.5,0.2) {$h^{0}$};
%%%%%%%%%
\foreach \s in{0,1,...,3}
\draw[fill] (2.5,\s+1.5) circle [radius=0.05];
\node at (2.4,1.2) {$h^{1}$};
\draw[-] (2.5,1.5)--(2.5,5.5);
%%%%%%%%%%
\draw[fill] (2.7,1.5) circle [radius=0.05];
\node at (3.2,1.4) {$a^{1}$};
\draw[fill] (3.5,2.5) circle [radius=0.05];
\draw[-] (2.7,1.5)--(3.5,2.5);
\draw[-] (3.5,2.5)--(4.5,3.5);
%%%%%%%%%%%
\draw[fill] (4.5,2.5) circle [radius=0.05];
\draw[fill] (4.5,3.5) circle [radius=0.05];
\node at (4.5,2.2) {$h^{2}$};
\draw[-] (4.5,2.5)--(4.5,3.5);
\draw[fill] (6.5,3.5) circle [radius=0.05];
\draw [fill] (6.5,4.5) circle [radius=0.05];
\node at (6.5,3.2) {$h^{3}$};
\draw[-] (6.5,3.5)--(6.5,5.5);

\end{tikzpicture}
\caption{$\Ext^{s,t}_{\A(1)_*}(\H_*(S^{0}\cup_{2}e^{1}\cup_{\eta}e^{2}))$ in the range $0\leq t-s\leq 6$.}
\end{minipage}\hfill
\begin{minipage}[c]{.46\linewidth}
 \begin{tikzpicture}[scale=0.5]
\clip(-1.5,-1.5) rectangle (8,5);
\draw[color=gray] (0,0) grid [step=1] (9,6);
\foreach \n in {0,1,...,6}
{
\def\nn{\n-0}
\node[below] at (\nn+0.5,0) {$\n$};
}
\foreach \s in {0,1,...,4}
{\def\ss{\s-0};
\node [left] at (-0.4,\ss+0.5,0){$\s$};
}
\draw [fill] (0.5,0.5) circle [radius=0.05];
\node at (1,0.5) {$h^{0}$};

\draw [fill] (2.5,1+0.5) circle [radius=0.05];
\node at (3,1.5) {$h^{1}$};

\draw [fill] (4.5,2+0.5) circle [radius=0.05];
\node at (5,2.5) {$h^{2}$};

\draw [fill] (6.5,3+0.5) circle [radius=0.05];
\node at (7,3.5) {$h^{3}$};
\end{tikzpicture}
\caption{$\Ext^{s,t}_{\A(1)_*}(\H_*(Y))$ in the range $0\leq t-s\leq 6$.}
\end{minipage}
\end{figure}
\noindent
See \cite{Rav86}, Theorem 3.1.27 for $(1)$ and $(4)$. The calculations for $(2)$ and $(3)$ are also elementary, so that we omit the detail.
\begin{Remark} We use the same notation $h^{i}$ for $i=0,1,2,3$ to denote certain generators of the above groups. This abuse of notation is justified by the fact that these generators have close relationships which are described in the next lemma. The context will clarify the use of the notation.
\end{Remark}
\noindent
Consider cell inclusions $V(0)\rightarrow Y$ and $S^{0}\cup_{2}e^{1}\cup_{\eta}e^{2}\rightarrow Y$. The induced homomorphisms in $\Ext$ over $\A(1)_*$ are described as follows.
\begin{Lemma} \phantomsection \label{Lem_inclu}
$(i)$The homomorphism $\Ext^{*,*}_{\A(1)_*}(\mathrm{H}_{*}(V(0)))\rightarrow \Ext^{*,*}_{\A(1)_*}(\mathrm{H}_{*}(Y))$ sends the classes $h^{0}$ and $h^{1}$ to the non-trivial classes of the same name. \\\\
\noindent
$(ii)$ The homomorphism $\Ext^{*,*}_{\A(1)_*}(\mathrm{H}_{*}(S^{0}\cup_{2}e^{1}\cup_{\eta}e^{2}))\rightarrow \Ext^{*,*}_{\A(1)_*}(\mathrm{H}_{*}(Y))$ sends the classes $h^{0}$, $h^{1}$, $h^{2}$, $h^{3}$ to the non-trivial classes of the same name.
\end{Lemma}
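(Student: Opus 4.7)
\noindent
The plan is to analyze long exact sequences on $\Ext_{\A(1)_*}$ arising from two cofiber sequences of spectra: for $(i)$, the cofiber sequence $V(0)\to Y\to\Sigma^2V(0)$, obtained by smashing $V(0)$ with $S^0\to C_\eta\to S^2$ (using $Y=V(0)\wedge C_\eta$); for $(ii)$, the cofiber sequence $S^0\cup_2e^1\cup_\eta e^2\to Y\to S^3$ given by the inclusion of the $2$-skeleton of $Y$ into $Y$. Applying mod-$2$ homology yields short exact sequences of $\A(1)_*$-comodules, hence long exact sequences on $\Ext_{\A(1)_*}$. By Lemma \ref{Lem_Mod}(4), each target $\Ext^{i,3i}(\H_*(Y))$ is one-dimensional over $\F$, so to show the inclusion sends $h^i\mapsto h^i$ it suffices to prove the induced map is non-zero in bidegree $(i,3i)$.

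In most bidegrees, the preceding group $\Ext^{s-1,t}$ of the quotient vanishes, forcing the induced map to be injective. This covers bidegrees $(0,0)$ and $(1,3)$ of $(i)$, where $\Ext^{-1,0}(\H_*(\Sigma^2V(0)))=0$ and $\Ext^{0,3}(\H_*(\Sigma^2V(0)))=\Ext^{0,1}(\H_*(V(0)))=0$ by the chart in Lemma \ref{Lem_Mod}(1); and bidegrees $(0,0)$, $(2,6)$, $(3,9)$ of $(ii)$, where the preceding terms $\Ext^{-1,0}(\H_*(S^3))=0$, $\Ext^{1,3}_{\A(1)_*}(\F)=0$, and $\Ext^{2,6}_{\A(1)_*}(\F)=0$ are all read off the chart of $\Ext_{\A(1)_*}(\F)$.

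The remaining case is $h^1$ in bidegree $(1,3)$ of $(ii)$, where $\Ext^{0,3}(\H_*(S^3))=\Ext^{0,0}_{\A(1)_*}(\F)\cong \F$ is non-zero. For this one I would compute the connecting homomorphism $\delta$ explicitly at the cobar level. The generator lifts to $z_3^*\in \H_*(Y)$, whose reduced coaction -- determined by the tensor decomposition $\H_*(Y)=\H_*(V(0))\otimes\H_*(C_\eta)$ -- is $\bar\psi(z_3^*)=\zeta_1\otimes z_2^*+\zeta_1^2\otimes z_1^*+\zeta_1^3\otimes z_0^*$. Projecting to $\H_*(S^0\cup_2e^1\cup_\eta e^2)$ gives
$$\delta(1)=[\zeta_1|y_2^*]+[\zeta_1^2|y_1^*]+[\zeta_1^3|y_0^*].$$
Comparing with explicit cocycle representatives for $h^1$ and $a^1$, which are distinguished by $h_0\cdot h^1\ne 0$ versus $h_0\cdot a^1=0$ (Lemma \ref{Lem_Mod}(3)), one identifies $\delta(1)=h^1+a^1$. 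Hence the kernel of the induced map is spanned by $h^1+a^1$, and in particular $h^1\mapsto h^1\in \Ext(\H_*(Y))$, as claimed.

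The main obstacle is precisely this $(1,3)$ case in $(ii)$: the clean dimension argument breaks down because the preceding Ext group is nonzero, and one must compute the connecting homomorphism $\delta$ explicitly at the cobar level rather than relying on exactness alone; every other bidegree reduces to reading the charts in Lemma \ref{Lem_Mod}.
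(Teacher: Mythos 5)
Your proposal is correct, and for all but one bidegree it coincides with the paper's argument: both proofs run the long exact sequences in $\Ext_{\A(1)_*}$ coming from the short exact sequences $0\to \H_*(V(0))\to \H_*(Y)\to \H_*(\Sigma^2V(0))\to 0$ and $0\to \H_*(S^0\cup_2e^1\cup_\eta e^2)\to \H_*(Y)\to \Sigma^3\F\to 0$, and dispose of $h^0,h^1$ in $(i)$ and $h^0,h^2,h^3$ in $(ii)$ by observing that the relevant groups feeding the connecting homomorphism vanish; your chart checks of those vanishing statements are all accurate. The divergence is in the one genuinely delicate case, $h^1\in\Ext^{1,3}(\H_*(S^0\cup_2e^1\cup_\eta e^2))$, where $\Ext^{0,3}(\Sigma^3\F)\cong\F$ is nonzero. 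You compute the connecting homomorphism at the cobar level: your reduced coaction $\bar\psi(z_3)=\zeta_1\otimes z_2+\zeta_1^2\otimes z_1+\zeta_1^3\otimes z_0$ (read off the tensor decomposition $\H_*(Y)\cong\H_*(V(0))\otimes\H_*(C_\eta)$) is correct, and the resulting explicit cocycle does represent $h^1+a^1$. The paper instead argues formally: $h_0h^1$ and $h_1a^1$ are nonzero in the source but must die in $\Ext(\H_*(Y))$ for degree reasons, and since $\partial$ is $\Ext_{\A(1)_*}(\F)$-linear with one-dimensional sources in those bidegrees, the relations $h_0a^1=0$ and $h_1h^1=0$ force $\partial(\Sigma^31)=h^1+a^1$. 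The paper's route buys you the answer with no cochain-level work; yours buys an explicit cocycle but at the cost of the final identification step, which you only sketch ("comparing with explicit cocycle representatives"). Note that the cleanest way to carry out that identification — multiplying your cocycle by $h_0$ and $h_1$ and invoking $h_0a^1=0$, $h_1h^1=0$ — is exactly the paper's module-theoretic argument transported to the cochain level, so to make your write-up airtight you should either exhibit the coboundary relating your cocycle to chosen representatives of $h^1$ and $a^1$, or simply replace the cobar computation by the linearity argument. Either way the conclusion $h^1\notin\mathrm{im}\,\partial$, hence $h^1\mapsto h^1\neq0$, stands.
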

\begin{proof} For part $(i)$, consider the short exact sequence of $\A(1)_*$-comodules
$$0\rightarrow \mathrm{H}_{*}(V(0))\rightarrow \mathrm{H}_{*}(Y)\rightarrow \mathrm{H}_{*}(\Sigma^{2}V(0))\rightarrow 0$$
For degree reasons, the classes $h^{0}$ and $h^{1}$ of $\Ext^{*,*}_{\A(1)_*}(\mathrm{H}_{*}(V(0)))$ do not belong to the image of the connecting homomorphism  $$\Ext^{s-1,t}_{\A(1)_*}(\mathrm{H}_{*}(\Sigma^{2}V(0)))\rightarrow \Ext^{s,t}_{\A(1)_*}(\mathrm{H}_{*}(V(0))).$$ Therefore, they are sent to nontrivial classes of the same name in $\Ext^{*,*}_{\A(1)_*}(\mathrm{H}_{*}(Y))$.
For part $(ii)$, consider the short exact sequence of $\A(1)_*$-comodules
$$0\rightarrow \mathrm{H}_{*}(S^{0}\cup_{2}e^{1}\cup_{\eta}e^{2})\rightarrow \mathrm{H}_{*}(Y)\rightarrow \Sigma^{3}\F\rightarrow 0$$ and the resulting long exact sequence $$\Ext^{s-1,t}_{\A(1)_*}(\mathrm{H}_{*}(\Sigma^{3}\F)) \xrightarrow{\partial} \Ext^{s,t}_{\A(1)_*}(\mathrm{H}_{*}(S^{0}\cup_{2}e^{1}\cup_{\eta}e^{2})) \rightarrow  \Ext^{s,t}_{\A(1)_*}(\mathrm{H}_{*}(Y)). $$ For degree reasons, the classes $h^{0}$, $h^{2}$, $h^{3}$ of $\Ext^{s,t}_{\A(1)_*}(\mathrm{H}_{*}(S^{0}\cup_{2}e^{1}\cup_{\eta}e^{2}))$ are not in the image of the connecting homomorphism, and thus are sent to $h^{0}$, $h^{2}$, $h^{3}$ in $ \Ext^{*,*}_{\A(1)_*}(\mathrm{H}_{*}(Y)$, respectively. Next, for degree reasons, the classes $h_{0}h^{1}$ and $h_{1}a^{1}$ are sent to $0 \in \Ext^{*,*}_{\A(1)_*}( \mathrm{H}_{*}(Y))$. The only way for this to happen is that the connecting homomorphism sends $\Sigma^{3}1\in \Ext^{0,3}_{\A(1)_*}(\F, \mathrm{H}_{*}(\Sigma^{3}\F))$ to the sum $h^{1} + a^{1}$. It follows that $h^{1}$ is not in the image of the connecting homomorphism, and therefore is sent to $h^{1}\in \Ext^{1,3}_{\A(1)_*}( \mathrm{H}_{*}(Y))$
\end{proof}

\begin{Lemma}\phantomsection \label{Lem_Y} $\mathrm{H}_{*}(Y)$ has a structure of an $\A(1)_*$-comodule algebra. The resulting structure on $\Ext_{\A(1)_*}^{*,*}(\mathrm{H}_{*}(Y))$ is that of a polynomial algebra. 
\end{Lemma}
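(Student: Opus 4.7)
The plan is to identify $\H_{*}(Y)$ concretely as an $\A(1)_{*}$-comodule algebra by recognising it as a familiar sub-Hopf algebra of $\A(1)_{*}$, and then apply the change-of-rings isomorphism already used in \textit{Example} \ref{ExampA(n)-cont} to reduce the computation of $\Ext_{\A(1)_{*}}^{*,*}(\H_{*}(Y))$ to one over an exterior Hopf algebra.

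First I would describe the $\A(1)_{*}$-coaction on $\H_{*}(Y)=\H_{*}(V(0)\wedge C_{\eta})$. Let $\{e_{0},e_{1},e_{2},e_{3}\}$ denote the obvious basis in degrees $0,1,2,3$. A direct calculation using the Cartan and K\"unneth formulas yields
\[
\nu(e_{i}) \;=\; \sum_{j=0}^{i}\zeta_{1}^{\,j}\otimes e_{i-j},\qquad 0\le i\le 3,
\]
the only subtle point being that the Milnor primitive dual to $\zeta_{2}$ acts trivially on the bottom cell of $Y$, so no $\zeta_{2}\otimes e_{0}$ contribution appears in $\nu(e_{3})$. Comparing with the coproduct on the sub-Hopf algebra $\F[\zeta_{1}]/(\zeta_{1}^{4})$ of $\A(1)_{*}$, the map $e_{i}\mapsto \zeta_{1}^{\,i}$ is an isomorphism of $\A(1)_{*}$-comodules $\H_{*}(Y)\xrightarrow{\cong}\F[\zeta_{1}]/(\zeta_{1}^{4})$. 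Transporting the multiplication of the sub-Hopf algebra endows $\H_{*}(Y)$ with a structure of $\A(1)_{*}$-comodule algebra, in which $e_{1}^{2}=e_{2}$, $e_{1}^{3}=e_{3}$, and $e_{1}^{4}=0$; this proves the first assertion.

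Second, I would apply change of rings. The quotient map $\A(1)_{*}\twoheadrightarrow \A(1)_{*}/(\zeta_{1})=:E(\zeta_{2})$ is a surjection of Hopf algebras, and a direct verification (in the spirit of \textit{Example} \ref{ExampA(n)} with $n=1$) identifies
\[
\F[\zeta_{1}]/(\zeta_{1}^{4}) \;=\; \A(1)_{*}\square_{E(\zeta_{2})}\F .
\]
The change-of-rings isomorphism (\cite{Rav86}, Theorem A1.3.13), applied exactly as in \textit{Example} \ref{ExampA(n)-cont}, then supplies a bigraded isomorphism of algebras
\[
\Ext_{\A(1)_{*}}^{*,*}(\H_{*}(Y)) \;\cong\; \Ext_{E(\zeta_{2})}^{*,*}(\F).
\]
Since $E(\zeta_{2})$ is exterior on a single primitive generator of internal degree $|\zeta_{2}|=3$, the right-hand side is the polynomial algebra $\F[h]$ on a class $h$ of bidegree $(1,3)$, via the standard Koszul resolution of $\F$ over $E(\zeta_{2})$. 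Under the chain of identifications, $h$ corresponds to the class $h^{1}$ of Lemma \ref{Lem_Mod}(4), and the classes $v_{1}^{4k}h^{i}$ there are identified with $h^{4k+i}$ for $0\le i\le 3$.

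The chief point requiring care is that the change-of-rings isomorphism respect the multiplicative structure, not merely the additive one. This can be handled by lifting it to a quasi-isomorphism of reduced cobar DGA complexes with their natural Alexander--Whitney products. A quicker alternative is to invoke the additive description from Lemma \ref{Lem_Mod}(4): the $\F$-dimension of $\Ext_{\A(1)_{*}}^{*,*}(\H_{*}(Y))$ is one in each bidegree of the form $(k,3k)$ for $k\ge 0$ and zero otherwise, so the unique class $h^{1}$ in bidegree $(1,3)$ must generate the whole $\F$-algebra, forcing the algebra structure to be the polynomial ring $\F[h^{1}]$.
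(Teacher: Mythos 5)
Your proof is correct and follows essentially the same route as the paper: identify $\H_{*}(Y)$ with $\A(1)_{*}\square_{E(1)_{*}}\F$ where $E(1)_{*}=\A(1)_{*}/(\zeta_{1})\cong\F[\zeta_{2}]/(\zeta_{2}^{2})$ (you make this explicit as $\F[\zeta_{1}]/(\zeta_{1}^{4})$ with the coaction written out), then apply the multiplicative change-of-rings isomorphism to land in $\Ext_{E(1)_{*}}^{*,*}(\F)$, a polynomial algebra on one generator of bidegree $(1,3)$. One caveat: your ``quicker alternative'' for the multiplicative structure does not actually work --- knowing the $\Ext$ groups are one-dimensional in each bidegree $(k,3k)$ does not by itself force the powers of $h^{1}$ to be nonzero --- so you should rely on the multiplicativity of the change-of-rings isomorphism, which is exactly what the paper does.
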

\begin{proof} It is not hard to see that $\mathrm{H}_{*}(Y)$ is isomorphic to $\A(1)_*\square_{E(1)_{*}}\F$ as $\A(1)_*$-comodules, where $E(1)_*$ is the Hopf quotient of $\A(1)_*$ by the Hopf ideal $(\zeta_1)$, i.e., $E(1)_*\cong \F[\zeta_2]/(\zeta_2^2)$. In particular, $\mathrm{H}_{*}(Y)$ has the structure of a $\A(1)_*$-comodule algebra. As a consequence, $\Ext_{\A(1)_*}^{*,*}( \mathrm{H}_{*}(Y))$ is an algebra and is furthermore isomorphic to $\Ext^{*,*}_{E(1)_{*}}(\F)$ by the change-of-rings isomorphism. It is well-known that the latter is a polynomial algebra on one variable.
\end{proof}
\noindent
%\textbf{Remark} This algebra structure does not come from a ring spectrum structure on $Y$. In fact, $\mathrm{H}_{*}(Y)$ is not a $\A_{*}$-comodule algebra. This remark is not relevant here. What is a good justification for this claim. $\mathrm{H}_{*}(Y)$ is definitely a $\A_{*}$-comodule algebra
We now compute $G_{\sigma} := \Ext_{\A(1)_*}^{*,*}(R_{\sigma}^{'})$. We denote by $\alpha_{s,t, \sigma}$ the non-trivial class of $ \Ext_{\A(1)_*}^{s,s+t}(R_{\sigma}^{'})$ whenever there is a unique such one.
\begin{Proposition}\phantomsection \label{Lem_G1}As a module over $G_{0}$, $G_{1} = \Ext_{\A(1)_*}^{*,*}(R^{'}_{1}) $ is generated by $\alpha_{0,4,1}\in \Ext_{\A(1)_*}^{0,4}(R^{'}_{1})$ and $\alpha_{1,8,1}\in \Ext_{\A(1)_*}^{1,9}(R^{'}_{1})$ with the relations $h_{1}\alpha_{0,4,1} = 0$ and $v\alpha_{0,4,1} = h_{0}^{2}\alpha_{1,8,1}$.
\end{Proposition}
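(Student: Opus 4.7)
The plan is to derive $G_1 = \Ext_{\A(1)_*}^{*,*}(R_1^{'})$ from the short exact sequence of $\A(1)_*$-comodules
\begin{equation*}
0 \longrightarrow \Sigma^4\F \xrightarrow{\;\cdot y_1\;} R_1^{'} \longrightarrow \Sigma^6 \H_*(V(0)) \longrightarrow 0,
\end{equation*}
in which the injection is multiplication by the $\A(1)_*$-primitive $y_1$, and the quotient $\F\{y_2,y_3\}$ is identified with $\Sigma^6 \H_*(V(0))$ via the coaction formula $\Delta(y_3)=\xi_1\otimes y_2+1\otimes y_3$ (i.e.\ $y_2\leftrightarrow x_0$, $y_3\leftrightarrow x_1$). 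Passing to $\Ext_{\A(1)_*}^{*,*}$ yields a long exact sequence whose outer terms are already known: $\Sigma^4 G_0$ on one side, and the $6$-fold suspension of the module computed in Lemma \ref{Lem_Mod}(1) on the other.

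The first step is to evaluate the connecting homomorphism $\partial$ on the two $G_0$-module generators of the quotient. Lifting $h^0$ to $y_2\in R_1^{'}$ and applying $d_v$ yields $\bar{\Delta}(y_2) = \xi_1^2 \otimes y_1$, which represents $h_1\alpha_{0,4,1}$, where $\alpha_{0,4,1}:=i_*(\Sigma^4(1))$. For $h^1\in \Ext^{1,9}_{\A(1)_*}(\Sigma^6 \H_*(V(0)))$, the target $\Ext^{2,9}_{\A(1)_*}(\Sigma^4\F)\cong \Sigma^4 \Ext^{2,5}_{\A(1)_*}(\F)$ vanishes by inspection of the generators of $G_0$, so $\partial(h^1)=0$; $G_0$-linearity then determines $\partial$ and identifies its image with the principal ideal $(h_1\alpha_{0,4,1})$. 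Exactness now furnishes the relation $h_1\alpha_{0,4,1}=0$ and a lift $\alpha_{1,8,1}\in \Ext^{1,9}_{\A(1)_*}(R_1^{'})$ of $h^1$. These two classes generate $G_1$ as a $G_0$-module because $i_*$ surjects onto $\ker(p_*)$ and $p_*$ surjects onto $\ker(\partial)$.

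The main obstacle is the exotic relation $v\alpha_{0,4,1} = h_0^2\alpha_{1,8,1}$, which is invisible to the long exact sequence: both sides lie in $\mathrm{Im}(i_*) \subset \Ext^{3,11}_{\A(1)_*}(R_1^{'})$ (for the right-hand side this uses $h_0^2 h^1 = h_0\cdot h_1^2 h^0 = 0$ in $\Ext_{\A(1)_*}(\H_*(V(0)))$), so to compare them one must work at the cochain level. The approach is to produce an explicit cobar representative of $\alpha_{1,8,1}$ by lifting the standard representative of $h^1$ to $[\xi_1^2|y_3]+[\xi_2|y_2]\in A\otimes R_1^{'}$, computing its $d_v$-boundary $[\xi_1^2|\zeta_2|y_1]+[\xi_2|\xi_1^2|y_1]$, and recognising this as $d_v([\xi_1^2\xi_2|y_1])$, so that $\alpha_{1,8,1}$ is represented by $[\xi_1^2|y_3]+[\xi_2|y_2]+[\xi_1^2\xi_2|y_1]$. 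Cupping on the left with $h_0^2=[\xi_1|\xi_1]$ and using coboundaries of the form $d_v([\xi_1|\xi_1|y_j])$ to push the $y_2$- and $y_3$-summands into $A^{\otimes 3}\otimes\Sigma^4\F$ expresses $h_0^2\alpha_{1,8,1}$ as a cocycle of the form $[\omega|y_1]$. Since $\Ext^{3,11}_{\A(1)_*}(\Sigma^4\F)=\Sigma^4\F\{v\}$ is one-dimensional and the image of $\partial$ vanishes in this bidegree (because $\Ext^{2,5}_{\A(1)_*}(\H_*(V(0)))=0$), the cocycle $[\omega|y_1]$ is either $0$ or $v\alpha_{0,4,1}$; ruling out the first possibility, for instance by the Massey product identification $v=\langle h_0,h_1,h_1^2\rangle$ and the corresponding juggling formula, gives the claimed relation and completes the description of $G_1$.
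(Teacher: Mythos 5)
Your treatment of the first half is correct and coincides with the paper's: both use the short exact sequence $0\to\Sigma^4\F\to R_1'\to\Sigma^6\H_*(V(0))\to 0$, compute $\partial(h^0)=h_1$ and $\partial(h^1)=0$, and deduce the additive structure, the relation $h_1\alpha_{0,4,1}=0$, and the two $G_0$-module generators. (Your cobar verification that $\partial(h^0)$ is represented by $[\xi_1^2|y_1]$ is a legitimate substitute for the paper's comparison with $\H_*(\Sigma^4C_\eta)$, and your lift $[\xi_1^2|y_3]+[\xi_2|y_2]+[\xi_1^2\xi_2|y_1]$ of $h^1$ does check out. One slip: you justify $\mathrm{im}(\partial)=0$ in bidegree $(3,11)$ by claiming $\Ext^{2,5}_{\A(1)_*}(\H_*(V(0)))=0$; that group is in fact $\F\{h_1h^1\}$, and the correct reason is $\partial(h_1h^1)=h_1\partial(h^1)=0$.)

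The genuine gap is at the decisive step, the relation $v\alpha_{0,4,1}=h_0^2\alpha_{1,8,1}$. You correctly reduce it to showing that a certain cocycle of the form $[\omega|y_1]$ is nonzero, but you do not actually show this: you offer either an uncompleted cochain manipulation (and the correcting coboundaries you name, $d_v([\xi_1|\xi_1|y_j])$, have internal degree $9$ or $10$, not the required $11$) or the identity $v=\langle h_0,h_1,h_1^2\rangle$, which is itself a nontrivial claim asserted without proof. Since everything else in the proposition falls out of the long exact sequence, this non-vanishing is the entire content of the statement, so deferring it to "for instance by\dots" leaves the proof incomplete. If the Massey product identity were established, your route would close up: the shuffle $v\alpha_{0,4,1}=\langle h_0,h_1,h_1^2\rangle\alpha_{0,4,1}\subseteq\langle h_0,h_1,h_1^2\alpha_{0,4,1}\rangle=\langle h_0,h_1,0\rangle$ has indeterminacy $h_0\cdot\Ext^{2,10}_{\A(1)_*}(R_1')=\{0,h_0^2\alpha_{1,8,1}\}$, and $v\alpha_{0,4,1}\neq 0$ forces the relation. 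But that identity would need its own cobar computation, so nothing has really been gained.

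For comparison, the paper avoids all of this with a second short exact sequence, $0\to\Sigma^4\H_*(C_\eta)\to R_1'\to\Sigma^7\F\to 0$ (here $\Sigma^4\H_*(C_\eta)=\F\{y_1,y_2\}$). The relation $vh^0=h_0h^2$ already holds in $\Ext_{\A(1)_*}(\H_*(C_\eta))$ by Lemma \ref{Lem_Mod}(2), and the map $\Ext_{\A(1)_*}(\Sigma^4\H_*(C_\eta))\to G_1$ is injective in stem $8$ (the relevant connecting homomorphism out of $\Ext_{\A(1)_*}(\Sigma^7\F)$ vanishes there), so the relation is simply transported into $G_1$ by a $G_0$-linear map. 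You may want to adopt that device: it replaces the cochain-level work by a one-line naturality argument.
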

\begin{proof} Consider the short exact sequence of $\A(1)_*$-comodules $$0\rightarrow\Sigma^{4}\F\rightarrow R_{1}^{'}\rightarrow \Sigma^{6}\mathrm{H}_{*}(V(0))\rightarrow 0.$$ The connecting homomorphism $$\partial: \Ext^{s,t-6}_{\A(1)_*}(V(0))\rightarrow \Ext^{s+1,t-4}_{\A(1)_*}(\F)$$ of the resulting long exact sequence sends $h^{0}$ to $h_{1}$ and $h^{1}$ to $0$. The latter follows from degree reasons and the former from the following map of short exact sequences of $\A(1)_*$-comodules and the naturality of the connecting homomorphism
$$\xymatrix{ 0\ar[r]&\Sigma^{4}\F\ar[r]&R_{1}\ar[r]&\Sigma^{6}\mathrm{H}_{*}(V(0))\ar[r]& 0\\
		    0\ar[r]&\Sigma^{4}\F\ar[r]\ar[u]&\mathrm{H}_{*}(\Sigma^{4}C_{\eta})\ar[r]\ar[u]&\Sigma^{6}\F\ar[r]\ar[u]& 0.	
}$$It follows that $G_{1}$ is $v_{1}^{4}$-periodic on the following generators (Figure \ref{G_1}) \\
\begin{figure}[h!]
\begin{center}
\begin{tikzpicture}[scale=0.5]
\clip(-1.5,-1.5) rectangle (7,6);
\draw[color=gray] (0,0) grid [step=1] (7,6);
\foreach \n in {4,5,...,11}
{
\def\nn{\n-0}
\node[below] at (\nn+0.5-4,0) {$\n$};
}
\foreach \s in {0,1,...,6}
{\def\ss{\s-0};
\node [left] at (-0.4,\ss+0.5,0){$\s$};
}
\foreach \s in {0,1,...,4}
\draw [fill,red] (0.5,\s+0.5) circle [radius=0.05];
\draw[->,red] (0.5,0.5)--(0.5,5.5);
%%%%%%%%%%%%%%%%%%%
\draw [fill] (4+0.5,1+0.5) circle [radius=0.05];
\draw [fill] (4+0.5,2+0.5) circle [radius=0.05];
\draw[-] (4+0.5,1+0.5)--(4+0.5,2+0.5);
%%%%%%%%%%%
\draw [fill,red] (4+0.5,3+0.5) circle [radius=0.05];
\draw [fill,red] (4+0.5,4+0.5) circle [radius=0.05];
\draw[->,red] (4.5,3.5)--(4.5,5.5);
%%%%%%
\draw [fill] (5.5,2.5) circle [radius=0.05];
\draw [fill] (6.5,3.5) circle [radius=0.05];
\draw[-] (4.5,1.5)--(5.5,2.5);
\draw[-] (5.5,2.5)--(6.5,3.5);
\draw[densely dashed] (4.5,2.5)--(4.5,3.5);
\end{tikzpicture}
\end{center}
\caption{$G_{1}$ - The red part is the contribution of $\Ext_{\A(1)_*}^{*,*}(\Sigma^4\F)$ and the black part from $\Ext_{\A(1)_*}^{*,*}(\Sigma^6\H_*(V(0)))$. }
\phantomsection \label{G_1}
\end{figure}
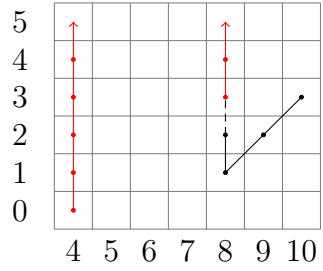

\noindent
What remains to be established is the multiplication by $h_{0}$ on the generator of bidegree $(s,t-s) = (2,8)$. This is done by a similar consideration of the connecting homomorphism associated to the short exact sequence of $\A(1)_*$-comodules 
$$0\rightarrow \Sigma^{4}C_{\eta}\rightarrow R_{1}^{'}\rightarrow \Sigma^{7}\F\rightarrow 0.$$ 

\end{proof}

\begin{Proposition}\phantomsection \label{Lem_G2}As a module over $G_{0}$, $\Ext_{\A(1)_*}^{*,*}(R^{'}_{2}) = G_{2}$ is generated by $\alpha_{s,t,2}\in \Ext^{s,s+t}$ where $(s,t)\in \{(0,8), (0,12), (1,14), (2,16), (3,18)\}$ with $$h_{1}\alpha_{s,t,2} = 0, v\alpha_{0,8,2} = h_{0}^{3}\alpha_{0,12,2}$$ $$v\alpha_{0,12,2} = h_{0}\alpha_{2,16,2}, v\alpha_{1,14,2} = h_{0}\alpha_{3,18,2}.$$
\end{Proposition}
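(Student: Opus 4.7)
The plan is to adapt the proof of Proposition~\ref{Lem_G1}, this time using the short exact sequence of Lemma~\ref{Lem_SES1}(a),
$$0 \to \Sigma^{12}\H_*(C_\eta) \to R'_2 \to \Sigma^8\bigl(\A(1)_*\square_{\A(0)_*}\F\bigr) \to 0,$$
as the main tool. The $\Ext$-groups of the two outer terms are both known: the left one by Lemma~\ref{Lem_Mod}(2), and for the right one the change-of-rings isomorphism gives $\Ext^{*,*}_{\A(1)_*}(\A(1)_*\square_{\A(0)_*}\F) \cong \Ext^{*,*}_{\A(0)_*}(\F) \cong \F[h_0]$, so its $\Ext$ is a single $h_0$-tower based in bidegree $(0,8)$.

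First, I would verify that the connecting homomorphism $\partial\colon \Ext^{s,s+8}(\Sigma^8(\A(1)_*\square_{\A(0)_*}\F))\to \Ext^{s+1,s+8}(\Sigma^{12}\H_*(C_\eta))$ vanishes identically by a bidegree count against the $G_0$-module generators of $\Ext^{*,*}_{\A(1)_*}(\H_*(C_\eta))$ listed in Lemma~\ref{Lem_Mod}(2): the potential targets are empty in every total internal degree $s+8$. The long exact sequence then splits into short exact sequences, producing $\alpha_{0,8,2}$ and its $h_0$-tower from the quotient, together with $\alpha_{0,12,2},\alpha_{1,14,2},\alpha_{2,16,2},\alpha_{3,18,2}$ and their full $G_0$-module structure from $\Sigma^{12}\H_*(C_\eta)$. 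The relations $h_1\alpha_{s,t,2}=0$ for $(s,t)\neq(0,8)$, $v\alpha_{0,12,2}=h_0\alpha_{2,16,2}$ and $v\alpha_{1,14,2}=h_0\alpha_{3,18,2}$ are then inherited directly from Lemma~\ref{Lem_Mod}(2) via the inclusion; the remaining relation $h_1\alpha_{0,8,2}=0$ holds because $\Ext^{1,10}(R'_2)$ vanishes by a similar bidegree count on both ends of the long exact sequence.

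The hard step is the hidden multiplicative extension $v\alpha_{0,8,2}=h_0^3\alpha_{0,12,2}$. The analysis above shows only that $\Ext^{3,15}(R'_2)$ is one-dimensional, spanned by $h_0^3\alpha_{0,12,2}$, so $v\alpha_{0,8,2}$ must equal either $0$ or $h_0^3\alpha_{0,12,2}$. To rule out zero, I would switch to the complementary short exact sequence of Lemma~\ref{Lem_SES1}(b), $0\to \Sigma^4 R'_1\to R'_2\to \Sigma^{12}V_3\to 0$. Under the induced map $\Sigma^4 G_1\to G_2$, the class $\Sigma^4\alpha_{0,4,1}$ is sent to $\alpha_{0,8,2}$, and an inspection of the long exact sequence for (b) in bidegree $(1,13)$---using $\Ext^{0,1}(V_3)=0$ and $\Ext^{1,1}(V_3)=0$ from Lemma~\ref{Lem_Mod}(3)---shows that $\Ext^{1,13}(R'_2)$ is one-dimensional, so the nonzero class $h_0\alpha_{0,12,2}$ must coincide with the image of $\Sigma^4\alpha_{1,8,1}$. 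Transporting the relation $v\alpha_{0,4,1}=h_0^2\alpha_{1,8,1}$ of Proposition~\ref{Lem_G1} through this map then yields $v\alpha_{0,8,2} = h_0^2\cdot h_0\alpha_{0,12,2} = h_0^3\alpha_{0,12,2}$, completing the argument.
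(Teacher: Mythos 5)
Your proposal is correct and follows the same two-step strategy as the paper's proof: the long exact sequence attached to Lemma \ref{Lem_SES1}(a), with connecting homomorphism vanishing for degree reasons, yields the generators and all the relations inherited from $\Ext^{*,*}_{\A(1)_*}(\H_*(C_\eta))$ and $\Ext^{*,*}_{\A(0)_*}(\F)$, and the sequence of Lemma \ref{Lem_SES1}(b) resolves the remaining extension on $\alpha_{0,8,2}$. The only genuine difference is in how (b) is exploited: the paper uses it to show that $v_1^4\alpha_{0,8,2}\neq 0$, from which $v\alpha_{0,8,2}=h_0^3\alpha_{0,12,2}$ can be deduced via the relation $v^2=h_0^2v_1^4$ in $G_0$ together with a bidegree inspection, whereas you identify the images of both $\Sigma^4\alpha_{0,4,1}$ and $\Sigma^4\alpha_{1,8,1}$ under the $G_0$-linear map $\Sigma^4 G_1\to G_2$ and transport the relation $v\alpha_{0,4,1}=h_0^2\alpha_{1,8,1}$ of Proposition \ref{Lem_G1}, which gives the stated $v$-relation more directly. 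Both arguments are valid; note only that the paper's variant additionally records $v_1^4\alpha_{0,8,2}\neq 0$, a piece of structure used later (in the chart of $G_2$ and in the induction of Theorem \ref{Lem_G}) that your argument does not establish, though it is not part of the statement being proved.
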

\begin{proof}
The short exact sequence in part (a) of Lemma \ref{Lem_SES1} gives rise to the long exact sequence 
%%%%%%%%%%%%%%%%%
$$\rightarrow\Ext_{\A(1)_*}^{s,t-12}(\mathrm{H}^{*}(C_{\eta}))\rightarrow \Ext_{\A(1)_*}^{s,t}(R^{'}_{2}) \rightarrow \Ext_{\A(0)_*}^{s,t-8}(\F)\rightarrow \Ext_{\A(1)_*}^{s+1,t-12}(\mathrm{H}^{*}(C_{\eta}))\rightarrow$$ 
%%%%%%%%%%%%%%%%%%%%%%%%
Combining that $\Ext_{\A(0)_*}^{s,t}(\F)\cong\F[h_{0}]$ and the description of $\Ext_{\A(1)_*}^{s,t}(\mathrm{H}^{*}(C_{\eta}))$, we see that the connecting homomorphism is trivial for degree reasons.

\begin{figure}[!h]
\begin{center}
\begin{tikzpicture}[scale=0.5]
\clip(-1.5,-1.5) rectangle (11,6);
\draw[color=gray] (0,0) grid [step=1] (11,6);
\foreach \n in {8,9,...,19}
{
\def\nn{\n-0}
\node[below] at (\nn+0.5-8,0) {$\n$};
}
\foreach \s in {0,1,...,6}
{\def\ss{\s-0};
\node [left] at (-0.4,\ss+0.5,0){$\s$};
}
\foreach \s in {0,1,...,4}
\draw [fill] (0.5,\s+0.5) circle [radius=0.05];
\draw[->] (0.5,0.5)--(0.5,5.5);
%%%%%%%%%%%%%%%%%%%
\foreach \t  in {0,1,..., 4}
     \draw [fill,red] (4.5,\t+0.5) circle [radius=0.05];
\foreach \t  in {0,1,..., 3}
      \draw[-,red] (4.5,\t+0.5)--(4.5,\t+1.5);
      \draw[->,red] (4.5,4.5)--(4.5,5.5); 
\foreach \t  in {1,2,..., 4}
     \draw [fill,red] (6.5,\t+0.5) circle [radius=0.05];
\foreach \t  in {1,2,..., 3}
      \draw[-,red] (6.5,\t+0.5)--(6.5,\t+1.5);
      \draw[->,red] (6.5,4.5)--(6.5,5.5); 
\foreach \t  in {2,3,..., 4}
     \draw [fill,red] (8.5,\t+0.5) circle [radius=0.05];
\foreach \t  in {2,3}
      \draw[-,red] (8.5,\t+0.5)--(8.5,\t+1.5);
      \draw[->,red] (8.5,4.5)--(8.5,5.5); 
\foreach \t  in {3,4}
     \draw [fill,red] (10.5,\t+0.5) circle [radius=0.05];
     \draw[-,red] (10.5,3.5)--(10.5,4.5);
      \draw[->,red] (10.5,4.5)--(10.5,5.5); 
\end{tikzpicture}
\end{center}
\caption{$G_{2}$ - The red part is the contribution of $\Ext_{\A(0)_*}^{s,t}(\F,\F)$ and the black one of $\Ext_{\A(1)_*}^{s,t}(\mathrm{H}_{*}(C_{\eta}))$}
\end{figure}
\noindent
What remains is to establish the $v_{1}^{4}$-multiplication on the class $\alpha_{0,8,2}$ of bidegree $(0,8)$. Consider the long exact sequence associated to the short exact sequence in part (b) of Lemma \ref{Lem_SES1}
\begin{equation} \label{connect-SES1}
\Ext_{\A(1)_*}^{s-1,t}(\Sigma^{12}V_{3})\xrightarrow{\partial} \Ext_{\A(1)_*}^{s,t}( \Sigma^{4}R_{1}^{'})\rightarrow \Ext_{\A(1)_*}^{s,t}(R_{2}^{'}).
\end{equation}
One can check that the class $\Sigma^{4}\alpha_{0,4,1} \in \Ext_{\A(1)_*}^{s,t}( \Sigma^{4}R_{1}^{'})$ is not in the image of $\partial$, and so is sent to $\alpha_{0,8,2}\in \Ext_{\A(1)_*}^{s,t}(R_{2}^{'})$. For degree reasons, we see that $v_{1}^{4}\Sigma^{4}\alpha_{0,4,1}$ is not in the image of $\partial$, thus $v_{1}^{4}\alpha_{0,8,2}$ is nontrivial in $G_{2}$. This completes the proof. 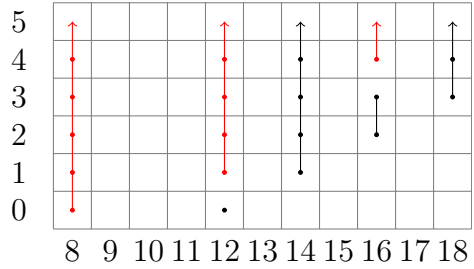
\begin{figure}[!h]
\begin{center}
\begin{tikzpicture}[scale=0.5]
\clip(-1.5,-1.5) rectangle (11,6);
\draw[color=gray] (0,0) grid [step=1] (11,6);
\foreach \n in {8,9,...,19}
{
\def\nn{\n-0}
\node[below] at (\nn+0.5-8,0) {$\n$};
}
\foreach \s in {0,1,...,6}
{\def\ss{\s-0};
\node [left] at (-0.4,\ss+0.5,0){$\s$};
}
\foreach \s in {0,1,...,4}
\draw [fill,red] (0.5,\s+0.5) circle [radius=0.05];
\draw[->,red] (0.5,0.5)--(0.5,5.5);
%%%%%%%%%%%%%%%%%%%
\foreach \t  in {1,2,..., 4}
     \draw [fill,red] (4.5,\t+0.5) circle [radius=0.05];
     \draw[->,red] (4.5,1.5)--(4.5,5.5); 
      \draw[fill] (4.5,0.5) circle [radius=0.05];
%%%%%%%%%%%%%%%%%%%%%%%%%
\foreach \t  in {1,2,..., 4}
     \draw [fill] (6.5,\t+0.5) circle [radius=0.05];
      \draw[->] (6.5,1.5)--(6.5,5.5); 
%%%%%%%%%%%%%%%%%%%%%%%%%%      
\foreach \t  in {2,3}
     \draw [fill] (8.5,\t+0.5) circle [radius=0.05];
\foreach \t  in {2}
      \draw[-] (8.5,\t+0.5)--(8.5,\t+1.5);
      \draw[->,red] (8.5,4.5)--(8.5,5.5); 
      \draw[fill,red] (8.5,4.5) circle [radius=0.05];
%%%%%%%%%%%%%%%%%%%%%%%%   
\foreach \t  in {3,4}
     \draw [fill] (10.5,\t+0.5) circle [radius=0.05];
      \draw[->] (10.5,3.5)--(10.5,5.5); 
\end{tikzpicture}
\end{center}
\caption{$G_{2}$ -The red part is the contribution of $G_{1}$ and the black one of $\Ext^{*,*}_{\A(1)_*}(V_{3})$.}
\phantomsection \label{G_2}
\end{figure}
\noindent
\end{proof}
\begin{Remark}
We can make a complete calculation of the connecting homomorphism of (\ref{connect-SES1}), which results to the chart Figure-\ref{G_2}.
\end{Remark}
\begin{Lemma}\phantomsection \label{Lem_G3}As a module over $G_{0}$, $\Ext_{\A(1)_*}^{*,*}(R^{'}_{3}) = G_{3}$ is generated by $\alpha_{s,t,3}$ of $\Ext^{s,s+t}$ where $(s,t)\in \{(0,12),(0,16), (0,18), (1,20), (2,22), (3,24)\}$ with $h_{1}\alpha_{s,t,3} = 0$, $v\alpha_{0,12,3} = h_{0}^{3}\alpha_{0,16,3}$, $v\alpha_{0,16,3} = h_{0}^{2}\alpha_{1,20,3}$, $v\alpha_{0,18,3} = h_{0}\alpha_{2,22,3}$, $v\alpha_{1,20,3} = h_{0}\alpha_{3,24,3}$.
\end{Lemma}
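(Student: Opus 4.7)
The plan is to imitate the strategy used for $G_1$ and $G_2$: exploit the short exact sequence furnished by Lemma \ref{Lem_SES2} with $\sigma=3$,
$$0 \to \Sigma^{4} R_{2}' \xrightarrow{\times y_{1}} R_{3}' \to \Sigma^{18} \H_{*}(Y) \to 0,$$
and read off $G_{3}$ from the resulting long exact sequence in $\Ext_{\A(1)_{*}}$, using $\Sigma^{4}G_{2}$ on the left (Proposition \ref{Lem_G2}) and $\Sigma^{18}\Ext_{\A(1)_{*}}(\H_{*}(Y))$ on the right (Lemmas \ref{Lem_Mod}(4) and \ref{Lem_Y}, which identify the latter with the polynomial algebra on $h^{1}$).

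The central step is to verify that the connecting homomorphism $\partial$ vanishes. Since the $G_{0}$-action on $\Ext_{\A(1)_{*}}(\H_{*}(Y))$ is trivial, it suffices to check this on each generator $h^{i}$, $i=0,1,2,3$, separately. For $h^{0}$, represented modulo $\Sigma^{4}R_{2}'$ by $y_{2}^{3}$, I would compute in the cobar complex that
$$\psi(y_{2}^{3}) = 1 \otimes y_{2}^{3} + \xi_{1}^{2} \otimes y_{1}y_{2}^{2}, \qquad \psi(y_{1}y_{3}^{2}) = 1 \otimes y_{1}y_{3}^{2} + \xi_{1}^{2} \otimes y_{1}y_{2}^{2}$$
(using $\xi_{1}^{4}=\zeta_{2}^{2}=0$ in $\A(1)_{*}$), so that $\alpha_{0,18,3} := y_{2}^{3} + y_{1}y_{3}^{2}$ is a cocycle in $R_{3}'$ lifting $h^{0}$; hence $\partial(h^{0})=0$. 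The computations for $h^{1}, h^{2}, h^{3}$ are analogous, and can alternatively be streamlined by naturality with respect to the pinch maps $Y \to \Sigma V(0)$ and $Y \to \Sigma^{2}C_{\eta}$, which map the short exact sequence above to simpler ones whose connecting homomorphisms are already known from the proofs of Propositions \ref{Lem_G1} and \ref{Lem_G2}.

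Once $\partial=0$ is established, the long exact sequence breaks up into short exact sequences, and $G_{3}$ has generators $\alpha_{0,12,3}, \alpha_{0,16,3}$ coming from $\Sigma^{4}\alpha_{0,8,2}, \Sigma^{4}\alpha_{0,12,2}$, together with lifts $\alpha_{0,18,3}, \alpha_{1,20,3}, \alpha_{2,22,3}, \alpha_{3,24,3}$ of the four classes $h^{0}, h^{1}, h^{2}, h^{3}$. The relation $h_{1}\alpha_{s,t,3}=0$ is then inherited directly, and the relation $v\alpha_{0,12,3} = h_{0}^{3}\alpha_{0,16,3}$ is the image of the corresponding relation $v\alpha_{0,8,2} = h_{0}^{3}\alpha_{0,12,2}$ in $\Sigma^{4}G_{2}$.

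The main obstacle is the identification of the three remaining relations, which arise from \emph{hidden} $h_{0}$-extensions across the filtration. Specifically, the classes $\Sigma^{4}\alpha_{1,14,2}, \Sigma^{4}\alpha_{2,16,2}, \Sigma^{4}\alpha_{3,18,2}$ of $\Sigma^{4}G_{2}$ at positions $(1,18), (2,20), (3,22)$ lie in the image of the inclusion and project to zero in $\Sigma^{18}\Ext_{\A(1)_{*}}(\H_{*}(Y))$ (where $h_{0}h^{i}=0$), so they must be identified with $h_{0}\alpha_{0,18,3}, h_{0}\alpha_{1,20,3}, h_{0}\alpha_{2,22,3}$ respectively. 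These identifications can be checked directly at the cochain level: for instance, multiplying the representative $y_{2}^{3}+y_{1}y_{3}^{2}$ of $\alpha_{0,18,3}$ by $[\xi_{1}]$ and reducing modulo cobar coboundaries (using $d(y_{3}^{2}) = [\xi_{1}^{2} \mid y_{2}^{2}]$ in $R_{2}'$) should exhibit the class $\Sigma^{4}\alpha_{1,14,2}$. Combining these hidden extensions with the known $G_{2}$-relations $v\alpha_{0,12,2} = h_{0}\alpha_{2,16,2}$ and $v\alpha_{1,14,2} = h_{0}\alpha_{3,18,2}$ then yields the three remaining $v$-relations in the statement; this bookkeeping is the most delicate part of the argument but is purely formal once the hidden extensions are in hand.
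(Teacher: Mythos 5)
Your proposal is correct and its skeleton coincides with the paper's: both start from the short exact sequence $0\to\Sigma^4R_2'\xrightarrow{\times y_1}R_3'\to\Sigma^{18}V_4\to 0$ of Lemma \ref{Lem_SES2}, observe that the connecting homomorphism vanishes (the paper dispatches this purely by degree reasons, since $\Sigma^4G_2$ is concentrated in even stems, whereas you verify it by exhibiting explicit primitive lifts such as $y_2^3+y_1y_3^2$ -- your coaction computations are correct and this is in fact the representative the paper later uses for $e[3,15]$), and both correctly identify the real content as the hidden $h_0$-extensions tying the lifts of $h^0,h^1,h^2$ to $\Sigma^4\alpha_{1,14,2},\Sigma^4\alpha_{2,16,2},\Sigma^4\alpha_{3,18,2}$. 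Where you diverge is in how those extensions are established: you propose direct cobar-level verification (e.g.\ correcting $[\xi_1\mid y_2^3+y_1y_3^2]$ by the coboundary of $y_2^2y_3$ to land in the image of $\times y_1$), while the paper instead refilters $R_3'$ by the subcomodule $\F\{y_2^3+y_1y_3^2,\,y_2y_3^2\}\cong\Sigma^{18}\H_*(C_\eta)$, whose $\Ext$ already carries $h_0$-towers on its generators by Lemma \ref{Lem_Mod}(2), and shows via a second long exact sequence that the quotient contributes only the two $h_0$-towers in stems $12$ and $16$; the extensions then fall out structurally with no cochain manipulation. Your route is more elementary and self-contained; the paper's is less computation per extension and packages all three at once. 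One caution: your sentence asserting that $\Sigma^4\alpha_{1,14,2}$ etc.\ ``must be identified with'' $h_0\alpha_{0,18,3}$ etc.\ because they die in $\Ext(\Sigma^{18}\H_*(Y))$ is not an argument -- the logically open alternative is that $h_0$ kills $\alpha_{0,18,3},\alpha_{1,20,3},\alpha_{2,22,3}$ and the three $\Sigma^4G_2$-classes survive as independent $G_0$-module generators (which is exactly what the lemma rules out); the cochain check you then propose is what actually closes this, so it is not optional. With that check carried out, your derivation of the remaining $v$-relations from the $G_2$-relations and $h_0$-injectivity in the relevant bidegrees goes through.
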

\begin{proof}The short exact sequence in Lemma \ref{Lem_SES2} gives the long exact sequence 
%%%%%%%%%%%%%%%%%%%%%%%
$$\rightarrow \Ext_{\A(1)_*}^{s,t}(\Sigma^{4}R^{'}_{2})\rightarrow \Ext_{\A(1)_*}^{s,t}(R^{'}_{3})\rightarrow \Ext_{\A(1)_*}^{s,t}(\Sigma^{18}V_{4})\rightarrow \Ext_{\A(1)_*}^{s+1,t}(\Sigma^{4}R^{'}_{2})\rightarrow$$
%%%%%%%%%%%%%%%%%%%%%%%
For degree reasons, the connecting homomorphism is trivial, hence we obtain the additive structure of $G_{3}$ as in Figure \ref{G_3}. We need to establish the non-trivial $h_{0}$-multiplication on the generators $\{\alpha_{s,18+2s,3}| \ s\geq0\}$. Taking the $v_{1}^{4}$-periodicity into account, we reduce to show this property for the generators of $$\alpha_{0,18,3}, \alpha_{1,20,3}, \alpha_{2,22,3}, \alpha_{3,24,3}.$$

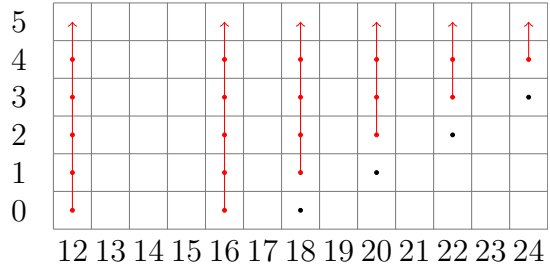
\begin{figure}[!h]
\begin{center}
\begin{tikzpicture}[scale=0.5]
\clip(-1.5,-1.5) rectangle (13,6);
\draw[color=gray] (0,0) grid [step=1] (13,6);
\foreach \n in {12,13,...,25}
{
\def\nn{\n-0}
\node[below] at (\nn+0.5-12,0) {$\n$};
}
\foreach \s in {0,1,...,6}
{\def\ss{\s-0};
\node [left] at (-0.4,\ss+0.5,0){$\s$};
}
\foreach \s in {0,1,...,4}
\draw [fill,red] (0.5,\s+0.5) circle [radius=0.05];
\draw[->,red] (0.5,0.5)--(0.5,5.5);
%%%%%%%%%%%%%%%%%%%
\foreach \t  in {0,1,..., 4}
     \draw [fill,red] (4.5,\t+0.5) circle [radius=0.05];
      \draw[->,red] (4.5,0.5)--(4.5,5.5); 
\foreach \t  in {1,2,..., 4}
     \draw [fill,red] (6.5,\t+0.5) circle [radius=0.05];
      \draw[->,red] (6.5,1.5)--(6.5,5.5); 
\foreach \t  in {2,3,..., 4}
     \draw [fill,red] (8.5,\t+0.5) circle [radius=0.05];
      \draw[->,red] (8.5,2.5)--(8.5,5.5); 
\foreach \t  in {3,4}
     \draw [fill,red] (10.5,\t+0.5) circle [radius=0.05];
      \draw[->,red] (10.5,3.5)--(10.5,5.5); 
\draw[fill,red] (12.5,4.5) circle [radius=0.05];      
\draw[->,red] (12.5,4.5)--(12.5,5.5);
\foreach \s in {0,1,...,3}
\draw[fill] (2*\s+6.5,\s+0.5) circle [radius=0.05];
\end{tikzpicture}
\end{center}
\caption{$G_{3}$ - The red part is the contribution of $G_{2}$ and the black one of $\Ext_{\A(1)_*}^{s,t}(V_4)$}
\phantomsection \label{G_3}
\end{figure}
\noindent
For this, we can check that there are the following short exact sequences:
$$0\rightarrow \Sigma ^{18} \mathrm{H}_{*}(C_{\eta}) \rightarrow R_{3} \rightarrow R_{3}/\Sigma ^{18} \mathrm{H}_{*}(C_{\eta})\rightarrow 0$$
and 
$$0\rightarrow \Sigma^{4} R_{2} \rightarrow R_{3}/\Sigma ^{18} \mathrm{H}_{*}(C_{\eta}) \rightarrow \Sigma^{19} \mathrm{H}_{*}(C_{\eta})\rightarrow 0$$
where, as a sub $\A(1)_*$-comodule of $R_{3}$, $\Sigma^{18}\mathrm{H}_{*}(C_{\eta})$ is equal to $\F\{y_{1}y_{3}^{2}+y_{2}^{3}, y_{2}y_{3}^{2}\}$ and the map $\Sigma^{4}R_{2}\rightarrow R_{3}/\Sigma ^{18}\mathrm{ C_{\eta}}$ is the composite $\Sigma^{4}R_{2}\xrightarrow{\times y_{1}} R_{3}\rightarrow R_{3}/\Sigma ^{18} \mathrm{H}_{*}(C_{\eta}) $.\\
\noindent
As a consequence, $\Ext_{\A(1)_*}^{*,*}( R_{3}/\Sigma ^{18} \mathrm{H}_{*}(C_{\eta}))$ sits in a long exact sequence
$$\Ext_{\A(1)_*}^{s-1,t}(\Sigma^{19}\mathrm{H}_{*}(C_{\eta}))\xrightarrow{\partial} \Ext_{\A(1)_*}^{s,t}(\Sigma^{4}R_{2})\rightarrow \Ext_{\A(1)_*}^{s,t}(\R_{3}/\Sigma ^{18} \mathrm{H}_{*}(C_{\eta}))\rightarrow.$$ 
Since $\partial$ is $G_{0}$-linear, one only needs to compute $\partial$ on the two generators of $\Ext^{0,19}_{\A(1)_*}(\Sigma^{19}\mathrm{H}_{*}(C_{\eta}))$ and $\Ext^{1,21}_{\A(1)_*}(\F,\Sigma^{19}\mathrm{H}_{*}(C_{\eta}))$. Direct computations show that $\partial$ act non-trivially on these classes. It follows that $\partial$ is a monomorphism and so $\Ext_{\A(1)_*}^{s,t}(\R_{3}/\Sigma ^{18} \mathrm{H}_{*}(C_{\eta}))$ is $v_{1}$-free on the generators depicted in Figure \ref{Ext_mid}.

\begin{figure}[!h]
\begin{center}
\begin{tikzpicture}[scale=0.5]
\clip(-1.5,-1.5) rectangle (13,6);
\draw[color=gray] (0,0) grid [step=1] (13,6);
\foreach \n in {12,13,...,25}
{
\def\nn{\n-0}
\node[below] at (\nn+0.5-12,0) {$\n$};
}
\foreach \s in {0,1,...,6}
{\def\ss{\s-0};
\node [left] at (-0.4,\ss+0.5,0){$\s$};
}
\foreach \s in {0,1,...,4}
\draw [fill,red] (0.5,\s+0.5) circle [radius=0.05];
\draw[->,red] (0.5,0.5)--(0.5,5.5);
%%%%%%%%%%%%%%%%%%%
\foreach \t  in {0,1,..., 4}
     \draw [fill,red] (4.5,\t+0.5) circle [radius=0.05];
      \draw[->,red] (4.5,0.5)--(4.5,5.5); 

\end{tikzpicture}
\caption{$\Ext_{\A(1)_*}^{s,t}(\R_{3}/\Sigma ^{18} \mathrm{H}_{*}(C_{\eta})).$}
\phantomsection \label{Ext_mid}
\end{center}
\end{figure}
\noindent
It follows immediately from the exact sequence $$0\rightarrow \Sigma ^{18} \mathrm{H}_{*}(C_{\eta}) \rightarrow R_{3} \rightarrow R_{3}/\Sigma ^{18} \mathrm{H}_{*}(C_{\eta})\rightarrow 0$$ that $\Ext_{\A(1)_*}^{*,*}(R_{3})$ is as depicted in Figure \ref{G_3}. In particular, missing $h_{0}$-extensions are established. 

\begin{figure}[!h]
\begin{center}
\begin{tikzpicture}[scale=0.5]
\clip(-1.5,-1.5) rectangle (13,6);
\draw[color=gray] (0,0) grid [step=1] (13,6);
\foreach \n in {12,13,...,25}
{
\def\nn{\n-0}
\node[below] at (\nn+0.5-12,0) {$\n$};
}
\foreach \s in {0,1,...,6}
{\def\ss{\s-0};
\node [left] at (-0.4,\ss+0.5,0){$\s$};
}
\foreach \s in {0,1,...,4}
\draw [fill,red] (0.5,\s+0.5) circle [radius=0.05];
\draw[->,red] (0.5,0.5)--(0.5,5.5);
%%%%%%%%%%%%%%%%%%%
\foreach \t  in {0,1,..., 4}
     \draw [fill,red] (4.5,\t+0.5) circle [radius=0.05];
      \draw[->,red] (4.5,0.5)--(4.5,5.5); 
\foreach \t  in {0,1,..., 4}
     \draw [fill] (6.5,\t+0.5) circle [radius=0.05];
      \draw[->] (6.5,0.5)--(6.5,5.5); 
\foreach \t  in {1,2,..., 4}
     \draw [fill] (8.5,\t+0.5) circle [radius=0.05];
      \draw[->] (8.5,1.5)--(8.5,5.5); 
\foreach \t  in {2,3,4}
    \draw [fill] (10.5,\t+0.5) circle [radius=0.05];
      \draw[->] (10.5,2.5)--(10.5,5.5); 
\foreach \t  in {3,4}
     \draw [fill] (12.5,\t+0.5) circle [radius=0.05];
      \draw[->] (12.5,3.5)--(12.5,5.5); 

\end{tikzpicture}
\end{center}
\caption{$G_{3}$ -The red part is the contribution of $\Ext_{\A(1)_*}^{s,t}(\R_{3}/\Sigma ^{18} \mathrm{H}_{*}(C_{\eta}))$ and the black one of $\Ext_{\A(1)_*}^{s,t}(\Sigma^{18}\mathrm{H}_{*}(C_{\eta})).$}
\phantomsection \label{G_3}
\end{figure}
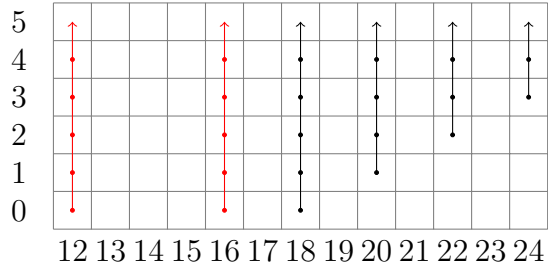
\end{proof}
\begin{Theorem}\phantomsection \label{Lem_G} As a module over $G_{0}$, we have
\begin{itemize}

\item[(a)] For every $\sigma\geq 2$, $\Ext_{\A(1)_*}^{*,*}(R^{'}_{\sigma}) = G_{\sigma}$ is generated by $\alpha_{s,t,\sigma}\in\Ext^{s,t+s}_{\A(1)_*}(R^{'}_{\sigma})$ where $(s,t)\in\{(0,4\sigma),(0,2j+4\sigma)|2\leq j\leq \sigma, (j,6\sigma+2j)|1\leq j\leq 3\}$ with $h_{1}\alpha_{s,t,\sigma} =0 $.
%\item[(b)] $\alpha_{0,4,1}\alpha_{s,2s+6t,\sigma}$ is a non-trivial element in $G_{\sigma +1}$
\item[(b) ]For all pairs of triples $(s_{1}, t_{1}, \sigma_{1})$ and $(s_{2}, t_{2},\sigma_{2})$ with $\sigma_{1}\geq 1$  and $\sigma_{2}\geq 1$ except for $(2,9,1)$ and $(3,10,1)$, we have that $$\alpha_{s_{1},t_{1},\sigma_{1}}\alpha_{s_{2},t_{2},\sigma_{2}}=\alpha_{s_{1}+s_{2},t_{1}+t_{2},\sigma_{1}+\sigma_{2}}.$$
\end{itemize}
\end{Theorem}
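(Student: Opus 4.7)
The proof goes by induction on $\sigma$. The base cases $\sigma = 2$ and $\sigma = 3$ are already Proposition \ref{Lem_G2} and Lemma \ref{Lem_G3}; for $\sigma \geq 4$ I would exploit the short exact sequence of Lemma \ref{Lem_SES2},
$$0 \to \Sigma^4 R'_{\sigma-1} \xrightarrow{\times y_1} R'_\sigma \to \Sigma^{6\sigma} V_4 \to 0,$$
together with the observation (Lemma \ref{Lem_Y}) that $\mathrm{Ext}^{*,*}_{\A(1)_*}(V_4)$ is polynomial on $v_1$, so that as a $G_0$-module it decomposes into four $v_1^4$-towers generated by $h^0, h^1, h^2, h^3$, each annihilated by $h_0$, $h_1$ and $v$.

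For part (a): the first step is to check that the connecting homomorphism $\partial : \mathrm{Ext}^s(\Sigma^{6\sigma}V_4) \to \mathrm{Ext}^{s+1}(\Sigma^4 R'_{\sigma-1})$ vanishes for internal-degree reasons, exactly as in the proof of Lemma \ref{Lem_G3}: comparing the bidegrees of $G_0$-generators on each side shows that no source generator has the correct internal degree to hit a target generator. The long exact sequence then splits into short exact sequences and additively $G_\sigma$ decomposes as the sum of $\mathrm{Ext}^*_{\A(1)_*}(\Sigma^4 R'_{\sigma-1})$ and $\mathrm{Ext}^*_{\A(1)_*}(\Sigma^{6\sigma}V_4)$. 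The lifts of $h^0, \ldots, h^3$ produce the generators $\alpha_{0, 6\sigma, \sigma}$ (the case $j=\sigma$ in the list) and $\alpha_{j, 6\sigma+2j, \sigma}$ for $1 \leq j \leq 3$, while the shifted inductive generators $\Sigma^4\alpha_{0, 4(\sigma-1), \sigma-1}$ and $\Sigma^4\alpha_{0, 2j+4(\sigma-1), \sigma-1}$ for $2 \leq j \leq \sigma-1$ yield $\alpha_{0, 4\sigma, \sigma}$ and $\alpha_{0, 2j+4\sigma, \sigma}$ for $2 \leq j \leq \sigma-1$. The main obstacle is then to show that the remaining shifted classes $\Sigma^4\alpha_{j, 6(\sigma-1)+2j, \sigma-1}$ (for $1 \leq j \leq 3$), which sit in bidegree $(j, 6\sigma+2j-2)$, are not new module generators but coincide with $h_0\alpha_{j-1, 6\sigma+2(j-1), \sigma}$. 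Mimicking the strategy of Lemma \ref{Lem_G3}, I would establish these $h_0$-extensions by refining the filtration via a subcomodule isomorphic to $\Sigma^{6\sigma}\mathrm{H}_*(C_\eta)$, built from $\A(1)_*$-primitive combinations extending $y_1y_3^2 + y_2^3$ and $y_2y_3^2$, and reading the required extensions off from the associated long exact sequence.

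The $h_1$-annihilation of the generators then follows by descent: inductively $h_1$ annihilates every generator of $G_{\sigma-1}$, hence the whole $G_0$-submodule $\mathrm{Ext}^*_{\A(1)_*}(\Sigma^4 R'_{\sigma-1}) \subset G_\sigma$. For each new generator $\alpha_{i, 6\sigma+2i, \sigma}$ lifting $h^i$, the relation $h_1 h^i = 0$ in the quotient forces $h_1\alpha_{i, 6\sigma+2i, \sigma}$ into this submodule, where it is itself an $h_1$-multiple and thus zero. This conclusion is independent of the chosen lift, since modifying the lift by $x$ in the submodule contributes only $h_1 x = 0$.

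For part (b), the pairing $G_{\sigma_1} \otimes G_{\sigma_2} \to G_{\sigma_1+\sigma_2}$ is induced from polynomial multiplication in $R = \F[y_1, y_2, y_3]$ projected onto the $R'_{\sigma_1+\sigma_2}$-factor of Lemma \ref{Lem_Decom} (equivalently, working modulo $v_2^4$). Choosing explicit $\A(1)_*$-primitive monomial representatives of the generators $\alpha_{s,t,\sigma}$ (that is, monomials $y_1^ay_2^by_3^c$ corrected by lower terms to enforce primitivity), multiplication of representatives gives the representative of the expected target generator, yielding the formula $\alpha_{s_1, t_1, \sigma_1}\alpha_{s_2, t_2, \sigma_2} = \alpha_{s_1+s_2, t_1+t_2, \sigma_1+\sigma_2}$. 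The two exceptions $(2, 9, 1)$ and $(3, 10, 1)$ arise because $\alpha_{2, 9, 1} = h_1\alpha_{1, 8, 1}$ and $\alpha_{3, 10, 1} = h_1^2\alpha_{1, 8, 1}$ in $G_1$; for $\sigma_2 \geq 1$ their products with any $\alpha_{s_2, t_2, \sigma_2}$ become $h_1^k\alpha_{s_2+1, t_2+8, \sigma_2+1}$, which vanish by part (a) as soon as $\sigma_2 + 1 \geq 2$, in contradiction to the naive formula's prediction of a generically nonzero target class.
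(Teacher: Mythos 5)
Your overall architecture for part (a) --- induction on $\sigma$ via the short exact sequence of Lemma \ref{Lem_SES2}, vanishing of the connecting homomorphism for degree reasons, and then resolution of the $h_0$-extensions on the classes coming from the $V_4$-quotient --- matches the paper up to the last step, where you diverge: you propose to generalise the $\Sigma^{18}\mathrm{H}_*(C_\eta)$-filtration trick of Lemma \ref{Lem_G3} to all $\sigma$, whereas the paper derives the $h_0$-extensions from the multiplicative identities $h_0\alpha_{s,2s+6\sigma+6,\sigma+1}=h_0\alpha_{0,12,2}\,\alpha_{s,2s+6\sigma-6,\sigma-1}=\alpha_{0,4,1}\alpha_{1,8,1}\alpha_{s,2s+6\sigma-6,\sigma-1}\neq 0$, which are instances of part (b). Your route is plausible but remains a sketch: the obvious candidate subcomodule $y_2^{\sigma-3}\cdot\F\{y_1y_3^2+y_2^3,\,y_2y_3^2\}$ of $R'_\sigma$ is \emph{not} an $\A(1)_*$-subcomodule when $\sigma-3$ is odd (only $y_2^2$, not $y_2$, is $\A(1)_*$-primitive), so the corrected generators and the two connecting homomorphisms have to be produced and recomputed for every $\sigma$; none of this is automatic. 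Your $h_1$-annihilation argument also needs repair: knowing that $h_1\alpha_{i,6\sigma+2i,\sigma}$ lands in the image of $\Ext_{\A(1)_*}^{*,*}(\Sigma^4R'_{\sigma-1})$ does not make it an $h_1$-multiple of an element of that image. The correct point is that, by induction, $\Sigma^4G_{\sigma-1}$ is concentrated in even stem $t-s$ (its generators have even stem, are $h_1$-torsion, and $h_0,v,v_1^4$ preserve stem parity), while $h_1\alpha_{i,6\sigma+2i,\sigma}$ sits in odd stem, hence vanishes.

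The genuine gap is in part (b). You propose to compute all products by multiplying ``$\A(1)_*$-primitive monomial representatives'' of the generators; this only makes sense for the generators in cohomological degree $s=0$. The classes $\alpha_{s,t,\sigma}$ with $s>0$ --- for instance $\alpha_{1,8,1}$, $\alpha_{1,14,2}$, $\alpha_{3,18,2}$ and the generators $\alpha_{j,6\sigma+2j,\sigma}$ for $1\le j\le 3$ --- live in positive $\Ext$-degree over $\A(1)_*$ and are not represented by primitives in $R'_\sigma$; they would require cobar-level representatives, and multiplying and identifying those is exactly the computation one wants to avoid. Since the $h_0$-extensions in part (a) rest precisely on products involving $\alpha_{1,8,1}$ (which has $s=1$), your method misses the cases that carry the weight of the theorem. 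The paper's solution is to bypass representatives entirely: it maps everything along $R'_\sigma\to\mathrm{H}_*(\Sigma^{6\sigma}X_\sigma)\to\mathrm{H}_*(\Sigma^{6\sigma}Y)$, checks (using the proofs of Lemma \ref{Lem_G1}, Proposition \ref{Lem_G2} and Lemma \ref{Lem_inclu}) that every non-exceptional generator is detected by a nonzero class in $\Ext^{*,*}_{\A(1)_*}(\mathrm{H}_*(Y))$, and then invokes Lemma \ref{Lem_Y}, which says that the latter is a polynomial algebra, so products of nonzero classes are nonzero; the product is then identified with $\alpha_{s_1+s_2,t_1+t_2,\sigma_1+\sigma_2}$ because that is the unique nontrivial class in its bidegree. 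Some such detection argument is needed for the positive-degree generators; without it both part (b) and, in your setup, the $h_0$-extensions of part (a) remain unproved.
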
  
\begin{figure}[!h]
\begin{center}
\begin{tikzpicture}[scale=0.5]
\clip(-1.5,-1.5) rectangle (17,6);
\draw[color=gray] (0,0) grid [step=1] (17,6);

\foreach \s in {0,1,...,6}
{\def\ss{\s-0};
\node [left] at (-0.4,\ss+0.5,0){$\s$};
}
\foreach \s in {0,1,...,4}
\draw [fill] (0.5,\s+0.5) circle [radius=0.05];
\draw[->] (0.5,0.5)--(0.5,5.5);
\node[below,scale=0.7] at (0.5,-0.5) {$4\sigma$};
%%%%%%%%%%%%%%%%%%%
\foreach \t  in {0,1,..., 4}
     \draw [fill] (4.5,\t+0.5) circle [radius=0.05];
      \draw[->] (4.5,0.5)--(4.5,5.5); 
      \node[below, scale=0.7] at (4.5,-0.5) {$4\sigma+4$};
\foreach \t  in {0,1,..., 4}
     \draw [fill] (6.5,\t+0.5) circle [radius=0.05];
      \draw[->] (6.5,0.5)--(6.5,5.5); 
      \node[below, scale=0.7] at (6.5,-0.5) {$4\sigma+6$};
\foreach \t  in {0,1,..., 4}
     \draw [fill] (10.5,\t+0.5) circle [radius=0.05];
      \draw[->] (10.5,0.5)--(10.5,5.5); 
      \node[below, scale=0.7] at (10.5,-0.5) {$6\sigma$};
\foreach \t  in {1,2,..., 4}
     \draw [fill] (12.5,\t+0.5) circle [radius=0.05];
      \draw[->] (12.5,1.5)--(12.5,5.5); 
\foreach \t  in {2,3,4}
    \draw [fill] (14.5,\t+0.5) circle [radius=0.05];
      \draw[->] (14.5,2.5)--(14.5,5.5); 
\foreach \t  in {3,4}
     \draw [fill] (16.5,\t+0.5) circle [radius=0.05];
      \draw[->] (16.5,3.5)--(16.5,5.5); 

\end{tikzpicture}
\end{center}
\caption{$G_{\sigma}$ for $\sigma \geq 2$}
\end{figure}
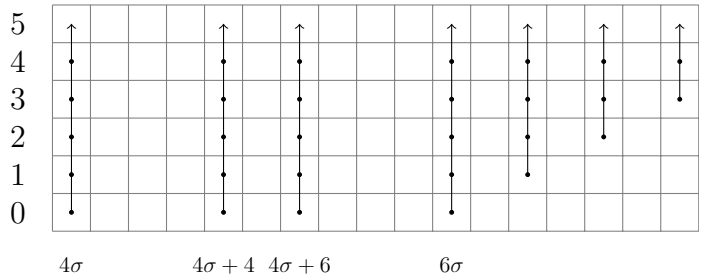

\begin{proof} $(a)$ The statement for $\sigma = 2$ is Lemma \ref{Lem_G2}. Let us prove the claim for $\sigma\geq 3$ by induction. The base case is Lemma \ref{Lem_G3}.\\\\ 
Suppose the claim is true for some $\sigma \geq 3$. The long exact sequence associated to the short exact sequence in Lemma \ref{Lem_SES2} reads
%%%%%%%%%%%%%%%%%%%%%%%%%
$$\rightarrow \Ext_{\A(1)_*}^{s,t}(R^{'}_{\sigma+1})\rightarrow \Ext_{\A(1)_*}^{s,t}(\Sigma^{6\sigma+6}V_{4})\rightarrow \Ext_{\A(1)_*}^{s+1,t}(\Sigma^{4}R^{'}_{\sigma})\rightarrow .$$
%%%%%%%%%%%%%%%%%
Combining the additive structure of $\Ext_{\A(1)_*}^{s,t}(\Sigma^{4}R^{'}_{\sigma})$ and that $$\Ext_{\A(1)_*}^{s,t}(\Sigma^{6\sigma+6}V_{4})\cong \Sigma^{6\sigma+6}\F[v_{1}],$$ we obtain the additive structure of $G_{\sigma+1}$ as described in the lemma because the connecting homomorphism vanishes for degree reasons. To establish the non-trivial $h_{0}$-multiplication on the generators $\{\alpha_{s,2s+6\sigma+6,\sigma+1}| \  s\geq 0  \}$, we use the following identities
\\
\begin{itemize}
\item[(i)] $G_{\sigma +1}\ni\alpha_{0,4,1}\alpha_{s,2s+6t,\sigma}\ne 0$ $\forall \sigma\geq 1$
\item[(ii)]$\alpha_{1,8,1}\alpha_{s,2s+6\sigma-6,\sigma-1} = \alpha_{s+1,2s+6\sigma+2,\sigma} $ $\forall \sigma\geq 2$
\item[(iii)]$\alpha_{0,12,2}\alpha_{s,2s+6\sigma-6,\sigma-1}=\alpha_{s,2s+6\sigma+6,\sigma+1}$ $\forall \sigma\geq 3$.
\end{itemize}
\noindent
\\
These identities are the content of part $(b)$. For the sake of the presentation, we postpone the proof to $(b)$; this is legitimate because, as we will see, the proof of $(b)$ only uses the additive structure of $G_{\sigma}'s$. Let us show how these identities allow us to conclude the proof of $(a)$. Indeed, the classes $\alpha_{s,2s+6\sigma-6,\sigma-1}$ exist (non-trivial) for all $\sigma\geq 3$ and $s\geq0$. Therefore, we have that, for all $\sigma\geq 3$,
\begin{align*}
h_{0}\alpha_{s,2s+6\sigma+6,\sigma+1}&= h_{0}\alpha_{0,12,2}\alpha_{s,2s+6\sigma-6,\sigma-1}\ (\mbox{multiplying both sides of (iii) by $h_{0}$})\\
							    & = \alpha_{0,4,1}\alpha_{1,8,1}\alpha_{s,2s+6\sigma-6,\sigma-1}\ (\mbox{because of (i)})\\
						            &=\alpha_{0,4,1}\alpha_{s+1,2s+2 +6\sigma,\sigma}\ (\mbox{because of (ii)}) \\
							    & \ne 0\  (\mbox{because of (i)}).
\end{align*}
\noindent
\\
%$(b)$ Since the generator $\alpha_{0,4,1}$ is represented by the primitive $y_{1}\in R_{1}$, multiplying by $\alpha_{0,4,1}$ is exactly the homomorphism $\Ext_{\A(1)_*}^{s,t}(\Sigma^{4}R_{t}^{'})\rightarrow \Ext_{\A(1)_*}^{s,t}(R^{'}_{t+1})$ induced by $\Sigma^{4}R^{'}_{t}\xrightarrow{.y_{1}}R^{'}_{t+1}$. This homomorphism being injective by the inductive step in the proof of $(a)$, the claim of $(b)$ follows.\\\\
\noindent
%%%%%%%%%%
$(b)$ For every $\sigma, \tau\geq 1$, there is a commutative diagram of $\A(1)_*$-comodules

$$\xymatrix{R_{\sigma}^{'}\ar[d]\otimes R_{\tau}^{'}\ar[r]^{\mu}\ar[d]&R_{\sigma+\tau}\ar[r]& R^{'}_{\sigma+\tau}\ar[d]\\
		\mathrm{H}_{*}(\Sigma^{6\sigma}X_{\sigma})\otimes \mathrm{H}_{*}(\Sigma^{6\tau}X_{\tau})\ar[d]\ar[rr]^{\mu}&&\mathrm{H}_{*}(\Sigma^{6\sigma+6\tau}X_{\sigma+\tau})\ar[d]\\
		\mathrm{H}_{*}(\Sigma^{6\sigma}Y)\otimes \mathrm{H}_{*}(\Sigma^{6\tau}Y)\ar[rr]^{\mu}&&\mathrm{H}_{*}(\Sigma^{6\sigma+6\tau}Y)
}.$$ Let us explain the maps in this diagram. The spectrum $X_{\sigma}$  is $V(0), S^{0}\cup_{2}e^{1}\cup_{\eta}e^{2}$ or $Y$ if $\sigma= 1, 2$ or $\sigma>2$ respectively; and in each case the map $R_{\sigma}^{'}\rightarrow \mathrm{H}_{*}(X_{\sigma})$ is the projection appearing in the proof of Lemma \ref{Lem_G1}, Lemma \ref{Lem_SES1} or Lemma \ref{Lem_SES2}, respectively. The other vertical arrows are the inclusions of $X_{\sigma}$ into Y. The bottom horizontal arrow is the multiplication on $\mathrm{H}_{*}(Y)$, described in Lemma \ref{Lem_Y}, and the middle one is induced by the latter. The second upper arrow is the projection on the factor $R_{\sigma+\tau}^{'}$ of the decomposition in Lemma \ref{Lem_Decom}. \\\\
The induced homomorphisms in $\Ext$ over $\A(1)_*$ of all vertical arrows are studied in the proof of Lemmas \ref{Lem_G1}, \ref{Lem_G2}, \ref{Lem_G} and Lemma \ref{Lem_inclu}, according to which the classes $\alpha_{s,t,\sigma}$, where $\sigma \geq 1$ and $(s,t,\sigma)\notin \{(2,9,1), (3,10,1)\}$, are sent non-trivially in a unique way to $\Ext_{\A(1)_*}^{s,t}(\mathrm{H}_{*}(Y))$, hence their products are non-trivial by Lemma \ref{Lem_Y}. This proves $(b)$.
\end{proof}
\noindent
\begin{Remark} \phantomsection \label{Alg_gen} Let us summarise what has been done so far. First, Lemma \ref{Lem_Decom} implies that
\begin{equation*} 
\Ext_{\A(1)_*}^{*,*}(R) \cong (\bigoplus\limits_{i\geq0} G_{i})\otimes \F[v_{2}^{4}]
\end{equation*}
where $v_{2}^{4} \in \Ext^{4,28}(\F,R_{4})$ represented by $y_{3}^{4}$. Next, Theorem \ref{Lem_G} describes completely the products between $G_{i}$'s modulo the ideal generated by $(v_{2}^{4})$. It is then straightforward to verify that $\Ext_{\A(1)_*}^{*,*}(R)$ is generated by the classes of 
\begin{equation}\phantomsection \label{alg_gen}
h_{0}, h_{1}, v, v_{1}^{4}, \alpha_{0,4,1}, \alpha_{1,8,1}, \alpha_{0,12,2}, \alpha_{1,14,2}, \alpha_{3,18,2}, \alpha_{0,18,3}, v_{2}^{4}.
\end{equation}
%To simplify the notation, we replace $\alpha_{0,4,1}, \alpha_{0,12,2},\alpha_{1,14,2}, \alpha_{0,18,3}$ by $ h_{2},\alpha_{0,12,2},\alpha, \alpha_{0,18,3}$, respectively.   \\\\
%%%%%%%%%%%%%%%%%%%%%%
\end{Remark}
\noindent
Let us describe the subalgebra of primitives. 
\begin{Corollary}There is the following isomorphism of graded algebras $$\Ext^{0,*}_{\A(1)_*}(R)\cong \F[\alpha_{0,4,1}, \alpha_{0,12,2},v_{2}^{4},\alpha_{0,18,3}]/(\alpha_{0,18,3}^{2}=\alpha_{0,12,2}^{3}+\alpha_{0,4,1}^{2}v_{2}^{4}).$$ 
\end{Corollary}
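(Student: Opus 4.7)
The plan is to identify $\Ext^{0,*}_{\A(1)_*}(R)$ with the subspace of $\A(1)_*$-primitives of $R=\F[y_1,y_2,y_3]$, exhibit four explicit primitives that generate it, verify the claimed relation, and finish by a Poincaré series comparison using the additive computations of $G_\sigma$ already established. Concretely, I would first observe that the four generators on the right-hand side correspond, under the representatives produced in Proposition \ref{Lem_G1}, Proposition \ref{Lem_G2} and Lemma \ref{Lem_G3}, to the four polynomials
\[
\alpha_{0,4,1}\leftrightarrow y_1,\qquad \alpha_{0,12,2}\leftrightarrow y_2^{2},\qquad v_2^{4}\leftrightarrow y_3^{4},\qquad \alpha_{0,18,3}\leftrightarrow y_1y_3^{2}+y_2^{3},
\]
the last identification coming from the computation in the proof of Lemma \ref{Lem_G3}, where $\F\{y_1y_3^2+y_2^3,\; y_2y_3^2\}$ was identified as the sub-$\A(1)_*$-comodule $\Sigma^{18}\H_*(C_\eta)\subset R_3'$, so that $y_1y_3^2+y_2^3$ is its bottom class and hence an $\A(1)_*$-primitive. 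The first three primitivities were already noted right after Lemma \ref{Lem_Decom}.

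Next, the relation is immediate in characteristic $2$:
\[
(y_1y_3^{2}+y_2^{3})^{2}=y_1^{2}y_3^{4}+y_2^{6}=\alpha_{0,4,1}^{2}\,v_2^{4}+\alpha_{0,12,2}^{3},
\]
so the claimed surjection
\[
\F[a,b,c,d]/(d^{2}-b^{3}-a^{2}c)\twoheadrightarrow \Ext^{0,*}_{\A(1)_*}(R)
\]
sending $a,b,c,d$ to $\alpha_{0,4,1},\alpha_{0,12,2},v_2^{4},\alpha_{0,18,3}$ is well defined; that it lands in the primitives and is a ring map is clear.

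To conclude, I would compare Poincaré series in the Davis-Mahowald grading. By Lemma \ref{Lem_Decom} the $\F$-dimension of $\Ext^{0,*}_{\A(1)_*}(R)$ in DM degree $\sigma$ equals $\sum_{4k\le\sigma}\dim_\F \Ext^{0}_{\A(1)_*}(R'_{\sigma-4k})$, and by Theorem \ref{Lem_G}(a) and Proposition \ref{Lem_G2}, Proposition \ref{Lem_G1} this inner dimension is $1$ for $\sigma=0,1$ and $\sigma$ for $\sigma\ge 2$. On the other hand, a monomial basis for $\F[a,b,c,d]/(d^{2}-b^{3}-a^{2}c)$ is given by $a^{i}b^{j}c^{k}d^{\epsilon}$ with $\epsilon\in\{0,1\}$; counting those in a fixed DM degree $\sigma$ (where $|a|=1,|b|=2,|c|=4,|d|=3$) gives exactly the same number by an elementary enumeration. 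Thus the surjection above is an isomorphism.

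The one step that requires a little care is justifying that the identifications of classes with polynomial representatives are compatible with the multiplication, so that the relation computed in $R$ really is the relation between the $\Ext^0$-classes. This is where Theorem \ref{Lem_G}(b) is used: it shows that the products of the classes $\alpha_{0,\bullet,\sigma}$ are non-zero and in particular detect the multiplication in $R$ on primitive representatives, ruling out the appearance of corrections from higher filtration in Lemma \ref{Lem_Decom}. Once this compatibility is in hand, the dimension count above forces both that there are no further algebra generators and that $d^{2}-b^{3}-a^{2}c$ generates the entire ideal of relations, completing the proof.
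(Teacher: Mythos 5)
Your proposal is correct in substance and its first half coincides with the paper's argument: the paper also identifies $\Ext^{0,*}_{\A(1)_*}(R)$ with the subalgebra of $\A(1)_*$-primitives of $R$, matches $\alpha_{0,4,1},\alpha_{0,12,2},v_2^4,\alpha_{0,18,3}$ with $y_1, y_2^2, y_3^4, y_2^3+y_1y_3^2$, and reads off the relation by squaring in characteristic $2$. Where you diverge is in closing the argument: the paper simply invokes Remark \ref{Alg_gen} to assert that these four classes generate the whole subalgebra of primitives (and leaves implicit that the kernel of $\F[a,b,c,d]\rightarrow R$ is exactly the principal ideal), whereas you replace this with a Poincar\'e series comparison in the Davis--Mahowald grading. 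Your count is correct: with $d_0=d_1=1$ and $d_\sigma=\sigma$ for $\sigma\geq 2$, the series $\frac{1}{1-x^4}\sum_\sigma d_\sigma x^\sigma$ equals $\frac{(1+x^3)}{(1-x)(1-x^2)(1-x^4)}$, which is the series of the monomial basis $a^ib^jc^kd^\epsilon$. This is a nice self-contained alternative that avoids re-deriving the generation statement, and as a bonus it makes the completeness of the relation ideal visible rather than implicit.

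There is one logical step you should tighten. Equality of Poincar\'e series between the source and target of a graded ring map does not by itself force that map to be an isomorphism; you need to know in advance that it is injective or that it is surjective, and you assert surjectivity without justification before the count. The cheapest repair is injectivity: the target is a subring of the integral domain $R$, the images of $a,b,c$ are algebraically independent, and $d^2+b^3+a^2c$ is irreducible (in characteristic $2$ this amounts to $b^3+a^2c$ not being a square, clear from the odd $b$-degree), so it is the minimal polynomial of $y_2^3+y_1y_3^2$ over $\F[y_1,y_2^2,y_3^4]$ and the kernel of $\F[a,b,c,d]\rightarrow R$ is exactly $(d^2+b^3+a^2c)$. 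With injectivity in hand, your dimension count does deliver surjectivity and hence the isomorphism. (Alternatively you could justify surjectivity directly from Theorem \ref{Lem_G}(a),(b) together with an induction on powers of $v_2^4$, which is essentially the content of Remark \ref{Alg_gen}.) Your closing remark about ``corrections from higher filtration'' is not really needed for this corollary: in cohomological degree $0$ the product on $\Ext^0(R)$ is literally the polynomial multiplication of $R$ restricted to the primitives, so no filtration issues arise.
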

\begin{proof} The algebra $\Ext_{\A(1)_*}^{0,*}(\F,R)$ is naturally identified with a subalgebra of $R=\F[y_{1},y_{2},y_{3}]$. Through this identification, $\alpha_{0,4,1}, \alpha_{0,12,2},v_{2}^{4}, \alpha_{0,18,3} $ identify with $y_{1}, y_{2}^{2}, y_{3}^{4}, y_{2}^{3}+ y_{1}y_{3}^{2}$, respectively. Thus  $\F[\alpha_{0,4,1}, \alpha_{0,12,2},v_{2}^{4},\alpha_{0,18,3}]/(\alpha_{0,18,3}^{2}=\alpha_{0,12,2}^{3}+\alpha_{0,4,1}^{2}v_{2}^{4})$ is isomorphic to the subalgebra of $\Ext_{\A(1)_*}^{0,*}(\F,R)$ generated by $\alpha_{0,4,1}, \alpha_{0,12,2},v_{2}^{4}, \alpha_{0,18,3} $. On the other hand, it follows from Remark (\ref{Alg_gen}) that $\alpha_{0,4,1}, \alpha_{0,12,2},v_{2}^{4}, \alpha_{0,18,3} $ generate the whole subalgebra of primitives of $\Ext_{\A(1)_*}^{*,*}(R)$.
 % We prove that this identification is in fact an isomorphism by comparing their Poincar\'e series. \\\\
% The Poincar\'e series of $\F[\alpha_{0,4,1}, \alpha_{0,12,2},v_{2}^{4},\alpha_{0,18,3}]/(\alpha_{0,18,3}^{2}=\alpha_{0,12,2}^{3}+\alpha_{0,4,1}^{2}v_{2}^{4})$ is $$P(x,t) = \frac{1+x^{3}t^{18}}{(1-xt^{4})(1-x^{2}t^{12})(1-x^{4}t^{28})}$$
%Now, we compute the Poincar\'e series of $\Ext_{\A(1)_*}^{0,*}(\F,R)$. According to discussion following Lemma \ref{Lem_G}, it is reduced to compute the Poincar\'e series of $\bigoplus_{i\geq0}G_{i}$. Let denote by $Q_{i}(x,t)$ the Poincar\'e series of $G_{i}$. Then there is a recurrence formula
%\begin{alignat*}{1}
%Q_{0}(x,t)& = 1\\
%Q_{1}(x,t)& = xt^{4}\\
%Q_{i}(x,t)&= xt^{4}Q_{i-1}(x,t)+ x^{i}t^{6i}\ \forall\ i\geq 2
%\end{alignat*} 
%
%  
%   
%
%Then the Poincar\'e series of  $\Ext_{\A(1)_*}^{0,*}(\F,R)$ is $$Q(x,t)=(\sum_{\substack{i\geq0}} Q_{i}(x,t))\frac{1}{1-x^{4}t^{28}}$$
%Let $a_{m,n}$ and $b_{m,n}$ be coefficients of the monomial $x^{m}t^{n}$ in the power series $P$ and $Q$, respectively. Then, $$0\leq a_{m,n}\leq b_{m,n}\forall m, n \in \N$$ because $P\leq Q$. Therefore, it suffices to show that $$\sum_{n\geq 0}a_{m,n} = \sum_{n\geq 0}b_{m,n} \ \ \forall n\in \N$$ or equivalently $$P(x,1)=Q(x,1)$$
%By recurrence, we find that $$Q_{i}(x,1) = i x^{i}$$
%and so 
%$$\sum_{i\geq 0}Q_{i}(x,1) = 1 +\sum_{i\geq 1}i x^{i} = \frac{x^{2}-x+1}{(x-1)^{2}}$$ 
%Hence, $$Q(x,1) = \frac{x^{2}-x+1}{(x-1)^{2}(1-x^{4})}$$
%Finally, $$P(x,1) = \frac{1+x^{3}}{(1-x)(1-x^{2})(1-x^{4})}=\frac{x^{2}-x+1}{(x-1)^{2}(1-x^{4})}$$
This concludes the proof of the lemma.
\end{proof}
%\begin{Lemma}\marginpar{Find a representative for $\alpha$ in $\A(2)_{*}\otimes E\otimes A_1$} $$\Ext^{1,15}_{\A(1)_*}(\F,R^{2})\cong \F\{\alpha\}$$ Plus, $\alpha$ is represented by $\xi_{2}\otimes y_{1}^{2}+\xi_{1}^{3}\otimes y_{1}^{2}+ \xi_{1}\otimes y_{2}^{2}\in \A(1)_*\otimes R^{2}$.
%\end{Lemma}\marginpar{give a proof}
%\begin{proof}
%\end{proof}
\noindent
\textbf{The differentials $d_{1}$.} %\marginpar{Choice 1: Give a demonstration of how to compute $d_{1}$ on some elements and refer the rest to Rognes }
 %\marginpar{Choice 2: Reproduce the whole $E_{1}$-term of DMSS and study all $d_{1}$ } 
Since the DMSS for $\F$ is a spectral sequence of algebras, all $d_{1}$-differentials can be determined on the set of algebra generators of (\ref{alg_gen}). 
\begin{Proposition}\phantomsection \label{d1-S} The $d_{1}$-differential is multiplicative and on generators, it is given as follows:
\begin{itemize}
\item[1)] $d_{1}(h_{0}) = 0$
\item[2)] $d_{1}(h_{1}) = 0$
\item[3)] $d_{1}(\alpha_{0,4,1}) =0$
\item[4)] $d_{1}(\alpha_{1,14,2})=0$
\item[5)] $d_{1}(\alpha_{0,18,3}) = 0$
\item[6)] $d_{1}(v_{1}^{4}) =0$
\item[7)] $d_{1}(\alpha_{0,12,2}) = \alpha_{0,4,1}^{3} $
\item[8)] $d_{1}(\alpha_{1,8,1}) = h_{0}\alpha_{0,4,1}^{2}$
\item[9)] $d_{1}(v)= h_{0}^{3}\alpha_{0,4,1}$
\item[10)]  $d_{1}(\alpha_{3,18,2})=h_{0}^{3}\alpha_{0,18,3}$
\item[11)] $d_{1}(v_{2}^{4}) = \alpha_{0,4,1}\alpha_{0,12,2}^{2}.$
\end{itemize}
\end{Proposition}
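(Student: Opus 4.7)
The plan is to exploit the multiplicativity of the Davis--Mahowald spectral sequence, which makes $d_{1}$ a derivation on the $\E_{1}$-term. It therefore suffices to check the stated formulas on the algebra generators listed in Remark \ref{Alg_gen}, and I would organise these into three families, each handled by a distinct technique.

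For the four $\Ext^{0}$-row primitives $\alpha_{0,4,1}$, $\alpha_{0,12,2}$, $\alpha_{0,18,3}$ and $v_{2}^{4}$, Remark \ref{SSd_1} identifies $d_{1}$ with the Koszul derivation of (\ref{Kos-CC}) restricted to $\A(2)_{*}$-primitives. The practical step is to lift each $\A(1)_{*}$-primitive $p\in R_{\sigma}$ to an explicit $\A(2)_{*}$-primitive $q\in E_{2}\otimes R_{\sigma}$ by computing $\Delta_{\A(2)_{*}}(p)$ and cancelling its non-primitive part with correction terms $x_{i}\otimes r_{i}$, using $\Delta(x_{1})=1\otimes x_{1}+\zeta_{1}^{4}\otimes 1$ and $\Delta(x_{2})=1\otimes x_{2}+\zeta_{1}^{2}\otimes x_{1}+\zeta_{2}^{2}\otimes 1$. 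The Koszul differential annihilates $1\otimes p$ and sends each $x_{i}\otimes r$ to $1\otimes y_{i}r$. For instance, $y_{2}^{2}$ lifts to $1\otimes y_{2}^{2}+x_{1}\otimes y_{1}^{2}$, whose image $y_{1}^{3}$ gives $d_{1}(\alpha_{0,12,2})=\alpha_{0,4,1}^{3}$; the lift of $y_{2}^{3}+y_{1}y_{3}^{2}$ carries both $x_{1}\otimes y_{1}^{2}y_{2}$ and $x_{2}\otimes y_{1}^{3}$, whose images are two copies of $y_{1}^{3}y_{2}$ cancelling mod $2$; and the lift $1\otimes y_{3}^{4}+x_{1}\otimes y_{2}^{4}$ produces $d_{1}(v_{2}^{4})=\alpha_{0,4,1}\alpha_{0,12,2}^{2}$.

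For the $\sigma=0$-column generators $h_{0},h_{1},v,v_{1}^{4}$, the classes $h_{0}$ and $h_{1}$ have cobar representatives $[\zeta_{1}\mid 1]$ and $[\zeta_{1}^{2}\mid 1]$ in $C^{1}(\A(2)_{*},E_{2})$ with $1\in E_{2}$, and $d_{h}$ sends them to $[\,\cdot\mid d(1)\,]=0$. For $v_{1}^{4}$, the relation $v^{2}=h_{0}^{2}v_{1}^{4}$ combined with the derivation property gives $h_{0}^{2}d_{1}(v_{1}^{4})=d_{1}(v^{2})=0$ in characteristic $2$, and since $h_{0}^{2}$ is injective on $G_{1}$ in bidegree $(4,12)$, this forces $d_{1}(v_{1}^{4})=0$. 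The crucial case is $v$: as $G_{1}$ is one-dimensional in bidegree $(3,7)$, spanned by $h_{0}^{3}\alpha_{0,4,1}$, it suffices to show that $d_{1}(v)$ is non-zero. My plan is either to choose an explicit $3$-cocycle representative of $v$ in $C^{3}(\A(1)_{*},\F)$, lift it across the change-of-rings to $C^{3}(\A(2)_{*},E_{2})$ (the lift necessarily carries correction terms with non-trivial $E_{2}^{\geq 1}$-components), and apply $d_{h}$ via $d(x_{i})=1\otimes y_{i}$; or alternatively to invoke a convergence argument based on the vanishing of $\Ext^{3,7}_{\A(2)_{*}}(\F,\F)$, which forces $v$ to be hit by some differential, the only candidate being $d_{1}(v)=h_{0}^{3}\alpha_{0,4,1}$.

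Finally, the mixed generators $\alpha_{1,8,1}$, $\alpha_{1,14,2}$ and $\alpha_{3,18,2}$ are handled using the multiplicative relations from Proposition \ref{Lem_G1}, Proposition \ref{Lem_G2} and Theorem \ref{Lem_G}, together with the derivation property. From $v\alpha_{0,4,1}=h_{0}^{2}\alpha_{1,8,1}$ one deduces $h_{0}^{2}d_{1}(\alpha_{1,8,1})=h_{0}^{3}\alpha_{0,4,1}^{2}$, and injectivity of $h_{0}^{2}$ on the relevant summand of $G_{2}$ gives $d_{1}(\alpha_{1,8,1})=h_{0}\alpha_{0,4,1}^{2}$. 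The class $\alpha_{1,14,2}$ has a target bidegree that is empty in $G_{3}$ (by inspection of the generators listed in Lemma \ref{Lem_G3}), so $d_{1}(\alpha_{1,14,2})=0$ for degree reasons; then the relation $v\alpha_{1,14,2}=h_{0}\alpha_{3,18,2}$ yields $d_{1}(\alpha_{3,18,2})=h_{0}^{3}\alpha_{0,18,3}$ by the same derivation trick. The single non-trivial step is the determination of $d_{1}(v)$; once that is in hand, all other differentials follow mechanically from the algebra structure.
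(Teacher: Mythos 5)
Your proposal is correct in substance and reaches all eleven formulas, but it reorganises the logic around a different key step. The paper's chain is: compute $d_1(\alpha_{0,12,2})=\alpha_{0,4,1}^3$ and $d_1(v_2^4)=\alpha_{0,4,1}\alpha_{0,12,2}^2$ from explicit $\A(2)_*$-primitive lifts exactly as you do, then push the first of these through the Leibniz rule and injectivity of multiplication by $\alpha_{0,4,1}$ to get $d_1(\alpha_{1,8,1})=h_0\alpha_{0,4,1}^2$, and only then $d_1(v)=h_0^3\alpha_{0,4,1}$; the vanishing statements 3) and 5) are obtained respectively from survival to $\Ext^{1,4}_{\A(2)_*}(\F)\neq 0$ and from the relation $h_0\alpha_{0,18,3}=\alpha_{0,4,1}\alpha_{1,14,2}$ together with injectivity of $h_0$. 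You instead take $d_1(v)$ as the primary input and derive $d_1(\alpha_{1,8,1})$ and $d_1(\alpha_{3,18,2})$ from it, and you handle $d_1(\alpha_{0,4,1})=d_1(\alpha_{0,18,3})=0$ by direct Koszul computation; your lift $1\otimes(y_2^3+y_1y_3^2)+x_1\otimes y_1^2y_2+x_2\otimes y_1^3$ is indeed an $\A(2)_*$-primitive and its two correction terms both map to $y_1^3y_2$, so they cancel mod $2$. Both routes are valid: the paper's buys self-containedness (everything nonzero flows from one explicit cocycle computation plus the $\E_1$-algebra structure), while yours isolates the conceptual content in a single convergence statement and makes the $\sigma$-filtration-zero cases uniformly computational.

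The one place you must tighten is the determination of $d_1(v)$, which you flag as crucial but do not complete. Your second alternative works, but as stated it is imprecise: since $v$ lies in Davis--Mahowald filtration $\sigma=0$ and the differentials raise $\sigma$, it cannot be \emph{hit} by any differential; rather, the vanishing of $\Ext^{3,7}_{\A(2)_*}(\F)$ (an external input, though of the same kind the paper itself uses for part 3)) forces $v$ to \emph{support} one. You then still need to rule out $d_r(v)$ for $r\geq 2$: these land in $\Ext^{4-r,7}_{\A(1)_*}(R_r)$, which vanishes because $R_r$ is concentrated in internal degrees at least $4r>7$ for $r\geq 2$ (and the cohomological degree is negative for $r\geq 5$). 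Only then does $d_1(v)\neq 0$ follow, and the one-dimensionality of $\Ext^{3,7}_{\A(1)_*}(R_1)=\F\{h_0^3\alpha_{0,4,1}\}$ pins down the value. With those two sentences added your argument is complete; alternatively you could simply reverse your arrow and deduce $d_1(v)$ from $d_1(\alpha_{1,8,1})$ via $\alpha_{0,4,1}v=h_0^2\alpha_{1,8,1}$, as the paper does, which avoids the external input altogether.
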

\begin{proof} 
\begin{itemize}
1), 2), 4) For degree reasons, there is no room for a non-trivial $d_{1}$-differential on $h_{0}, h_{1}, \alpha_{1,14,2}$\\\\
3) It is easy to see that $\Ext_{\A(2)_{*}}^{1,4}(\F,\F)$ is non-trivial and that $\alpha_{0,4,1}$ is the only class in the $\mathrm{E}_{1}$-term that can contribute to it. Therefore $\alpha_{0,4,1}$ is a permanent cycle.\\\\
5) We see that $h_{0}\alpha_{0,18,3} = \alpha_{0,4,1}\alpha_{1,14,2}$. By the Leibniz rule, $h_{0}d_{1}(\alpha_{0,18,3}) =0$. As $h_{0}$ acts injectively on $G_{3}$, it follows that $d_{1}(\alpha_{0,18,3}) = 0$. \\\\
6) Since $h_{0}^{2}v_{1}^{4} = v^{2}$, $h_{0}^2d_{1}(v_{1}^{4}) = 2vd_1(v) = 0$. This follows because $d_{1}(v_{1}^{4})$ takes values in $\Ext_{\A(1)_*}^{4,8}(\F, R_{1}^{'})$ on which $h_{0}$ acts injectively.\\\\
7) We have that $\alpha_{0,12,2}$ is represented by the $\A(2)$-primitive $[1|y_{2}^{2}]+[x_{1}|y_{1}^{2}] \in E\otimes R_{2}$. By Remark \ref{SSd_1},  $d_{1}(\alpha_{0,12,2})$ is represented by $d([1|y_{2}^{2}]+[x_{1}|y_{1}^{2}] )=[1|y_{1}^{3}]\in E\otimes R_{3}$, hence is equal to $\alpha_{0,4,1}^{3}$. \\\\
8) Because $\alpha_{0,4,1}\alpha_{1,8,1} = h_{0}\alpha_{0,12,2}$, the Leibniz rule implies that $$\alpha_{0,4,1}d_{1}(\alpha_{1,8,1}) = h_{0}d_{1}(\alpha_{0,12,2}) = h_{0}\alpha_{0,4,1}^{3}.$$
 That $\alpha_{0,4,1}$ acts injectively on the $\mathrm{E}_{1}$-term implies that $d_{1}(\alpha_{1,8,1}) = h_{0}\alpha_{0,4,1}^{2}$\.\\\
9) The relation $\alpha_{0,4,1}v = h_{0}^{2}\alpha_{1,8,1}$ implies that $$\alpha_{0,4,1}d_{1}(v) = h_{0}^{2}d_{1}(\alpha_{1,8,1})=h_{0}^{3}\alpha_{0,4,1}^{2}$$ As $\alpha_{0,4,1}$ acts injectively on the $\mathrm{E}_{1}$-term, we obtain that $d_{1}(v) = h_{0}^{3}\alpha_{0,4,1}$. \\\\
10) The relation $v\alpha_{1,14,2} = h_{0}\alpha_{3,18,2}$ shows that $$h_{0}d_{1}(\alpha_{3,18,2}) = \alpha_{1,14,2}d_{1}(v)=\alpha_{1,14,2}h_{0}^{3}\alpha_{0,4,1} = h_{0}^{4}\alpha_{0,18,3}$$ Therefore, $d_{1}(\alpha_{3,18,2}) = h_{0}^{3}\alpha_{0,18,3}$.\\\\
11) We check that $v_{2}^{4}$ is represented by the $\A(2)$-primitive $[1|y_{3}^{4}]+[x_{1}|y_{2}^{4}]$ in $E\otimes R_{4}$.  By Remark \ref{SSd_1}, $d_{1}(v_{2}^{4})$ is represented by $[1|y_{1}y_{2}^{4}]$, hence is equal to $\alpha_{0,4,1}\alpha_{0,12,2}^{2}$.
\end{itemize}
 
\end{proof}
\noindent
%We omit the proof of this proposition because it is not the main concern of this note. We refer interested readers to [theorem 5.6, D-M] and [Rognes] for a proof. \marginpar{need to give precise reference}
It turns out that the DMSS collapses at the $\mathrm{E}_{2}$-term because there is no room for higher differentials. In particular, the classes $\alpha_{1,14,2}, \alpha_{0,4,1}, \alpha_{0,12,2}^{2}, v_{2}^{8}, \alpha_{0,18,3}$
 survive the spectral sequence converging to elements of $\Ext^{*,*}_{\A(2)_{*}}(\F,\F)$ in appropriate bidegrees. Following \cite{DFHH14}, those elements are denoted by $\alpha, h_{2}, g, w_{2}, \beta$, respectively.  Furthermore, $h_{2}, g, w_{2}, \beta$ generate a subalgebra of $\Ext^{*,*}_{\A(2)_{*}}(\F,\F)$ isomorphic to  $\F[h_{2}, g, w_{2}, \beta]/(h_{2}^{3}, h_{2}g, \beta^{4}-g^{3})$. The relation $\beta^{4}= g^{3}$ is a consequence of a $d_{1}$-differential. In effect, the relation $\alpha_{0,18,3}^{2}=\alpha_{0,12,2}^{3}+\alpha_{0,4,1}^{2}v_{2}^{4} $ implies the relation $ \beta^{4}-g^{3}-h_{2}^{4}w_{2}=0$ in $\Ext_{\A(2)_{*}}^{*,*}(\F)$. But $\alpha_{0,4,1}^{4}v_{2}^{8}$ gets hit by the differential $$d_{1}(v_{2}^{8}\alpha_{0,4,1}\alpha_{0,12,2})=v_{2}^{8}\alpha_{0,4,1}d_{1}(\alpha_{0,12,2})=v_{2}^{8}\alpha_{0,4,1}^{4}.$$ Thus the relation $\beta^{4}=g^{3}+h_{2}^{4}w_{2}$ becomes $\beta^{4}=g^{3}$.

%There are the following $d_{2}$-differential in the $E_{2}$-term of the Adams spectral sequence that will be important in the sequel:
\subsection{The Davis-Mahowald spectral sequence for $A_1$}\phantomsection \label{DMSS_A_1}
\noindent
\textbf{The $\A(2)_{*}$-comodule structure of $A_1$.} In \cite{DM81}, Davis and Mahowald constructed four finite spectra, whose mod $2$ cohomology are isomorphic to a free module of rank one over the subalgebra $\A(1) = \langle Sq^{1}, Sq^{2}\rangle$ of the Steenrod algebra $\A$. Let us review the construction of these spectra and their module structure over the subalgebra $\A(2) = \langle Sq^{1}, Sq^{2}, Sq^{4}\rangle$ of $\A$. Recall that  $Y$ is  $V(0)\wedge C_{\eta}$. The $\A$-module structure of $\mathrm{H}^{*}(Y)$ is depicted in Figure \ref{Diagram_Y}.
%\marginpar{need a better presentation}
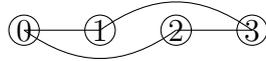
\begin{figure}[h!]
\begin{center}
\begin{tikzpicture}

   \draw[] (0,0) circle [radius=0.2];
   \node at (0,0) {$0$};
   \draw (2,0) circle [radius=0.2];
   \node at (2,0) {$2$};
   \draw (1,0) circle [radius=0.2];
   \node at (1,0) {$1$};
   \draw (3,0) circle [radius=0.2];
   \node at (3,0) {$3$};
   \draw[-] (0,0)--(1,0);
   \draw[-] (2,0)--(3,0);
   \draw (0,0) .. controls (0.7,-0.5) and (1.3,-0.5) .. (2,0);
   \draw (1,0) .. controls (1.7,0.5) and (2.3,0.5) .. (3,0);
\end{tikzpicture}
\end{center}
\caption{Diagram of $\mathrm{H}^{*}(Y)$: the straight lines represent $Sq^{1}$ and the curved lines represent $Sq^{2}$, the numbers represent the degree of the cell.}
\phantomsection \label{Diagram_Y}
\end{figure}
\noindent
An element of $\Ext_{\A(1)}^{1,3}(\mathrm{H}^{*}(Y),\mathrm{H}^{*}(Y))$ can be represented by an $\A(1)$-module $M$ sitting in a short exact sequence of $\A(1)$-modules $$0\rightarrow \mathrm{H}^{*}(\Sigma^{3}Y)\rightarrow M\rightarrow \mathrm{H}^{*}(Y)\rightarrow 0.$$ It can be checked that $M$ must be isomorphic either to $\mathrm{H}^{*}(\Sigma^{3}Y)\oplus \mathrm{H}^{*}(Y)$ or to $\A(1)$ as an $\A(1)$-module. This means that 
\begin{equation}
\Ext_{\A(1)}^{1,3}(\mathrm{H}^{*}(Y),\mathrm{H}^{*}(Y))\cong \F.
\end{equation}
The $\A(1)$-module structure of $\A(1)$ is depicted in Figure \ref{Diagram_A_1}.
\begin{figure}[h!]
\begin{center}
\begin{tikzpicture}
\draw (0,0) circle [radius=0.2];
\node at (0,0) {$0$}; 
\draw (1,0) circle [radius=0.2];
\node at (1,0) {$1$}; 
\draw (2,-1) circle [radius=0.2];
\node at (2,-1) {$2$}; 
\draw (3,-1) circle [radius=0.2];
\node at (3,-1) {$3$}; 
\draw  (3,-0.5) circle [radius=0.2];
\node at (3,-0.5) {$3$}; 
\draw  (4,-0.5) circle [radius=0.2];
\node at (4,-0.5) {$4$}; 
\draw (6,-1.5)circle [radius=0.2];
\node at (6,-1.5) {$6$}; 
\draw (5,-1.5) circle [radius=0.2];
\node at (5,-1.5) {$5$}; 
%%%%%%%%%%%%%% Sq^{1}
\draw[-] (0,0)--(1,0);
\draw[-] (2,-1)--(3,-1);
\draw[-] (3,-0.5)--(4,-0.5);
\draw[-] (5,-1.5)--(6,-1.5);
%%%%%%%%%%%%%%% Sq^{2}
\draw (0,0) .. controls (0.7,-0.7) and (1,-1) ..(2,-1);
\draw (1,0) .. controls (1.7,0.4) and (2.5,0) ..(3,-0.5);
\draw (4,-0.5) .. controls (4.7,-0.5) and (5.5,-1) ..(6,-1.5);
\draw (3,-1) .. controls (3.7,-1.7) and (4.5,-1.7) ..(5,-1.5);
\draw (2,-1) .. controls (2.6,-0.7) and (3.4,-0.8) ..(4,-0.5);
\end{tikzpicture}
\caption{ Diagram of $\A(1)$.}
\phantomsection \label{Diagram_A_1}
\end{center}
\end{figure}
\noindent
One can ask whether $\A(1)$ admits a structure of $\A(2)$-module. If such a structure exists, then according to the Adem relations $Sq^{2}Sq^{1}Sq^{2} = Sq^{4}Sq^{1}+ Sq^{1}Sq^{4}$, there must be a nontrivial action of $Sq^{4}$ on the nontrivial class of degree $1$. It is straightforward to verify that the latter is the only constraint to put an $\A(2)$-module structure on $\A(1)$. There are also possibilities for $Sq^{4}$ to act nontrivially on the classes of degree $0$ and $2$. These give in total four different $\A(2)$-module structures on $A_1$. In other words, the inclusion of Hopf algebras $\A(1)\hookrightarrow \A(2)$ induces a surjective homomorphism 
$$\Ext_{\A(2)}^{1,3}(\mathrm{H}^{*}(Y),\mathrm{H}^{*}(Y))\rightarrow \Ext_{\A(1)}^{1,3}(\mathrm{H}^{*}(Y),\mathrm{H}^{*}(Y))$$ whose kernel contains $4$ element. Therefore, 
$$\Ext_{\A(2)}^{1,3}(\mathrm{H}^{*}(Y),\mathrm{H}^{*}(Y))\cong \F^{\oplus 3}.$$
Next, one observes that restriction along $\A(2)\subset \A$ induces an isomorphism
$$\Ext_{\A}^{1,3}(\mathrm{H}^{*}(Y),\mathrm{H}^{*}(Y))\cong \Ext_{\A(2)}^{1,3}(\mathrm{H}^{*}(Y),\mathrm{H}^{*}(Y)),$$ 
because for any $\A$-module $M$ sitting in a short exact sequence 
$$0\rightarrow \mathrm{H}^{*}(\Sigma^{3}Y)\rightarrow M\rightarrow \mathrm{H}^{*}(Y)\rightarrow 0$$ 
there can not be any non-trivial $Sq^{k}$ for $k\geq 8$ on $M$. It is proved in \cite{DM81} that the four classes of $\Ext_{\A}^{1,3}(\mathrm{H}^{*}(Y),\mathrm{H}^{*}(Y))$ that are sent to the unique non-trivial class of $\Ext_{\A(1)}^{1,3}(\mathrm{H}^{*}(Y),\mathrm{H}^{*}(Y))$ are permanent cycles in the Adams spectral sequence and converge to four $v_{1}$- self-maps of $Y$, i.e., the maps $\Sigma^{2}Y\rightarrow Y$ inducing isomorphisms in $K(1)$-homology theory. As a consequence, the cofibers of these $v_{1}$-self-maps realise the four different $\A$-module structures on $\A(1)$. We will write $A_1$ to refer to any of these four finite spectra. 
Following \cite{BEM17}, we make the following definition.
\begin{Definition}\label{version A1} We define by $A_{1}[i,j],\ i,j\in\{0,1\}  $ the version of $A_1$ having the non-trivial $Sq^{4}$ on the generator of degree $0$, respectively $2$ if and only if $i=1$, respectively $j=1$.
\end{Definition} 
As a $\F$-vector spaces, 
\begin{equation}\phantomsection \label{basis-A_1}\mathrm{H}_{*}(A_{1}[ij])\cong \F\{a_{0}, a_{1}, a_{2}, a_{3}, \overline{a_{3}}, a_{4}, a_{5}, a_{6}\},
\end{equation} where $a_{0}, a_{1}, a_{2}, a_{4}, a_{5}, a_{6}$ are duals to the generators of degree $0, 1, 2, 4, 5, 6$ of $\mathrm{H}^{*}(A_{1}[ij])$, respectively and $a_{3}, \overline{a_{3}}$ are duals to the images of the generator of degree 0 by $Sq^{3}, Sq^{3}+Sq^{2}Sq^{1}$, respectively. By taking duals to the action of $\A(2)$ on $\H^*(A_{1}[ij])$, we obtain
\begin{Proposition} \phantomsection \label{comod_A_1}The left coaction of $\A(2)_{*}$ on $\H_*(A_{1}[ij])$ is given by

 $\Delta(a_{1}) = [1| a_{1}]+[\xi_{1}|a_{0}]$
 
 $\Delta(a_{2})=[1| a_{2}]+[\xi_{1}^{2}| a_{2}]$
 
$\Delta(a_{3})=[1| a_{3}]+[\xi_{1}|a_{2}]+[\xi_{1}^{2}| a_{1}]+[\xi_{1}^{3}| a_{0}]$

 $\Delta(\overline{a_{3}})=[1|\overline{a_{3}}]+[\xi_{1}^{2}| a_{1}]+[\xi_{2}| a_{0}]$
 
$\Delta(a_{4})=[1| a_{4}]+[\xi_{1}|\overline{a_{3}}]+[\xi_{1}^{2}|a_{2}]+[\xi_{1}^{3}|a_{1}]+[\xi_{2}|a_{1}]+[\xi_{2}\xi_{1}|a_{0}] + \alpha_{i,j}[\xi_{1}^{4}| a_{0}]$

$\Delta(a_{5})=[1| a_{5}]+[\xi_{1}^{2}|\overline{a_{3}}]+[\xi_{1}^{2}|a_{3}]+[\xi_{2}| a_{2}]+[\xi_{1}^{4}|a_{1}]+[\xi_{2}\xi_{1}^{2}|a_{0}]$

$\Delta(a_{6})=[1| a_{6}]+[\xi_{1}|a_{5}]+[\xi_{1}^{2}| a_{4}]+[\xi_{1}^{3}|\overline{a_{3}}]+[\xi_{1}^{3}| a_{3}]+[\xi_{2}| a_{3}]+[\xi_{2}\xi_{1}| a_{2}]+\beta_{i,j}[\xi_{1}^{4}| a_{2}]+[\xi_{2}\xi_{1}^{2}| a_{1}]+[\xi_{1}^{5}| a_{1}]+\gamma_{i,j}[\xi_{1}^{6}| a_{0}] + [\xi_{2}\xi_{1}^{3}| a_{0}]+\lambda_{i,j}[\xi_{2}^{2}| a_{0}],$
 where 

 \[ \alpha_{i,j} = \begin{cases} 0 & \mbox{if } (i,j) \in \{(0,0), (0,1)\} \\ 
 				              1 & \mbox{if } (i,j)\in \{(1,0), (1,1)\} \end{cases} \]
				          
 \[ \beta_{i,j} = \begin{cases} 0 & \mbox{if } (i,j) \in \{(0,0), (1,0)\} \\ 
 				              1 & \mbox{if } (i,j)\in \{(0,1), (1,1)\} \end{cases} \]	
$$\gamma_{i,j} = 1+\alpha_{i,j}$$ and $$\lambda_{i,j} = \alpha_{i,j}+\beta_{i,j}$$			              
\end{Proposition}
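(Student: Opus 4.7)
The proof is a direct dualization of the $\A(2)$-module structure on $\H^*(A_1[ij])$, which is fully determined by the discussion preceding Definition \ref{version A1}: $\H^*(A_1[ij])$ is free of rank one over $\A(1)$, together with a $Sq^4$-action on the bottom cell parametrised by $\alpha_{i,j}$ and on the degree-$2$ cell parametrised by $\beta_{i,j}$; the remaining Steenrod squares are then forced by the Adem relations.

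My plan is as follows. First, I would tabulate the $\A(2)$-action on the dual basis $\{a_0^*, \ldots, a_6^*\}$ of $\H^*(A_1[ij])$: for every pair $(i,k)$ with $k>i$ and every Milnor basis element $Sq^R \in \A(2)$ of degree $k-i$, record the coefficient $c^R_{i,k} \in \F$ in $Sq^R \cdot a_i^* = \sum_k c^R_{i,k}\, a_k^*$. Computing such $c^R_{i,k}$ reduces, via the Milnor product formula (or equivalently, the Adem relations), to inspection of the $Sq^1, Sq^2, Sq^4$-actions already encoded by the cell diagram of $A_1[ij]$.

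Second, I would invoke the general dualisation principle: because the monomial basis $\{\xi^R\} \subset \A_*$ is dual to the Milnor basis $\{Sq^R\} \subset \A$, the left coaction on $\H_*(A_1[ij])$ is
$$\Delta(a_k) \;=\; \sum_{R,\, i}\, c^{Sq^R}_{i,k}\; \xi^R \otimes a_i.$$
Reading off the coefficients in turn for $k=1, 2, \ldots, 6$ produces the stated formulas. The cases $k \in \{1, 2, 3, \overline{3}, 5\}$ are routine and exhibit no $(i,j)$-dependence.

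The main obstacle is the bookkeeping in degrees $4$ and $6$, where several Milnor basis elements of $\A(2)$ coexist, e.g.\ $Sq^{(4)}, Sq^{(1,1)}, Sq^{(0,0,1)}$ in degree $4$ and $Sq^{(6)}, Sq^{(3,1)}, Sq^{(0,2)}$ in degree $6$, and each must be expanded in terms of $Sq^1, Sq^2, Sq^4$ before its effect on the explicit cell diagram of $A_1[ij]$ can be read off. It is here that the $(i,j)$-dependent parameters $\alpha_{i,j}$ and $\beta_{i,j}$ propagate, producing the coefficients $\gamma_{i,j} = 1+\alpha_{i,j}$ and $\lambda_{i,j}= \alpha_{i,j}+\beta_{i,j}$ of $\xi_1^6$ and $\xi_2^2$ in $\Delta(a_6)$. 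As a sanity check I would verify coassociativity, $(\Delta\otimes\mathrm{id})\Delta = (\mathrm{id}\otimes\Delta)\Delta$, on $a_6$: this yields an independent derivation of precisely the two linear dependencies above, confirming the formulas.
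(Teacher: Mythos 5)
Your proposal is correct and is essentially the paper's own argument: the paper's proof is the one-line statement that the coaction is obtained by straightforward dualisation of the $\A(2)$-module structure on $\H^*(A_1[ij])$ using Milnor's formulas for the duals of the Milnor basis, which is exactly the computation you outline (your coassociativity check is a reasonable extra verification not present in the paper).
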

\begin{proof} The proof is a straightforward translation from $\A(2)$-module structure to $\A(2)_{*}$-comodule structure using the formula of the duals of the Milnor basis in \cite{Mil58}.
\end{proof}
\noindent
\textbf{DMSS for $A_1$.} In what follows, we will apply in many places the shearing homomorphism to find primitives representing certain cohomology classes, see \cite{ABP69}, Theorem 3.1. In general, let $C$ be a Hopf algebra with conjugation $\chi$ and $B$ be Hopf-algebra quotient of $C$. Given a $C$-comodule $M$, consider the composite  
$$C\otimes M\xrightarrow{id\otimes \Delta} C\otimes C\otimes M\xrightarrow{id\otimes \chi\otimes id}C\otimes C\otimes M\xrightarrow{\mu\otimes id} C\otimes M. $$
When restricting to $C\square_{B}M$, this composite factors through $(C\square_{B}k)\otimes M$ inducing the shearing isomorphism of $C$-comodules $$Sh :C\square_{B}M\rightarrow (C\square_{B}k)\otimes M, $$ where $C$ coacts on $C\square_{B}M$ via the left factor and on $(C\square_{B}k)\otimes M$ diagonally. Combined with the change-of-rings isomorphism, we have the following isomorphisms: 
$$\Ext_{B}^{*}(k,M)\cong \Ext_{C}^{*}(k,C\square_{B}M)\cong \Ext_{C}^{*}(k,(C\square_{B}k)\otimes M).$$
In particular, via these isomorphisms, a class $x\in\Ext_{B}^{0}(k,M)$ is sent to $Sh(1\otimes x)$.
\begin{Proposition}\phantomsection \label{Prop_A_1}The $\mathrm{E}_{1}$-term of the Davis-Mahowald spectral sequence converging to $\mathrm{Ext}^{s,t}_{\A(2)_{*}}(\H_*(A_1))$ is given by
$$
\mathrm{E}_{1}^{s,\sigma,*}\cong\left\{ \begin{array}{ll}
 						0 & \mbox{if} \ s > 0\\
						R_{\sigma} & \mbox{if} \ s=0.
\end{array} \right.$$
As a module over $\mathbb{F}_{2}[\alpha_{0,4,1},\alpha_{0,12,2},v_{2}^{4}] (\subset \Ext_{\A(1)_*}^{*,*}(R))  $, $\mathrm{E}_{1}^{*,*,*}$ is free module of rank eight on the following generators of 
\begin{equation}\phantomsection \label{mod_gen_A1} 1, y_{3}, y_{3}^{2}, y_{3}^{3}, y_{2}, y_{2}y_{3}, y_{2}y_{3}^{2}, y_{2}y_{3}^{3}.
\end{equation}
\end{Proposition}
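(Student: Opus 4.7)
The plan is to use change-of-rings and the shearing isomorphism to reduce this Ext computation to something trivial. By Example \ref{ExampA(n)-cont} applied with $n=2$, the $\mathrm{E}_1$-term of the DMSS is
\[
\mathrm{E}_1^{s,\sigma,t} \;\cong\; \Ext^{s,t}_{\A(2)_*}(E_2\otimes R_\sigma\otimes \H_*(A_1)) \;\cong\; \Ext^{s,t}_{\A(1)_*}(R_\sigma\otimes \H_*(A_1)),
\]
the second isomorphism being change-of-rings along $\A(2)_*\twoheadrightarrow \A(1)_*$. The essential observation is that by construction $\H^*(A_1)$ is a free $\A(1)$-module of rank one (the $\A(2)$-structure merely extends this), so dually $\H_*(A_1)$ is isomorphic to $\A(1)_*$ as a left $\A(1)_*$-comodule with coaction given by the coproduct. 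In particular this holds uniformly for all four versions of $A_1$.

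With this in hand, I would apply the shearing isomorphism recalled just before Proposition \ref{comod_A_1}, with $C=\A(1)_*$, $B=\F$ and $M=R_\sigma$, to produce the $\A(1)_*$-comodule isomorphism
\[
R_\sigma\otimes \H_*(A_1) \;\cong\; R_\sigma\otimes \A(1)_* \;\cong\; \A(1)_*\otimes R_\sigma^{\mathrm{triv}},
\]
where on the right-hand side $\A(1)_*$ coacts via its own coproduct and trivially on $R_\sigma$. A further application of change-of-rings along $\A(1)_*\to \F$ collapses this to
\[
\Ext^{s,t}_{\A(1)_*}(R_\sigma\otimes \H_*(A_1)) \;\cong\; \Ext^{s,t}_{\F}(\F, R_\sigma),
\]
which vanishes for $s>0$ and equals $R_\sigma$ concentrated in cohomological degree zero. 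This establishes the first displayed formula.

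For the module structure, under the identification $\mathrm{E}_1^{0,*,*}\cong R=\F[y_1,y_2,y_3]$ obtained above, the classes $\alpha_{0,4,1}$, $\alpha_{0,12,2}$, $v_2^4$ are represented by the primitives $y_1$, $y_2^2$, $y_3^4$, as recorded in the computation of $\Ext^{0,*}_{\A(1)_*}(R)$. The claim then reduces to the elementary algebraic assertion that $\F[y_1,y_2,y_3]$ is free of rank eight over the subring $\F[y_1,y_2^2,y_3^4]$, with monomial basis $\{y_2^i y_3^j : 0\leq i\leq 1,\ 0\leq j\leq 3\}$, which is exactly the list displayed in the statement. I do not anticipate any serious technical obstacle: the proposition is a formal consequence of the cofreeness of $\H_*(A_1)$ as an $\A(1)_*$-comodule. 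The only point worth making explicit is the naturality of this cofreeness with respect to the restriction from $\A(2)_*$ to $\A(1)_*$, which is built into the definition of the versions $A_1[ij]$.
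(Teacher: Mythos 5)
Your proposal is correct and follows essentially the same route as the paper: both use the comodule isomorphism $\H_*(A_1)\cong\A(1)_*$ together with change-of-rings (via the shearing isomorphism) to identify the $\E_1$-term with $R_\sigma$ concentrated in cohomological degree zero, and both then identify $\F[\alpha_{0,4,1},\alpha_{0,12,2},v_2^4]$ with the subalgebra $\F[y_1,y_2^2,y_3^4]\subset R$ to read off the rank-eight basis. The only step the paper spells out slightly more carefully is that the module action of $\Ext^{0,*}_{\A(1)_*}(R)$ on $\Ext^{0,*}_{\A(1)_*}(R\otimes\H_*(A_1))$ really is polynomial multiplication in $R$ (checked by writing a class as $s\otimes a_0+\sum s_i\otimes a_i$ with $a_i$ in positive degree), which your appeal to cofreeness covers implicitly.
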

\begin{proof} 
In effect, $\mathrm{E}_{1}^{s,\sigma,t}$ is equal to $\mathrm{Ext}_{\A(1)_*}^{s,t}( R^{\sigma}\otimes \H_*(A_1))$ by definition. The coaction of $\A(1)_*$ on $R^{\sigma}\otimes \H_*(A_1)$ is the usual diagonal coaction on tensor products. In addition, $\H_*(A_1)$ is isomorphic to $\A(1)_*$ as $\A(1)_*$-comodules. By the change-of-rings isomorphism, we obtain that
\begin{equation}
\phantomsection \label{Ext(A_1)}
\mathrm{Ext}_{\A(1)_*}^{s,t}(R^{\sigma}\otimes \H_*(A_1))\cong \mathrm{Ext}^{s,t}_{\mathbb{F}_{2}}(R^{\sigma})\cong R^{\sigma}.
\end{equation}
\noindent
The first part of the proposition follows.\\\\
For the second part, the action of $\Ext_{\A(1)_*}^{s,t}(R)$ on $\mathrm{E}_{1}^{s,t,\sigma}$
$$\xymatrix{
\mathrm{Ext}_{\A(1)_*}^{s,t}(R)\otimes\mathrm{Ext}_{\A(1)_*}^{s',t'}(R\otimes \H_*(A_1))\ar[r]&\mathrm{Ext}_{\A(1)_*}^{s+s',t+t'}(R\otimes \H_*(A_1))
}$$ is induced by the multiplication on $R$: $$R\otimes (R\otimes \H_*(A_1))\rightarrow R\otimes \H_*(A_1).$$
Now let $r\in \Ext_{\A(1)_*}^{0,*}(R)\subset R$ and $s\in R\cong \Ext_{\A(1)_*}^{0,*}(R\otimes \H_*(A_1))$. By applying the shearing isomorphism, the class $s$ is represented by a unique element of the form $s\otimes a_{0} + \sum s_{i}\otimes a_{i}\in R\otimes \H_*(A_1)$ where the $a_{i}$ are in positive degrees. The action of $r$ on $s$ is then represented by $rs\otimes a_{0} + \sum rs_{i}\otimes a_{i}$ which represents $rs\in R\cong \Ext_{\A(1)_*}^{0,*}(R\otimes A_1)$ via \eqref{Ext(A_1)}. In other words, the action of $\Ext_{\A(1)_*}^{0,*}(R)$ on $\Ext_{\A(1)_*}^{0,*}(R\otimes \H_*(A_1))$ is given by the multiplication of the polynomial algebra $R$. The proof follows from the fact that $\F[\alpha_{0,4,1},\alpha_{0,12,2},v_{2}^{4}]$ is identified with the subalgebra of $R$ generated by $y_{1}, y_{2}^{2}, y_{3}^{4}$.
\end{proof}
\noindent
Let us analyse the differentials in this spectral  sequence. As the $d_{r}$-differentials decrease $s$-filtration by $r-1$, i.e., $d_{r}: \mathrm{E}_{r}^{s,\sigma,t}\rightarrow \mathrm{E}_{r}^{s-r+1,\sigma+r,t }$ and $\mathrm{E}_{1}^{s,\sigma,t} =0$ if $s>0$, the spectral sequence collapses at the $\mathrm{E}_{2}$-term and there are no extension problems. Therefore, 
$$\mathrm{E}_{2}^{0,t,\sigma}\cong \mathrm{Ext}_{\A(2)_{*}}^{\sigma,t}(\H_*(A_1)).$$
%%%%%%
We now turn our attention to the $d_{1}$-differentials. As all elements of the $\mathrm{E}_{1}$-term are in $\Ext_{\A(1)_*}^{0,*}(R\otimes \H_*(A_1))$, we can apply the remark after Proposition \ref{Prop_SS}. We have determined the $d_{1}$-differential on the classes $\alpha_{0,4,1}, \alpha_{0,12,2}, v_{2}^{4}$ in Proposition \ref{d1-S}.
%\begin{Lemma}[D-M] In the Danis-Mahowald spectral sequence converging to $\mathrm{Ext}_{A_{2}}(\mathbb{F}_{2},\mathbb{F}_{2})$, there are following $d_{1}$-differentials:%$d_{1}(y_{0}) = 0$, $d_{1}(y_{1}^{2})= y_{0}^{3}$ and $d_{1}(y_{2}^{4}) = y_{0}y_{1}^{4}$ 
%\end{Lemma}
By the Leibniz rule, it remains to determine the $d_{1}$-differential on the classes of (\ref{mod_gen_A1}).
\begin{Proposition} \phantomsection \label{d1-A_1}There are the following $d_{1}$-differentials

\begin{itemize}
\item[1)]$d_{1}(1)=0,$
\item[2)]$d_{1}(y_{2})=0,$
\item[3)]$d_{1}(y_{3}) =0,$
\item[4)]$d_{1}(y_{2}y_{3}) =0,$
\item[5)] $ d_{1}(y_{2}y_{3}^{2})=0,$
\item[6)] $d_{1}(y_{2}y_{3}^{3}) =0,$
\item[7)] $d_{1}(y_{3}^{2})=\alpha_{0,4,1}^{2}y_{2},$
\item[8)]$d_{1}(y_{3}^{3}) = \alpha_{0,4,1}^{2}y_{2}y_{3}.$ 
\end{itemize}
\end{Proposition}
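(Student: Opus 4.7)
The plan is to compute each differential by lifting the module generator $g \in R_\sigma$ to an $\A(2)_*$-primitive $\tilde g \in E_2 \otimes R_\sigma \otimes \H_*(A_1)$ and applying the Koszul derivation $d$, as prescribed by Remark \ref{SSd_1}. The central simplification is that $d$ annihilates every tensor of $E_2$-degree zero, so any lift lying inside $1 \otimes R_\sigma \otimes \H_*(A_1)$ automatically yields $d_1(g) = 0$.

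For items 1--3, I would produce $E_2$-degree zero primitive lifts directly: $\tilde{1} = 1 \otimes 1 \otimes a_0$, $\tilde{y_2} = 1 \otimes (y_2 \otimes a_0 + y_1 \otimes a_2)$, and $\tilde{y_3} = 1 \otimes (y_3 \otimes a_0 + y_2 \otimes a_1 + y_1 \otimes \overline{a_3})$; each is verified to be $\A(2)_*$-primitive using Proposition \ref{comod_A_1} together with the Milnor identity $\zeta_2 = \xi_2 + \xi_1^3$ in $\A(2)_*$. The same idea handles items 4--6. A direct calculation produces
$$\tilde{y_2 y_3} \;=\; 1 \otimes \bigl(y_2 y_3 \otimes a_0 + y_2^2 \otimes a_1 + y_1 y_3 \otimes a_2 + y_1 y_2 \otimes (a_3 + \overline{a_3}) + y_1^2 \otimes a_5\bigr),$$
and I expect analogous $E_2$-degree zero lifts for $y_2 y_3^2$ and $y_2 y_3^3$ that span all eight cells $a_0, \ldots, a_6$ of $\H_*(A_1)$; the $E_2$-degree one obstructions arising from $\zeta_2^2 = x_2$ and $\xi_1^4 = x_1$ in $\Delta(y_2 y_3^k)$ should cancel against the $E_2$-degree one parts of $\Delta(a_4), \Delta(a_5), \Delta(a_6)$ controlled by the parameters $\alpha_{ij}, \beta_{ij}, \gamma_{ij}, \lambda_{ij}$ of Proposition \ref{comod_A_1}.

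For items 7--8, no $E_2$-degree zero lift exists: the term $\zeta_2^2 \otimes y_1^2 \otimes a_0$ in $\Delta(y_3^2 \otimes a_0)$ (respectively $\zeta_2^2 \otimes y_1^2 y_3 \otimes a_0$ in $\Delta(y_3^3 \otimes a_0)$) cannot be matched by any degree-appropriate $1 \otimes r \otimes a_i$, since no such coaction carries a $\zeta_2^2 \otimes a_0$ component. The primitive lift must therefore contain the correction $x_2 \otimes y_1^2 \otimes a_0$ (respectively $x_2 \otimes y_1^2 y_3 \otimes a_0$); since $p(x_2) = y_2$, the Koszul formula immediately gives
$$d(x_2 \otimes y_1^2 \otimes a_0) = 1 \otimes y_1^2 y_2 \otimes a_0, \qquad d(x_2 \otimes y_1^2 y_3 \otimes a_0) = 1 \otimes y_1^2 y_2 y_3 \otimes a_0,$$
producing the stated $d_1(y_3^2) = \alpha_{0,4,1}^2 y_2$ and $d_1(y_3^3) = \alpha_{0,4,1}^2 y_2 y_3$. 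Any accompanying $x_1$-corrections contribute only terms of the form $1 \otimes y_1 z \otimes a$, which are $y_1$-divisible and hence absorbed into the non-leading part of the resulting primitive.

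The principal technical obstacle will be the uniform construction of $E_2$-degree zero primitive lifts for $y_2 y_3^2$ and $y_2 y_3^3$ across all four versions of $A_1$, since the relevant cancellations depend on the parameters $\alpha_{ij}, \beta_{ij}, \gamma_{ij}, \lambda_{ij}$ in a case-by-case way; in particular the fortunate identity $\xi_2 \xi_1 = \zeta_1 \zeta_2 + (1+\alpha_{ij}) x_1$ in $\A(2)_*$ switches the parity of the $x_1$-contribution of $\Delta(a_4)$ between versions. To cut the bookkeeping roughly in half I would exploit the Spanier--Whitehead self-dualities $D A_1[01] \simeq \Sigma^{-6} A_1[01]$ and $D A_1[10] \simeq \Sigma^{-6} A_1[10]$, together with the cross-duality $D A_1[00] \simeq \Sigma^{-6} A_1[11]$, reducing the number of independent verifications accordingly.
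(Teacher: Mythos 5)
Your route is genuinely different from the paper's. The paper disposes of items 1)--4) by sparseness of the $\E_1$-term (the target groups simply vanish), of items 5)--6) by observing that the only candidate target is not a $d_1$-cycle, and of items 7)--8) by a Massey-product juggling argument showing $\nu^2 y_2 = \nu^2 y_2 y_3 = 0$. You instead compute everywhere with explicit $\A(2)_*$-primitive lifts in the double complex and apply the Koszul derivation, as permitted by Remark \ref{SSd_1}. For items 7)--8) this works and is arguably more self-contained than the paper's argument: the primitive lift of $y_3^2$ is forced to be $1\otimes(y_3^2\otimes a_0 + y_2^2\otimes a_2) + x_2\otimes y_1^2\otimes a_0 + x_1\otimes y_1^2\otimes a_2$, which involves only $a_0$ and $a_2$ and is therefore version-independent; applying $d$ and projecting onto the $1\otimes R_3\otimes a_0$ component gives $y_1^2y_2=\alpha_{0,4,1}^2y_2$ as claimed. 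One caveat: your stated reason that no $E_2$-degree-zero lift exists --- that no coaction carries a $\zeta_2^2\otimes a_0$ component --- is not literally true, since $\Delta(a_6)$ contains $\lambda_{i,j}[\xi_2^2|a_0]$ and $\xi_2^2=\zeta_2^2+\xi_1^6$; the clean justification is uniqueness of the primitive lift with prescribed leading term, together with the explicit lift above.

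The genuine gap is in items 5) and 6). You only assert that you \emph{expect} $E_2$-degree-zero lifts of $y_2y_3^2$ and $y_2y_3^3$, and you yourself identify the version-dependent cancellations against $\Delta(a_4)$, $\Delta(a_5)$, $\Delta(a_6)$ as the principal obstacle; nothing in the proposal establishes them. If the lift contained a term $x_i\otimes z\otimes a_0$ the differential would be nonzero, so this cannot be waved through. (Your remark that $x_1$-corrections are ``$y_1$-divisible and hence absorbed into the non-leading part'' is not an argument: a correction $x_1\otimes z\otimes a_0$ contributes $y_1z$ directly to the leading coefficient, and $y_1$-divisible elements of $R_{\sigma+1}$ are not zero.) This is precisely where the paper's argument is both shorter and uniform in the version of $A_1$: by sparseness the only possible value of $d_1(y_2y_3^2)$ is $\alpha_{0,4,1}^2\alpha_{0,12,2}\cdot 1$, and since $d_1(\alpha_{0,4,1}^2\alpha_{0,12,2})=\alpha_{0,4,1}^5\neq 0$ this class is not a $d_1$-cycle and hence cannot be a target; likewise $d_1(y_2y_3^3)=\alpha_{0,4,1}^2\alpha_{0,12,2}y_3$ is ruled out the same way. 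You should replace your items 5)--6) by this argument or actually carry out the lift computation in all four cases; the Spanier--Whitehead duality shortcut you invoke at the end is not set up anywhere in your argument and should not be relied on.
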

\begin{proof} Parts $1 - 4$ follow from the sparseness of the $\mathrm{E}_{1}$-term. \\\\
5) The only nontrivial $d_{1}$-differential that $y_{2}y_{3}^{2}$ can support is $$d_{1}(y_{2}y_{3}^{2}) = \alpha_{0,4,1}^{2}\alpha_{0,12,2} 1.$$ However, $$d_{1}(\alpha_{0,4,1}^{2}\alpha_{0,12,2})=  \alpha_{0,4,1}^{2}d_{1}(\alpha_{0,12,2})=\alpha_{0,4,1}^{5} 1 \ne 0.$$ This means that $\alpha_{0,4,1}^{2}\alpha_{0,12,2}$ is not a $d_{1}$-cycle, and so cannot be hit by a $d_{1}$-differential. Therefore, $y_{2}y_{3}^{2}$ is a $d_{1}$-cycle.\\\\
%%%%%%%
6) Similarly, a nontrivial $d_{1}$-differential on $y_{2}y_{3}^{3}$ would be $$d_{1}(y_{2}y_{3}^{3})=\alpha_{0,4,1}^{2}\alpha_{0,12,2}y_{3}.$$ However, $$d_{1}(\alpha_{0,4,1}^{2}\alpha_{0,12,2}y_{3}) =  \alpha_{0,4,1}^{5}y_{3} \ne 0$$ by the Leibniz rule. Thus, $y_{2}y_{3}^{3}$ is a $d_{1}$-cycle.\\\\
%%%%%%%%%%%%%
7-8) It suffices to prove that $\nu^{2}y_{2} =0$ and $\nu^{2}y_{2}y_{3}=0$ in $\Ext^{*,*}_{\A(2)_{*}}(\H_*(A_1))$ because the differentials in part 7) and 8) are the only possibilities for the latter to occur. We will proceed using juggling formulas for Massey products, see \cite{Rav86}, Section 4 of Appendix A1. In effect, the classes $1$ and $y_{3}$ being permanent cycles by part 1) and part 3), they converge to classes in $ \Ext^{0,0}_{\A(2)_{*}}(\H_*(A_1))$ and $ \Ext^{1,6}_{\A(2)_{*}}(\H_*(A_1))$, respectively. By sparseness even at the level of the $\mathrm{E}_{1}$-term of the DMSS, $\eta 1  = \eta y_{3} = 0$. Hence the Massey product $\langle \nu, \eta, y_{3}^{i}\rangle$ with $i\in \{0,1\}$ can be formed. We have that  $$\nu^{2}y_{3}^{i} = \langle \eta, \nu, \eta\rangle y_{3}^{i} = \eta \langle \nu,\eta, y_{3}^{i}\rangle.$$ By sparseness of the DMSS, $\alpha_{0,4,1}^{2}y_{3}^{i}$ survives the DMSS and so $\nu^{2}y_{3}^{i}\ne 0$. It follows that $\langle \nu,\eta, y_{3}^{i}\rangle$ is nontrivial and must be equal to $y_{2}y_{3}^{i}$. The fact that $\nu^{3} = 0 \in \Ext^{3,12}_{\A(2)_{*}}(\F)$ allows us to do the following juggling
$$\nu^{2}y_{2}y_{3}^{i}= \nu^{2}\langle \nu,\eta, y_{3}^{i}\rangle = \langle \nu^{2},\nu, \eta\rangle y_{3}^{i}.$$ However, the Massey product $\langle \nu^{2},\nu,\eta\rangle$ lives in the group $\Ext^{3,14}_{\A(2)_{*}}(\F)$ which vanishes by Theorem \ref{Lem_G}. This concludes the proof of parts 7) and 8).
%%%%%%%%%%
\end{proof}
\noindent
\textbf{$\mathrm{E}_{2}$-term of the Adams SS.} We describe $\Ext^{*,*}_{\A(2)_{*}}(\H_*(A_1))$ as a module over $$\mathbb{F}_{2}[h_{2},g, v_{2}^{8}]/(h_{2}^{3},h_{2}g) \subset \mathrm{Ext}_{\A(2)_{*}}^{*,*}(\mathbb{F}_{2}).$$ We recall that $g$ is represented by $\alpha_{0,12,2}^2$ in the DMSS for $\F$. We will denote by $e[s,t]$ where $s,t \in \N$ the unique non-trivial class belonging to $\Ext^{s,s+t}_{\A(2)_{*}}(\H_*(A_1))$.
\begin{Proposition}\phantomsection \label{Prop_AdamsA_1} As a module over $\mathbb{F}_{2}[h_{2},g, v_{2}^{8}]/(h_{2}^{3},h_{2}g)$,  $\mathrm{Ext}_{\A(2)_{*}}^{*,*}(\H_*(A_1))$ is a direct sum of cyclic modules generated by the following elements  \\\\
\begin{tabular}{| c | c | c | c |}
\hline
e[0,0]&e[1,5]&e[1,6]&e[2,11]\\
\hline
1&$y_{2}$&$y_{3}$&$y_{2}y_{3}$\\
\hline
(0)&$(h_{2}^{2})$&(0)&$(h_{2}^{2})$\\
\hline
\end{tabular}
\\\\
\begin{tabular}{| c | c | c | c |}
\hline
e[3,15]&e[3,17]&e[4,21]&e[4,23]\\
\hline
$y_{2}^{3}+y_{1}y_{3}^{2}$&$y_{2}y_{3}^{2}$&$y_{1}y_{3}^{3}+y_{2}^{3}y_{3}$&$y_{2}y_{3}^{3}$\\
\hline
$(h_{2}^{2})$&(0)&$(h_{2}^{2})$&(0)\\
\hline
\end{tabular}\\\\
\begin{tabular}{| c | c | c | c |}
\hline
e[6,30]&e[6,32]&e[7,36]&e[7,38]\\
\hline
$y_{2}^{6}+y_{1}^{2}y_{3}^{4}$&$y_{2}^{4}y_{3}^{2}+y_{1}y_{2}y_{3}^{4}$&$y_{2}^{6}y_{3}+ y_{1}^{2}y_{3}^{5}$&$y_{2}^{4}y_{3}^{3}+y_{1}y_{2}y_{3}^{5}$\\
\hline
$(h_{2})$&$(h_{2})$&$(h_{2})$&$(h_{2})$\\
\hline
\end{tabular}\\\\
\begin{tabular}{| c | c | c | c |}
\hline
e[8,42]&e[9,47]&e[9,48]&e[10,53]\\
\hline
$y_{2}^{6}y_{3}^{2}+y_{1}^{2}y_{3}^{6}+y_{1}y_{2}^{3}y_{3}^{4}$&$y_{2}^{7}y_{3}^{2}+y_{1}^{2}y_{2}y_{3}^{6}$&$y_{2}^{6}y_{3}^{3}+y_{1}^{2}y_{3}^{7}+y_{1}y_{2}^{3}y_{3}^{5}$&$y_{2}^{7}y_{3}^{3}+y_{1}^{2}y_{2}y_{3}^{7}$\\	
\hline
$(h_{2})$&$(h_{2})$&$(h_{2})$&$(h_{2})$\\
\hline
\end{tabular}\\\\
The second row in the table indicates a representative in the DMSS and the third row the annihilator ideal of the corresponding generator.
\end{Proposition}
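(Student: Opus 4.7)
The plan is to identify $\Ext^{*,*}_{\A(2)_*}(\H_*(A_1))$ with $H^{*}(R,d_1)$, using the collapse of the Davis-Mahowald spectral sequence at $E_2$ (the $E_1$-page is concentrated in $s=0$ by Proposition \ref{Prop_A_1}, so no $d_r$ for $r\geq 2$ is possible). Under the identification $E_1^{0,*,*}\cong R=\F[y_1,y_2,y_3]$, the differential $d_1$ is determined by Propositions \ref{d1-S} and \ref{d1-A_1} together with the Leibniz rule $d_1(a\cdot m)=d_1^{\F}(a)\cdot m+a\cdot d_1(m)$ for $a\in\F[y_1,y_2^2,y_3^4]$, the subring that records the action of the DMSS for $\F$. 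Explicitly, on the ring side $d_1(y_1)=0$, $d_1(y_2^2)=y_1^3$, $d_1(y_3^4)=y_1y_2^4$, while on the eight module generators $d_1$ vanishes on $\{1,y_2,y_3,y_2y_3,y_2y_3^2,y_2y_3^3\}$ and $d_1(y_3^2)=y_1^2y_2$, $d_1(y_3^3)=y_1^2y_2y_3$. Since $h_2=y_1$, $g=y_2^4$ and $w_2=y_3^8$ are $d_1$-cycles with $h_2^3=d_1(y_2^2)$ and $h_2g=d_1(y_3^4)$ already coboundaries, the $\F[h_2,g,w_2]$-action on $H^{*}(R,d_1)$ factors through $\F[h_2,g,w_2]/(h_2^3,h_2g)$.

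With this setup, the proof splits into three tasks. First, I verify that each of the sixteen listed representatives is a $d_1$-cocycle: pure monomials are cycles by inspection, and twisted combinations such as $y_2^3+y_1y_3^2$, $y_2^6+y_1^2y_3^4$, or $y_2^6y_3^2+y_1^2y_3^6+y_1y_2^3y_3^4$ are cycles because their $d_1$-images cancel by a direct Leibniz calculation (for instance $d_1(y_2^3)=y_1^3y_2=d_1(y_1y_3^2)$). Second, I determine the annihilator of each generator by multiplying with $h_2, g, w_2$ and either exhibiting an explicit coboundary preimage or ruling one out by degree-counting: for example $h_2\cdot(y_2^6+y_1^2y_3^4)=y_1y_2^6+y_1^3y_3^4=d_1(y_2^2y_3^4)$ yields annihilator $(h_2)$ for $e[6,30]$, while the annihilator of $e[0,0]=1$ is trivial because no basis monomial $h_2^a g^b w_2^c$ of $\F[h_2,g,w_2]/(h_2^3,h_2g)$ lies in $\mathrm{im}(d_1)$, as can be checked bidegree-by-bidegree. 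Third, I must show that the sixteen cyclic submodules exhaust $H^{*}(R,d_1)$.

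The main obstacle is this completeness step, because $d_1$ is not a derivation on $R$ (observe $d_1(y_3)^2=0$ yet $d_1(y_3^2)=y_1^2y_2\neq 0$), so the eight-block decomposition of $R$ from Proposition \ref{Prop_A_1} is not $d_1$-stable and the cohomology cannot be computed summand-by-summand. My plan is a Poincar\'e series argument exploiting that $d_1$ is $\F[h_2,g,w_2]$-linear: $R$ is free of rank $32$ over $\F[h_2,g,w_2]$ with basis $\{y_2^a y_3^b:0\leq a\leq 3,\ 0\leq b\leq 7\}$, so $(R,d_1)$ is a complex of finitely generated free $\F[h_2,g,w_2]$-modules, and its tri-graded Euler characteristic (over $\F[h_2,g,w_2]$) equals that of $H^{*}(R,d_1)$. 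Matching this Euler characteristic against the explicit Poincar\'e series of the proposed direct sum of sixteen cyclic modules over $\F[h_2,g,w_2]/(h_2^3,h_2g)$ in each internal degree, combined with the already-established inclusion of the sixteen submodules into $H^{*}(R,d_1)$, will force the inclusion to be an equality.
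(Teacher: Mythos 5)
Your framework coincides with the paper's: the DMSS collapses at $\E_2$ because the $\E_1$-term is concentrated in $s=0$, so $\Ext_{\A(2)_*}^{\sigma,t}(\H_*(A_1))\cong H^{\sigma}(R,d_1)_t$; the differential is evaluated via Propositions \ref{d1-S} and \ref{d1-A_1} and the Leibniz rule over $\F[\alpha_{0,4,1},\alpha_{0,12,2},v_2^4]$; and your sample verifications (e.g.\ $d_1(y_2^3)=y_1^3y_2=d_1(y_1y_3^2)$ and $h_2(y_2^6+y_1^2y_3^4)=d_1(y_2^2y_3^4)$) are correct. The gap is in the completeness step. The Euler characteristic of $(R,d_1)$ in a fixed internal degree $t$ determines only the alternating sum $\sum_\sigma(-1)^\sigma\dim_{\F}H^\sigma(R,d_1)_t$, not the individual dimensions; those are governed by the ranks of the maps $(R_\sigma)_t\to(R_{\sigma+1})_t$, which is exactly the information your argument tries to avoid computing. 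Containment of the sixteen cyclic modules plus equality of Euler characteristics therefore cannot exclude an extra summand $N$ with $\chi_t(N)=0$ for every $t$, i.e.\ extra classes occurring in cancelling pairs in adjacent cohomological degrees of the same internal degree. This is not a vacuous worry here: already in the claimed answer one has $w_2e[0,0]\in\Ext^{8,56}_{\A(2)_*}(\H_*(A_1))$ and $e[9,47]\in\Ext^{9,56}_{\A(2)_*}(\H_*(A_1))$, two nonzero groups of the same internal degree contributing with opposite signs to $\chi_{56}$, so the cohomology is genuinely spread over both parities and the alternating sum loses information. (A milder instance of the same problem appears in your annihilator step: "no basis monomial $h_2^ag^bw_2^c$ lies in $\mathrm{im}(d_1)$, as can be checked bidegree-by-bidegree" is an infinite family of checks and needs a uniform argument, e.g.\ $\mathrm{im}(d_1)\subseteq y_1R$ plus finitely many explicit computations.)

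The paper closes completeness by direct computation rather than counting: it evaluates $d_1$ on all $32$ generators $\{y_2^iy_3^j\mid 0\le i\le 3,\ 0\le j\le 7\}$ of $R$ as a free $\F[h_2,g,v_2^8]$-module and observes that, after correcting by $h_2$-multiples, exactly $16$ of them are cycles while the $d_1$-images of the remaining $16$ generate precisely the relations recorded in the table; since $d_1$ is $\F[h_2,g,v_2^8]$-linear and these images are $h_2$-power multiples of distinct cycle generators, this determines $\ker d_1$ and $\mathrm{im}\,d_1$, hence $H^*(R,d_1)$, outright. You already have every tool needed to do the same: extending your Leibniz computations to the remaining $16$ generators replaces the Euler characteristic detour and repairs the proof.
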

\begin{proof} As a corollary of Proposition \ref{Prop_A_1}, the $\mathrm{E}_{1}$-term of the DMSS for $\H_*(A_1)$ is isomorphic to a free module of rank $32$ over $\F[h_{2},g, v_{2}^{8}]$. In particular, these $32$ generators are $h_{2}$-free. It turns out that one can choose these $32$-generators in such a way that there are exactly $16$ $h_{2}$-free towers that truncate $16$ others by $d_{1}$-differentials. The question is how one can identify these $16$ $d_{1}$-cycles. For this, we compute the $d_{1}$-differentials on the following $32$ generators of the $\mathrm{E}_{1}$-term: $\{y_{2}^{i}y_{3}^{j}|0\leq i\leq 3, 0\leq j\leq 7\}$. Some of them are $d_{1}$-cycles, for example $y_{2}, y_{3}$. Whereas, some of them are not $d_{1}$-cycle at first, but become so after adding a multiple of $h_{2}$, for example $\alpha_{0,12,2}y_{2}+h_{2}y_{3}^{2} = y_{2}^{3}+y_{1}y_{3}^{2}$. This procedure is straightforward but lengthy, so we omit details here. It can be checked that the generators listed in the table are $d_{1}$-cycles. Finally, since $g$ and $v_{2}^{8}$ are $d_{1}$-cycles, Proposition \ref{Prop_AdamsA_1} follows.  
\end{proof} 
 \subsection {Two products}
 \noindent
Now we turn our attention to the product between $\alpha \in \Ext^{3,15}_{\A(2)_{*}}(\F)$ and $e[4,23]\in\Ext^{4,27}_{\A(2)_{*}}(\H_*(A_1))$. This product is not detected in the DMSS because $\alpha$ has $\sigma$-filtration $1$ in the DMSS whereas all non-trivial groups in the $\E_\infty$-term of the DMSS converging to $\Ext_{\A(2)_{*}}^{*,*}( \H_*(A_1))$ are in $\sigma$-filtration $0$. Therefore, we need first to find a representative of $\alpha$ in the total cochain complex of the double complex $\A(2)_{*}^{\otimes *}\otimes E_{2}\otimes R$ and that of $e[4,23]$ in $\A(2)_{*}^{\otimes *}\otimes E_{2}\otimes R\otimes \H_*(A_1)$, then take the product at the level of cochain complexes and finally check if this product is a coboundary. It is tedious to carry out this procedure because any representative of $e[4,23]$ contains many terms, and so it is not easy to check if the product is a coboundary. Here, by a term of $\A(2)_{*}^{\otimes *}\otimes E_{2}\otimes R_{*}$ and $\A(2)_{*}^{\otimes *}\otimes E_{2}\otimes R_{*}\otimes \H_*(A_1)$, we mean an element of the basis formed by the tensor products of a basis of $\A(2)_{*}$, $E_{2}$, $R_{*}$ and $\H_*(A_1)$ chosen to be the monomial basis and the basis of (\ref{basis-A_1}), respectively. We will keep the same convention when working with $B(2)_{*}$, $F_{2}$, $S_{*}$ instead of $\A(2)_{*}$, $E_{2}$, $R_{*}$. The following two lemmas simplify computations.
\begin{Lemma} The product of $\alpha$ and $e[4,23]$ is equal either to $0$ or to $g e[3,15].$ 
\end{Lemma}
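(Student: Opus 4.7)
The plan is to reduce the claim to a bidegree count using the structure theorem of Proposition \ref{Prop_AdamsA_1}. Concretely, I will argue that $\Ext^{7,42}_{\A(2)_{*}}(\H_{*}(A_1))$ contains at most one non-zero element, namely $g\cdot e[3,15]$, so that the product $\alpha\cdot e[4,23]$, which \emph{a priori} lives in this group, can only be $0$ or $g\cdot e[3,15]$.

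First I would record the bidegree of the product: $\alpha\in\Ext^{3,15}_{\A(2)_{*}}(\F)$ has filtration $3$ and stem $12$, while $e[4,23]\in\Ext^{4,27}_{\A(2)_{*}}(\H_{*}(A_1))$ has filtration $4$ and stem $23$, so the product lies in filtration $7$ and stem $35$, i.e.\ in $\Ext^{7,42}_{\A(2)_{*}}(\H_{*}(A_1))$.

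Next, Proposition \ref{Prop_AdamsA_1} presents this module as a direct sum of sixteen cyclic modules over $\F[h_2,g,v_2^{8}]/(h_2^3,h_2g)$, so any element in filtration $7$ and stem $35$ must be of the form $h_2^{a}g^{b}v_2^{8c}\cdot e[s_i,t_i]$ for some listed generator, where $h_2$, $g$, $v_2^{8}$ have (filtration, stem) equal to $(1,3)$, $(4,20)$, $(8,48)$ respectively. The bidegree constraints
\[ a+4b+8c+s_i \;=\; 7, \qquad 3a+20b+48c+t_i \;=\; 35 \]
force $c=0$ (since $48c\leq 35$), and then a short case analysis on the remaining variables against the sixteen values of $(s_i,t_i)$ yields precisely two candidates: $g\cdot e[3,15]$ (coming from $(a,b)=(0,1)$ with $(s_i,t_i)=(3,15)$) and $h_2\cdot e[6,32]$ (coming from $(a,b)=(1,0)$ with $(s_i,t_i)=(6,32)$).

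Finally I would dispatch the second candidate using the annihilator data recorded in the table of Proposition \ref{Prop_AdamsA_1}: since the annihilator of $e[6,32]$ is $(h_2)$, we have $h_2\cdot e[6,32]=0$; in contrast, the annihilator of $e[3,15]$ is $(h_2^2)$, which does not contain $g$, so $g\cdot e[3,15]\neq 0$. Hence $\Ext^{7,42}_{\A(2)_{*}}(\H_{*}(A_1))$ has a unique non-zero element, and the lemma follows. The argument is essentially a bookkeeping exercise in the additive structure of the DMSS, and no substantial obstacle arises; the interesting question of \emph{which} of the two values the product actually takes is postponed to the following subsection.
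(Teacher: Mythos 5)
Your argument is correct and is essentially the paper's own proof: the paper simply asserts that $ge[3,15]$ is the only non-trivial class in bidegree $(7,42)$, and your bookkeeping (locating the two candidate monomials $g\,e[3,15]$ and $h_2\,e[6,32]$ and killing the latter via the annihilator ideal $(h_2)$ of $e[6,32]$) is exactly the verification of that assertion. Nothing further is needed.
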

\begin{proof} This is trivial because $ge[3,15]$ is the only non-trivial class in the appropriate bidegree.
\end{proof}
\noindent
We recall from Section \ref{DMSS_Const} that there is a map of pairs $(\A(2)_{*}, E_{2}) \rightarrow (B(2)_{*}, F_{2})$ given by 
$$\A(2)_{*} = \F[\zeta_{1}, \zeta_2, \zeta_{3}]/(\zeta_{1}^{8}, \zeta_2^{4},\zeta_{3}^{2})\rightarrow B(2)_{*} = \F[\zeta_{1}, \zeta_2, \zeta_{3}]/(\zeta_{1}^{4}, \zeta_2^{4},\zeta_{3}^{2})$$
$$\zeta_{i}\mapsto \zeta_{i} \ \ i\in\{1,2,3\}$$
$$E_{2} = E(x_{1}, x_{2}, x_{3})\rightarrow F_{2} = E(x_{2},x_{3})$$
$$x_{1}\mapsto 0, x_{2}\mapsto x_{2}, x_{2}\mapsto x_{2}. $$
The induced map on their Koszul duals is $$R=\F[y_{1},y_{2},y_{3}]\rightarrow S=\F[y_{2},y_{3}]$$
$$y_{1}\mapsto 0, y_{2}\mapsto y_{2}, y_{3}\mapsto y_{3}.$$
By an abuse of notation, we will denote by $p$ these projection maps. The context will make it clear which map is referred to.
\begin{Lemma}\phantomsection \label{Lem_prod3} The map $p_{*} = \Ext^{7,42}_{\A(2)_{*}}(\H_*(A_1))\rightarrow \Ext^{7,42}_{B(2)_{*}}(\H_*(A_1))$ induced by the projection $\A(2)_{*}\rightarrow B(2)_{*}$ sends $ge[3,15]$ to a non-trivial element. 
\end{Lemma}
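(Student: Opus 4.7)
The plan is to exhibit an explicit representative of $ge[3,15]$ in the $\A(2)_*$-DMSS for $\H_*(A_1)$, apply the projection $p$, and then show that the resulting class in the $B(2)_*$-DMSS is a $d_1$-cycle that is not a $d_1$-boundary. By Proposition \ref{Prop_AdamsA_1} and Proposition \ref{d1-S}, the class $g = \alpha_{0,12,2}^{2}$ is represented by $y_2^{4}\in R_4$ and $e[3,15]$ is represented by $y_2^{3}+y_1y_3^{2}\in R_3$, so (using the module structure of the DMSS for $\H_*(A_1)$ over that for $\F$) the product $ge[3,15]$ is represented by
\[
y_2^{4}\cdot(y_2^{3}+y_1y_3^{2}) = y_2^{7} + y_1y_2^{4}y_3^{2}\in R_7.
\]
Applying the projection $p:R\to S$, which sends $y_1\mapsto 0$, gives $y_2^{7}\in S_7$ as the candidate representative of $p_*(ge[3,15])$.

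I first check that $y_2^{7}$ represents a $d_1$-cycle in $\E_1^{0,7,42}$ of the $B(2)_*$-DMSS for $\H_*(A_1)$. In $S$ the $\A(1)_*$-coaction on $y_2$ reduces to $1\otimes y_2$ (the $\xi_1^{2}\otimes y_1$ summand disappears), hence $y_2^{7}$ is $\A(1)_*$-primitive; combining the shearing and change-of-rings isomorphisms with the cofreeness $\H_*(A_1)\cong \A(1)_*$, $y_2^{7}$ corresponds to a genuine class in $\Ext^{0}_{\A(1)_*}(S_7\otimes \H_*(A_1))$. Sparseness (no monomial in $R_2$ has internal degree $6$) forces $d_1^{\A(2)_*}(y_2)=0$, so by naturality of the DMSS along the morphism of pairs $(\A(2)_*,E_2)\to(B(2)_*,F_2)$ we also have $d_1^{B(2)_*}(y_2)=0$, and the Leibniz rule gives $d_1^{B(2)_*}(y_2^{7})=0$.

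It remains to show that $y_2^{7}$ is not a $d_1$-boundary, which is the heart of the proof. Cofreeness of $\H_*(A_1)$ concentrates the $\E_1$-term of the $B(2)_*$-DMSS in cohomological degree zero, so any potential source for such a boundary lies in $\Ext^{0}_{\A(1)_*}(S_6\otimes \H_*(A_1))\cong(S_6)^{42}$. Solving $6a+7b=42$ subject to $a+b=6$ shows that this vector space is one-dimensional, spanned by $y_3^{6}$. Using $d_1^{\A(2)_*}(v_2^{4})=y_1y_2^{4}$ from Proposition \ref{d1-S} and $d_1^{\A(2)_*}(y_3^{2})=y_1^{2}y_2$ from Proposition \ref{d1-A_1}, the Leibniz rule yields
\[
d_1^{\A(2)_*}(y_3^{6}) = d_1^{\A(2)_*}(v_2^{4}\cdot y_3^{2}) = y_1y_2^{4}y_3^{2}+y_1^{2}y_2y_3^{4},
\]
and both summands are divisible by $y_1$. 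Naturality therefore gives $d_1^{B(2)_*}(y_3^{6})=p\bigl(d_1^{\A(2)_*}(y_3^{6})\bigr)=0$, so $y_2^{7}$ cannot be a $d_1$-boundary. Higher $d_r$ differentials are zero for cofreeness reasons, so $y_2^{7}$ survives to $\E_\infty$ and represents the non-trivial class $p_*(ge[3,15])$. The main obstacle is the degree bookkeeping that isolates $y_3^{6}$ as the unique possible source and the Leibniz computation of its $d_1$; once these are in hand, naturality does the rest.
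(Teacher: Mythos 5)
Your proof is correct and follows essentially the same route as the paper's: you use the same representative $y_2^4(y_2^3+y_1y_3^2)\mapsto y_2^7$, isolate $y_3^6$ as the unique possible source of a $d_1$ hitting it, and compute $d_1(y_3^6)$ via the factorisation $v_2^4\cdot y_3^2$ and the Leibniz rule, concluding by naturality that the image under $p$ vanishes. (Your value $v_2^4 d_1(y_3^2)=y_1^2y_2y_3^4$ is the one consistent with Proposition \ref{d1-A_1}, and in any case both summands are divisible by $y_1$, which is all that is needed.)
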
 
\begin{proof} The projection $\A(2)_{*}\rightarrow B(2)_{*}$ induces a morphism of the DMSSs. The morphism of the $\mathrm{E}_{1}$-terms reads 

$$\Ext_{\A(2)_{*}}^{s,t}(E_{2}\otimes R\otimes \H_*(A_1))\rightarrow \Ext_{B(2)_{*}}^{s,t}(F_{2}\otimes S\otimes \H_*(A_1)).$$
By the change-of-rings isomorphism, this morphism identifies with the projection $p : R\rightarrow S$, which is surjective. The class $ge[3,15]$ is detected by $y_{2}^{4}(y_{2}^{3}+ y_{1}y_{3}^{2})\in R^{7}$, which maps to $y_{2}^{7}\in S^{7}$ via $p$. By naturality, $y_{2}^{7}$ is a permanent cycle in the target DMSS. The only class in the $\mathrm{E}_{1}$-term which can support a differential hitting $y_{2}^{7}$ is $y_{3}^{6}$. $y_{3}^{6}$ admits $v_{2}^{4}y_{3}^{2}$ as a lift in the source DMSS. We have that 
$$d_{1}(v_{2}^{4}y_{3}^{2}) = d_{1}(v_{2}^{4})y_{3}^{2} +v_{2}^{4}d_{1}(y_{3}^{2}) = (\alpha_{0,4,1}\alpha_{0,12,2}^{2}) y_{3}^{2} + v_{2}^{4}(\alpha_{0,4,1}y_{2}) = y_{1}y_{2}^{4}y_{3}^{2} +y_{3}^{4}y_{1}y_{2}.$$ 
This uses the Leibniz rule, Proposition \ref{d1-S} part 11), Proposition \ref{d1-A_1} part 7). By naturality, the $d_{1}$-differential in the target DMSS is equal to $p(y_{1}y_{2}^{4}y_{3}^{2} +y_{3}^{4}y_{1}y_{2}),$ which is equal to $0$. Therefore, the image of $ge[3,15]$ is non-trivial.

\end{proof}
\begin{Lemma} The product of $\alpha$ and $e[4,23]$ is non-trivial, hence equal to $ge[3,15]$ if and only if the product of $p_{*}(\alpha)$ and $p_{*}(e[4,23])$ is non-trivial.
\end{Lemma}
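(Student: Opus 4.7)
\medskip
\noindent
\textbf{Proof proposal.} The plan is to reduce the claim to the naturality of the Yoneda (cup) product in $\Ext$ together with the preceding two lemmas. The key observation is that the projection of Hopf algebras $p : \A(2)_{*}\to B(2)_{*}$ induces a morphism of spectral sequences of algebras and, at the level of $\Ext$, a map of bigraded rings
$p_{*}: \Ext_{\A(2)_{*}}^{*,*}(\F)\to \Ext_{B(2)_{*}}^{*,*}(\F)$ together with a module map
$p_{*}: \Ext_{\A(2)_{*}}^{*,*}(\H_{*}(A_{1}))\to \Ext_{B(2)_{*}}^{*,*}(\H_{*}(A_{1}))$ which is compatible with the pairings. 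In particular, for any classes $x\in \Ext_{\A(2)_{*}}^{*,*}(\F)$ and $y\in \Ext_{\A(2)_{*}}^{*,*}(\H_{*}(A_{1}))$ one has the identity
\[
p_{*}(x\cdot y)=p_{*}(x)\cdot p_{*}(y).
\]

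The argument then proceeds in two short steps. First, by the preceding lemma, the product $\alpha\cdot e[4,23]$ lies in $\Ext_{\A(2)_{*}}^{7,42}(\H_{*}(A_{1}))$, which contains only the two classes $0$ and $ge[3,15]$; hence $\alpha\cdot e[4,23]$ is non-trivial if and only if it equals $ge[3,15]$. Second, applying $p_{*}$ and using naturality gives $p_{*}(\alpha)\cdot p_{*}(e[4,23])=p_{*}(\alpha\cdot e[4,23])$, which is either $p_{*}(0)=0$ or $p_{*}(ge[3,15])$. By Lemma \ref{Lem_prod3}, the latter is non-trivial. Therefore, $\alpha\cdot e[4,23]\ne 0$ if and only if $p_{*}(\alpha)\cdot p_{*}(e[4,23])\ne 0$, which is exactly the statement of the lemma.

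There is essentially no obstacle in this step: all the genuine work has already been done in the previous lemma, where one had to verify that $p_{*}(ge[3,15])$ survives by performing a $d_{1}$-computation in the target DMSS. The present statement is then a formal consequence of naturality, and its usefulness lies in the next step of the paper, where the product $p_{*}(\alpha)\cdot p_{*}(e[4,23])$ can be computed in the (smaller) DMSS associated to $(B(2)_{*},F_{2})$, where the cochain-level representatives involve fewer terms.
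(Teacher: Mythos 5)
Your proposal is correct and is essentially the same argument as the paper's: the paper expresses the naturality identity $p_{*}(x\cdot y)=p_{*}(x)\cdot p_{*}(y)$ via a commutative square of multiplication maps and then concludes from the non-triviality of $p_{*}(ge[3,15])$ established in the preceding lemma, exactly as you do.
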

\begin{proof} The map $p : \A(2)_{*}\rightarrow B(2)_{*}$ induces the commutative diagram

$$\xymatrix{ \Ext_{\A(2)_{*}}^{3,15}(\F)\otimes\Ext_{\A(2)_{*}}^{4,27}(\H_*(A_1))\ar[d]^{p_{*}}\ar[r] & \Ext_{\A(2)_{*}}^{7,42}(\H_*(A_1))\ar[d]^{p_{*}}\\
                    \Ext_{B(2)_{*}}^{3,15}(\F)\otimes\Ext_{B(2)_{*}}^{4,27}(\H_*(A_1))\ar[r] & \Ext_{B(2)_{*}}^{7,42}(\H_*(A_1)),
 }$$
 where the horizontal maps are the respective multiplications. The result follows from the fact that $p_{*}(ge[3,15])$ is non-trivial by Lemma \ref{Lem_prod3}.  

\end{proof}
\noindent
Now let us compute the product of $p_{*}(\alpha)$ and $p_{*}(e[4,23])$.
\begin{Lemma} In the total cochain complexes of $B(2)_{*}^{\otimes *}\otimes F_{2}\otimes S$ and of $B(2)_{*}^{\otimes *}\otimes F_{2}\otimes S\otimes \H_*(A_1)$, respectively :
\begin{itemize} 
\item[i)] $p_{*}(\alpha)$ is represented by $[\xi_{2}|1|y_{2}^{2}]+ [\xi_{1}^{3}|1|y_{2}^{2}]+[\xi_{1}|1|y_{3}^{2}] \in B(2)\otimes F_{2}\otimes S^{2}$;
\item[ii)] $p_{*}(e[4,23])$ is represented by $[1|y_{2}y_{3}^{3}|a_{0}]+[1|y_{2}^{2}y_{3}^{2}|a_{1}]+[1|y_{2}^{3}y_{3}|a_{2}]+[1|y_{2}^{4}|a_{3}]\in F_{2}\otimes S^{4}\otimes A_1$.
\end{itemize}
\end{Lemma}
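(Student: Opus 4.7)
The plan is to check, for each of the two claimed elements, (a) that it is a cocycle in the total cochain complex $B(2)_*^{\otimes \bullet} \otimes F_2 \otimes S_\bullet$ (resp.\ tensored with $\H_*(A_1)$) for the differential $d_v + d_h$, and (b) that the resulting cohomology class is the one claimed, by tracing through the identification of the $\E_1$-term of the DMSS with the stated Ext groups. A key initial observation is that every summand in both expressions has its $F_2$-entry equal to $1 \in F_2^{(0)}$, so the Koszul differential $d_h$ vanishes identically (the formula $d(\prod x_{i_j} \otimes z) = \sum_t \prod_{j \neq t} x_{i_j} \otimes p(x_{i_t}) z$ is an empty sum when there is no $x$-factor). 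Hence the cocycle verification reduces to checking $d_v = 0$ on each expression.

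For the $d_v$ computation I would first collect the reduced coactions in $B(2)_*$: $\bar\Delta(\xi_1) = 0$, $\bar\Delta(\xi_1^3) = \xi_1^2 \otimes \xi_1 + \xi_1 \otimes \xi_1^2$, and $\bar\Delta(\xi_2) = \xi_1^2 \otimes \xi_1$; the coaction on $S$ inherited from the one on $R$, namely $\Delta(y_2) = 1 \otimes y_2$ and $\Delta(y_3) = \xi_1 \otimes y_2 + 1 \otimes y_3$; and the coaction on $\H_*(A_1)$ obtained from Proposition \ref{comod_A_1} by reducing modulo the Hopf ideal defining the projection $\A(2)_* \to B(2)_*$ (which in particular kills $\xi_1^4$ and higher). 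For part (i), the three summands produce terms in $B(2)_*^{\otimes 2} \otimes F_2 \otimes S_2$ which pair up and cancel over $\F$. For part (ii), each of the four summands produces three terms in $B(2)_* \otimes F_2 \otimes S_4 \otimes \H_*(A_1)$, and every monomial that appears does so an even number of times in the full sum, so $d_v = 0$. This cancellation is essentially the shearing isomorphism made explicit: the supplementary summands $[1|y_2^{j+1} y_3^{3-j}|a_j]$ for $j = 1, 2, 3$ are precisely the correction terms needed to turn the leading $[1|y_2 y_3^3|a_0]$, which is only an $\A(1)_*$-primitive on its own, into an honest $B(2)_*$-primitive in $F_2 \otimes S_4 \otimes \H_*(A_1)$.

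To identify the cohomology classes, I would use naturality of the DMSS under the map of pairs $(\A(2)_*, E_2) \to (B(2)_*, F_2)$. Both elements sit in a single DMSS filtration: part (i) at $(s, \sigma) = (1, 2)$, matching the position of $\alpha_{1,14,2}$; part (ii) at $(0, 4)$, matching the position of the representative $y_2 y_3^3$ of $e[4,23]$. For part (ii), the leading summand $[1|y_2 y_3^3|a_0]$ corresponds under the shearing/change-of-rings identification $\Ext^0_{\A(1)_*}(S_4 \otimes \H_*(A_1)) \cong S_4$ precisely to $p(y_2 y_3^3) = y_2 y_3^3$, matching the DMSS representative of $p_*(e[4,23])$. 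For part (i), projecting to the $\E_1^{1,2}$-term $\Ext^1_{\A(1)_*}(S_2)$ and comparing with the image of $\alpha_{1,14,2}$ under $p_*$ identifies the class as $p_*(\alpha)$. The main genuine obstacle is guessing the correct shape of the correction terms in part (ii); once the coaction formulas are in hand, verifying $d_v = 0$ is purely mechanical.
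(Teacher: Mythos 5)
Your reduction of the cocycle check to $d_v$ alone is correct and is a clean way to organise what the paper dismisses as ``a direct computation'': since every summand of both elements has $F_2$-entry equal to $1$, the Koszul differential $d_h$ is an empty sum, and because $d_vX$ and $d_hX$ land in different spots of the double complex the total cocycle condition is exactly $B(2)_*$-primitivity, which your coaction formulas verify. Your identification of the class in part (ii) also essentially reproduces the paper's argument: the $\E_1$-term of the target DMSS in total degree $(4,27)$ is the one-dimensional group $\F\{y_2y_3^3\}\subset S_4$ (all higher cohomological contributions vanish by the change-of-rings isomorphism), this class survives for degree reasons and detects $p_*(e[4,23])$ by naturality, and $N$ maps to it under the shearing identification; since nothing lives in higher Davis--Mahowald filtration there is no ambiguity.

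The gap is in your identification for part (i). The group you propose to compare classes in, $\E_1^{1,2,15}\cong\Ext^{1,15}_{\A(1)_*}(S_2)$, is \emph{two}-dimensional: $S_2\cong\Sigma^{12}V_3$ and $\Ext^{1,15}_{\A(1)_*}(\Sigma^{12}V_3)=\F\{h^1,a^1\}$ by Lemma \ref{Lem_Mod}(3). Moreover no other bidegree contributes to total degree $(3,15)$ and no differential of the target DMSS touches this group, so $\Ext^{3,15}_{B(2)_*}(\F)\cong\F^{\oplus 2}$ and there are three nonzero candidates for $p_*(\alpha)$. ``Comparing with the image of $\alpha_{1,14,2}$ under $p_*$'' is therefore not a formality: you would have to compute $p_*(\alpha_{1,14,2})$ inside $\F\{h^1,a^1\}$ (e.g.\ via an explicit cobar representative of $\alpha_{1,14,2}$ or a careful analysis of the connecting homomorphism for $0\to\Sigma^4R_1'\to R_2'\to\Sigma^{12}V_3\to 0$) and separately compute the image of $M$, and you indicate neither. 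The paper sidesteps this entirely by exhibiting an explicit lift $\tilde M$ of $M$ to a total cocycle in $\A(2)_*^{\otimes *}\otimes E_2\otimes R$: since $\Ext^{3,15}_{\A(2)_*}(\F)\cong\F\{\alpha\}$ and $\tilde M$ cannot be a coboundary (its image $M$ is not one), $\tilde M$ represents $\alpha$ and hence $M=p_*(\tilde M)$ represents $p_*(\alpha)$ exactly. You should either carry out the cobar-level comparison in $\Ext^{1,15}_{\A(1)_*}(S_2)$ or adopt the lifting argument; as written, part (i) of your identification does not go through.
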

\begin{proof} A direct computation shows that these elements are cocycles of the total differentials, which are not coboundaries. One way to prove that they represent the right classes is to prove that they lift to cocycles in the total cochain complexes of $\A(2)_{*}^{\otimes *}\otimes E_{2}\otimes R$ and of $\A(2)_{*}^{\otimes *}\otimes E_{2}\otimes R\otimes \H_*(A_1)$, respectively. \\\\
\noindent
It is easy to check that $[\xi_{2}|1|y_{2}^{2}] + [\xi_{1}^{3}|1|y_{2}^{2}]+[\xi_{1}|1|y_{3}^{2}]+ [\xi_{2}|x_{1}|y_{1}^{2}]+[\xi_{1}^{3}|x_{1}|y_{1}^{2}]+[\xi_{1}|x_{2}|y_{1}^{2}]+[1|1|y_{1}^{2}y_{3}]\in (\A(2)_{*}\otimes E_{2}\otimes R^{2})\oplus (E_{2}\otimes R^{3})$ is a lift for $[\xi_{2}|1|y_{1}^{2}]+ [\xi_{1}^{3}|1|y_{1}^{2}]+[\xi_{1}|1|y_{2}^{2}]$.\\\\
\noindent
For the other element, instead of finding a lift it suffices to show that $p_{*}$ induces an isomorphism $\Ext_{\A(2)_{*}}^{4,27}(\H_*(A_1))\xrightarrow{\cong}\Ext_{B(2)_{*}}^{4,27}(\H_*(A_1))$, so that both are isomorphic to $\F$. This can be proved by a similar argument to that used in the proof of Lemma \ref{Lem_prod3}. In effect, the non-trivial class of $\Ext_{\A(2)_{*}}^{4,27}(\H_*(A_1))$ is detected by $y_{2}y_{3}^{3}$ in the DMSS. Via $p_{*}$, the latter is sent to $y_{2}y_{3}^{3}$ which is the unique non-trivial element of the $E_{1}$-term of the target DMSS in the appropriate tridegree. For degree reasons, $y_{2}y_{3}^{3}$ is not hit by any differential. Therefore, $y_{2}y_{3}^{3}$ survives the target DMSS and it follows that  $\Ext_{\A(2)_{*}}^{4,27}(\H_*(A_1))\xrightarrow{\cong}\Ext_{B(2)_{*}}^{4,27}(\H_*(A_1))\cong \F$.

%Instead of doing that we present a more elegant proof. In fact, it suffices to proof that the maps $p_{*}$ are isomorphisms $\Ext_{\A(2)_{*}}^{3,12}(\F,\F)\xrightarrow{\cong}\Ext_{B(2)_{*}}^{3,12}(\F,\F)$ and $\Ext_{\A(2)_{*}}^{4,23}(\F,A_1)\xrightarrow{\cong}\Ext_{B(2)_{*}}^{4,23}(\F,A_1)$ and so that they are all isomorphic to $\F$. The latter isomorphism can be proved by using the naturality of the DMSS exactly the same way as in the proof of the lemma \ref{Lemma 4.0.7}. For the former, observe that there is an isomorphism of $\A(2)_{*}-$ comodules \linebreak $\A(2)_{*}\square_{B(2)_{*}}\F\cong \mathrm{H}_{*}(C_{\nu})$. The elementary argument of homological algebra shows that $p_{*}$ is the the composite $$\Ext_{\A(2)_{*}}^{s,t}(\F)\rightarrow \Ext_{\A(2)_{*}}(\F,\A(2)_{*}\square_{B(2)_{*}}\F)\xrightarrow{\cong}\Ext_{B(2)_{*}}^{s,t}(\F,\F)$$ where the first map is induces by inclusion to the bottom generator $\F\rightarrow \A(2)\square_{B(2)_{*}}\F$ and the second map is the usual change-of-ring isomorphism. It remains therefore to prove that $\Ext_{\A(2)_{*}}^{3,12}(\F,\F)\xrightarrow{\cong} \Ext_{\A(2)_{*}}^{3,12}(\F,\mathrm{H}_{*}(C_{\nu}))$ 
\end{proof}
\noindent
Set $M = [\xi_{2}|1|y_{2}^{2}]+ [\xi_{1}^{3}|1|y_{2}^{2}]+[\xi_{1}|1|y_{3}^{2}]$ and $N=[1|y_{2}y_{3}^{3}|a_{0}]+[1|y_{2}^{2}y_{3}^{2}|a_{1}]+[1|y_{2}^{3}y_{3}|a_{2}]+[1|y_{2}^{4}|a_{3}]$. We need to show that $MN$, which is a $(d_{v}+d_{h})$-cocycle, represents a non-trivial class in $\Ext_{B(2)_{*}}^{7,42}(\H_*(A_1))$. We see that $MN$ is an element in $B(2)_{*}\otimes F_{2}\otimes S^{6}\otimes A_1$ and $d_{v}(MN) = 0$. This means that $MN$ represents a class in $\Ext_{B(2)_{*}}^{1,42}(F_{2}\otimes S^{6}\otimes \H_*(A_1))$. However, the latter group is trivial because by the change-of-rings theorem, $\Ext^{*,*}_{B(2)_{*}}(\F, F_{2}\otimes S\otimes \H_*(A_1))$ is isomorphic to $S$ which is concentrated only in cohomological degree $0$. There must be an element $P\in F_{2}\otimes S^{6}\otimes \H_*(A_1)$ such that $d_{v}(P)=MN$, and so $d_{h}(P)$ represents the same class in $\Ext_{B(2)_{*}}^{7,42}(\H_*(A_1))$ as $MN$ does. \\\\
 We recall the values of $\lambda_{i,j}$ as introduced in Proposition \ref{comod_A_1}: $\lambda_{1,0} = \lambda_{0,1} =1$ and $\lambda_{0,0} = \lambda_{1,1} =0$. 
\begin{Lemma} \phantomsection \label{LemP} $P$ contains $\lambda_{i,j} [1|x_{2}|y_{2}^{6}|a_{0}]$ as a term.
\end{Lemma}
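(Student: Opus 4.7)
The strategy is to track the constraints imposed on $P$ by two distinguished coefficients: one that must appear, and one that must vanish.

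First I would expand $MN$ in its $B(2)_*$-canonical form, using $\xi_1 = \zeta_1$ and $\xi_2 = \zeta_2 + \zeta_1^3$ in $B(2)_*$, so that each paired contribution $[\xi_2|\cdots]+[\xi_1^3|\cdots]$ collapses to $[\zeta_2|\cdots]$. A direct multiplication of the three terms of $M$ against the four terms of $N$ shows that $MN$ contains the term $[\zeta_2|1|y_2^6|a_3]$ with coefficient $1$, while no term of the form $[\zeta_2^2|1|y_2^6|a_0]$ appears in $MN$.

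Next I would enumerate every monomial $[1|f|s|m] \in F_2 \otimes S^6 \otimes \H_*(A_1)$ whose reduced diagonal coaction $d_v$ contributes to $[\zeta_2|1|y_2^6|a_3]$. The $B(2)_*$-coactions on $F_2$ produce only the factors $\zeta_2^2$, $\zeta_3$, or $\zeta_2^2\zeta_3$; those on $S$ produce only powers of $\zeta_1$; and by Proposition~\ref{comod_A_1} the only element of $\H_*(A_1)$ whose reduced coaction produces a $[\zeta_2|a_3]$ term in $B(2)_*$ is $a_6$, since there the combination $[\xi_2|a_3]+[\xi_1^3|a_3]$ collapses to $[\zeta_2|a_3]$. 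Matching internal degrees then forces the unique contributing monomial to be $[1|1|y_2^6|a_6]$, which therefore appears in $P$ with coefficient $1$.

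Now I would compute the remaining contributions of $d_v([1|1|y_2^6|a_6])$. Because $\xi_2^2 = \zeta_2^2$ in $B(2)_*$ (the $\zeta_1^6$-correction vanishing because $\zeta_1^4 = 0$), the reduced coaction of $a_6$ contributes $\lambda_{i,j}[\zeta_2^2|1|y_2^6|a_0]$ to $d_v([1|1|y_2^6|a_6])$. Since this term is absent from $MN$, it must be cancelled by the $d_v$ of another monomial in $P$. Repeating the same structural enumeration for the target $[\zeta_2^2|1|y_2^6|a_0]$ shows that the only alternative source is the monomial $[1|x_2|y_2^6|a_0]$, contributing via $\bar{\Delta}(x_2) = \zeta_2^2 \otimes 1$. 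Equating coefficients in $\F$ then forces the coefficient of $[1|x_2|y_2^6|a_0]$ in $P$ to equal $\lambda_{i,j}$, as asserted.

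The main technical burden lies in the two structural enumerations of admissible monomials; these are clean once one exploits $\xi_1 = \zeta_1$, $\xi_2 = \zeta_2 + \zeta_1^3$, and $\zeta_1^4 = 0$ in $B(2)_*$, which make $\xi_2^2 = \zeta_2^2$ and which merge the $[\xi_2|a_3]$ and $[\xi_1^3|a_3]$ contributions into a single $[\zeta_2|a_3]$ in the reduced coaction of $a_6$.
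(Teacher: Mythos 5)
Your argument is correct and follows essentially the same route as the paper's proof: identify the term $[\xi_2|1|y_2^6|a_3]$ (equivalently $[\zeta_2|1|y_2^6|a_3]$) in $MN$, deduce that $[1|y_2^6|a_6]$ must occur in $P$, observe via the coaction formula for $a_6$ that $d_v$ of that term produces $\lambda_{i,j}[\xi_2^2|1|y_2^6|a_0]$, which is absent from $MN$ and can only be cancelled by including $\lambda_{i,j}[1|x_2|y_2^6|a_0]$ in $P$. Your explicit enumerations and the passage to the $\zeta$-basis merely flesh out the steps the paper leaves as "one can check."
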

\begin{proof}
The product MN contains the term $[\xi_{2}|1| y_{2}^{6}|a_{3}]$. One can check that $P$ must contain the term $[1|y_{2}^{6}|a_{6}]$, so that $d_{v}(P)$ contains the term $[\xi_{2}|1| y_{2}^{6}|a_{3}]$. Using the formula for the coaction of $\A(2)_{*}$ on $a_{6}$, one sees that $d_{v}(P)$ contains the term $\lambda_{i,j} [\xi_{2}^{2}|1| y_{2}^{6}|a_{0}]$ which is not a term of $MN$.  In order to compensate this term, $P$ must contain the term $\lambda_{i,j} [1|x_{2}|y_{2}^{6}|a_{0}].$
\end{proof}

\begin{Lemma} \phantomsection \label{LemY}A ($d_{h}+d_{v}$)-cycle in $F_{2}\otimes S^{7}\otimes A_1$ gives rise to a non-trivial class in $\Ext^{7,42}_{B(2)_{*}}(\H_*(A_1))$ if and only if it contains the term $[1|y_{2}^{7}|a_{0}]$.
\end{Lemma}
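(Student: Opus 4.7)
The plan is to combine an elementary dimension count in the cobar--Koszul bicomplex with one explicit $d_{1}$-computation in the Davis--Mahowald spectral sequence for $(B(2)_{*},F_{2})$ applied to $\mathrm{H}_{*}(A_1)$.

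First I would carry out a purely additive check. With the bases $\{1,x_{2},x_{3},x_{2}x_{3}\}$ of $F_{2}$ in internal degrees $\{0,6,7,13\}$, the basis $\{y_{2}^{i}y_{3}^{7-i}\}_{i=0}^{7}$ of $S^{7}$ in degrees $\{42,\ldots,49\}$, and the basis $\{a_{0},a_{1},a_{2},a_{3},\overline{a_{3}},a_{4},a_{5},a_{6}\}$ of $\mathrm{H}_{*}(A_1)$ from (\ref{basis-A_1}) in degrees $\{0,\ldots,6\}$, the internal-degree-$42$ part of $F_{2}\otimes S^{7}\otimes \mathrm{H}_{*}(A_1)$ is spanned by the single basis monomial $[1|y_{2}^{7}|a_{0}]$. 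Consequently any $(d_{h}+d_{v})$-cycle in this internal degree is a scalar multiple of $[1|y_{2}^{7}|a_{0}]$, so the condition of ``containing the term $[1|y_{2}^{7}|a_{0}]$'' is equivalent to being non-zero; this already settles the ``only if'' direction.

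Second I would verify that $[1|y_{2}^{7}|a_{0}]$ itself is a $(d_{h}+d_{v})$-cycle. The element $y_{2}\in S$ is $B(2)_{*}$-primitive (the term $\xi_{1}^{2}\otimes y_{1}$ in $\Delta(y_{2})$ dies under $R\to S$), and $a_{0}$ is $B(2)_{*}$-primitive by Proposition \ref{comod_A_1}, so $d_{v}$ vanishes on the diagonal coaction. The Koszul differential $d_{h}$ of (\ref{Kos-CC}.iv) is identically zero on elements whose $F_{2}$-factor is $1$, so $d_{h}=0$ as well.

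Third I would prove non-triviality via the $B(2)_{*}$-DMSS for $\mathrm{H}_{*}(A_1)$. Using the change-of-rings isomorphism of Example \ref{ExampB(n)-cont} together with $\mathrm{H}_{*}(A_1)\cong \A(1)_{*}$ as $\A(1)_{*}$-comodules, one has $\mathrm{E}_{1}^{0,\sigma,t}\cong S_{t}^{\sigma}$; hence $\mathrm{E}_{1}^{0,7,42}=\F\{y_{2}^{7}\}$ and $\mathrm{E}_{1}^{0,8,42}=0$, while the only group that could $d_{1}$-hit $\mathrm{E}_{1}^{0,7,42}$ is $\mathrm{E}_{1}^{0,6,42}=\F\{y_{3}^{6}\}$. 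I would then exhibit the explicit primitive lift
$$P \;:=\; [1|y_{3}^{6}|a_{0}]+[1|y_{2}^{2}y_{3}^{4}|a_{2}]$$
of $y_{3}^{6}$: its primitivity under the diagonal $B(2)_{*}$-coaction follows from $\Delta(y_{3}^{6})=1\otimes y_{3}^{6}+\zeta_{1}^{2}\otimes y_{2}^{2}y_{3}^{4}$ (after noting $\zeta_{1}^{4}=0$ in $B(2)_{*}$) combined with $\Delta(a_{2})=1\otimes a_{2}+\zeta_{1}^{2}\otimes a_{0}$ from Proposition \ref{comod_A_1}. Since $P$ contains no term with a non-trivial $F_{2}$-factor, $d_{h}(P)=0$, so $d_{1}(y_{3}^{6})=0$. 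Therefore $\mathrm{E}_{\infty}^{0,7,42}\cong\F$, generated by $y_{2}^{7}$, whose cochain-level representative is forced by the one-dimensionality of the first step to be precisely $[1|y_{2}^{7}|a_{0}]$.

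The only mildly non-routine step is producing the primitive lift $P$; the rest is bookkeeping, so I do not expect a genuine obstacle.
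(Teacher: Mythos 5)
Your proof is correct. The overall strategy is the same as the paper's — run the Davis--Mahowald spectral sequence for $(B(2)_*,F_2)$, identify $\Ext^{7,42}_{B(2)_*}(\H_*(A_1))\cong\F$ as detected by $y_2^7$ in Davis--Mahowald filtration $7$, and conclude — but you differ in two useful ways. First, your degree count (the internal-degree-$42$ part of $F_2\otimes S^7\otimes\H_*(A_1)$ is spanned by $[1|y_2^7|a_0]$ alone, since each tensor factor is forced to sit in its minimal degree) makes ``contains the term $[1|y_2^7|a_0]$'' literally equivalent to ``is nonzero'' and pins down the cochain representative with no further work; the paper reaches the same conclusion less explicitly by invoking the shearing homomorphism. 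Second, and more substantively, you prove the key fact $d_1(y_3^6)=0$ intrinsically in the $(B(2)_*,F_2)$-complex, by exhibiting the explicit primitive lift $[1|y_3^6|a_0]+[1|y_2^2y_3^4|a_2]$ (whose primitivity rests on $\zeta_1^4=0$ in $B(2)_*$, and which is killed by $d_h$ because its $F_2$-factor is trivial), whereas the paper (in the proof of Lemma \ref{Lem_prod3}, which its proof of this lemma simply cites) argues by naturality along $p:(\A(2)_*,E_2)\to(B(2)_*,F_2)$: it lifts $y_3^6$ to $v_2^4y_3^2$, computes $d_1(v_2^4y_3^2)$ upstairs via the Leibniz rule and the already-established differentials of Propositions \ref{d1-S}(11) and \ref{d1-A_1}(7), and notes every term of the result contains $y_1$ and hence maps to zero. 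Your route is more self-contained and even silently corrects the typo in the coaction formula for $a_2$ in Proposition \ref{comod_A_1}; the paper's route buys economy by recycling computations it has already done. Both are valid.
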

\begin{proof} It is shown in the proof of Lemma \ref{Lem_prod3} that $$\Ext_{B(2)_{*}}^{7,42}(\H_*(A_1))\cong\F$$ and that this group arises from $$\Ext_{B(2)}^{0,42}(F_{2}\otimes S^{7}\otimes \H_*(A_1))\cong \F\{y_{2}^{7}\}\subset S^{7}.$$ Therefore, by the shearing homomorphism, the only element in $F_{2}\otimes S^{7}\otimes \H_*(A_1)$ that represents the non-trivial class of $\Ext^{7,42}_{B(2)_{*}}(\H_*(A_1))$ must contain the term $[1|y_{2}^{7}|a_{0}]$.  
\end{proof}

\begin{Proposition} \label {exo_prod}The product $\alpha e[4,23]$ is equal to $\lambda_{i,j} ge[3,15]$.
 \end{Proposition}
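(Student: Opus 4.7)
The plan is to leverage the previous lemmas, which have already reduced the question to a concrete calculation in the cobar complex for $B(2)_*$. By the preceding discussion, it suffices to compute the product $p_*(\alpha) \cdot p_*(e[4,23])$ in $\Ext^{7,42}_{B(2)_*}(\H_*(A_1))$: the map $p_*$ sends $ge[3,15]$ to a non-trivial element (Lemma \ref{Lem_prod3}), and $\alpha e[4,23]$ is either $0$ or $ge[3,15]$, so its image under $p_*$ determines it. Thus the first step is simply to compute the cohomology class of the cocycle $MN \in B(2)_* \otimes F_2 \otimes S^6 \otimes \H_*(A_1)$.

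Next, I would carry out the standard "shift by $d_v$" trick in the double complex. Since $\Ext^{1,42}_{B(2)_*}(F_2 \otimes S^6 \otimes \H_*(A_1)) = 0$ (by the change-of-rings argument used in Lemma \ref{Lem_prod3}), there exists some $P \in F_2 \otimes S^6 \otimes \H_*(A_1)$ with $d_v(P) = MN$; the element $d_h(P) \in F_2 \otimes S^7 \otimes \H_*(A_1)$ is then a $(d_v+d_h)$-cocycle representing the same cohomology class as $MN$. The cohomology class of $d_h(P)$ depends only on $P$ modulo $\ker d_v$, so one only needs to pin down $P$ up to primitives.

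The heart of the argument is then purely combinatorial: by Lemma \ref{LemY}, the class of $d_h(P)$ in $\Ext^{7,42}_{B(2)_*}(\H_*(A_1))$ is non-trivial if and only if $d_h(P)$ contains the basis element $[1|y_2^7|a_0]$, and from the Koszul differential formula $d_h(\prod x_{i_j} \otimes z \otimes m) = \sum_t \prod_{j\neq t} x_{i_j} \otimes p(x_{i_t}) z \otimes m$ together with $p(x_2) = y_2$ and $p(x_3) = y_3$, the only basis element of $F_2 \otimes S^6 \otimes \H_*(A_1)$ whose $d_h$-image contains $[1|y_2^7|a_0]$ is $[x_2|y_2^6|a_0]$. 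The coefficient of $[1|y_2^7|a_0]$ in $d_h(P)$ therefore equals the coefficient of $[x_2|y_2^6|a_0]$ in $P$. By Lemma \ref{LemP}, this coefficient is precisely $\lambda_{i,j}$.

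Combining these steps, we conclude that $p_*(\alpha) \cdot p_*(e[4,23]) = \lambda_{i,j} \, p_*(g e[3,15])$ in $\Ext^{7,42}_{B(2)_*}(\H_*(A_1))$, and pulling back via the injectivity of $p_*$ on $g e[3,15]$ yields $\alpha \, e[4,23] = \lambda_{i,j} \, g e[3,15]$. The main obstacle, already handled in Lemma \ref{LemP}, was identifying the specific term $\lambda_{i,j}[1|x_2|y_2^6|a_0]$ forced into $P$ by the coaction formula for $a_6$ (whose $\xi_2^2$-component carries the coefficient $\lambda_{i,j}$); once this is in hand, the remaining Koszul differential computation collapses to a single term by dimension considerations, and Lemma \ref{LemY} furnishes the detection criterion.
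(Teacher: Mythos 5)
Your proposal is correct and follows essentially the same route as the paper: reduce to $B(2)_*$ via $p_*$ and Lemma \ref{Lem_prod3}, replace the cocycle $MN$ by $d_h(P)$ using the vanishing of $\Ext^{1,42}_{B(2)_*}(F_2\otimes S^6\otimes \H_*(A_1))$, extract the coefficient $\lambda_{i,j}$ from Lemma \ref{LemP}, and detect non-triviality with Lemma \ref{LemY}. Your explicit remark that $[x_2|y_2^6|a_0]$ is the only basis element whose $d_h$-image can contribute $[1|y_2^7|a_0]$ is a small but welcome clarification that the paper leaves implicit.
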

\begin{proof} $\alpha e[4,23]$ is non-trivial if and only if $d_{h}(P)$ represents a non-trivial class in $\Ext_{B(2)_{*}}^{7,42}(\H_*(A_1))$. Lemma
 \ref{LemP} shows that $d_{h}(P)$ contains the term $\lambda_{i,j} [1|y_{1}^{7}|a_{0}]$. Hence, lemme \ref{LemY} concludes the proof.
\end{proof}
\noindent
The product between $\beta\in\Ext^{3,18}_{\A(2)_{*}}(\F)$ and $e[3,15]\in \Ext^{3,18}_{\A(2)_{*}}(\H_*(A_1))$ is easier because both have $\sigma$-filtration $0$ in the Davis-Mahowald spectral sequence.
\begin{Proposition}\phantomsection \label{prod} $\beta e[3,15] = e[6,30].$
\end{Proposition}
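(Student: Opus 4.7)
The plan is to compute this product directly at the level of the Davis--Mahowald spectral sequence, exploiting the fact that both $\beta$ and $e[3,15]$ have cohomological grading $s=0$ in the DMSS, so no exotic filtration jump can occur.

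First, I would assemble the representatives. From the computation of the subalgebra of primitives, $\beta$ is represented in the DMSS for $\F$ by $\alpha_{0,18,3}$, which under the identification $\Ext^{0,*}_{\A(1)_*}(R) \subset R$ corresponds to $y_2^3 + y_1 y_3^2 \in R_3$. From Proposition \ref{Prop_AdamsA_1}, the class $e[3,15] \in \Ext^{3,18}_{\A(2)_*}(\H_*(A_1))$ is represented in the DMSS by $y_2^3 + y_1 y_3^2 \in R_3$ (viewed in the $s=0$ strand of the $\E_1$-term via Proposition \ref{Prop_A_1}), and $e[6,30]$ is represented by $y_2^6 + y_1^2 y_3^4 \in R_6$.

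Next, I would observe that by Proposition \ref{Prop_A_1} the entire $\E_1$-term (hence $\E_\infty$-term) of the DMSS computing $\Ext^{*,*}_{\A(2)_*}(\H_*(A_1))$ is concentrated in cohomological degree $s=0$, so the associated DM-filtration on $\Ext^{*,*}_{\A(2)_*}(\H_*(A_1))$ admits no nontrivial filtration jumps. Since the DMSS for $\H_*(A_1)$ is a module over the DMSS for $\F$, and both $\beta$ and $e[3,15]$ are detected in the $s=0$ strand, the product $\beta \cdot e[3,15]$ is detected by the polynomial product of their representatives in $R$, with no possibility of being hidden by an exotic product in a higher $\sigma$-filtration.

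Finally, I would carry out the multiplication in $R = \F[y_1,y_2,y_3]$: using characteristic $2$,
\[
(y_2^3 + y_1 y_3^2)(y_2^3 + y_1 y_3^2) \;=\; y_2^6 + 2 y_1 y_2^3 y_3^2 + y_1^2 y_3^4 \;=\; y_2^6 + y_1^2 y_3^4,
\]
which is precisely the DMSS representative of $e[6,30]$, yielding $\beta e[3,15] = e[6,30]$. I do not expect any real obstacle here, in contrast to the previous product $\alpha \cdot e[4,23]$ where $\alpha$ had positive cohomological DM-grading and one was forced to work at the double-complex level; the mildly delicate point is only the verification that everything lies in $s=0$ so that polynomial multiplication suffices, which is immediate from Proposition \ref{Prop_A_1}.
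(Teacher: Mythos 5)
Your proof is correct and follows essentially the same route as the paper: both arguments identify $\beta$ and $e[3,15]$ with the representative $y_2^3+y_1y_3^2$ (in $R_3$ and $R_3\otimes \H_*(A_1)$ respectively), note that no filtration jump can occur since the whole $\E_\infty$-term for $\H_*(A_1)$ sits in cohomological degree $s=0$, and conclude by squaring in $R$ to get $y_2^6+y_1^2y_3^4$, the representative of $e[6,30]$ from Proposition \ref{Prop_AdamsA_1}. The only difference is that you spell out the no-hidden-extension justification that the paper leaves implicit.
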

\begin{proof}
the class $\beta$ is represented by $y_{2}^{3}+y_{1}y_{3}^{2}$ in $R^{3}$ and $e[3,15]$ is represented by $[y_{2}^{3}+y_{1}y_{3}^{2}|a_{0}]$ in $R^{3}\otimes A_1$. So the product $\beta e[3,15]$ is represented by $[y_{2}^{6}+y_{1}^{2}y_{3}^{4}|a_{0}]$, which represents $e[6,30]$ by Proposition \ref{Prop_AdamsA_1}. 
\end{proof}
\section{Partial study of the Adams spectral sequence for $tmf\wedge A_1$} \label{G24 3}
\noindent
In this section, we establish some differentials as well as some structures of the ASS for $A_1$. These are essential bits of information allowing us to run the homotopy fixed point spectral sequence in the next section. \\\\
\noindent
Recall that the ASS for $tmf\wedge A_1$ which has $\mathrm{E}_{2}$-term isomorphic to $\Ext_{\A(2)_{*}}^{*,*}(\H_*(A_1))$ is a spectral sequence of modules over that for $tmf$, whose $\mathrm{E}_{2}$-term is isomorphic to $\Ext_{\A(2)_{*}}^{*,*}(\F)$. We first recollect some known properties of the ASS for $tmf$, see \cite{DFHH14}, Chapter 13. 
\begin{Theorem} \phantomsection \label{tmf'sdiff}
\begin{itemize}
\item[(i)] The class $g\in \Ext_{\A(2)_{*}}^{4,24}(\F)$ is a permanent cycle detecting the image of $\overline{\kappa}\in \pi_{20}(S^{0})$ via the Hurewicz map $S^{0}\rightarrow tmf$. 
\item[ii)] There is the following $d_{2}$-differential in the Adams spectral sequence for $tmf$ $$d_{2}(w_{2}) = g\beta\alpha. $$
\item[(iii)] There is the following $d_{3}$-differential in the Adams spectral sequence for $tmf$ $$d_{3}(w_{2}^{2}(v_{2}^{4}\eta)) = g^{6}.$$
\item[(iv)] The class $\Delta^{8}:=w_{2}^{4}$ survives the Adams spectral sequence.

\end{itemize}
\end{Theorem}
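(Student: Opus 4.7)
All four assertions appear in the literature as part of the Bauer--Mahowald--Rezk computation of $\pi_{*}tmf$ (see Chapter 13 of \cite{DFHH14}); my plan is to outline the main arguments and indicate where each external ingredient enters.

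For (i), the first step is to recall that $\ovk\in \pi_{20}(S^{0})$ is classically detected in the Adams spectral sequence for the sphere by a class denoted $g\in \Ext_{\A_{*}}^{4,24}(\F)$, and that under the restriction of Hopf algebras $\A_{*}\twoheadrightarrow \A(2)_{*}$ this class is sent to the element of $\Ext_{\A(2)_{*}}^{4,24}(\F)$ introduced in Section \ref{G24 2} (both are represented by $\alpha_{0,12,2}^{2}$ in the appropriate DMSS). Naturality of the ASS with respect to the unit $S^{0}\to tmf$ then yields the detection statement. That $g$ is a permanent cycle follows from a direct inspection of $\Ext_{\A(2)_{*}}^{*,*}(\F)$: no non-zero class exists in the target bidegrees of $d_{r}(g)$ for $r\geq 2$.

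For (ii) and (iii), the strategy in each case is the same: rule out the source as a permanent cycle using knowledge of $\pi_{*}tmf$ in the relevant total degree, then identify the unique possible non-zero target. For (ii), the known structure of $\pi_{48}tmf$ (in particular its order) forces $w_{2}$ to support a differential; inspection of the tridegree shows that $d_{2}(w_{2})$ lands in $\Ext_{\A(2)_{*}}^{10,57}(\F)$, in which $g\beta\alpha$ is the unique non-zero candidate, the alternatives being eliminated by the multiplicative relations in $\Ext_{\A(2)_{*}}^{*,*}(\F)$ (notably $h_{2}g=0$). For (iii), $g^{6}$ cannot survive the ASS because $\pi_{120}tmf$ is known not to contain a class of the matching Adams filtration; then the only class on the $\E_{3}$-page capable of supporting a $d_{3}$ with $g^{6}$ as target is $w_{2}^{2}(v_{2}^{4}\eta)$.

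For (iv), the plan is to argue that $\Delta^{8}=w_{2}^{4}$ survives because the comparison map $tmf\to TMF\simeq [(\Delta^{8})^{-1}]tmf$ of (\ref{tmf connec model}) makes $\Delta^{8}$ invertible in $\pi_{*}TMF$; a differential either originating at or hitting $\Delta^{8}$ would contradict this periodicity, and the latter possibility is anyway excluded because the required source bidegrees on the $\E_{2}$-page vanish by the computation of Section \ref{G24 2}. The main obstacle is (ii): establishing $d_{2}(w_{2})=g\beta\alpha$ rigorously requires independent, non-formal input about $\pi_{*}tmf$ around total degree $47$--$48$. The standard routes use either the descent spectral sequence for $TMF$ combined with the Gap Theorem, or a synthetic comparison with the ASS for the real $K$-theory spectrum $bo$; either way, this topological input is the only step that is not a formal consequence of the $\E_{2}$-term computation.
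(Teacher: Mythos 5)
The paper does not actually prove Theorem \ref{tmf'sdiff}: it is introduced with the sentence ``We first recollect some known properties of the ASS for $tmf$, see \cite{DFHH14}, Chapter 13'' and is stated without a proof environment, so there is no internal argument to compare yours against. Your sketch is a reasonable reconstruction of how these facts are established in that reference, and you correctly identify the one genuinely non-formal ingredient, namely the topological input behind $d_{2}(w_{2})=g\beta\alpha$; your degree bookkeeping ($w_{2}\in\Ext^{8,56}$, $g\beta\alpha\in\Ext^{10,57}$, both in the $47$--$48$ stem range) is consistent with the conventions of Section \ref{G24 2}.

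Two soft spots are worth flagging. First, in (ii) and (iv) you appeal to uniqueness of the target of $d_{2}(w_{2})$ in $\Ext^{10,57}_{\A(2)_{*}}(\F)$ and to the vanishing of the bidegrees that could hit $w_{2}^{4}$, citing ``the computation of Section \ref{G24 2}''; but that section explicitly computes only a subalgebra of $\Ext^{*,*}_{\A(2)_{*}}(\F)$ (``We do not intend to describe completely the $\Ext_{\A(2)_{*}}^{*,*}(\F)$ but only a subalgebra in which we are interested''), so those vanishing claims still rest on the full charts of \cite{DFHH14} rather than on anything proved in this paper. Second, your argument for (iv) as phrased risks circularity: the equivalence $[(\Delta^{8})^{-1}]tmf\simeq TMF$ concerns a homotopy class $\Delta^{8}\in\pi_{192}(tmf)$ whose identification with a survivor of the $\E_{2}$-class $w_{2}^{4}$ is precisely part of what is being asserted; one needs the independent existence of $\Delta^{8}\in\pi_{192}(tmf)$ (from the descent spectral sequence) together with a filtration argument showing $w_{2}^{4}$ is the only class that can detect it. Note also that the Leibniz rule alone gives that $w_{2}^{4}=(w_{2}^{2})^{2}$ is a $d_{2}$- and $d_{3}$-cycle, but it does not by itself rule out higher differentials, so some external input remains unavoidable here as well. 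None of this makes your outline wrong, but it means your proposal, like the paper, is ultimately a pointer to \cite{DFHH14} rather than a self-contained proof.
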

\noindent

\begin{Proposition}\phantomsection \label{alternative} In the ASS for $tmf\wedge A_1$, there exists $\lambda\in\F$ such that the following statements are equivalent:
\begin{itemize} 
\item[i)]  $d_{2}(w_{2}e[4,23]) = \lambda g^{2}e[6,30],$
\item[ii)] $d_{2}(w_{2}e[9,48]) = \lambda g^{4}e[3,15],$
\item[iii)] $d_{2}(w_{2}e[10,53]) =\lambda g^{5}e[0,0],$
\item[iv)] $d_{2}(w_{2}e[7,38]) =\lambda g^{4}e[1,5].$
\end{itemize}
\end{Proposition}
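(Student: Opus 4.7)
The strategy is to exploit the module structure of the ASS for $tmf\wedge A_1$ over the ASS for $tmf$. Since $\beta\in\pi_{15}(tmf)$ is a permanent cycle, multiplication by $\beta$ commutes with all differentials, so $d_2(\beta\cdot w_2 x)=\beta\cdot d_2(w_2 x)$ for any class $x$. I would connect the four sources $w_2 e[\ast,\ast]$ by $\beta$-multiplication through direct polynomial multiplication of DMSS representatives, and then deduce the equivalences via $\beta$-injectivity on one-dimensional target $\Ext$-groups.

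\textbf{Key DMSS identities.} Using the representatives from Proposition \ref{Prop_AdamsA_1} together with $\beta=y_2^3+y_1 y_3^2$, polynomial multiplication in $R$ followed by reduction modulo $d_1$-boundaries yields
\begin{align*}
\beta\,e[4,23]&=e[7,38], & \beta\,e[7,38]&=e[10,53], & \beta\,e[9,48]&=g^2 e[4,23],\\
\beta\,e[3,15]&=e[6,30], & \beta\,e[6,30]&=g^2 e[1,5], & \beta\,e[1,5]&=g\,e[0,0]+h_2 e[3,17].
\end{align*}
The first two identities and the last are immediate from multiplying representatives. For $\beta\,e[9,48]=g^2 e[4,23]$, the extraneous term $y_1^3 y_3^9$ in the polynomial product equals $d_1(y_2^2 y_3^9)$, since $d_1(y_2^2)=y_1^3$ by Proposition \ref{d1-S} and $y_3^9=w_2\cdot e[1,6]_{\mathrm{rep}}$ is a $d_1$-cycle. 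For $\beta\,e[6,30]=g^2 e[1,5]$, the sum $y_1^3 y_3^6+y_1^2 y_2^3 y_3^4+y_1 y_2^6 y_3^2$ equals $d_1(y_2^2 y_3^6)$ by the Leibniz rule, using $d_1(y_3^6)=y_1 y_2^4 y_3^2+y_1^2 y_2 y_3^4$. Finally, $h_2 g=0$ yields $g\cdot\beta\,e[1,5]=g^2 e[0,0]$.

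\textbf{Chain of implications.} Assuming (i), the Leibniz rule with $\beta$ and $\beta\,e[4,23]=e[7,38]$ gives
\[
d_2(w_2 e[7,38])=\beta\,d_2(w_2 e[4,23])=\lambda g^2\cdot\beta\,e[6,30]=\lambda g^4 e[1,5],
\]
which is (iv). Applying $\beta$ once more, $d_2(w_2 e[10,53])=\lambda g^4\cdot\beta\,e[1,5]=\lambda g^5 e[0,0]$ since $gh_2=0$, giving (iii). For (ii), Leibniz with $\beta\,e[9,48]=g^2 e[4,23]$ produces
\[
\beta\,d_2(w_2 e[9,48])=g^2\cdot d_2(w_2 e[4,23])=\lambda g^4 e[6,30]=\beta\,\bigl(\lambda g^4 e[3,15]\bigr);
\]
since the group $\Ext^{19,114}_{\A(2)_{*}}(\H_*(A_1))$ is one-dimensional, generated by $g^4 e[3,15]$ (verified by enumerating contributions via Proposition \ref{Prop_AdamsA_1}), and $\beta\,g^4 e[3,15]=g^4 e[6,30]$ is nonzero, $\beta$-multiplication is injective on this group and (ii) follows. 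The reverse implications go by analogous $\beta$- and $g^2$-injectivity arguments on the one-dimensional target groups at each of the four bidegrees.

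\textbf{Main obstacle.} The delicate step is the pair of hidden DMSS multiplications $\beta\,e[6,30]=g^2 e[1,5]$ and $\beta\,e[9,48]=g^2 e[4,23]$, which require explicitly recognizing the polynomial sums $y_1^3 y_3^6+y_1^2 y_2^3 y_3^4+y_1 y_2^6 y_3^2$ and $y_1^3 y_3^9$ as the $d_1$-boundaries $d_1(y_2^2 y_3^6)$ and $d_1(y_2^2 y_3^9)$ via the Leibniz rule and the identity $d_1(y_2^2)=y_1^3$. Once these are in place, the module-theoretic transfer via $\beta$ and the one-dimensionality of the relevant target $\Ext$-groups yield all four equivalences with a common $\lambda$.
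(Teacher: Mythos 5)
Your proof is correct, but it takes a genuinely different route from the paper's. The paper never computes any $\beta$-multiplications beyond $\beta e[3,15]=e[6,30]$; instead it argues globally: each hypothesis forces one of $w_2^2e[3,15]$, $w_2^2e[0,0]$, $w_2^2e[1,5]$, $w_2^2e[6,30]$ to be a permanent cycle, the relation $\overline{\kappa}^6=0$ (via $d_3(w_2^2(v_2^4\eta))=g^6$) forces some $g^k$-multiple of that permanent cycle to be hit, and a bidegree inspection shows the only possible source is the next $w_2$-class in the cycle, after which one divides by the $d_2$-cycle $w_2^2$. You instead link the four sources directly by $\beta$-multiplication at the level of DMSS representatives and transfer the differential with the Leibniz rule, closing the loop by injectivity of $\beta$ (resp.\ $g^2$) on the one-dimensional target groups. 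I checked your hidden products: $\beta e[6,30]=g^2e[1,5]$ and $\beta e[9,48]=g^2e[4,23]$ do hold, with the extraneous terms being exactly $d_1(y_2^2y_3^6)$ and $d_1(y_2^2y_3^9)$ as you claim (using $d_1(y_2^2)=y_1^3$, $d_1(y_3^2)=y_1^2y_2$, $d_1(v_2^4)=y_1y_2^4$, $d_1(v_2^8)=0$), and since the DMSS for $A_1$ is concentrated in $\A(1)_*$-cohomological degree $0$ there are no filtration jumps, so products of representatives do compute products in $\Ext_{\A(2)_*}$. What your approach buys is independence from the external input $d_3(w_2^2(v_2^4\eta))=g^6$ and from knowing in advance which $w_2^2$-classes are permanent cycles; what it costs is several additional DMSS product verifications and one unstated input, namely that $\beta$ is a $d_2$-cycle in the ASS for $tmf$ (true, standard, and implicitly used by the paper when it writes $d_2(w_2)e[4,23]=g\beta\alpha e[4,23]$, but worth flagging since Theorem \ref{tmf'sdiff} does not list it). The reverse implications you leave to "analogous injectivity arguments" do go through — for instance $\Ext^{14,84}_{\A(2)_*}(\H_*(A_1))=\F\{g^2e[6,30]\}$ and $\beta g^2e[6,30]=g^4e[1,5]\neq 0$ gives $iv)\Rightarrow i)$ — but they should be spelled out, since the whole content of the proposition is the equivalence and not just the forward chain.
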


\begin{proof} We will prove that $i)\Rightarrow ii)\Rightarrow iii)\Rightarrow iv)\Rightarrow i)$. The charts of Figures (\ref{tmf-A_1}) and (\ref{tmf-A_1-2}) will make the proof easier to follow.  First, we observe that all of the classes $e[4,23]$, $e[7,38]$, $e[9,48]$, $e[10,53]$ are permanent cycles, by sparseness. 

$i)\Rightarrow ii)$ Suppose $d_{2}(w_{2}e[4,23]) = g^{2}e[6,30]$. Then $d_{2}(g^{2}w_{2}e[4,23]) = g^{4}e[6,30]$ by $g$-linearity. It follows that there is no room for a non-trivial differential on $w_{2}^{2}e[3,15]$. In order words, $w_{2}^{2}e[3,15]$ is a permanent cycle. Because of part $iii)$ of Theorem \ref{tmf'sdiff},  a $g^{k}$-multiple of $w_{2}^{2}e[3,15]$ must be hit by a differential for some $k$ less than $7$. One can check that the only possibility is that $d_{2}(w_{2}^{3}e[9,48])=g^{4}w_{2}^{2}e[3,15]$. Since $w_{2}^{2}$ is a $d_{2}$-cycle in the ASS for $tmf$, this differential implies that $d_{2}(w_{2}e[9,48])= g^{4}e[3,15]$.

$ii)\Rightarrow iii)$ Suppose $d_{2}(w_{2}e[9,48]) = g^{4}e[3,15]$. Then the class $w_{2}^{2}e[0,0]$ is a permanent cycle, by sparseness. Again, a $g^{k}$-multiple of $w_{2}^{2}e[0,0]$ for some $k$ smaller than $7$ must be hit by a differential. Inspection shows that the classes $w_{2}^{3}e[10,53]$ and $w_{2}^{4}e[1,5]$ are the only ones that have the appropriate bidegree to support such a differential. However, $w_{2}^{4}e[1,5]$ is a permanent cycle, because $w_{2}^{4}$ and $e[1,5]$ are permanent cycles in their respective ASS. Thus, we have that $d_{2}(w_{2}e[10,53])=g^{5}e[0,0]$.

$iii\Rightarrow iv)$ Suppose $d_{2}(w_{2}e[10,53]) = g^{5}e[0,0]$. Then the class $w_{2}^{2}e[1,5]$ is a permanent cycle, as there is no room for a non-trivial differential on it. Then $g^{k}w_{2}^{2}e[1,5]$ must be hit by a differential for some $k$ less than $7$. Inspection shows that the only possibility is that $d_{2}(w_{2}^{3}e[7,38]) = g^{4}w_{2}^{2}e[1,5]$. As $w_{2}^{2}$ is a $d_{2}$-cycle, it follows that $d_{2}(w_{2}e[7,38]) = g^{4}e[1,5]$.

$iv)\Rightarrow i)$  Suppose $d_{2}(w_{2}e[7,38]) = g^{4}e[1,5]$. By $g$-linearity, we get that $d_{2}(gw_{2}e[7,38]) = g^{5}e[1,5]$. It follows by sparseness that $w_{2}^{2}e[6,30]$ is a permanent cycle. Then the class $g^{k}w_{2}^{2}e[6,30]$ is hit by a differential for some $k$ less than $7$. Inspection shows that the only possibility is that $d_{2}(w_{2}^{3}e[4,23])=g^{2}w_{2}^{2}e[6,30]$. Therefore, $d_{2}(w_{2}e[4,23]) = g^{2}e[6,30]$ by $w_{2}^{2}$-linearity.
  
\end{proof}

\begin{Theorem} \phantomsection \label{ASS_d_2_bis} In the Adams spectral sequence for $tmf\wedge A_{1}[ij]$, there are the following differential $d_2$:
\begin{itemize} 
\item[i)]  $d_{2}(w_{2}e[4,23]) = \lambda_{i,j} g^{2}e[6,30],$
\item[ii)] $d_{2}(w_{2}e[9,48]) = \lambda_{i,j} g^{4}e[3,15],$
\item[iii)] $d_{2}(w_{2}e[10,53]) =\lambda_{i,j} g^{5}e[0,0],$
\item[iv)] $d_{2}(w_{2}e[7,38]) =\lambda_{i,j} g^{4}e[1,5].$
\end{itemize}
\end{Theorem}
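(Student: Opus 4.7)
The plan is to prove statement (i) directly and then invoke Proposition \ref{alternative} to deduce statements (ii), (iii), and (iv). The main tool will be the Leibniz rule combined with the already-computed differential $d_2(w_2) = g\beta\alpha$ in the Adams spectral sequence for $tmf$ (Theorem \ref{tmf'sdiff}(ii)) and the two products determined in the previous subsection.

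First, I would observe that $e[4,23]$ is a permanent cycle: by the module structure description in Proposition \ref{Prop_AdamsA_1}, together with inspection of the chart of $\Ext_{\A(2)_*}^{*,*}(\H_*(A_1))$, there is no class in the appropriate bidegree that $e[4,23]$ could hit via $d_r$ for any $r \geq 2$ (this is essentially a sparseness argument). Applying the Leibniz rule to the product $w_2 \cdot e[4,23]$ in the ASS for $tmf \wedge A_1[ij]$, viewed as a module spectral sequence over the ASS for $tmf$, I obtain
\[
d_2(w_2 \cdot e[4,23]) \;=\; d_2(w_2) \cdot e[4,23] \;+\; w_2 \cdot d_2(e[4,23]) \;=\; g\beta\alpha \cdot e[4,23].
\]

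Next I would compute the right-hand side using the two products established in the previous subsection. By Proposition \ref{exo_prod} we have $\alpha \cdot e[4,23] = \lambda_{i,j}\, g\, e[3,15]$, and by Proposition \ref{prod} we have $\beta \cdot e[3,15] = e[6,30]$. Combining these gives
\[
g\beta\alpha \cdot e[4,23] \;=\; g\beta \cdot \lambda_{i,j}\, g\, e[3,15] \;=\; \lambda_{i,j}\, g^2 \cdot e[6,30],
\]
which establishes part (i). Parts (ii), (iii), and (iv) then follow immediately from the chain of equivalences proved in Proposition \ref{alternative}, since the value of $\lambda$ appearing there must be $\lambda_{i,j}$.

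I do not expect a serious obstacle here, since the hard work (the exotic product $\alpha \cdot e[4,23]$ and the cyclic equivalence of the four differentials) has been done in Proposition \ref{exo_prod} and Proposition \ref{alternative}. The only subtlety worth checking carefully is that the Leibniz rule applies in the form used: one needs $e[4,23]$ to be a genuine $d_2$-cycle, not just that no target is available after one identifies generators, which is the content of the sparseness observation above. Once that is in place, the proof reduces to a mechanical substitution of the two product formulas into $d_2(w_2)\cdot e[4,23]$.
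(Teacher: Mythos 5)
Your proposal is correct and follows essentially the same route as the paper: the paper likewise applies the Leibniz rule with $d_2(w_2)=g\beta\alpha$, evaluates $g\beta\alpha\, e[4,23]=\lambda_{i,j}g^2e[6,30]$ via Proposition \ref{exo_prod} and Proposition \ref{prod}, and then invokes Proposition \ref{alternative} for the remaining three differentials. Your added remark that $e[4,23]$ must be a genuine $d_2$-cycle is also present in the paper, where it is noted (by sparseness) at the start of the proof of Proposition \ref{alternative}.
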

\begin{proof} By the Leibniz rule and part (ii) of Theorem \ref{tmf'sdiff}, $$d_{2}(w_{2}e[4,23])=d_{2}(w_{2})e[4,23] = g\beta\alpha e[4,23] = \lambda_{i,j}g^2e[6,30], $$ where the last equality follows from Proposition \ref{exo_prod} and Proposition \ref{prod}. Thus, the theorem follows from Proposition \ref{alternative}.
\end{proof}
\begin{Proposition} \label {d_3-diff}There are the following $d_{3}$-differentials in the Adams spectral sequence for $tmf\wedge A_1$
$$d_{3}(w_{2}^{2}e[10,53]) = g^{5}e[9,48]$$
$$d_{3}(w_{2}^{3}e[1,5]) = g^{5}w_{2}e[0,0].$$
\end{Proposition}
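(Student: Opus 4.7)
The plan is to deduce both $d_3$-differentials from the $d_3$-differential $d_3(w_{2}^{2} v_{2}^{4} \eta) = g^{6}$ in the Adams spectral sequence for $tmf$ (Theorem \ref{tmf'sdiff}(iii)), by exploiting the module structure of the ASS for $tmf \wedge A_1$ over the one for $tmf$, together with two hidden multiplicative relations in $\Ext_{\A(2)_{*}}^{*,*}(\H_*(A_1))$.

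\textbf{Survival to $\E_3$.} First I would verify that $w_{2}^{2} e[10,53]$, $w_{2}^{3} e[1,5]$, $e[9,48]$ and $w_{2} e[0,0]$ all represent classes in $\E_3$. In characteristic two one has $d_{2}(w_{2}^{2}) = 0$, while $d_{2}(w_{2}^{3}) = w_{2}^{2} g \beta \alpha$ by Theorem \ref{tmf'sdiff}(ii); the Leibniz rule therefore reduces survival of the source classes to establishing $d_{2}(e[10,53]) = 0$, $d_{2}(e[1,5]) = 0$ and $\beta\alpha \cdot e[1,5] = 0$. All three vanishings are obtained by enumerating, against the cyclic module description of Proposition \ref{Prop_AdamsA_1}, the candidate targets in bidegrees $(12,52)$, $(3,4)$ and $(7,32)$ respectively, and checking that no non-zero class occupies them. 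The same type of argument handles $d_{2}(e[9,48]) = 0$ and $d_{2}(w_{2} e[0,0]) = g\beta\alpha \cdot e[0,0] = 0$ at bidegree $(10,47)$.

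\textbf{Hidden relations.} The central input is a pair of hidden multiplicative relations
\[ v_{2}^{4} \eta \cdot e[9,48] = g \cdot e[10,53] \qquad \text{and} \qquad v_{2}^{4} \eta \cdot e[0,0] = g \cdot e[1,5], \]
living in bidegrees $(14,73)$ and $(5,25)$ respectively. An enumeration against Proposition \ref{Prop_AdamsA_1} shows that each of these bidegrees contains a unique non-zero $\F$-class, namely the right-hand side, so it suffices to prove non-vanishing of the left-hand side. I plan to establish this by explicit cocycle-level computations in the cobar complex, following the template of Proposition \ref{exo_prod}: lift $v_{2}^{4}$, $\eta$, and representatives of the relevant $e[s,t]$ to cocycles in $\A(2)_{*}^{\otimes *} \otimes E_{2} \otimes R \otimes \H_{*}(A_{1})$, form their products, and compare with explicit representatives of $g \cdot e[10,53]$ and $g \cdot e[1,5]$; the comparison of Davis-Mahowald spectral sequences $(\A(2)_{*}, E_{2}) \to (B(2)_{*}, F_{2})$ should simplify the combinatorics as in that earlier proof.

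\textbf{Leibniz and conclusion.} Granted the hidden relations, apply the Leibniz rule to $d_{3}(w_{2}^{2} v_{2}^{4} \eta \cdot e[9,48])$. Since $e[9,48]$ is a $d_{3}$-cycle by sparseness at bidegree $(12,47)$,
\[ g \cdot d_{3}(w_{2}^{2} e[10,53]) = d_{3}(g \cdot w_{2}^{2} e[10,53]) = d_{3}(w_{2}^{2} v_{2}^{4} \eta \cdot e[9,48]) = g^{6} \cdot e[9,48], \]
and an identical computation with $w_{2} e[0,0]$ (also a $d_{3}$-cycle by sparseness at bidegree $(11,47)$) in place of $e[9,48]$ gives
\[ g \cdot d_{3}(w_{2}^{3} e[1,5]) = d_{3}(w_{2}^{2} v_{2}^{4} \eta \cdot w_{2} e[0,0]) = g^{6} \cdot w_{2} e[0,0]. \]
Since multiplication by $g$ acts injectively on the cyclic submodules generated by $e[9,48]$ and $e[0,0]$ (whose annihilators are $(h_{2})$ and $(0)$), we conclude $d_{3}(w_{2}^{2} e[10,53]) = g^{5} e[9,48]$ and $d_{3}(w_{2}^{3} e[1,5]) = g^{5} w_{2} e[0,0]$.

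\textbf{Main obstacle.} The principal difficulty is the verification of the two hidden relations. They cannot be detected at the $\E_{1}$-term of the DMSS because multiplication by $\eta$ raises cohomological filtration while the DMSS for $\H_{*}(A_{1})$ is concentrated in filtration $s=0$ by Proposition \ref{Prop_A_1}; the relations are therefore genuinely exotic products detectable only through a cobar-level computation analogous to the one carried out for $\alpha \cdot e[4,23]$ in Proposition \ref{exo_prod}.
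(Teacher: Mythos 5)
Your route is genuinely different from the paper's, and it hinges entirely on the two hidden relations $v_2^4\eta\cdot e[9,48]=g\,e[10,53]$ and $v_2^4\eta\cdot e[0,0]=g\,e[1,5]$. Everything else you do (survival to $\E_3$, the Leibniz manipulation, injectivity of $g$) is routine once those are in hand — but those relations are precisely the hard part, and you have only announced a plan to prove them. As you note yourself, they cannot be read off the Davis--Mahowald $\E_1$-term, since the product lands in strictly higher $\sigma$-filtration than expected; each one therefore requires a cobar-level computation of the kind carried out for $\alpha\cdot e[4,23]$ in Proposition \ref{exo_prod}, which in the paper took several lemmas, required passing to $B(2)_*$, and turned out to depend on the version of $A_1$ through $\lambda_{i,j}$. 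For your relations the representatives are considerably larger ($e[9,48]$ is detected by $y_2^6y_3^3+y_1^2y_3^7+y_1y_2^3y_3^5$ in Davis--Mahowald degree $9$, and a total-complex representative of $v_2^4\eta$ mixes cohomological and Davis--Mahowald filtration), so ``should simplify the combinatorics'' is a hope, not an argument. Until the relations are established the Leibniz step is vacuous: if either product vanished you would only obtain $g\cdot d_3(\,\cdot\,)=0$. Two smaller points you elide: to pass from $g\cdot d_3(w_2^2e[10,53])=g^6e[9,48]$ to the stated differential you need $g^6e[9,48]$ to be nonzero on the $\E_3$-page and $g$ to act injectively on the full $\E_3$-term in the relevant bidegree, not merely on the cyclic $\E_2$-submodules; both require checking that no $d_2$ interferes.

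The paper's own proof avoids all of this. It observes that $e[9,48]$ and $w_2e[0,0]$ are permanent cycles whose $g$-towers must be truncated before the $\E_4$-page (because $g^6$ dies in the ASS for $tmf$ by Theorem \ref{tmf'sdiff}(iii) and the module structure), enumerates from the chart the only two ways this can happen, and eliminates the unwanted alternative with the one-line computation $d_2(w_2^2e[10,53])=d_2(w_2^2)e[10,53]=2w_2d_2(w_2)e[10,53]=0$. If you wish to keep your approach, you must actually carry out the two cobar computations; note, though, that the hidden relations are more easily \emph{deduced from} the proposition (together with the nonvanishing of $g^6e[9,48]$ and $g^6w_2e[0,0]$ in $\E_3$) than used to prove it, which suggests the paper's counting argument is the more economical order of logic here.
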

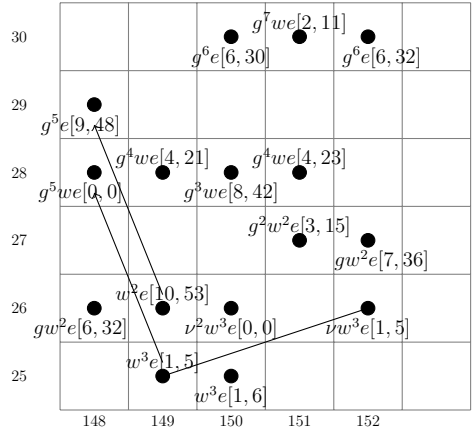
\begin{figure}[h!]
\begin{center}
\begin{tikzpicture}[scale =0.9]
\clip(-1.5,-1.5) rectangle (6.5,6.5);
\draw[color=gray] (0,0) grid [step=1] (6,6);

\foreach \n in {148,149,150,151,152}
{
\def\nn{\n-0}
\node[below, scale=0.5] at (\nn-148+0.5,0) {$\n$};
}
\foreach \s in {25,26,27,28,29,30}
{\def\ss{\s-0};
\node [left, scale=0.5] at (-0.4,\ss-25+0.5){$\s$};
}
\draw [fill] ( 0.5, 1.5) circle [radius=0.1];
\node[scale=0.65] at (0.3,1.2) {$gw^{2}e[6,32]$};
\draw[fill] (0.5,3.5) circle [radius=0.1];
\node[scale=0.65] at (0.3,3.2) {$g^{5}we[0,0]$};
\draw[fill] (0.5, 4.5) circle [radius=0.1];
\node[scale=0.65] at (0.3,4.2) {$g^{5}e[9,48]$};

\draw[fill] (1.5,0.5) circle [radius =0.1];
\node[scale=0.65] at (1.5,0.7) {$w^{3}e[1,5]$};
\draw[fill] (1.5,1.5) circle [radius =0.1];
\node[scale=0.65] at (1.5,1.7) {$w^{2}e[10,53]$};
\draw[fill] (1.5,3.5) circle [radius =0.1];
\node[scale=0.65] at (1.5,3.7) {$g^{4}we[4,21]$};

\draw[fill] (2.5,0.5) circle [radius =0.1];
\node[scale=0.65] at (2.5,0.2) {$w^{3}e[1,6]$};
\draw[fill] (2.5,1.5) circle [radius =0.1];
\node[scale=0.65] at (2.5,1.2) {$\nu^{2}w^{3}e[0,0]$};
\draw[fill] (2.5,3.5) circle [radius =0.1];
\node[scale=0.65] at (2.5,3.2) {$g^{3}we[8,42]$};
\draw[fill] (2.5,5.5) circle [radius =0.1];
\node[scale=0.65] at (2.5,5.2) {$g^{6}e[6,30]$};

\draw[fill] (3.5,2.5) circle [radius =0.1];
\node[scale=0.65] at (3.5,2.7) {$g^{2}w^{2}e[3,15]$};
\draw[fill] (3.5,3.5) circle [radius =0.1];
\node[scale=0.65] at (3.5,3.7) {$g^{4}we[4,23]$};
\draw[fill] (3.5,5.5) circle [radius =0.1];
\node[scale=0.65] at (3.5,5.7) {$g^{7}we[2,11]$};

\draw[fill] (4.5,1.5) circle [radius =0.1];
\node[scale=0.65] at (4.5,1.2) {$\nu w^{3}e[1,5]$};
\draw[fill] (4.5,2.5) circle [radius =0.1];
\node[scale=0.65] at (4.7,2.2) {$gw^{2}e[7,36]$};
\draw[fill] (4.5,5.5) circle [radius =0.1];
\node[scale=0.65] at (4.7,5.2) {$g^{6}e[6,32]$};

\draw[-] (1.5,0.5)--(4.5,1.5);
\draw[->] (1.5,0.7)--(0.5,3.2);
\draw[->] (1.5,1.7)--(0.5,4.2);

\end{tikzpicture}
\end{center}
\caption{The Adams spectral sequence in the range $148 \leq t-s \leq 152$}
\phantomsection \label{ASS_chart}
\end{figure}
\begin{proof} 
We can check from the chart that $e[9,48]$ and $we[0,0]$ are permanent cycles. Then $g^{l}e[9,48]$ and $g^{k}we[0,0]$ must be targets of some differentials for some $l$ and $k$ less than $7$. Inspection of the $\mathrm{E}_{2}$-term shows that either $$d_{2}(w_{2}^{2}e[10,53]) = g^{5}we[0,0] \ \mbox{and}\ d_{4}(w_{2}^{3}e[1,5]) = g^{5}e[9,48]$$ or $$d_{3}(w_{2}^{2}e[10,53]) = g^{5}e[9,48] \ \mbox{and} \ d_{3}(w_{2}^{3}e[1,5]) = g^{5}w_{2}e[0,0].$$ However, the former possibility is ruled out because of the Leibniz rule: $$d_{2}(w_{2}^{2}e[10,53]) = d_{2}(w_{2}^{2})e[10,53] = 2w_{2}d_{2}(w_{2})e[10,53] = 0,$$ where the first equality follows from the fact that $e[10,53]$ is a permanent cycle, by spareness.  
\end{proof}
\begin{Corollary}\phantomsection \label{Toda bracket} The Toda bracket $\langle g^{5},e[9,48],\nu\rangle$ can be formed and contains only elements which are divisible by $g$.
\end{Corollary}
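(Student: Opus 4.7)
The plan is to show that the bracket is defined, identify it via Moss's convergence theorem with a Massey product in the Adams spectral sequence, and then read $g$-divisibility off from Figure \ref{ASS_chart} together with a short analysis of the indeterminacy.

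First, for the bracket to be formed I need $g^{5}\,e[9,48]=0$ and $\nu\,e[9,48]=0$ in $\pi_{*}(tmf\wedge A_{1})$. The first follows from Proposition \ref{d_3-diff}, where the differential $d_{3}(w_{2}^{2}e[10,53])=g^{5}e[9,48]$ kills $g^{5}e[9,48]$ in $E_{4}$; a quick bidegree check in stem $148$ at higher Adams filtration rules out hidden non-trivial representatives. The second is immediate from Proposition \ref{Prop_AdamsA_1}, which exhibits $(h_{2})$ as the annihilator of $e[9,48]$, so $\nu\,e[9,48]=h_{2}\,e[9,48]=0$ already at $E_{2}$, with sparseness in stem $51$ excluding hidden extensions.

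Next, I would apply Moss's convergence theorem to identify the Toda bracket with the Massey product of the same name on the $E_{4}$-page. Using $w_{2}^{2}e[10,53]$ as the null-witness for the relation $d_{3}(w_{2}^{2}e[10,53])=g^{5}e[9,48]$, the Massey product is represented (modulo a summand $g^{5}y$ coming from a cobar null-witness for $\nu\,e[9,48]=0$) by $\nu\cdot w_{2}^{2}e[10,53]$. But Proposition \ref{Prop_AdamsA_1} gives $h_{2}\,e[10,53]=0$ already at $E_{2}$, so $\nu\cdot w_{2}^{2}e[10,53]=0$ in $E_{2}$. The Massey product therefore lives strictly deeper in the Adams filtration than its nominal filtration $27$, so any class of $\pi_{152}(tmf\wedge A_{1})$ detecting the bracket has Adams filtration at least $29$.

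Finally, inspection of Figure \ref{ASS_chart} shows that every $E_{\infty}$-class of filtration $\geq 29$ in stem $152$ is visibly a $g$-multiple, with $g^{6}e[6,32]$ at filtration $30$ being the clearest candidate. For the indeterminacy $g^{5}\pi_{52}(tmf\wedge A_{1})+\nu\cdot\pi_{149}(tmf\wedge A_{1})$, the first summand is obviously contained in $g\cdot\pi_{*}$; for the second, Proposition \ref{d_3-diff} removes $w_{2}^{3}e[1,5]$ and $w_{2}^{2}e[10,53]$ from the permanent cycles in stem $149$, and the remaining permanent classes there are already $g$-divisible, so $\nu\cdot\pi_{149}(tmf\wedge A_{1})\subseteq g\cdot\pi_{*}(tmf\wedge A_{1})$. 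The trickiest part of executing this plan is the bookkeeping needed to rule out hidden extensions in stems $148$, $51$ and $149$ in order to promote $E_{\infty}$-divisibility to honest $g$-divisibility in the homotopy groups.
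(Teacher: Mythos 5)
Your proposal is correct and follows essentially the same route as the paper: identify the Toda bracket with the Massey product $\langle g^{5},e[9,48],\nu\rangle$ on the $\mathrm{E}_{4}$-page via Moss's convergence theorem, observe that this Massey product vanishes in its nominal filtration $27$ (your explicit null-witness $w_{2}^{2}e[10,53]$ together with $h_{2}e[10,53]=0$ is exactly why the paper can assert it is zero), and then read $g$-divisibility of both the detected element and the indeterminacy off the chart in stem $152$. You simply supply more of the bookkeeping (formability of the bracket and the indeterminacy in stem $149$) that the paper leaves implicit.
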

\noindent
For references on Toda bracket, see \cite{Tod62}, \cite{Koc90}. 
\begin{proof} In the $\mathrm{E}_{4}$-term of the ASS, the Massey product $\langle g^{5},e[9,48],\nu\rangle$ has cohomological filtration $27$ and is equal to zero with zero indeterminacy. On the other hand the corresponding Toda bracket can be formed with indeterminacy containing only multiples of $g$. We can check that all conditions of Moss's convergence theorem \cite{Mos70} are met. This implies that the Toda bracket $\langle g^{5},e[9,48],\nu\rangle$ contains an element detected in filtration $27$ by $0$, thus is a multiple of $g$. Therefore, this Toda bracket contains only multiples of $g$.
\end{proof}
\noindent
Finally, we need to have control of the action of the class $\Delta^{8} = w_2^4 \in \Ext_{\A(2)_{*}}^{32,224}(\F)$ on the $\mathrm{E}_{\infty}$-term of the ASS for $tmf\wedge A_1$. This will allow us to compare $\pi_{*}(tmf\wedge A_1)$ with $\pi_{*}(E_{C}^{hG_{24}}\wedge A_1)$ (see Corollary \ref{Cor_Compare}) and hence to discuss higher differentials in the HFPSS for $E_{C}^{hG_{24}}\wedge A_1$.
\begin{Proposition} \phantomsection \label{Period}The class $w_{2}^{4}$ acts freely on the $\mathrm{E}_{\infty}$-term of the ASS for $tmf\wedge A_1$. As a consequence, the element $\Delta^{8}\in\pi_{192}(tmf)$ acts freely on the homotopy groups of $tmf\wedge A_1$.
\end{Proposition}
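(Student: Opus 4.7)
The plan is threefold: first show that the $\E_2$-term is a free $\F[w_2^4]$-module, then propagate this property to $\E_\infty$, and finally transfer freeness from $\E_\infty$ to $\pi_*(tmf\wedge A_1)$.

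The first step follows directly from Proposition \ref{Prop_AdamsA_1}: the $\E_2$-term decomposes as a direct sum of sixteen cyclic $\F[h_2, g, v_2^8]/(h_2^3, h_2 g)$-modules whose annihilator ideals are drawn from $\{(0), (h_2), (h_2^2)\}$. None of these ideals contains any power of $w_2 = v_2^8$, so each cyclic summand is free as an $\F[w_2]$-module, and a fortiori as an $\F[w_2^4]$-module.

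For the second step, observe that $w_2^4 = \Delta^8$ is a permanent cycle by Theorem \ref{tmf'sdiff}(iv), so by the Leibniz rule every $d_r$ is $\F[w_2^4]$-linear. Since $\F[w_2^4]$ is a PID, $w_2^4$-freeness is preserved under taking cohomology as soon as the image of $d_r$ is $w_2^4$-saturated inside its kernel, i.e.\ $w_2^4 y \in \mathrm{im}(d_r)$ with $y \in \ker(d_r)$ forces $y \in \mathrm{im}(d_r)$. Equivalently, in any $\F[w_2^4]$-basis adapted to the decomposition of Proposition \ref{Prop_AdamsA_1}, every non-zero matrix entry of $d_r$ should be a unit of $\F[w_2^4]$. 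I would check this by compiling the complete list of non-trivial differentials from Theorem \ref{tmf'sdiff}, Theorem \ref{ASS_d_2_bis} and Proposition \ref{d_3-diff}: in each case the boundary is of the form $\lambda\cdot w_2^j g^k \cdot e[s,t]$ with $\lambda \in \F$ and $0 \le j \le 3$, i.e.\ a scalar multiple of a basis element. This is the main obstacle: the verification requires systematic bookkeeping of how the $d_2$ and $d_3$ inherited from $tmf$ interact with the exotic product $\alpha\cdot e[4,23] = \lambda_{i,j}\,g\cdot e[6,30]$ from Proposition \ref{exo_prod}, and one must be certain that no further differential --- in particular no $w_2^4$-divisible boundary --- has been overlooked.

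Finally, suppose $0 \ne x \in \pi_*(tmf\wedge A_1)$ with $\Delta^8 x = 0$, and let $[x] \in \E_\infty^{s, *}$ be its detecting class. If $\Delta^8 \cdot [x]$ were non-zero in $\E_\infty$, it would detect $\Delta^8 x$ in Adams filtration exactly $s+32$, contradicting $\Delta^8 x = 0$. Hence $\Delta^8 \cdot [x] = 0$, which by freeness on $\E_\infty$ forces $[x] = 0$. An induction on Adams filtration then yields $x = 0$, completing the proof.
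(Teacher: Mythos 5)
Your steps 1 and 3 are fine: the $\E_2$-term is visibly $w_2^4$-free from Proposition \ref{Prop_AdamsA_1}, and the passage from freeness on $\E_\infty$ to freeness on homotopy is the standard detection argument. The gap is in step 2, and it is exactly the obstacle you flag yourself: saturation of $\mathrm{im}(d_r)$ cannot be verified by compiling the differentials from Theorem \ref{tmf'sdiff}, Theorem \ref{ASS_d_2_bis} and Proposition \ref{d_3-diff}, because these are nowhere near a complete list. Section \ref{G24 3} is explicitly a \emph{partial} study of the ASS for $tmf\wedge A_1$; many further differentials occur (for instance the $d_4$'s discussed in the proof of Theorem \ref{connective-iso}, and the ASS shadows of the HFPSS differentials $d_9$ through $d_{23}$ of Section \ref{G24 4}), and they are not determined at the point where Proposition \ref{Period} is needed --- indeed Proposition \ref{Period} is an input to determining them. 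So an argument that requires knowing $\mathrm{im}(d_r)$ is circular, or at best cannot be completed.

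The missing idea, which is what the paper's proof uses, is that saturation holds for \emph{bidegree} reasons, independently of which differentials actually occur: any class $y$ lying in a bidegree from which a $d_r$ could hit a class of the form $w_2^4x$ is itself divisible by $w_2^4$. Concretely, for a generator $w_2^jg^kh_2^le[s_0,t_0]$ of the $\E_2$-term the quantity $5s-(t-s)$ equals $-8j+2l+(5s_0-t_0)$ with $5s_0-t_0\in[-3,0]$ and $l\le 2$, so it is at least $-27$ when $j\le 3$ and at most $-28$ when $j\ge 4$; since a $d_r$ increases this quantity by $5r+1\ge 11$, no class with $j\le 3$ can map to a class with $j\ge 4$. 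Granting this, $w_2^4$-linearity of $d_r$ (your observation that $w_2^4$ is a permanent cycle) plus the inductive hypothesis that $w_2^4$ acts freely on $\E_r$ gives freeness on $\E_{r+1}$: if $w_2^4x=d_r(y)$ then $y=w_2^4z$, so $w_2^4(x-d_r(z))=0$ and hence $x=d_r(z)$. You should replace your enumeration step by this inspection; as written, your proof does not close.
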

\begin{proof} Using the description of the $\mathrm{E}_{2}$-term of the ASS for $tmf\wedge A_1$ in Theorem \ref{Prop_AdamsA_1} and an elementary bidegree inspection, we can see that, if a class $y$ is in an appropriate bidegree to support a differential hitting a class of the form $w_{2}^{4}x$ for some class $x$, then $y$ is divisible by $w_{2}^{4}$. Knowing that $w_{2}^{4}$ is a permanent cycle in the ASS for $tmf$, we conclude that, if a class $x$ survives the $\mathrm{E}_{r}$-term, then the multiple of $x$ by all powers of $w_{2}^{4}$ also survive that term. Therefore, the Proposition follows by induction.
\end{proof}
\begin{Proposition}\phantomsection \label{Period+} For every element $x\in \pi_{*}(tmf\wedge A_1)$, the element $\Delta^{8}x$ is divisible by $\overline{\kappa}$ (resp. $\nu$) if and only if $x$ is divisible by $\overline{\kappa}$ (resp. $\nu$).
\end{Proposition}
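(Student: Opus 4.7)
The ``if'' direction is immediate from the $\pi_*(tmf)$-module structure, since $\overline{\kappa}$ and $\nu$ commute with $\Delta^8$. For the ``only if'' direction, the plan is to establish the corresponding statement at the $\mathrm{E}_\infty$-level of the ASS and then lift to homotopy by downward induction on Adams filtration, which terminates because $tmf\wedge A_1$ is connective. The argument for $\nu$ is parallel to that for $\overline{\kappa}$ with $h_2$ in place of $g$, so I will write it for $\overline{\kappa}$.

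The $\mathrm{E}_\infty$-level claim I would aim for is that $w_2^4$ acts injectively on $\mathrm{E}_\infty/g\mathrm{E}_\infty$. The input is Proposition \ref{Prop_AdamsA_1}, which presents $\mathrm{E}_2$ as a direct sum of cyclic modules over $R := \F[h_2,g,v_2^8]/(h_2^3, h_2 g)$ whose annihilator ideals involve only powers of $h_2$. For each such cyclic summand $C$, one checks directly, case by case on the annihilator, that $v_2^8$ is a non-zero-divisor on $C/gC$; hence $w_2^4 = v_2^{32}$ acts freely on $\mathrm{E}_2/g\mathrm{E}_2$. Since $g, h_2$ and $w_2^4$ are permanent cycles, each differential $d_r$ is linear over these classes, and a routine spectral-sequence comparison, combined with the freeness of $w_2^4$ on $\mathrm{E}_\infty$ supplied by Proposition \ref{Period}, propagates this injectivity from $\mathrm{E}_2/g$ to $\mathrm{E}_\infty/g$.

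To lift to homotopy, suppose $\Delta^8 x = \overline{\kappa} y$ and let $a$ detect $x$ at filtration $s$. By Proposition \ref{Period}, $w_2^4 a \neq 0$ and detects $\Delta^8 x = \overline{\kappa} y$ at filtration $s+32$. Possibly after modifying $y$ by terms of higher Adams filtration to normalise any hidden $\overline{\kappa}$-extension, one obtains $b \in \mathrm{E}_\infty$ with $w_2^4 a = g b$. The $\mathrm{E}_\infty$-level claim then produces $a' \in \mathrm{E}_\infty$ with $a = g a'$; any homotopy element $z$ detected by $a'$ satisfies that $\overline{\kappa} z$ is detected by $g a' = a$, so $x - \overline{\kappa} z$ has strictly higher Adams filtration than $x$ while still satisfying the hypothesis $\Delta^8(x-\overline{\kappa} z) = \overline{\kappa}(y-\Delta^8 z)$. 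Induction on Adams filtration completes the argument.

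The main obstacle will be the handling of hidden $\overline{\kappa}$- and $\nu$-extensions in the lifting step: the naive detection of $\overline{\kappa} y$ is $g b$ where $b$ detects $y$, but a hidden extension could send $\overline{\kappa} y$ to a detection at strictly higher filtration by a class not obviously of the form $g\cdot(-)$. Ruling such configurations out, or absorbing them into higher-filtration corrections of $y$, requires combining the explicit $d_2$- and $d_3$-differentials of Theorem \ref{ASS_d_2_bis} and Proposition \ref{d_3-diff} with the $\F[v_2^8]$-freeness of the cyclic summands of $\mathrm{E}_2$ in the relevant bidegrees.
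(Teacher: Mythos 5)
Your overall architecture matches the paper's: both arguments combine Proposition \ref{Period} with a statement to the effect that, in the relevant bidegrees, $g$-divisibility (resp.\ $h_2$-divisibility) of a $w_2^4$-multiple forces divisibility before multiplying by $w_2^4$, and then lift to homotopy by an induction on filtration. But there is a genuine gap at exactly the two places you flag as delicate, and in both cases the missing ingredient is the same: a bidegree inspection of $\Ext_{\A(2)_*}^{*,*}(\H_*(A_1))$ showing that \emph{any} class lying in a bidegree from which a differential, or a possibly exotic $\overline{\kappa}$- or $\nu$-extension, can reach the bidegree of a class of the form $w_2^4a$ is itself divisible by $w_2^4$. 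That inspection is the actual content of the paper's proof (it is the inspection from the proof of Proposition \ref{Period}, extended from differentials to products), and the \emph{statement} of Proposition \ref{Period} --- freeness of $w_2^4$ on the $\E_\infty$-term --- which is all you invoke, does not supply it.

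Concretely: (1) injectivity of $w_2^4$ on $\E_2/g\E_2$ does not pass to $\E_\infty/g\E_\infty$ by a routine comparison. A differential $d_r(c)=w_2^4a+gb$ with $a,b$ cycles would create a relation $w_2^4[a]=g[b]$ in $\E_{r+1}$ with $[a]\notin g\E_{r+1}$, without contradicting either mod-$g$ injectivity at $\E_2$ or freeness of $w_2^4$ at $\E_\infty$; this configuration is excluded only by knowing that such a $c$ is itself of the form $w_2^4c'$, which is the bidegree inspection again. (2) In the lifting step, if the extension $\overline{\kappa}y=\Delta^8x$ is hidden, i.e.\ detected in filtration strictly greater than that of $y$ plus $4$, then you cannot conclude that $w_2^4a$ lies in $g\E_\infty$, so your mod-$g$ injectivity statement does not apply; moreover, modifying $y$ by elements of higher Adams filtration does not change the filtration in which $y$ itself is detected, so it cannot ``normalise'' a hidden extension away. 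The paper's route through this case is to show that the class detecting $y$, in whatever filtration it sits, is already $w_2^4$-divisible, write $y=\Delta^8\tilde c+(\mbox{higher filtration})$, and induct on the filtration of $y$ rather than of $x$, terminating because $\pi_*(tmf\wedge A_1)$ is degreewise finite and $\Delta^8$ acts injectively. Until the bidegree inspection is actually carried out, your plan does not close.
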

\begin{proof} The argument is similar to that used in the proof of Proposition \ref{Period}. A bidegree inspection shows that, if a class $y \in \Ext_{\A(2)_{*}}^{*,*}(\H_*(A_1))$ is in an appropriate bidegree whose (exotic) product with $g$ (resp. $\nu$) might detect $\Delta^{8}x$, then $y$ is divisible by $w_{2}^{4}$. We conclude the proof by using the fact that the class $w_{2}^{4}$ acts freely on the ASS for $tmf\wedge A_1$, by Proposition \ref{Period}.
\end{proof}
\noindent
\begin{landscape}
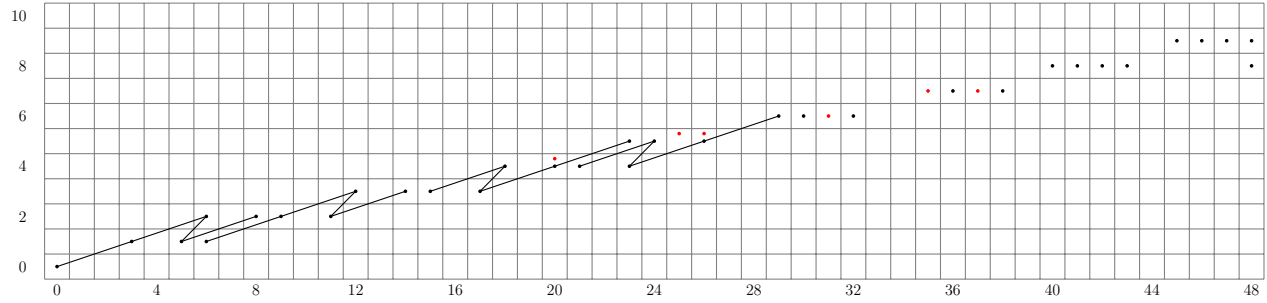
\begin{figure}[h!]
 \center
	\begin{tikzpicture}[scale =0.33]
		\clip(-1.5,-1.5) rectangle (49,12);
		\draw[color=gray] (0,0) grid [step=1] (49,11);
		\foreach \n in {0,4,...,48}
			{\def\nn{\n-0}
			\node[below,scale=0.5] at (\n+0.5,0) {$\n$};
			}
		\foreach \s in {0,2,...,10}
			{\def\ss{\s-0};
			\node [left,scale=0.5] at (-0.5,\s+0.5){$\s$};
			}
		%%%% 0-29
		\draw[fill] (0.5,0.5) circle [radius=0.05]; %%%e_{0}
		\draw[fill] (3.5,1.5) circle [radius=0.05];%%%\nu e_{0}
		\draw[fill] (6.5,2.5) circle [radius=0.05]; %%%\nu2 e_{0}
		\draw[-] (0.5,0.5)--(3.5,1.5);
		\draw[-] (3.5,1.5)--(6.5,2.5);		
		\draw[fill] (5.5,1.5) circle [radius=0.05]; %%%e_{5}
		\draw[fill] (8.5,2.5) circle [radius=0.05]; %%%\nu e_{5}
		\draw[-] (5.5,1.5)--(8.5,2.5);
		\draw[-] (5.5,1.5)--(6.5,2.5);
		\draw[fill] (6.5,1.5) circle [radius=0.05]; %%% e_{6}
		\draw[fill] (9.5,2.5) circle [radius=0.05]; %%%\nu e_{6}
		\draw[fill] (12.5,3.5) circle [radius=0.05]; %%%\nu2 e_{6}
		\draw[-] (6.5,1.5)--(9.5,2.5);
		\draw[-] (9.5,2.5)--(12.5,3.5);
		\draw[fill] (11.5,2.5) circle [radius=0.05]; %%% e_{11}
		\draw[fill] (14.5,3.5) circle [radius=0.05]; %%%\nu e_{11}
		\draw[-] (11.5,2.5)--(14.5,3.5);
		\draw[-] (11.5,2.5)--(12.5,3.5);
		\draw[fill] (15.5,3.5) circle [radius=0.05];%%% e_{15}
		\draw[fill] (18.5,4.5) circle [radius=0.05]; %%%\nu e_{15}
		\draw[-] (15.5,3.5) --(18.5,4.5);
		\draw[fill] (17.5,3.5) circle [radius=0.05]; %%% e_{17}
		\draw[fill] (20.5,4.5) circle [radius=0.05]; %%%\nu e_{17}
		\draw[fill] (23.5,5.5) circle [radius=0.05]; %%%\nu2 e_{17}
		\draw[-] (17.5,3.5)--(20.5,4.5);
		\draw[-] (20.5,4.5)--(23.5,5.5);
		\draw[-] (17.5,3.5)--(18.5,4.5);
		\draw[fill] (21.5,4.5) circle [radius=0.05]; %%% e_{21}
		\draw[fill] (24.5,5.5) circle [radius=0.05];%%%\nu e_{21}
		\draw[-] (21.5,4.5)--(24.5,5.5);
		\draw[fill] (23.5,4.5) circle [radius=0.05]; %%% e_{23}
		\draw[fill] (26.5,5.5) circle [radius=0.05]; %%%\nu e_{23}
		\draw[fill] (29.5,6.5) circle [radius=0.05]; %%%\nu2 e_{23}
		\draw[-] (23.5,4.5)--(26.5,5.5);
		\draw[-] (26.5,5.5)--(29.5,6.5);
		\draw[-] (23.5,4.5)--(24.5,5.5);
		\draw[fill, red] (20.5,4.8) circle [radius=0.05]; %%% g
		\draw[fill, red] (25.5,5.8) circle [radius=0.05]; %%% g e5
		\draw[fill, red] (26.5,5.8) circle [radius=0.05]; %%% g e_{6}
		%%%%%30--62
		\draw[fill] (30.5,6.5) circle [radius=0.05]; %%% e30
		\draw[fill,red] (31.5,6.5) circle [radius=0.05]; %%% ge11
		\draw[fill] (32.5,6.5) circle [radius=0.05]; %%% e32
		\draw[fill,red] (35.5,7.5) circle [radius=0.05]; %%% ge15
		\draw[fill] (36.5,7.5) circle [radius=0.05]; %%% e36
		\draw[fill,red] (37.5,7.5) circle [radius=0.05]; %%% ge17
		\draw[fill] (38.5,7.5) circle [radius=0.05]; %%% e38
		\draw[fill] (40.5,8.5) circle [radius=0.05]; %%% g^2
		\draw[fill] (41.5,8.5) circle [radius=0.05]; %%% ge21	
		\draw[fill] (42.5,8.5) circle [radius=0.05]; %%% e42	
		\draw[fill] (43.5,8.5) circle [radius=0.05]; %%% ge23
		\draw[fill] (45.5,9.5) circle [radius=0.05]; %%% g^2e5
		\draw[fill] (46.5,9.5) circle [radius=0.05]; %%% g^2e6
		\draw[fill] (47.5,9.5) circle [radius=0.05]; %%% e47
		\draw[fill] (48.5,9.5) circle [radius=0.05]; %%% e48
		\draw[fill] (48.5,8.5) circle [radius=0.05]; %%% we0
	\end{tikzpicture}
	\caption[scale=0.5]{Adams spectral sequence for $A_1$ in the range $0\leq t-s\leq 48$ }
	\phantomsection \label{tmf-A_1}
\end{figure}
\begin{figure}[h!]
        \center
	\begin{tikzpicture}[scale=0.32]
	\clip(-1.5,-1.5) rectangle (54,14);
	\draw[color=gray] (0,0) grid [step=1] (54,13);
	\foreach \n in {48,52,...,100}
		{\def\nn{\n-0}
		\node[below,scale=0.5] at (\n+0.5-48,0) {$\n$};
			}
	\foreach \s in {8,10,...,20}
		{\def\ss{\s-0};
		\node [left,scale=0.5] at (-0.5,\s+0.5-8){$\s$};
			}
	 	\draw[fill] (48.5-48,9.5-8) circle [radius=0.05]; %%% e48
		\draw[fill] (48.5-48,8.5-8) circle [radius=0.05]; %%% we0
		\draw[fill] (51.5-48,9.5-8) circle [radius=0.05]; %%% nuwe0
		\draw[fill] (54.5-48,10.5-8) circle [radius=0.05]; %%% nu^2we0
		\draw[-] (48.5-48,8.5-8)--(51.5-48,9.5-8);
		\draw[-] (51.5-48,9.5-8)--(54.5-48,10.5-8);
		\draw[fill,red] (50.5-48,10.5-8) circle [radius=0.05]; %%% ge30
		\draw[fill,red] (51.5-48,10.5-8) circle [radius=0.05]; %%% g^2e11
		\draw[fill,red] (52.5-48,10.5-8) circle [radius=0.05]; %%% ge32
		\draw[fill] (53.5-48,10.5-8) circle [radius=0.05]; %%% e53
		\draw[fill] (53.5-48,9.5-8) circle [radius=0.05]; %%% we5
		\draw[fill] (56.5-48,10.5-8) circle [radius=0.05]; %%% nuwe5
		\draw[-] (53.5-48,9.5-8)--(56.5-48,10.5-8);
		\draw[-] (53.5-48,9.5-8)--(54.5-48,10.5-8);
		\draw[fill] (54.5-48,9.5-8) circle [radius=0.05]; %%% we6
		\draw[fill] (57.5-48,10.5-8) circle [radius=0.05]; %%% nuwe6
		\draw[fill] (60.5-48,11.5-8) circle [radius=0.05]; %%% nu^2we6
		\draw[-] (54.5-48,9.5-8)--(57.5-48,10.5-8);
		\draw[-] (57.5-48,10.5-8)--(60.5-48,11.5-8);
		\draw[fill,red] (55.5-48,11.5-8) circle [radius=0.05]; %%% g^2e15
		\draw[fill,red] (56.5-48,11.5-8) circle [radius=0.05]; %%% ge36
		\draw[fill,red] (57.5-48,11.5-8) circle [radius=0.05]; %%% g^2e17
		\draw[fill,red] (58.5-48,11.5-8) circle [radius=0.05]; %%% ge38
		\draw[fill] (59.5-48,10.5-8) circle [radius=0.05]; %%% we11
		\draw[fill] (62.5-48,11.5-8) circle [radius=0.05]; %%% nuwe11
		\draw[-] (59.5-48,10.5-8)--(62.5-48,11.5-8);
		\draw[-] (59.5-48,10.5-8)--(60.5-48,11.5-8);
		\draw[fill,red] (60.5-48,12.5-8) circle [radius=0.05]; %%% g^3
		\draw[fill,red] (61.5-48,12.5-8) circle [radius=0.05]; %%% g^2e21
		\draw[fill,red] (62.5-48,12.5-8) circle [radius=0.05]; %%% ge42
		\draw[fill,red] (63.5-48,12.5-8) circle [radius=0.05]; %%% g^2e23
		\draw[fill] (63.5-48,11.5-8) circle [radius=0.05]; %%% we15
		\draw[fill] (66.5-48,12.5-8) circle [radius=0.05]; %%% nuwe15
		\draw[-] (63.5-48,11.5-8)--(66.5-48,12.5-8);
		\draw[fill] (65.5-48,11.5-8) circle [radius=0.05]; %%% we17
		\draw[fill] (68.5-48,12.5-8) circle [radius=0.05]; %%% nuwe17
		\draw[fill] (71.5-48,13.5-8) circle [radius=0.05]; %%% nu^2we17
		\draw[-] (65.5-48,11.5-8)--(68.5-48,12.5-8);
		\draw[-] (68.5-48,12.5-8)--(71.5-48,13.5-8);
		\draw[-] (65.5-48,11.5-8)--(66.5-48,12.5-8);
		\draw[fill,red] (65.5-48,13.5-8) circle [radius=0.05]; %%% g^3e5
		\draw[fill,red] (66.5-48,13.5-8) circle [radius=0.05]; %%% g^3e6
		\draw[fill,red] (67.5-48,13.5-8) circle [radius=0.05]; %%% ge47
		\draw[fill,red] (68.5-48,13.5-8) circle [radius=0.05]; %%% ge48
		\draw[fill,red] (68.5-48,12.8-8) circle [radius=0.05]; %%% gw
		\draw[fill] (69.5-48,12.5-8) circle [radius=0.05]; %%% we21
		\draw[fill] (72.5-48,13.5-8) circle [radius=0.05]; %%% nuwe21
		\draw[-] (69.5-48,12.5-8)--(72.5-48,13.5-8);
		\draw[fill] (71.5-48,12.5-8) circle [radius=0.05]; %%% we23
		\node [scale=0.5] at (71.5-48,12.2-8) {$we[4,23]$};
		\draw[->] (71.4-48,12.6-8)--(70.6-48,14.4-8); %%%%d_{2}
		\draw[fill] (74.5-48,13.5-8) circle [radius=0.05]; %%% nuwe23
		\draw[fill] (77.5-48,14.5-8) circle [radius=0.05]; %%% nuwe23
		\draw[-] (71.5-48,12.5-8)--(72.5-48,13.5-8);
		\draw[-] (71.5-48,12.5-8)--(74.5-48,13.5-8);
		\draw[-] (74.5-48,13.5-8)--(77.5-48,14.5-8);
		\draw[fill,red] (70.5-48,14.5-8) circle [radius=0.05]; %%% g^2e30
		\node [scale=0.5] at (70.5-48,14.8-8) {$g^{2}e[6,30]$};
		\draw[fill,red] (71.5-48,14.5-8) circle [radius=0.05]; %%% g^3e11
		\draw[fill,red] (72.5-48,14.5-8) circle [radius=0.05]; %%% g^2e32
		\draw[fill,red] (73.5-48,14.5-8) circle [radius=0.05]; %%% ge53
		\draw[fill,red] (73.5-48,13.5-8) circle [radius=0.05]; %%% gwe5
		\draw[fill,red] (74.5-48,13.8-8) circle [radius=0.05]; %%% gwe6
		\draw[fill,red] (75.5-48,15.5-8) circle [radius=0.05]; %%% g^3e15
		\draw[fill,red] (76.5-48,15.5-8) circle [radius=0.05]; %%% g^2e36
		\draw[fill,red] (77.5-48,15.5-8) circle [radius=0.05]; %%% g^3e15
		\draw[fill,red] (78.5-48,15.5-8) circle [radius=0.05]; %%% g^2e38
		\draw[fill] (78.5-48,14.5-8) circle [radius=0.05]; %%% we30
		\draw[fill,red] (79.5-48,14.5-8) circle [radius=0.05]; %%% gwe11
		\draw[fill] (80.5-48,14.5-8) circle [radius=0.05]; %%% we32
		\draw[fill,red] (80.5-48,16.5-8) circle [radius=0.05]; %%% g^4
		\draw[fill,red] (81.5-48,16.5-8) circle [radius=0.05]; %%% g^3e21
		\draw[fill,red] (82.5-48,16.5-8) circle [radius=0.05]; %%% g^2e42
		\draw[fill,red] (83.5-48,16.5-8) circle [radius=0.05]; %%% g^3e23
		\draw[fill,red] (83.5-48,15.5-8) circle [radius=0.05]; %%% gwe15
		\draw[fill] (84.5-48,15.5-8) circle [radius=0.05]; %%% we36
		\draw[fill,red] (85.5-48,15.5-8) circle [radius=0.05]; %%% gwe17
		\draw[fill,red] (85.5-48,17.5-8) circle [radius=0.05]; %%% g^4e5
		\node [scale=0.5] at (85.5-48,17.8-8) {$g^{4}e[1,5]$};
		\draw[fill] (86.5-48,15.5-8) circle [radius=0.05]; %%% we38
		\draw[->] (86.4-48,15.6-8)--(85.6-48,17.4-8);%%%d_{2}
		\node [scale=0.5] at (86.5-48,15.2-8) {$we[7,38]$};
		\draw[fill,red] (86.5-48,17.5-8) circle [radius=0.05]; %%% g^4e6
		\draw[fill,red] (87.5-48,17.5-8) circle [radius=0.05]; %%% g^2e47
		\draw[fill,red] (88.5-48,16.5-8) circle [radius=0.05]; %%% g^2w
		\draw[fill,red] (88.5-48,17.5-8) circle [radius=0.05]; %%% g^2e48
		\draw[fill,red] (89.5-48,16.5-8) circle [radius=0.05]; %%% gwe21
		\draw[fill,red] (90.5-48,16.5-8) circle [radius=0.05]; %%% we42
		\draw[fill,red] (90.5-48,18.5-8) circle [radius=0.05]; %%% g^3e30
		\node [scale=0.5] at (90.5-48,18.8-8) {$g^{3}e[6,30]$};
		\draw[fill,red] (91.5-48,16.5-8) circle [radius=0.05]; %%% gwe23
		\draw[->] (91.4-48,16.6-8)--(90.6-48,18.4-8);%%%d_{2}
		\node [scale=0.5] at (91.5-48,16.2-8) {$gwe[4,23]$};
		\draw[fill,red] (91.5-48,18.5-8) circle [radius=0.05]; %%%g^4e11
		\draw[fill,red] (92.5-48,18.5-8) circle [radius=0.05]; %%%g^3e32
		\draw[fill,red] (93.5-48,17.5-8) circle [radius=0.05]; %%%g^2we5
		\draw[fill,red] (93.5-48,18.5-8) circle [radius=0.05]; %%%g^2e53
		\draw[fill,red] (94.5-48,17.5-8) circle [radius=0.05]; %%%g^2we5
		\draw[fill,red] (95.5-48,17.5-8) circle [radius=0.05]; %%%we47
		\draw[fill,red] (95.5-48,19.5-8) circle [radius=0.05]; %%%g^4e15
		\node [scale=0.5] at (95.5-48,19.8-8) {$g^{4}e[3,15]$};
		\draw[fill] (96.5-48,16.5-8) circle [radius=0.05]; %%%w^2
		\node [scale=0.5] at (96.5-48,16.2-8) {$w^{2}$};
		\draw[->,densely dashed] (96.4-48,16.6-8)--(95.6-48,19.4-8);%%%d_{3}
		\draw[fill] (99.5-48,17.5-8) circle [radius=0.05];%%%nuw^{2}
		\draw[-] (96.5-48,16.5-8)--(99.5-48,17.5-8);
		\draw[fill] (96.5-48,17.5-8) circle [radius=0.05]; %%%we48
		\node [scale=0.5] at (97.2-48,17.8-8) {$we[9,48]$};
		\draw[->] (96.4-48,17.6-8)--(95.6-48,19.4-8);%%%d_{2}
		\draw[fill,red] (96.5-48,19.5-8) circle [radius=0.05]; %%%g^3e36
		\draw[fill,red] (97.5-48,19.5-8) circle [radius=0.05]; %%%g^4e17
		\draw[fill,red] (98.5-48,18.5-8) circle [radius=0.05]; %%%gwe30
		\draw[fill,red] (98.5-48,19.5-8) circle [radius=0.05]; %%%g^3e38
		\draw[fill,red] (99.5-48,18.5-8) circle [radius=0.05]; %%%g^2we11
		\draw[fill,red] (100.5-48,18.5-8) circle [radius=0.05]; %%%gwe32
		\draw[fill,red] (100.5-48,20.5-8) circle [radius=0.05]; %%%g^5
		\node [scale=0.5] at (100.5-48,20.8-8) {$g^{5}$};
		\draw[fill] (101.5-48,17.5-8) circle [radius=0.05]; %%%w^2e5
		\node [scale=0.5]at (101.5-48,17-8) {$w^{2}e[1,5]$};
		\draw[->,densely dashed] (101.4-48,17.6-8)--(100.6-48,20.4-8);%%%d_{3}
		\draw[fill] (101.5-48,18.5-8) circle [radius=0.05]; %%%we53
		\node [scale=0.5] at (102.2-4!,18.8-8) {$we[10,53]$};
		\draw[->] (101.4-48,18.6-8)--(100.6-48,20.4-8);%%%d_{2}
		\draw[fill,red] (101.5-48,20.5-8) circle [radius=0.05]; %%%g^4e21
		\draw[fill,red] (95.5-48,19.5-8) circle [radius=0.05]; %%%g^4e15
	\end{tikzpicture}
	\caption[scale = 0.3]{Adams spectral sequence for $A_1$ in the range $48\leq t-s\leq 101$. The arrows in bold are differentials for the models $A_{1}[10]$ and $A_{1}[01]$ and the dashed arrows for the models $A_{1}[00]$ and $A_{1}[11]$ }
	\phantomsection \label{tmf-A_1-2}
\end{figure}
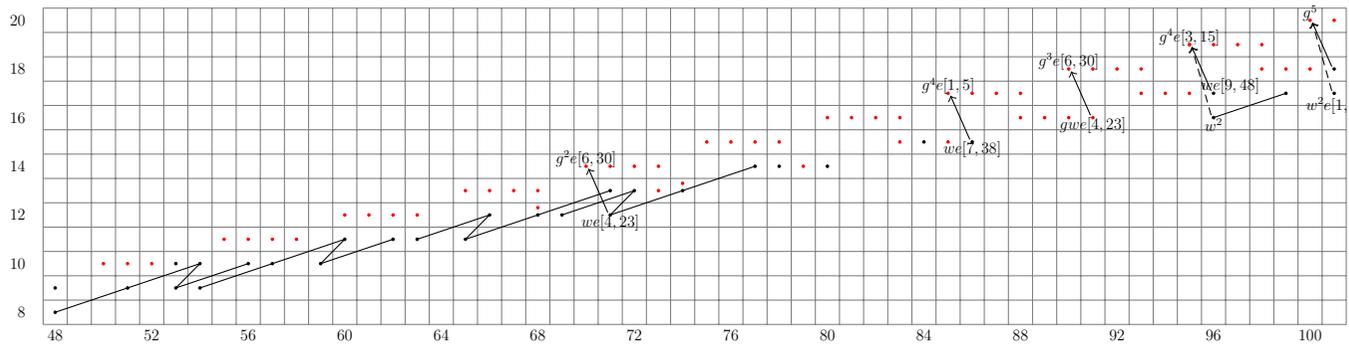
\end{landscape}
\section{The homotopy fixed point spectral sequence for $E_{C}^{hG_{24}}\wedge A_1$}\label{G24 4}
\subsection{Preliminaries and recollection on cohomology of $G_{24}$ }
\noindent
 The action of $G_{24}$ on $(E_C)_*\cong \W(\FF_4)[[u_1]][u^{\pm 1}]$ is documented \cite{Bea17}, 2.4. Let us recall this action.
 \begin{Theorem}\label{actionG_24}
 The action of $G_{24}$ on $\mathbb{W}(\FF_{4})[[u_{1}]][u^{\pm 1}]$ is given by
 $$\omega(u^{-1}) = \zeta^{2}u^{-1}\ \ \ \ \ \omega(v_{1}) = v_{1}$$
 $$i(u^{-1}) =\frac{-u^{-1}+v_{1}}{\zeta^{2}-\zeta}\ \ \ \ i(v_{1})=\frac{v_{1}+2u^{-1}}{\zeta^{2}-\zeta}$$
  $$j(u^{-1}) =\frac{-u^{-1}+\zeta^{2} v_{1}}{\zeta^{2}-\zeta}\ \ \ \ j(v_{1})=\frac{v_{1}+2\zeta^{2}u^{-1}}{\zeta^{2}-\zeta}$$
   $$k(u^{-1}) =\frac{-u^{-1}+\zeta v_{1}}{\zeta^{2}-\zeta}\ \ \ \ k(v_{1})=\frac{v_{1}+2\zeta u^{-1}}{\zeta^{2}-\zeta}.$$
  \end{Theorem}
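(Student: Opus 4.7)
The plan is to deduce the formulas from the Goerss-Hopkins-Miller functoriality of Theorem~\ref{GHM ob}: $G_{24}=\mathrm{Aut}(C)$ acts on $E_C$ as an $E_\infty$-ring because it acts on the pair $(\FF_4,F_C)$, and the induced action on $(E_C)_*\cong \W(\FF_4)[[u_1]][u^{\pm 1}]$ is determined by its action on the universal deformation of $F_C$. I would first pin down an explicit model for this deformation, e.g.\ the Weierstrass curve $\tilde C:y^2+u_1xy+y=x^3$ over $\W(\FF_4)[[u_1]]$, which reduces to $C$ modulo $(2,u_1)$ and whose Hasse invariant is a unit multiple of $u_1$, so that its formal completion at infinity is a universal deformation of $F_C$. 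Fix $t=-x/y$ as a local parameter at the origin; by Lubin-Tate normalization, $t$ coincides with the canonical Landweber coordinate on $F_{\tilde C}$ after a unique $\star$-isomorphism, which identifies $u^{-1}$ with the linear coefficient in $t$ and $v_1=u_1u^{-1}$ with the coefficient in the deformation direction.

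The cyclic generator is the easy case: since $\omega(x,y)=(\zeta x,\zeta^2 y)$ is diagonal and defined over $\FF_4$, the same formulas extend verbatim to an automorphism of $\tilde C$ as a deformation, so $u_1$ is preserved. Computing $\omega^*t=-\zeta x/(\zeta^2 y)=\zeta^{-1}t$ and translating through the Landweber orientation yields $\omega(u^{-1})=\zeta^2u^{-1}$, and then $\omega(v_1)=\omega(u_1)\omega(u^{-1})\cdot\zeta^{-2}=v_1$ after the obvious renormalisation.

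The quaternionic generator $i$ requires more work, because $i(x,y)=(x+1,y+x+\zeta^2)$ is only defined over $\FF_4$. I would lift it to an isomorphism $\tilde C\to\tilde C'$ between two deformations of $C$ by applying the substitution in $\W(\FF_4)[[u_1]]$, read off the (non-minimal) Weierstrass form of $\tilde C'$, and then compose with the unique admissible change of variables implementing the $\star$-isomorphism $\tilde C'\simeq\tilde C$. Expanding the induced automorphism of $F_{\tilde C}$ in the canonical coordinate to first order in $u_1$ yields the stated formulas for $i(u^{-1})$ and $i(v_1)$, the denominator $\zeta^2-\zeta$ being a Vandermonde-type factor produced by this renormalisation. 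The remaining formulas for $j$ and $k$ then follow without additional Weierstrass computation by conjugation: using the relations $j=\omega i\omega^{-1}$ and $k=\omega j\omega^{-1}$ recalled in Subsection~\ref{sub-tmf} and the action of $\omega$ just computed, one obtains the formulas for $j$ and $k$ by replacing $\zeta$ by $\zeta^2$ and $\zeta^2$ by $\zeta$ (respectively $\zeta$) in the appropriate places.

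The main obstacle is the renormalisation step for $i$: one must show that transporting the coordinates $(x+1,y+x+\zeta^2)$ back to the universal form $\tilde C$ produces precisely the rational expressions in $\zeta$ appearing in the statement, and in particular the denominator $\zeta^2-\zeta$. A useful consistency check is that the resulting action must satisfy $i^2=-1$ in $Q_8$, which acts on $F_C$ by the formal inverse; this translates into $i^2(u^{-1})=-u^{-1}$ and $i^2(v_1)=v_1$ and, together with Galois-equivariance, essentially pins down the formulas from their leading-order behaviour.
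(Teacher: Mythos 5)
The paper does not prove this statement at all: Theorem~\ref{actionG_24} is introduced with the sentence ``The action of $G_{24}$ on $(E_C)_*$ is documented [Bea17], 2.4'' and is simply quoted from Beaudry's work. So there is no in-paper argument to compare against; what you propose is essentially the derivation that lives in the cited reference (choose an explicit Weierstrass model of the universal deformation whose Hasse invariant is a unit times $u_1$, lift each automorphism of $C$ to an isomorphism of deformations, compose with the unique $\star$-isomorphism back to the chosen model, and read off the action on the normalized coordinate). That overall strategy is sound, and your observation that $j$ and $k$ come for free from $j=\omega i \omega^{-1}$, $k=\omega j\omega^{-1}$ is exactly how one avoids repeating the Weierstrass computation.

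Two concrete points need repair, however. First, your treatment of $\omega$ is internally inconsistent: if $\omega$ really fixed $u_1$ and sent $u^{-1}$ to $\zeta^2u^{-1}$, then $v_1=u_1u^{-1}$ would go to $\zeta^2 v_1$, not $v_1$; the phrase ``$\cdot\,\zeta^{-2}$ after the obvious renormalisation'' is papering over the fact that $\omega$ acts nontrivially on the Lubin--Tate ring, sending $u_1$ to $\zeta u_1$ (the diagonal substitution does \emph{not} extend ``verbatim'' to an automorphism of the deformation over $\W(\FF_4)[[u_1]]$ without a rescaling that moves $u_1$). The statement $\omega(v_1)=v_1$ is correct, but only after this is accounted for. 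Second, the only genuinely hard step --- producing the denominator $\zeta^2-\zeta=\sqrt{-3}$ and the $v_1$-terms in $i(u^{-1})$, $i(v_1)$ from the explicit $\star$-isomorphism --- is precisely the step you defer. Your consistency checks are useful but one of them is misstated: since $i^2=-1$ is central and acts on all of $(E_C)_2$ by $-1$, one must have $i^2(v_1)=-v_1$, not $v_1$ (and indeed the stated formulas give $i^2(v_1)=-v_1$). As written, the proposal is a correct plan with a worked easy case and an unworked hard case, rather than a complete proof; for the purposes of this paper the honest route is the one the author takes, namely to cite [Bea17].
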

Equations \ref{K(2)-loc TMF} and \ref{tmf connec model} give us a way to get access to the homotopy groups of $E_{C}^{hG_{24}}\wedge A_1$. 
\begin{Theorem}\phantomsection \label{compar} There is a homotopy equivalence $$[(\Delta^{8})^{-1}]tmf\wedge A_1  \simeq (E_{C}^{hG_{24}})^{h\Gal}\wedge A_1,$$
where $\Gal$ denotes the Galois group $\Gal(\mathbb{F}_{4}/\F)$.
\end{Theorem}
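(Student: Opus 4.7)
The plan is to chain together the homotopy equivalences already established in Section \ref{sub-tmf}, using the type-$2$ hypothesis on $A_1$ to collapse the height-$2$ chromatic fracture square. Explicitly, I would assemble
$$[(\Delta^8)^{-1}]tmf\wedge A_1 \simeq TMF\wedge A_1 \simeq L_{K(2)}TMF\wedge A_1 \simeq E_C^{hG_{48}}\wedge A_1 \simeq (E_C^{hG_{24}})^{h\Gal}\wedge A_1,$$
and conclude by concatenation. The first and third equivalences are obtained by smashing \eqref{tmf connec model} and \eqref{K(2)-loc TMF} respectively with $A_1$; these require no further input.

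The middle equivalence $TMF\wedge A_1 \simeq L_{K(2)}TMF\wedge A_1$ is the substantive step, and the one I expect to be the main obstacle. Since $TMF$ is a homotopy limit of elliptic spectra, which are $E(2)$-local, $TMF$ is itself $E(2)$-local; the chromatic fracture square at height $2$ therefore presents it as a homotopy pullback
$$\xymatrix{ TMF \ar[r]\ar[d] & L_{K(2)}TMF \ar[d] \\ L_1 TMF \ar[r] & L_1 L_{K(2)}TMF. }$$
By construction $A_1$ is a finite spectrum of type $2$, so $K(0)_*A_1 = K(1)_*A_1 = 0$; for a finite spectrum this forces $E(1)_*A_1 = 0$, and hence $L_1 A_1 \simeq \ast$. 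Since $L_1$ is a smashing localization, $L_1 Z \wedge A_1 \simeq Z\wedge L_1 A_1 \simeq \ast$ for every spectrum $Z$; smashing the above square with $A_1$ therefore kills both bottom corners, and the resulting pullback identifies $TMF\wedge A_1$ with $L_{K(2)}TMF\wedge A_1$.

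For the final equivalence, I would invoke the decomposition $G_{48} = G_{24}\rtimes\Gal$ together with the iterated homotopy fixed point identity $E_C^{hG_{48}} \simeq (E_C^{hG_{24}})^{h\Gal}$ for the Devinatz--Hopkins construction, which is valid because $\Gal$ is finite and $G_{24}$ is normal in $G_{48}$. Since moreover $\Gal$ is a finite group and $A_1$ is a finite spectrum on which it acts trivially, smashing with $A_1$ commutes with homotopy $\Gal$-fixed points, and the claim follows.
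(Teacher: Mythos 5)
Your proof is correct and follows essentially the same route as the paper: invert $\Delta^{8}$ to get $TMF$, use the $E(2)$-locality of $TMF$ together with the type-$2$ hypothesis on $A_1$ to identify $TMF\wedge A_1$ with $L_{K(2)}TMF\wedge A_1$, and then apply Theorem \ref{tmf-HFP}. The only cosmetic difference is that where you collapse the chromatic fracture square by hand, the paper simply cites Lemma 7.2 of \cite{HS99}, which encapsulates exactly that argument.
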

\begin{proof} 
We have
\begin{align*}
[(\Delta^{8})^{-1}]tmf\wedge A_1  &\simeq TMF\wedge A_1\ (\mbox{Equation}\ \ref{tmf connec model}) \\
						   &\simeq L_{2}(TMF)\wedge A_1\  (\mbox{$TMF$ is $E(2)$-local})\\
						   &\simeq L_{2}(TMF\wedge A_1)\ (\mbox{$L_{2}$ is smashing}) \\
						   &\simeq L_{K(2)} (TMF)\wedge A_1 \\
						   &\simeq (E_{C}^{hG_{24}})^{h\Gal(\FF_{4}/\F)} \wedge A_1\ (\mbox{Equation}\ \ref{K(2)-loc TMF}). 	
\end{align*}
The fourth equivalence is Lemma 7.2 of \cite{HS99} applied to the $K(2)$-localisation and $A_1$, which is finite spectrum of type $2$.
\end{proof}
\begin{Corollary}\label{Compar_alg} There is a homotopy equivalence
$$\Gal_+\wedge [(\Delta^{8})^{-1}]tmf\wedge A_1 \simeq E_C^{hG_{24}}\wedge A_1. $$
Therefore, $$ \mathbb{W}(\FF_{4})\otimes_{\mathbb{Z}_{2}}(\Delta^{8})^{-1}(\pi_{*}(tmf\wedge A_1))\cong \pi_{*}(E_{C}^{hG_{24}}\wedge A_1).$$
\end{Corollary}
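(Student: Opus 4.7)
The first equivalence reduces to a Galois descent statement via Theorem \ref{compar}. Indeed, smashing the equivalence of Theorem \ref{compar} with $\Gal_+$ yields
$$\Gal_+\wedge [(\Delta^{8})^{-1}]tmf\wedge A_1 \simeq \Gal_+\wedge (E_{C}^{hG_{24}})^{h\Gal}\wedge A_1 \simeq \Gal_+\wedge E_C^{hG_{48}}\wedge A_1,$$
so it suffices to exhibit an equivalence $\Gal_+\wedge E_C^{hG_{48}}\wedge A_1 \simeq E_C^{hG_{24}}\wedge A_1$. The natural candidate is the map
$$\phi\colon \Gal_+\wedge E_C^{hG_{48}} \longrightarrow \Gal_+\wedge E_C^{hG_{24}} \longrightarrow E_C^{hG_{24}},$$
where the first arrow is induced by the unit $E_C^{hG_{48}}\hookrightarrow E_C^{hG_{24}}$ and the second is the $\Gal = G_{48}/G_{24}$ action map. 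Since $A_1$ is of type $2$, both the source and target of $\phi\wedge A_1$ are $K(2)$-local, and it suffices to show that $\phi\wedge A_1$ induces an isomorphism on homotopy groups.

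For this, I would use the HFPSS comparison provided by the Lyndon--Hochschild--Serre spectral sequence for the extension $1 \to G_{24} \to G_{48} \to \Gal \to 1$. Because $G_{24}$ acts trivially on $\W(\FF_4)\subset (E_C)_*$ (being defined over $\FF_4$), while $\Gal$ acts on $(E_C)_*$ solely through the Frobenius on $\W(\FF_4)$ with fixed subring $\mathbb{Z}_2$, the Hochschild--Serre $\mathrm{E}_2$-term collapses to yield
$$H^*(G_{24}, (E_C)_*(A_1)) \;\cong\; \W(\FF_4)\otimes_{\mathbb{Z}_2} H^*(G_{48}, (E_C)_*(A_1)).$$
Since $\W(\FF_4)$ is free of rank $2$ over $\mathbb{Z}_2$, tensoring with it is exact, so this identification is compatible with all differentials and passes to the $\mathrm{E}_\infty$-page. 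Convergence of both HFPSS then gives the desired comparison on $\pi_*$. Compatibility with $\phi$ follows from the naturality of the HFPSS under the inclusion $E_C^{hG_{48}}\hookrightarrow E_C^{hG_{24}}$ together with the observation that the two summands of $\Gal_+\wedge E_C^{hG_{48}}$ are swapped by $\phi$ according to the $\Gal$-action, matching the $\W(\FF_4)$-basis $\{1,\zeta\}$ on the right.

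The second displayed isomorphism follows formally: taking $\pi_*$ of the first equivalence and invoking Theorem \ref{compar} to rewrite $\pi_*([(\Delta^{8})^{-1}]tmf\wedge A_1) = (\Delta^{8})^{-1}\pi_*(tmf\wedge A_1)$, we have
$$\pi_*(\Gal_+\wedge [(\Delta^{8})^{-1}]tmf\wedge A_1) \cong \pi_*([(\Delta^{8})^{-1}]tmf\wedge A_1)^{\oplus 2},$$
which is identified with $\W(\FF_4)\otimes_{\mathbb{Z}_2}(\Delta^{8})^{-1}\pi_*(tmf\wedge A_1)$ as abelian groups. The $\W(\FF_4)$-module structure on the right-hand side is the one induced by the unit $\W(\FF_4)\to\pi_0(E_C^{hG_{24}})$ on $\pi_*(E_C^{hG_{24}}\wedge A_1)$. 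The principal obstacle in making this argument precise is ensuring that the $\W(\FF_4)$-action arising from the identification $\Gal_+\wedge(-) \leftrightarrow \W(\FF_4)\otimes_{\mathbb{Z}_2}(-)$ agrees, after transport along $\phi$, with the intrinsic $\W(\FF_4)$-module structure on $\pi_*(E_C^{hG_{24}}\wedge A_1)$; this compatibility is built into the Frobenius-twist identification used in the HFPSS comparison above.
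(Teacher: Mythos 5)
Your plan is correct and arrives where the paper does, but it allocates the work differently: the paper's entire proof is the one line ``Theorem \ref{compar} plus Lemma 1.37 of \cite{BG18}'', where the cited lemma is exactly the splitting $\Gal_+\wedge E_C^{hG_{48}}\simeq E_C^{hG_{24}}$, while you reconstruct the proof of that lemma from scratch. Your reconstruction (the map $\phi$ built from the unit $E_C^{hG_{48}}\to E_C^{hG_{24}}$ and the residual $\Gal$-action, checked on homotopy via the Lyndon--Hochschild--Serre spectral sequence for $1\to G_{24}\to G_{48}\to\Gal\to 1$) is essentially the standard argument behind the Bobkova--Goerss lemma, so nothing is wrong; what you buy is self-containedness, what you lose is brevity and the extra generality of the cited statement, which applies to any closed subgroup stable under the Galois action, not just $G_{24}$. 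The one step you should justify more carefully is the collapse of the Hochschild--Serre $\mathrm{E}_2$-term: the fact that $\Gal$ acts through Frobenius is not by itself enough; you need $H^p(\Gal,H^q(G_{24},(E_C)_*A_1))=0$ for $p>0$, and this holds because $H^q(G_{24},(E_C)_*A_1)$ is a $\W(\FF_4)$-module with semilinear $\Gal$-action and $\W(\FF_4)$ is free of rank one over $\Z_2[\Gal]$ (normal basis for the unramified extension $\W(\FF_4)/\Z_2$); the same fact gives the identification $H^q(G_{24},-)\cong\W(\FF_4)\otimes_{\Z_2}H^q(G_{48},-)$ on $\mathrm{E}_2$. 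With that supplied the rest of your argument goes through; note only that to match the swapping of the two wedge summands with the $\Gal$-action on $\W(\FF_4)$ you want a $\Z_2$-basis actually permuted by Frobenius, such as $\{\zeta,\zeta^2\}$, rather than $\{1,\zeta\}$.
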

\begin{proof} This is a consequence of Theorem \ref{compar} and Lemma 1.37 of \cite{BG18}.
\end{proof}
\noindent
Let us denote by 
\begin{equation}\label{map_compa}\Theta : \mathbb{W}(\FF_{4})\otimes_{\Z_{2}}\pi_{*}(tmf\wedge A_1)\rightarrow \pi_{*}(E_{C}^{hG_{24}}\wedge A_1),
\end{equation} given by pre-composing the isomorphism of Corollary \ref{Compar_alg} with the natural homomorphism $\pi_*(tmf\wedge A_1)\rightarrow \pi_*([(\Delta^8)^{-1}]tmf\wedge A_1).$ The following corollary recapitulates the relationship between $\pi_{*}(tmf\wedge A_1)$ and $\pi_{*}(E_{C}^{hG_{24}}\wedge A_1)$. 
\begin{Corollary} \phantomsection \label{Cor_Compare} The homomorphism $\Theta$ is injective. Moreover, it remains injective after quotienting out by the ideal of $\pi_{*}(S^{0})$ generated by $(\overline{\kappa}, \nu).$
\end{Corollary}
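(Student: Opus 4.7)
The strategy is to factor $\Theta$ through a localization and reduce both assertions to injectivity properties of multiplication by $\Delta^{8}$ on $\pi_{*}(tmf\wedge A_1)$ (or a suitable quotient thereof). Concretely, $\Theta$ decomposes as
$$\W(\FF_{4})\otimes_{\Z_{2}}\pi_{*}(tmf\wedge A_1) \xrightarrow{\mathrm{id}\otimes \iota} \W(\FF_{4})\otimes_{\Z_{2}} [(\Delta^{8})^{-1}]\pi_{*}(tmf\wedge A_1) \xrightarrow[\cong]{} \pi_{*}(E_{C}^{hG_{24}}\wedge A_1),$$
where $\iota$ is the natural localization map and the second arrow is the isomorphism provided by Corollary \ref{Compar_alg}. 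The first assertion follows at once: by Proposition \ref{Period}, $\Delta^{8}$ acts freely on $\pi_{*}(tmf\wedge A_1)$, so $\iota$ is injective; since $\W(\FF_{4})$ is free (hence flat) over $\Z_{2}$, tensoring with it preserves injectivity.

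For the second assertion, suppose $x\in\W(\FF_{4})\otimes_{\Z_{2}}\pi_{*}(tmf\wedge A_1)$ satisfies $\Theta(x)\in(\overline{\kappa},\nu)\pi_{*}(E_{C}^{hG_{24}}\wedge A_1)$. Unwinding the isomorphism of Corollary \ref{Compar_alg} and clearing denominators in $\Delta^{8}$, there exist an integer $k\geq 0$ and elements $z_{1},z_{2}\in\W(\FF_{4})\otimes_{\Z_{2}}\pi_{*}(tmf\wedge A_1)$ such that
$$\Delta^{8k}\cdot x = \overline{\kappa}z_{1}+\nu z_{2}.$$
By an obvious induction on $k$, the problem reduces to showing that multiplication by $\Delta^{8}$ is injective on the quotient $\W(\FF_{4})\otimes_{\Z_{2}}\pi_{*}(tmf\wedge A_1)/(\overline{\kappa},\nu)$; by flatness of $\W(\FF_{4})$ over $\Z_{2}$, this reduces further to the analogous statement on $\pi_{*}(tmf\wedge A_1)/(\overline{\kappa},\nu)$.

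To establish this last injectivity, I would argue at the level of the $\E_{\infty}$-term of the Adams spectral sequence for $tmf\wedge A_1$, in the same spirit as Propositions \ref{Period} and \ref{Period+}. The key point to verify, bidegree-by-bidegree, is that any class $y$ in $\Ext_{\A(2)_{*}}^{*,*}(\H_{*}(A_1))$ whose product with $g$ or $h_{2}$ (the Adams representatives of $\overline{\kappa}$ and $\nu$), possibly through an exotic extension, could detect a class of the form $w_{2}^{4}\cdot x$ in the appropriate bidegree must itself be divisible by $w_{2}^{4}$; combined with the fact that $w_{2}^{4}$ is a permanent cycle in the ASS for $tmf$, this produces the desired injectivity. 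The main obstacle is the bookkeeping of potential exotic $\overline{\kappa}$- and $\nu$-extensions, but these are controlled by the explicit structure of the $\E_{\infty}$-term and the Adams filtration established in Section \ref{G24 3}, so the argument reduces to a direct case-by-case inspection parallel to the one already carried out for Proposition \ref{Period+}.
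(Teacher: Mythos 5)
Your proposal is correct and follows essentially the same route as the paper, which simply combines Theorem \ref{compar}, Proposition \ref{Period} and Proposition \ref{Period+}. The inspection you sketch at the end --- that any class which could detect $w_{2}^{4}x$ via a (possibly exotic) product with $g$ or $h_{2}$ must itself be divisible by $w_{2}^{4}$ --- is precisely the content and the proof of Proposition \ref{Period+}, so you may cite it rather than redo the bookkeeping.
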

\begin{proof} This follows from Theorem \ref{compar}, Proposition \ref{Period} and Proposition \ref{Period+}.
\end{proof}
\noindent
\noindent
We continue to recollect some necessary information about the HFPSS converging to $\pi_{*}(E_{C}^{hG_{24}})$:
\begin{equation}\phantomsection \label{HFPSS_S0}
\mathrm{H}^{s}(G_{24}, (E_{C})_{t}) \Longrightarrow \pi_{t-s}(E_{C}^{hG_{24}}).
\end{equation}
The elements $\eta\in \pi_{1}(S^{0})$, $\nu\in \pi_{3}(S^{0})$, $\overline{\kappa}\in\pi_{20}(S^{0})$ are sent non-trivially to elements of the same name in $\pi_{*}(E_{C}^{hG_{24}})$ via the Hurewicz map $S^{0}\rightarrow E_{C}^{hG_{24}}$. As the latter factors through the unit map of $tmf$, the element $\overline{\kappa}^{6}=0$ in $\pi_{*}(E_{C}^{hG_{24}})$ because $\overline{\kappa}^{6}=0$ in $\pi_{*}(tmf)$ (see \cite{Bau08}). These elements are detected by $\eta\in\mathrm{H}^{1}(G_{24}, (E_{C})_{2})$, $\nu \in \mathrm{H}^{1}(G_{24}, (E_{C})_{4})$, $\overline{\kappa}\in \mathrm{H}^{4}(G_{24},(E_{C})_{24})$, respectively. Furthermore, there is a class $\Delta\in \mathrm{H}^{0}(G_{24}, (E_{C})_{24})$ such that $\Delta^{8}$ is a permanent cycle detecting the periodicity of $E_C^{hG_{24}}$. \\\\
\noindent
The HFPSS for $E_{C}^{hG_{24}}\wedge A_1$ is a spectral sequence of modules over that of (\ref{HFPSS_S0}):
\begin{equation}\phantomsection \label{HFPSS_A_1} \mathrm{H}^{s}(G_{24}, (E_{C})_{t}A_1)\Longrightarrow \pi_{t-s}(E_{C}^{hG_{24}}\wedge A_1).
\end{equation}
In Section \ref{HFPSS-E_2-term}, we will compute $\mathrm{H}^{*}(G_{24}, (E_{C})_{*}A_1)$ as a module over a certain subalgebra of $\mathrm{H}^{*}(G_{24}, (E_{C})_{*})$. Let $\pi : (E_{C})_{*}\rightarrow \FF_{4}[u^{\pm 1}]$ be the quotient of $(E_{C})_{*}$ by the maximal ideal $(2,u_{1})$. As the ideal $(2,u_{1})$ is preserved by the action of $\mathbb{S}_{C}$, the ring $\FF_{4}[u^{\pm 1}]$ inherits an action of $\mathbb{S}_{C}$, and so of its subgroup $G_{24}$. We need the computation of the ring structure of  $\mathrm{H}^{*}(G_{24}, \FF_{4}[u^{\pm 1}])$, which is due to Hans-Werner Henn, see \cite{Bea17}, Appendix A.
\begin{Proposition} There are classes 
$z\in \mathrm{H}^{4}(G_{24}, \FF_{4}[u^{\pm 1}]_{0})$, $a\in \mathrm{H}^{1}(G_{24}, \FF_{4}[u^{\pm 1}]_{2})$, $b\in \mathrm{H}^{1}(G_{24}, \FF_{4}[u^{\pm 1}]_{4})$, $v_{2}\in \mathrm{H}^{0}(G_{24}, (\FF_{4}[u^{\pm1}])_{6})$ such that there is an isomorphism of graded algebras
$$\mathrm{H}^{*}(G_{24}, \FF_{4}[u^{\pm 1}]) \cong \mathbb{F}_{4}[v_{2}^{\pm 1}, z, a, b]/(ab, b^{3}=v_{2}a^{3}).$$
\end{Proposition}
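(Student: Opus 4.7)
The plan is to exploit the extension $1\to Q_{8}\to G_{24}\to C_{3}\to 1$ together with the fact that $|C_{3}|=3$ is a unit in $\FF_{4}$. The Lyndon--Hochschild--Serre spectral sequence then collapses and yields
\[
\mathrm{H}^{*}(G_{24},\FF_{4}[u^{\pm 1}])\cong \mathrm{H}^{*}(Q_{8},\FF_{4}[u^{\pm 1}])^{C_{3}}.
\]
To prepare the ground I would first reduce the formulas of Theorem~\ref{actionG_24} modulo $(2,u_{1})$. In characteristic $2$ one has $\zeta^{2}-\zeta=\zeta^{2}+\zeta=1$ (since $1+\zeta+\zeta^{2}=0$) and $v_{1}=uu_{1}\equiv 0$; substituting gives $i(u^{-1})=j(u^{-1})=k(u^{-1})=u^{-1}$, so $Q_{8}$ acts trivially on $\FF_{4}[u^{\pm 1}]$. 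Meanwhile $\omega$ generates $C_{3}$ and acts $\FF_{4}$-linearly by $\omega(u^{-n})=\zeta^{2n}u^{-n}$.

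Triviality of the $Q_{8}$-action gives $\mathrm{H}^{*}(Q_{8},\FF_{4}[u^{\pm 1}])\cong \mathrm{H}^{*}(Q_{8},\F)\otimes_{\F}\FF_{4}[u^{\pm 1}]$, and I would invoke the classical presentation
\[
\mathrm{H}^{*}(Q_{8},\F)\cong \F[x,y,P]/(x^{2}+xy+y^{2},\ x^{2}y+xy^{2}),\qquad |x|=|y|=1,\ |P|=4.
\]
The $C_{3}$-action is induced by conjugation by $\omega$ on $Q_{8}$, which cyclically permutes $i,j,k$; dually it permutes the three non-trivial characters of $Q_{8}^{\mathrm{ab}}=(\Z/2)^{2}$, acting on $\mathrm{H}^{1}(Q_{8},\F)$ with minimal polynomial $t^{2}+t+1$. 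After extending scalars to $\FF_{4}$ it diagonalises: $x+\zeta y$ and $x+\zeta^{2}y$ are eigenvectors with eigenvalues $\zeta$ and $\zeta^{2}$. The periodicity class $P$ is $C_{3}$-fixed, which can be checked by restricting to the central subgroup $\{\pm 1\}\subset Q_{8}$.

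Taking $C_{3}$-invariants piece by piece (matching the character on the $Q_{8}$-factor with the inverse of $\zeta^{2n}$ on $u^{-n}$) produces: in cohomological degree $0$, $\FF_{4}[u^{\pm 3}]=\FF_{4}[v_{2}^{\pm 1}]$ with $v_{2}:=u^{-3}$; in degree $1$, the classes $a:=(x+\zeta y)u^{-1}$ and $b:=(x+\zeta^{2}y)u^{-2}$ of internal degrees $2$ and $4$; and in degree $4$, the class $z:=P$. The two defining relations then follow from direct multiplication in $\mathrm{H}^{*}(Q_{8},\F)\otimes \FF_{4}$:
\[
ab=v_{2}\bigl(x^{2}+(\zeta+\zeta^{2})xy+y^{2}\bigr)=v_{2}(x^{2}+xy+y^{2})=0,
\]
and
\[
b^{3}+v_{2}a^{3}=v_{2}^{2}\bigl((x+\zeta^{2}y)^{3}+(x+\zeta y)^{3}\bigr)=v_{2}^{2}(x^{2}y+xy^{2})=0.
\]
A Poincar\'e-series count then shows that the stated algebra has rank $(1,2,2,1)$ over $\FF_{4}[v_{2}^{\pm 1},z]$ in cohomological degrees $(0,1,2,3)\bmod 4$, matching the rank of the $C_{3}$-invariants computed above, so no further relations are needed.

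The main obstacle would be, first, carefully justifying the reduction mod $(2,u_{1})$ of the formulas in Theorem~\ref{actionG_24}---the cancellations hinge on delicate characteristic-$2$ identities involving roots of unity and Witt-vector lifts---and, second, correctly identifying the $C_{3}$-action on the degree-$4$ periodicity class $P$, which requires slightly more care than the action on the linear classes $x$ and $y$.
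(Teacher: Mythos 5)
Your argument is correct, and it is worth noting that the paper itself does not prove this proposition at all: it is quoted from \cite{Bea17}, Appendix A (attributed to Henn). So there is no in-paper proof to compare against; your write-up supplies the standard argument, which is also the route taken in the cited source. The key steps all check out: since $3$ is invertible in $\FF_4$, the Lyndon--Hochschild--Serre spectral sequence for $1\to Q_8\to G_{24}\to C_3\to 1$ collapses to $\H^{*}(Q_{8},\FF_{4}[u^{\pm 1}])^{C_{3}}$; reducing Theorem \ref{actionG_24} modulo $(2,u_1)$ does kill the $Q_8$-action (using $\zeta^{2}-\zeta\equiv \zeta^2+\zeta=1$ and $v_1\equiv 0$ --- note the small slip that $v_1=u_1u^{-1}$ rather than $uu_1$, which is immaterial here), consistently with Proposition \ref{action G24 on EA1}; the eigenspace bookkeeping matching $\zeta$-eigenvectors of $\H^{1}(Q_{8},\FF_4)$ against the weights $\zeta^{2n}$ on $u^{-n}$ correctly produces $a$, $b$, $v_2=u^{-3}$ and $z=P$; and the two relations follow from $(x+\zeta y)(x+\zeta^{2}y)=x^2+xy+y^2$ and $(x+\lambda y)^3=(\lambda+\lambda^{2})(x^{2}y)$ in $\H^{*}(Q_8,\FF_4)$. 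Two points deserve one more sentence each in a polished version: the triviality of the $C_3$-action on $P$ is immediate from $\dim_{\F}\H^{4}(Q_8,\F)=1$ (no need to restrict to the centre), and the assignment of which eigenvector pairs with $u^{-1}$ versus $u^{-2}$ depends on the direction of the $3$-cycle induced by $\omega$ on $\{i,j,k\}$ --- but since both eigenvalues occur with multiplicity one in $\H^{1}$, either choice yields the same presentation, so the ambiguity is harmless. The final Poincar\'e-series comparison $(1,2,2,1)$ over $\FF_4[v_2^{\pm1},z]$ (using $a^4=v_2^{-1}(ab)b^2\cdot a=0$ and $b^4=0$ in the abstract algebra) correctly rules out further relations.
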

\begin{Proposition}\phantomsection \label{induced-coh} The homomorphism of graded algebras $$\mathrm{H}^{*}(G_{24}, E_{C*}) \rightarrow \mathrm{H}^{*}(G_{24}, \FF_{4}[u^{\pm 1}])$$ induced by the projection $(E_{C})_{*}\rightarrow \FF_{4}[u^{\pm 1}]$ sends $\eta$ to $a$,  $\nu$ to $b$, $\overline{\kappa}$ to $v_{2}^{4}z$, and $\Delta$ to $v_{2}^{4}$.
\end{Proposition}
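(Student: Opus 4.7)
The plan is to exploit that the quotient $\pi : (E_C)_* \to \FF_{4}[u^{\pm 1}]$ by the maximal ideal $(2,u_1)$ is $G_{24}$-equivariant (the ideal being stable by Theorem \ref{actionG_24}), so induces a morphism of graded cohomology algebras; one then verifies each of the four identifications separately, leveraging the one-dimensionality of the pertinent target cohomology groups.

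First I would treat $\Delta \mapsto v_2^4$. Inspection of $\FF_{4}[v_2^{\pm 1}, z, a, b]/(ab, b^3 = v_2 a^3)$ in bidegree $(s,t)=(0,24)$ shows that the unique nonzero class is $v_2^4$, so $\pi_*(\Delta) = \lambda v_2^4$ for some $\lambda \in \FF_{4}$. Since $\Delta^8$ is the periodicity generator of $\pi_* E_C^{hG_{24}}$, it projects to a unit, and in particular $\pi_*(\Delta^8) = v_2^{32}$ after normalising $v_2$; this forces $\lambda^8 = 1$ in $\FF_{4}$ and hence $\lambda = 1$.

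Next I would treat $\eta \mapsto a$ and $\nu \mapsto b$. The target cohomology in bidegrees $(1,2)$ and $(1,4)$ each contain only one nonzero class (namely $a$ and $b$), so it suffices to show that $\pi_*(\eta)$ and $\pi_*(\nu)$ are nonzero. For this I would extract explicit $1$-cocycle representatives from the formulas of Theorem \ref{actionG_24}: the non-invariance of $u^{-1}$ and of $v_1 = u_1 u^{-1}$ under $i,j,k$ produces cocycles in $H^1(G_{24}, (E_C)_*)$ representing $\eta$ and $\nu$, whose reductions modulo $(2,u_1)$ are precisely the standard cocycles for $a$ and $b$ recorded in \cite{Bea17}, Appendix A.

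Finally, for $\overline{\kappa} \mapsto v_2^4 z$: the relation $ab = 0$ together with a direct monomial count show that $v_2^4 z$ is the unique nonzero class in bidegree $(4,24)$ of $H^*(G_{24}, \FF_{4}[u^{\pm 1}])$, so $\pi_*(\overline{\kappa}) = \mu v_2^4 z$ for some $\mu \in \FF_{4}$. Ruling out $\mu = 0$ is the main obstacle: I would do this via the factorisation of the Hurewicz map $S^0 \to E_C^{hG_{24}}$ through $tmf$ (Theorem \ref{tmf-HFP}), using that $\overline{\kappa} \in \pi_{20}(tmf)$ has Adams filtration exactly $4$ and maps to a class whose image in $E_C^{hG_{48}}$ is detected in the bottom of the $(2,u_1)$-adic filtration of $H^*(G_{48}, (E_C)_*)$, hence survives reduction; alternatively, a Massey-product description of $\overline{\kappa}$ in terms of $\eta$ and $\nu$, combined with the identifications in the previous paragraphs and the naturality of Massey products, yields the nonvanishing directly. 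The other three identifications reduce essentially to bidegree counting and explicit cocycle manipulation; the delicate interplay between the Adams filtration in $tmf$ and the coefficient filtration on $H^*(G_{24}, -)$ is what makes this last step the technical crux.
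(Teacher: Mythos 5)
The paper does not actually prove this proposition: it is stated as a fact immediately after the ring structure of $\mathrm{H}^{*}(G_{24},\FF_{4}[u^{\pm 1}])$ is quoted from Henn's computation (\cite{Bea17}, Appendix A), so there is no in-paper argument to compare yours against. Judged on its own merits, your skeleton is the right one: in each of the four relevant bidegrees the target algebra $\FF_{4}[v_{2}^{\pm 1},z,a,b]/(ab,\,b^{3}-v_{2}a^{3})$ is one-dimensional (your monomial counts, including the use of $ab=0$ and $b^4=0$ to isolate $v_2^4z$ in bidegree $(4,24)$, are correct), so everything reduces to non-vanishing of the four images, with the precise unit a matter of normalising $v_2$, $a$, $b$, $z$. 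The $\Delta$ case is fine ($\Delta$ is a unit multiple of $u^{-12}$ since the curve is nonsingular), and the $\eta$ case is essentially complete: the cocycle $g\mapsto \tfrac{g(v_1)-v_1}{2}$, which the paper itself writes down in Section 4.2, reduces mod $(2,u_1)$ to the nonzero homomorphism $Q_8\to\FF_4\{u^{-1}\}$ sending $i\mapsto u^{-1}$, and restriction to $Q_8$ is injective on $\H^1$.

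The genuine gaps are in the $\nu$ and $\overline{\kappa}$ cases. For $\nu$ you claim that "the non-invariance of $u^{-1}$ and of $v_1$" produces a representing cocycle, but $\nu$ lives in $\H^{1}(G_{24},(E_C)_{4})$, i.e.\ in internal degree $4$, and no cocycle valued in $(E_C)_4$ is exhibited or reduced; the analogy with $\eta$ does not supply one. For $\overline{\kappa}$, which you correctly identify as the crux, neither of your two proposed routes is executed: the filtration argument as stated ("detected in the bottom of the $(2,u_1)$-adic filtration, hence survives reduction") is essentially a restatement of the claim to be proved, and the Massey-product route would require an actual bracket expression for $\overline{\kappa}$ in $\H^{4}(G_{24},(E_C)_{24})$ together with a matching computation of the corresponding bracket of $a$ and $b$ in the target, none of which is supplied. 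In practice these two non-vanishing statements are exactly the content imported from the explicit $(2,u_1)$-Bockstein analysis of $\H^{*}(G_{24},(E_C)_{*})$ in \cite{Bea17}, Appendix A; without that input, or a completed version of one of your two sketches, the proof of the $\nu$ and $\overline{\kappa}$ identifications does not close.
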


%\textbf{Remark} This comparison does not tell us anything on how the HFPSS for $E_{C}^{G_{24}}\wedge A_1$ behaves in comparison to the ASS for $tmf\wedge A_1$; with this we mean that we do not know any apparent way to derive differentials in the HFPSS from those of the ASS. However, this comparison gives us partial information about $\pi_{*}(E_{C}^{hG_{24}}\wedge A_1)$. Together with other means we can deduce differentials in the HFPSS.
\subsection{On the cohomology groups $\mathrm{H}^{*}(G_{24},(E_{C})_{*}(A_1))$}\phantomsection \label{HFPSS-E_2-term}
\noindent
We first determine $(E_{C})_{*}(A_1)$ using the cofiber sequences through which $A_1$ are defined. The cofiber sequence
$\Sigma S^{0}\xrightarrow{\eta}S^{0}\rightarrow C_{\eta}$ gives rise to a short exact sequence of $E_{C}$-homology $$0\rightarrow (E_{C})_{*}\rightarrow (E_{C})_{*}(C_{\eta})\rightarrow (E_{C})_{*}(S^{2})\rightarrow 0,$$ since $(E_{C})_{*}$ is concentrated in even degrees. Hence, as an $(E_{C})_{*}$-module $$(E_{C})_{*}(C_{\eta})\cong  \mathbb{W}(\FF_{4})[[u_{1}]][u^{\pm 1}]\{e_{0},e_{2}\},$$ where $e_{0}$ is the image of $1\in (E_{C})_{0}$ and $e_{2}$ is a lift of $\Sigma^{2}1\in (E_{C})_{2}(S^{2})$. Next, the long exact sequence in $E_{C}$-homology associated to $ C_{\eta}\xrightarrow{2} C_{\eta}\rightarrow Y$ is the short exact sequence $$0\rightarrow (E_{C})_{*}(C_{\eta})\xrightarrow{\times 2}(E_{C})_{*}(C_{\eta}) \rightarrow(E_{C})_{*}(Y)\rightarrow 0 $$ since multiplication by $2$ on $(E_{C})_{*}(C_{\eta})\cong\mathbb{W}(\FF_{4})[[u_{1}]][u^{\pm 1}]\{e_{0},e_{2}\}$ is injective. Therefore $$(E_{C})_{*}(Y)\cong \FF_{4}[[u_{1}]][u^{\pm 1}]\{e_{0},e_{2}\}.$$
Now $A_1$ is the cofiber of some $v_{1}$-self map of $Y$: $\Sigma^{2}Y\xrightarrow{v_{1}}Y\rightarrow A_1$. The following lemma describe the induced homomorphism in $E_{C}$-homology of these $v_{1}$-self maps.
\begin{Lemma} \phantomsection \label{E-module} The homomorphism $(E_{C})_{*}(v_{1})$ is given by multiplication by $u_{1}u^{-1}$. Therefore, $$(E_{C})_{*}(A_1)\cong \FF_{4}[u^{\pm 1}]\{e_{0},e_{2}\}.$$
\end{Lemma}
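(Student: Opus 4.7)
\noindent
The plan is to identify the $(E_{C})_{*}$-theoretic action of any $v_{1}$-self map of $Y$ with multiplication by $u_{1}u^{-1}$, then read off $(E_{C})_{*}(A_1)$ from the long exact sequence of the cofibration $\Sigma^{2}Y\xrightarrow{v_{1}}Y\to A_1$.

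\noindent
\textbf{Step 1.} Recall that the canonical complex orientation $BP\to E_{C}$ presents the Lubin-Tate ring $(E_{C})_{*}\cong\W(\FF_{4})[[u_{1}]][u^{\pm 1}]$ as a quotient of $BP_{*}$ under which (up to a unit) the Hazewinkel--Araki generator $v_{1}\in\pi_{2}BP$ is sent to $u_{1}u^{-1}\in(E_{C})_{2}$; this is the standard change-of-coordinates formula $v_{i}=u_{i}u^{1-2^{i}}$ at $p=2$ used implicitly throughout Section \ref{Prelim}.

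\noindent
\textbf{Step 2.} Since $(E_{C})_{*}(Y)\cong \FF_{4}[[u_{1}]][u^{\pm 1}]\{e_{0},e_{2}\}$ is a free $(E_{C})_{*}$-module of rank two, any degree-$2$ self-map of $Y$ induces an $(E_{C})_{*}$-linear endomorphism given by a matrix in $M_{2}((E_{C})_{2})$. A $v_{1}$-self map $v_{1}:\Sigma^{2}Y\to Y$ constructed in \cite{DM81} is, by design, detected in the Adams spectral sequence for $[Y,Y]_{*}$ by the class $v_{1}$; equivalently, it induces multiplication by $v_{1}$ on $K(1)$-homology. Pulling this back along the canonical map $E_{C}\to K(1)$ together with compatibility of Hurewicz images under the orientation $BP\to E_{C}$ of Step 1 forces the induced matrix to be $u_{1}u^{-1}$ times the identity, up to a unit in $(E_{C})_{0}^{\times}$; we absorb this unit into the choice of the self-map, so that $(E_{C})_{*}(v_{1})$ is exactly multiplication by $u_{1}u^{-1}$.

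\noindent
\textbf{Step 3.} Since $u_{1}$ is a non-zero-divisor on the free module $\FF_{4}[[u_{1}]][u^{\pm 1}]\{e_{0},e_{2}\}$ and $u^{-1}$ is a unit, multiplication by $u_{1}u^{-1}$ is injective. The long exact sequence in $E_{C}$-homology therefore collapses to the short exact sequence
\[
0\to (E_{C})_{*-2}(Y)\xrightarrow{\,u_{1}u^{-1}\,} (E_{C})_{*}(Y)\to (E_{C})_{*}(A_1)\to 0,
\]
yielding $(E_{C})_{*}(A_1)\cong (E_{C})_{*}(Y)/(u_{1})\cong\FF_{4}[u^{\pm 1}]\{e_{0},e_{2}\}$, as required.

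\noindent
The main obstacle is Step 2: turning the qualitative statement ``the self-map is detected by $v_{1}$'' into the quantitative identification of $(E_{C})_{*}(v_{1})$ as a specific scalar matrix. The cleanest route is to observe that the Adams $\E_{2}$-class detecting a Davis--Mahowald $v_{1}$-self map sits in a bidegree where the only $(E_{C})_{*}$-linear degree-$2$ endomorphism of $(E_{C})_{*}(Y)$ whose reduction along $E_{C}\to K(1)$ realises that class is scalar multiplication by $u_{1}u^{-1}$; alternatively, one can argue directly from the explicit construction of \cite{DM81} by tracing through the cofibre sequences defining $V(0)$, $Y$, and the $v_{1}$-self map and checking the induced map on $E_{C}$-homology at each stage.
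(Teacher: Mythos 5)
Your overall strategy matches the paper's: use the defining property of a $v_{1}$-self map in $K(1)$-homology, transfer this information to $E_{C}$-homology via complex orientations, and read off $(E_{C})_{*}(A_1)$ from the resulting short exact sequence. Steps 1 and 3 are fine (Step 1 is the paper's appeal to the computation of the $2$-series of the $2$-typification of the formal group law of $E_C$, cited from \cite{Bea17}, and Step 3 is the same injectivity argument).

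The gap is in Step 2, which is the crucial step. You propose to ``pull back along the canonical map $E_{C}\to K(1)$'', but no such map exists: the formal group law of $E_{C}$ reduces to one of height $2$ over the residue field, while $K(1)$ has height $1$, so there is no ring map (nor any useful map of spectra) from $E_{C}$ to $K(1)$ along which the two homology theories could be compared directly. The paper instead routes through $BP$, which maps to both theories. One first shows that $BP_{*}(v_{1}):BP_{*}(\Sigma^{2}Y)\to BP_{*}(Y)$ is \emph{exactly} multiplication by $v_{1}\in BP_{2}$, using the commutative square induced by $BP\to K(1)$ together with the fact that $BP_{*}(Y)\cong (BP_{*}/2)\{e_{0},e_{2}\}$; the degree constraints on the four matrix entries (which lie in $BP_{2}/2$, $BP_{0}/2$, $BP_{4}/2$, $BP_{2}/2$, not all in a single degree as your $M_{2}((E_{C})_{2})$ suggests), combined with the requirement that the matrix reduce to $v_{1}\cdot\mathrm{id}$ in $K(1)_{*}(Y)$, leave scalar multiplication by $v_{1}$ as the only possibility. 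Only then does one push forward along $c:BP\to E_{C}$ and use $c_{*}(v_{1})\equiv u_{1}u^{-1} \pmod{2}$. Your write-up gestures at the $BP\to E_{C}$ compatibility but never carries out the intermediate determination of $BP_{*}(v_{1})$, which is what makes the naturality argument go through. A further small point: the unit ambiguity cannot be ``absorbed into the choice of the self-map'', since the eight $v_{1}$-self maps form a discrete set with no $\F$-algebra of units acting on them; fortunately, once the argument is run through $BP$ there is no unit ambiguity at all, because the relevant matrix entries lie in $\F$.
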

\begin{proof} Let $K(1)$ be the first Morava K-theory at the prime $2$ such that $K(1)_{*}\cong \F[v_{1}^{\pm 1}]$ where $|v_{1}|=2$ and $BP$ be the Brown-Peterson spectrum at the prime $2$. There is a map of ring spectra $BP\rightarrow K(1)$ that classifies the complex orientation of $K(1)$. Recall that the coefficient ring of $BP$ is given by $$BP_*\cong \Z_{(2)}[v_1, v_2, ...],$$ where $|v_i| = 2(2^{i}-1)$, see \cite{Ada74}, Part II. The induced homomorphism of coefficient rings sends $v_{1}$ to $v_{1}$. The map $BP\rightarrow K(1)$ gives rise to the commutative diagram 
$$\xymatrix{ BP_{*}(\Sigma^{2}Y) \ar[rr]^{BP_{*}(v_{1})}\ar[d]&& BP_{*}(Y)\ar[d]\\
		    K(1)_{*}(\Sigma^{2}Y) \ar[rr]^{K(1)_{*}(v_{1})}&& K(1)_{*}(Y)	
}$$
By definition, a $v_{1}$-self-map of $Y$ induces in $K(1)$-homology multiplication by $v_{1}$. The above diagram forces, then for degree reasons, that $BP_{*}(v_{1})$ is given by  multiplication by $v_{1}\in BP_{2}$. Now, let $ c: BP\rightarrow E_{C}$ be the map of ring spectra that classifies the $2$-typification of the formal group law of $E_{C}$. One can show that the $2$-series of the latter has leading term $u_{1}u^{-1}x^{2}$ modulo $(2)$, see \cite{Bea17}, Proposition 6.1.1. This implies that the induced homomorphism $c_{*}: BP_{*}\rightarrow (E_{C})_{*}$ sends $v_{1}$ to $u_{1}u^{-1}$ modulo $2$. By naturality,  $(E_{C})_{*}(v_{1})$ is also given by multiplication by $u_{1}u^{-1}$.
\end{proof}
\noindent
We now describe the action of $G_{24}$ on $(E_{C})_{*}(A_1)$. For any $2$-local finite spectrum $X$, the map $c$, introduced in the proof of Lemma \ref{E-module}, induces a map of ANSS 
$$\xymatrix{ \Ext_{BP_{*}BP}^{s,t}(BP_{*},BP_{*}X) \ar[r]\ar@{=>}[d] & \Ext_{(E_{C})_{*}E_{C}}^{s,t}((E_{C})_{*}, (E_{C})_{*}X)\ar@{=>}[d]\\
				\pi_{t-s}(X)\ar[r] &\pi_{t-s}(L_{K(2)}X)
}$$ where $(E_{C})_{*}E_{C}$ stands for $\pi_{*}(L_{K(2)}(E_{C}\wedge E_{C}))$. By Morava's change-of-ring theorem (see \cite{Dev95}), one has $$\Ext_{(E_{C})_{*}E_{C}}^{s,t}((E_{C})_{*}, (E_{C})_{*})\cong \mathrm{H}_c^{s}(\mathbb{G}_{C}, (E_{C})_{t}).$$ 
Now the map $c$ induces a map of short exact sequences
$$\xymatrix{ 0\ar[r]&BP_{*}\ar[r]^{\times 2}\ar[d]^{c_{*}}&BP_{*}\ar[r]\ar[d]^{c_{*}}&BP_{*}/(2)\ar[r]\ar[d]^{c_{*}}&0\\
		0\ar[r]&E_{C*}\ar[r]^{\times 2}&E_{C*}\ar[r]&E_{C*}/(2)\ar[r]&0.
}$$
Therefore, we obtain the commutative diagram 
$$\xymatrix { \mathrm{Ext}^{0,*}_{BP_{*}BP}(BP_{*},BP_{*}/2) \ar[d]^{c_{*}}\ar[r]^{\delta_{BP}}&\mathrm{Ext}^{1,*}_{BP_{*}BP}(BP_{*},BP_{*})\ar[d]^{c_{*}}\\
		 \mathrm{H}^{0}_{c}(\G_{C},E_{C*}/2)\ar[r]^{\delta_{E_{C}}}&\mathrm{H}^{1}_{c}(\G_{C},E_{C*}),
}$$
where $\delta_{BP}$ and $\delta_{E_C}$ denote the respective connecting homomorphisms.
By \cite{Rav86}, Theorem 4.3.6, one has that $$\mathrm{Ext}^{0,2}_{BP_{*}BP}(BP_{*},BP_{*}/2) = \Z_{(2)}\{v_{1}\}\ \mbox{and}\ \delta_{BP}(v_{1}) = \eta\in \Ext^{1,2}_{BP_{*}BP}(BP_{*}, BP_{*}),$$ where $\eta$ is a permanent cycle representing the Hopf element $\eta\in \pi_{1}(S^{0})$. By naturality, $\delta_{E_{C}}(v_{1}) = c_{*}(\eta)$. Therefore, as a cocycle in $Map_{c}(\G_{C},(E_{C})_{2})$, $c_{*}(\eta)$ is given by 
$$\mathbb{G}_{C}\rightarrow (E_{C})_{2},\ \ g\mapsto \frac{g(v_{1})-v_{1}}{2}$$ 
On the other hand, let us consider the short exact sequence $$0\rightarrow E_{C*}\rightarrow E_{C*}(C_{\eta})\rightarrow E_{C*}(S^{2})\rightarrow 0$$ representing the class $c_{*}(\eta)$, so that the connecting homomorphism sends $\Sigma^{2} 1$ to $c_{*}(\eta)$.
Thus, if $e_{2}$ is a lift of $\Sigma^{2}1$ in $E_{C*}(C_{\eta})$, then $c_{*}(\eta)$ is represented by the cocycle 
$$\mathbb{G}_{C}\rightarrow (E_{C})_{2}, \ \ g\mapsto g(e_{2})-e_{2}.$$ 
This implies that one can modify $e_{2}$ so that $$\frac{g(v_{1})-v_{1}}{2} = g(e_{2})-e_{2}\ \forall \ g\in \G_{C}.$$ With this choice of $e_{2}$, we see that $E_{C*}(C_{\eta}) = E_{C*}\{e_{0},e_{2}\}$ and the action of $\G_{C}$ on $e_{2}$ is given by the formula 
\begin{equation}\phantomsection \label{action} g(e_{2}) = e_{2} + \frac{g(v_{1}) - v_{1}}{2} e_{0}
\end{equation}
Note that when determining $(E_{C})_{*}(A_1)$, we did not specify any lift $e_{2}$ of $\Sigma^{2} 1$. From now on, we will fix $e_{2}$ such that the formula of (\ref{action}) holds.
\begin{Proposition}\label{action G24 on EA1} As an $(E_{C})_*$-module, $(E_{C})_*(A_1)$ is isomorphic to $ \FF_{4}[u^{\pm 1}]\{e_{0}, e_{2}\} $ and the action of $G_{24}$ is given by 
$$\omega(u^{-1}) = \zeta^{2}u^{-1}, \ \ \ \omega(e_{0}) =e_{0}, \ \ \ \omega(e_{2}) = e_{2}$$
$$i(u^{-1}) = u^{-1}, \ \ \ i(e_{0}) = e_{0}, \ \ \ i(e_{2}) = e_{2} +  u^{-1}e_{0}$$
$$j(u^{-1}) = u^{-1}, \ \ \ j(e_{0}) = e_{0}, \ \ \ j(e_{2}) = e_{2} +  \zeta^{2}u^{-1}e_{0}$$
$$k(u^{-1}) = u^{-1}, \ \ \ k(e_{0}) = e_{0}, \ \ \ k(e_{2}) = e_{2} + \zeta u^{-1}e_{0}$$
\end{Proposition}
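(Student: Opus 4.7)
The plan is to push the action formula \eqref{action} from $(E_C)_*(C_\eta)$ down to $(E_C)_*(A_1)$ via the two cofiber sequences used to build $A_1$, and then evaluate it on the generators of $G_{24}$ using Theorem \ref{actionG_24}. More precisely, the cofibrations $C_\eta \xrightarrow{2} C_\eta \to Y$ and $\Sigma^2 Y \xrightarrow{v_1} Y \to A_1$ combined with Lemma \ref{E-module} identify $(E_C)_*(A_1)$ with the quotient of $(E_C)_*(C_\eta) = (E_C)_*\{e_0,e_2\}$ by the ideal $(2, u_1u^{-1})$ acting on each generator; equivalently one tensors down to $\FF_4[u^{\pm 1}]\{e_0,e_2\}$. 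Since this quotient is $\G_C$-equivariant (both $2$ and $u_1u^{-1}$ are elements of $(E_C)_0^{\G_C}$ up to the scalar already built into $v_1=u_1u^{-1}$ being a $BP$-class, but more simply because $(2,u_1)$ is a $\G_C$-invariant ideal of $(E_C)_*$), the action of $G_{24}$ on $(E_C)_*(A_1)$ is obtained from the action on $(E_C)_*(C_\eta)$ by reducing the entries modulo $(2,u_1)$.

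The action on $u^{-1}$ then follows immediately by reducing the formulas in Theorem \ref{actionG_24} modulo $(2,u_1)$: we have $\zeta^2-\zeta\equiv 1 \pmod 2$ and $v_1\equiv 0 \pmod{(2,u_1)}$, so $i(u^{-1})\equiv -u^{-1}\equiv u^{-1}$, and similarly for $j,k$, while $\omega(u^{-1})=\zeta^2 u^{-1}$ is already in the stated form. The fact that $g(e_0)=e_0$ for every $g\in G_{24}$ is immediate because $e_0$ is the image of $1\in(E_C)_0$ under the unit map.

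For $e_2$ the calculation uses formula \eqref{action}. Writing $\alpha=\zeta^2-\zeta=-1-2\zeta$, one has $1-\alpha=2(1+\zeta)$, so
\[
\frac{i(v_1)-v_1}{2}=\frac{(1+\zeta)v_1+u^{-1}}{\alpha},\quad
\frac{j(v_1)-v_1}{2}=\frac{(1+\zeta)v_1+\zeta^2 u^{-1}}{\alpha},\quad
\frac{k(v_1)-v_1}{2}=\frac{(1+\zeta)v_1+\zeta u^{-1}}{\alpha},
\]
and $\omega(v_1)-v_1=0$. Reducing modulo $(2,u_1)$ kills the $v_1$-terms and turns $1/\alpha$ into $1$, yielding the claimed formulas $i(e_2)=e_2+u^{-1}e_0$, $j(e_2)=e_2+\zeta^2 u^{-1}e_0$, $k(e_2)=e_2+\zeta u^{-1}e_0$, $\omega(e_2)=e_2$.

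The only potentially subtle point is the equivariance of the identification $(E_C)_*(A_1)\cong (E_C)_*(C_\eta)\otimes_{(E_C)_*}\FF_4[u^{\pm 1}]$, which I would justify by noting that the two short exact sequences in $E_C$-homology constructed just before Lemma \ref{E-module} are short exact sequences of $\G_C$-modules (multiplication by $2$ and by $v_1=u_1u^{-1}$ are $\G_C$-equivariant maps on $(E_C)_*$), so the quotient inherits the action computed above. Everything else is a direct reduction modulo $(2,u_1)$ of Theorem \ref{actionG_24} and formula \eqref{action}.
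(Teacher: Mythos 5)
Your proposal is correct and follows exactly the paper's own (very terse) argument: the module structure is Lemma \ref{E-module}, and the $G_{24}$-action is obtained by feeding the formulas of Theorem \ref{actionG_24} into formula (\ref{action}) and reducing modulo the invariant ideal $(2,u_1)$. The explicit arithmetic with $\alpha=\zeta^2-\zeta$ and the remark on equivariance of the quotient identification are correct and simply fill in details the paper leaves implicit.
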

\begin{proof} The first part of the statement is the content of Lemma \ref{E-module}. The second part follows from the action of $G_{24}$ on $v_{1}$ given in Theorem \ref{actionG_24}  and the formula (\ref{action}).
\end{proof}

%\begin{Lemma} The induced map $$\mathrm{H}^{1}_{c}(G_{24}, E_{C*})\rightarrow \mathrm{H}^{1}_{c}(G_{24}, \FF_{4}[u^{\pm 1}])$$ given by the quotient $$p : E_{C*}\rightarrow \FF_{4}[u^{\pm 1}]$$ sends $\eta$ to $a$. 
%\end{Lemma}
%\begin{proof} In effect, the image of $\eta$ is represented by the following cocycle in $Map (G_{24}, \FF_{4}[u^{\pm1}])$
%$$\xymatrix{ G_{24} \ar[rr]&& \FF_{4}[u^{\pm 1}] \\	
%			i \ar[rr]&&	p(\frac{i(v_{1})-v_{1}}{2}) =   \xi u^{-1}		
%}$$ 
%
%In particular, it is not a coboundary because $Q_{8}$ acts trivially on $\FF_{4}[u^{\pm 1}]$. Therefore, up to a nontrivial scalar, $\eta$ maps to $a$. 
%\end{proof}

\begin{Corollary} \phantomsection \label{G-SS}$E_{C*}(A_1)$ sits in a non-split short exact sequence of $G_{24}$-modules 
\begin{equation}\phantomsection \label{G-SES}0\rightarrow \FF_{4}[u^{\pm 1}]\{e_{0}\}\rightarrow E_{C*}(A_1)\rightarrow\FF_{4}[u^{\pm 1}]\{e_{2}\}\rightarrow 0.
\end{equation}
\end{Corollary}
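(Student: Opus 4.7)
The plan is to extract the exact sequence directly from the explicit action described in Proposition \ref{action G24 on EA1} and then to obstruct splitting by a one-parameter calculation.

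First, I would define the map $\FF_4[u^{\pm 1}]\{e_0\}\to E_{C*}(A_1)$ sending $u^ke_0$ to $u^ke_0$, and check that it is $G_{24}$-equivariant: by Proposition \ref{action G24 on EA1}, every generator $\omega,i,j,k$ fixes $e_0$ and acts on $u^{-1}$ by the same formula on both sides, so $\FF_4[u^{\pm 1}]\{e_0\}$ is a sub-$G_{24}$-module. The quotient $E_{C*}(A_1)/\FF_4[u^{\pm 1}]\{e_0\}$ is, as an $(E_C)_*$-module, generated freely by the class of $e_2$, and by reading the formulas in Proposition \ref{action G24 on EA1} modulo $e_0$ one finds $g(\bar e_2)=\bar e_2$ for $g\in\{i,j,k\}$ and $\omega(\bar e_2)=\bar e_2$, while the action on $u^{-1}$ is unchanged; this identifies the quotient with $\FF_4[u^{\pm 1}]\{e_2\}$ with the $G_{24}$-action obtained by setting all the correction terms $u^{-1}e_0$ to zero. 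This gives the short exact sequence claimed.

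To prove non-splitting, I would argue by contradiction: an equivariant $(E_C)_*$-linear section would have to send $e_2$ to an element of $E_{C*}(A_1)$ of the form $e_2+\lambda u^{-1}e_0$ with $\lambda\in\FF_4$, since the only elements of internal degree $2$ in $\FF_4[u^{\pm 1}]\{e_0\}$ are the $\FF_4$-multiples of $u^{-1}e_0$. Applying $i$ to this element and using $i(e_0)=e_0$, $i(u^{-1})=u^{-1}$ and $i(e_2)=e_2+u^{-1}e_0$ yields $e_2+(1+\lambda)u^{-1}e_0$, whereas equivariance would demand $e_2+\lambda u^{-1}e_0$; the equation $1+\lambda=\lambda$ in $\FF_4$ is impossible, so no section exists.

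The construction of the exact sequence itself is routine bookkeeping, and I expect the only subtle point to be the non-splitting: the heart of the matter is the observation that the constant $1$ appearing in $i(e_2)=e_2+u^{-1}e_0$ is non-zero in $\FF_4$. (The same obstruction is visible from $j$ and $k$, giving the constants $\zeta^2$ and $\zeta$, so the argument is robust under which generator one picks.) This will produce the desired non-trivial extension class in $\H^1(G_{24},\FF_4[u^{\pm 1}]\{e_0\}\otimes\FF_4[u^{\pm 1}]\{e_2\}^{\vee})$, which plays a role in the $\E_2$-term analysis that follows.
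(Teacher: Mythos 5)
Your proof is correct and takes essentially the same approach as the paper, which simply declares the corollary immediate from the explicit description of the action in Proposition \ref{action G24 on EA1}. You have merely filled in the details the paper leaves implicit; in particular, your degree-$2$ computation showing that $i(e_2+\lambda u^{-1}e_0)=e_2+(1+\lambda)u^{-1}e_0$ forces $1=0$ in $\FF_4$ is exactly the obstruction to splitting that the paper has in mind.
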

\begin{proof} This is immediate in view of the explicite description of the action of $G_{24}$ on $(E_{C})_{*}A_1$.
\end{proof}
\noindent
The cohomology group $\mathrm{H}^{*}(G_{24}, \FF_{4}[u^{\pm 1}]\{e_{i}\}) \ i\in \{0, 2\}$ is free of rank one as a module over $\mathrm{H}^{*}(G_{24}, \FF_{4}[u^{\pm 1}])$. For $i\in \{0,2\}$, we choose the generators $e[0,i] \in$ $\mathrm{H}^{0}(G_{24}, (\FF_{4}[u^{\pm 1}]\{e_{i}\})_{i})$ of these modules.
\begin{Corollary} The connecting homomorphism induced from the short exact sequence (\ref{G-SES}) in Corollary \ref{G-SS} $$\mathrm{H}^{*}(G_{24}, \FF_{4}[u^{\pm 1}]\{e_{2}\})\xrightarrow{\delta} \mathrm{H}^{*+1}(G_{24}, \FF_{4}[u^{\pm 1}]\{e_{0}\})$$ is $\mathrm{H}^{*}(G_{24}, \FF_{4}[u^{\pm 1}])$-linear and sends $e[0,2]$ to $ae[0,0]$ up to a unit of $\FF_{4}$, where, as a reminder, $a\in \H^1(G_{24}, (\FF_4[u^{\pm 1}])_{2})$.  
\end{Corollary}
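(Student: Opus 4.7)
The linearity of $\delta$ over $\mathrm{H}^{*}(G_{24},\FF_4[u^{\pm 1}])$ is a formal consequence of the fact that the short exact sequence (\ref{G-SES}) is one of modules over the (graded) ring $\FF_4[u^{\pm 1}]$ equipped with compatible $G_{24}$-action: the long exact sequence in group cohomology of a short exact sequence of $R[G]$-modules is a long exact sequence of $\mathrm{H}^{*}(G,R)$-modules, by naturality of the cup product and the standard formula $\delta(\alpha \cup \beta) = \alpha \cup \delta(\beta)$ when $\alpha$ comes from $R$. So the substance of the claim is the identification of the image $\delta(e[0,2])$.

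The plan is to compute $\delta(e[0,2])$ at the cochain level using the explicit description of the $G_{24}$-action given by Proposition \ref{action G24 on EA1}. Since $e_2$ is $G_{24}$-invariant in the quotient $\FF_4[u^{\pm 1}]\{e_2\}$, we can represent $e[0,2]$ by $e_2$, and lift it to the element $e_2 \in \FF_4[u^{\pm 1}]\{e_0,e_2\}$. Standard homological algebra then identifies $\delta(e[0,2])$ with the class of the $1$-cocycle
\[ \phi : G_{24} \longrightarrow \FF_4[u^{\pm 1}]\{e_0\}_2, \qquad g \longmapsto g(e_2) - e_2. \]
From Proposition \ref{action G24 on EA1} we read off $\phi(\omega) = 0$, $\phi(i) = u^{-1}e_0$, $\phi(j) = \zeta^2 u^{-1}e_0$ and $\phi(k) = \zeta u^{-1}e_0$ (the values on the remaining elements of $G_{24}$ are determined by the cocycle identity).

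The last step is to verify that $\phi$ represents a nonzero element of $\mathrm{H}^{1}(G_{24},(\FF_4[u^{\pm 1}])_2)$. From the description of $\mathrm{H}^{*}(G_{24},\FF_4[u^{\pm 1}])$ recalled before Proposition \ref{induced-coh}, the group $\mathrm{H}^{1}(G_{24},(\FF_4[u^{\pm 1}])_2)$ is one-dimensional over $\FF_4$ and spanned by $a$, so it will suffice to show that $\phi$ is not a coboundary. For this I will use that $i$ acts trivially on $\FF_4[u^{\pm 1}]_2 = \FF_4\cdot u^{-1}$: any coboundary $g \mapsto (g-1)\cdot m$ must therefore vanish on $i$, while $\phi(i) = u^{-1}e_0 \neq 0$. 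Combined with the one-dimensionality of the target, this forces $[\phi] = \lambda\, a$ for some $\lambda \in \FF_4^{\times}$, yielding $\delta(e[0,2]) = \lambda\, a\, e[0,0]$ as claimed.

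The only mild subtlety is keeping the book-keeping for the cocycle straight and confirming from the tridegrees in the ring $\FF_4[v_2^{\pm 1},z,a,b]/(ab,\, b^3 = v_2 a^3)$ that no other generator contributes in bidegree $(s,t)=(1,2)$; once this is noted, the argument is formal. No genuine obstacle arises beyond these verifications.
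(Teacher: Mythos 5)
Your proposal is correct. The overall architecture matches the paper's: linearity of $\delta$ is dispatched as a standard property of connecting homomorphisms (the paper cites Brown, V.3), and the identification $\delta(e[0,2])=a\,e[0,0]$ up to a unit reduces, in both arguments, to showing $\delta(e[0,2])\neq 0$ and then invoking the fact that $\mathrm{H}^{1}(G_{24},(\FF_4[u^{\pm 1}])_2)$ is one-dimensional on $a$ (your tridegree check in $\FF_4[v_2^{\pm 1},z,a,b]/(ab,\,b^3=v_2a^3)$ confirms this: $v_2^k a$ has internal degree $6k+2$ and $v_2^k b$ has internal degree $6k+4$, so only $a$ lands in bidegree $(1,2)$). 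Where you diverge is in how non-vanishing is established: the paper argues abstractly that since the short exact sequence of Corollary \ref{G-SS} does not split, $\delta$ must hit a non-trivial class, whereas you compute the connecting homomorphism at the cochain level as $g\mapsto g(e_2)-e_2$, read the values off Proposition \ref{action G24 on EA1}, and rule out coboundaries by evaluating on $i$ (which acts trivially on $\FF_4\{u^{-1}e_0\}$ while $\phi(i)=u^{-1}e_0\neq 0$). Your route is more explicit and arguably more self-contained, since the paper's one-line deduction from non-splitness tacitly uses that $e[0,2]$ generates the source as a free $\mathrm{H}^{*}(G_{24},\FF_4[u^{\pm 1}])$-module and that the extension class is detected by $\delta$ on that generator; the paper's version is shorter but leans on the reader to supply exactly the kind of verification you carry out. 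Both are valid proofs.
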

\begin{proof} That $\delta$ is $\mathrm{H}^{*}(G_{24}, \FF_{4}[u^{\pm 1}])$-linear is a well-known property of the connecting homomorphism (See \cite{Bro82}, V.3). Next,  since the short exact sequence in Corollary \ref{G-SS} does not split, the connecting homomorphism $\delta$ sends $e[2,0]$ to a non-trivial class and hence to $ae[0,0]$ up to a unit of $\FF_{4}$.
\end{proof}
\noindent
 Using the description of $\mathrm{H}^{*}(G_{24}, \mathbb{F}_{4}[u^{\pm 1}])$ and the long exact sequence associated to the short exact sequence of Corollary \ref{G-SS}, we obtain the following description of $\mathrm{H}^{*}(G_{24},(E_{C})_{*}(A_1))$:
\begin{Proposition} As a module over $\mathrm{H}^{*}(G_{24},\FF_{4}[u^{\pm}])$, there is an isomorphism $$\mathrm{H}^{*}(G_{24},(E_{C})_{*}(A_1))\cong\mathbb{F}_{4}[v_{2}^{\pm 1}, z, a,b]/(a,b^3)\{e[0,0],e[1,5]\},$$
where $e[0,0]\in \mathrm{H}^{0}(G_{24},(E_{C})_{0}(A_1))$ and $e[1,5]\in\mathrm{H}^{1}(G_{24},(E_{C})_{6}(A_1)).$
\end{Proposition}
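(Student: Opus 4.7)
The plan is to compute $\H^*(G_{24},(E_C)_*(A_1))$ via the long exact sequence in $G_{24}$-cohomology induced by the short exact sequence
\[
0\to \FF_{4}[u^{\pm 1}]\{e_{0}\}\to (E_C)_*(A_1)\to \FF_{4}[u^{\pm 1}]\{e_{2}\}\to 0
\]
from the preceding corollary, and to read off the answer using the explicit ring structure
\[
X := \H^*(G_{24},\FF_{4}[u^{\pm 1}])\cong \FF_{4}[v_{2}^{\pm 1},z,a,b]/(ab,\ b^{3}-v_{2}a^{3}).
\]
Each of $\H^*(G_{24},\FF_{4}[u^{\pm 1}]\{e_{i}\})$ for $i=0,2$ is a free rank-one $X$-module on $e[0,i]$, and by the previous corollary the connecting map $\delta$ is $X$-linear with $\delta(e[0,2]) = a\cdot e[0,0]$ (up to a unit of $\FF_{4}$); thus $\delta$ is essentially multiplication by $a$.

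Next I would extract the kernel and cokernel of this multiplication. Because $ab=0$ is the only relation making $a$ a zero divisor, $\operatorname{Ann}_X(a)=(b)$; symmetrically $\operatorname{Ann}_X(b)=(a)$, so the ideal $(b)\subset X$ is isomorphic as an abstract $X$-module to $X/(a)$ via $1\mapsto b$. The cokernel $X/(a)$ simplifies to $\FF_{4}[v_{2}^{\pm 1},z,b]/(b^{3})$ since the relation $b^{3}=v_{2}a^{3}$ becomes $b^{3}=0$ modulo $a$. The long exact sequence therefore collapses to a short exact sequence of $X$-modules
\[
0\to X/(a)\{e[0,0]\}\to \H^*(G_{24},(E_C)_*(A_1))\to X/(a)\{\bar e[1,5]\}\to 0,
\]
where $\bar e[1,5]$ corresponds to $b\cdot e[0,2]$ in the kernel. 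I would then pick any lift $e[1,5]$ in bidegree $(s,t-s)=(1,5)$; a quick bidegree count shows this lift is unique up to scalar.

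Finally I would show the extension splits as $X$-modules by checking that $a$ and $b^{3}$ annihilate both generators. On $e[0,0]$ this is immediate: $a\cdot e[0,0]=\delta(e[0,2])$ vanishes by exactness, and $b^{3}\equiv 0 \pmod{(a)}$. For $e[1,5]$, the products $a\cdot e[1,5]$ and $b^{3}\cdot e[1,5]$ project to $ab\cdot e[0,2]=0$ and $b^{4}\cdot e[0,2]=v_{2}a^{3}b\cdot e[0,2]=0$ in the quotient, hence lie in the embedded submodule $X/(a)\{e[0,0]\}$; bidegree considerations reduce each to a single $\FF_{4}$-scalar multiple of $b^{2}$ and $zv_{2}^{3}$ respectively. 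The main technical obstacle is pinning down these scalars: one must compute at the cobar level with the lift $e[1,5]$ represented by a cochain $\phi\cdot e_{2}+\chi\cdot e_{0}$ (where $\phi$ represents $b$ and $\chi$ is a primitive for $\eta\cup\phi$) and use the explicit $G_{24}$-action on $e_{2}$ from Proposition \ref{action G24 on EA1} to verify that the resulting classes vanish. Assembling these pieces gives the claimed isomorphism.
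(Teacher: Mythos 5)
Your proposal follows essentially the same route as the paper, which itself offers no more detail than "use the long exact sequence of Corollary \ref{G-SS} together with the ring structure of $\mathrm{H}^{*}(G_{24},\FF_{4}[u^{\pm 1}])$"; your identification of $\ker(\delta)=(b)\cong X/(a)$ and $\operatorname{coker}(\delta)=X/(a)$ and the resulting short exact sequence is correct, and you have rightly isolated the only genuine content, namely the scalar in $a\cdot e[1,5]=\lambda\, b^{2}e[0,0]$. One small simplification you could add: the second scalar is automatically zero, since $a\cdot e[1,5]$ lies in the image of $\operatorname{coker}(\delta)=X/(a)\{e[0,0]\}$, on which $a$ acts trivially, so $a^{2}e[1,5]=0$ and hence $b^{3}e[1,5]=v_{2}a^{3}e[1,5]=0$ without any cobar computation, leaving only the single coefficient of $b^{2}e[0,0]$ to be checked at the cochain level as you describe.
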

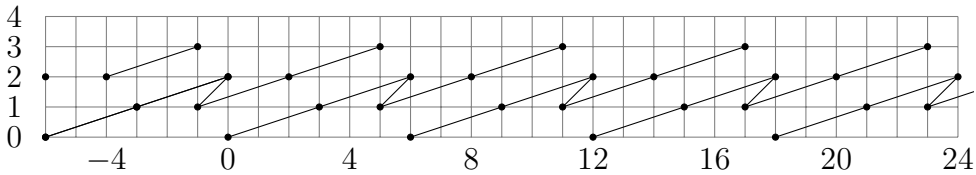
\begin{figure}[h!]
\begin{tikzpicture}[scale=0.4]
\clip(-1.5,-1.5) rectangle (30.5,4.5);
\draw[color=gray] (0,0) grid [step=1] (30,4);
\foreach \n in {-4,0,...,24}
{
\def\nn{\n-0}
\node[below] at (\nn+6,0) {$\n$};
}
\foreach \s in {0,1,...,4}
{\def\ss{\s-0};
\node [left] at (-0.4,\ss,0){$\s$};
}
\draw [fill] ( 0.00, 0.00) circle [radius=0.1];
\draw [fill] (3,1) circle [radius=0.1];
\draw [fill] (6,2) circle [radius=0.1];
\draw [-] (0,0)--(3,1);
\draw [-] (3,1)--(6,2);
%\draw[->,red] (11.8,0.2)--(11.2,2.8);
\foreach \t in {0}
\foreach \s in {0,6,...,24}
{\def\ss{\s-0};
\draw [fill]  (\s-\t,\t) circle [radius=0.1];
\draw [fill] (\s+3-\t,1+\t) circle [radius=0.1];
\draw [-] (\s-\t,\t)--(\s+3-\t,1+\t);
\draw[fill] (\s+5-\t,1+\t) circle [radius=0.1];
\draw[fill] (\s+8-\t,2+\t) circle [radius=0.1];
\draw[-] (\s+5-\t,1+\t)--(\s+8-\t,2+\t); 
\draw [fill] (\s-\t+6,2+\t) circle [radius=0.1];
\draw [-] (\s+3-\t,1+\t)--(\s+6-\t,2+\t);
\draw[fill] (\s+11-\t,3+\t) circle [radius=0.1];
\draw[-] (\s+8-\t,2+\t)--(\s+11-\t,3+\t); 
\draw [-] (\s+5-\t,1+\t)--(\s-\t+6,2+\t);
}
\draw[fill] (0,2) circle [radius=0.1];
\draw[fill] (2,2) circle [radius=0.1];
\draw[fill] (5,3) circle [radius=0.1];
\draw[-] (2,2)--(5,3);
%\foreach \t in {0,4,...,24}
%\foreach \s in {0,24,...,72}
%{\draw[->,red] (12+\s-\t,\t)--(11+\s-\t,\t+3);
%\draw[->,red] (18+\s-\t,\t)--(17+\s-\t,\t+3);
%}
%\foreach \t in {0,8,...,24}
%\foreach \s in {0,24,...,72}
%{\draw[->,blue] (24+\s-\t,\t)--(23+\s-\t,\t+5);
%\draw[->,blue] (30+\s-\t,\t)--(29+\s-\t,\t+5);
%\draw[->,blue] (27+\s-\t,1+\t)--(26+\s-\t,\t+6);
%\draw[->,blue] (29+\s-\t,1+\t)--(28+\s-\t,\t+6);
%\draw[->,blue] (33+\s-\t,1+\t)--(32+\s-\t,\t+6);
%\draw[->,blue] (35+\s-\t,1+\t)--(34+\s-\t,\t+6);
%\draw[-\rangle,blue] (39+\s-\t,1+\t)--(38+\s-\t,\t+6);figure
%\draw[-\rangle,blue] (41+\s-\t,1+\t)--(40+\s-\t,\t+6);
%\draw[-\rangle,blue] (44+\s-\t,2+\t)--(43+\s-\t,\t+7);
%\draw[-\rangle,blue] (45+\s-\t,1+\t)--(44+\s-\t,\t+6);
%\draw[-\rangle,blue] (47+\s-\t,1+\t)--(46+\s-\t,\t+6);
%}
%\foreach \t in {4,12}
%\foreach \s in {0}
%{\draw[-\rangle,blue] (\s-\t,\t)--(\s-\t-1,\t+5);
%\draw[-\rangle,blue] (2,4)--(1,9);
%\draw[-\rangle,blue] (3+\s-\t,1+\t)--(2+\s-\t,\t+6);
%\draw[-\rangle,blue] (9+\s-\t,1+\t)--(8+\s-\t,\t+6);
%\draw[-\rangle,blue] (15+\s-\t,1+\t)--(14+\s-\t,\t+6);
%\draw[-\rangle,blue] (17+\s-\t,1+\t)--(16+\s-\t,\t+6);
%\draw[-\rangle,blue] (21+\s-\t,1+\t)--(20+\s-\t,\t+6);
%\draw[-\rangle,blue] (5+\s-\t,1+\t)--(4+\s-\t,\t+6);
%\draw[-\rangle,blue] (11+\s-\t,1+\t)--(10+\s-\t,\t+6);
%\draw[-\rangle,blue] (20+\s-\t,2+\t)--(19+\s-\t,\t+7);
%\draw[-\rangle,blue] (21+\s-\t,1+\t)--(20+\s-\t,\t+6);
%\draw[-\rangle,blue] (23+\s-\t,1+\t)--(22+\s-\t,\t+6);}
%\node[left] at (19.8,4) {$\overline{\kappa}$};
\end{tikzpicture}
\caption{ $\mathrm{H}^{s}(G_{24}, (E_{C})_{t}(A_1))$ depicted in the coordinate (s, t-s))}
\end{figure}
\noindent
The above proposition also gives the action of $\mathrm{H}^{*}(G_{24}, (E_{C})_{*})$ on $\mathrm{H}^{*}(G_{24}, (E_{C})_{*}A_1)$. In effect, 
the action of $E_{C*}$ on $E_{C*}(A_1)$ factors though $\FF_{4}[u^{\pm 1}]$ via $ E_{C*}\xrightarrow{\pi} \FF_{4}[u^{\pm 1}]$. As a consequence, the action of $\mathrm{H}^{*}(G_{24},E_{C*})$ on $\mathrm{H}^{*}(G_{24},E_{C*}(A_1))$ factors through the induced homomorphism in cohomology of $G_{24}$. In particular, il follows from Proposition \ref{induced-coh} that the classes $\Delta, \overline{\kappa}, \nu$ act on $\mathrm{H}^{*}(G_{24},E_{C*}(A_1))$ as $v_{2}^{4}, v_{2}^{4}z, b$ do, respectively.

\subsection{Differentials of the homotopy fixed point spectral sequence for $E_{C}^{hG_{24}}\wedge A_1$}
\noindent
The HFPSS for $E_C^{hG_{24}}\wedge A_1$ has the following features. The spectrum $E_C\wedge A_1$ is a $G_{24}$-$E_C$-module in the sense that $E_C\wedge A_1$ is an $E_C$-module and the structure maps are $G_{24}$-equivariant. This guarantees that the HFPSS for $E_C^{hG_{24}}\wedge A_1$ is a module over that for $\EG24$. In particular, all differentials are $\overline{\kappa}$-linear. This element plays a central role here: the group $G_{24}$ is a group with periodic cohomology (see \cite{Bro82}, Chapter VI) and $\overline{\kappa}\in \H^4(G_{24}, (E_C)_*)$ is a cohomological periodicity class. These features induce more structure on the HFPSS. 
\begin{Definition}\phantomsection\label{regular pair} Let $R$ be a ring spectrum and $G$ be a finite group acting on $R$ by maps of ring spectra. The pair $(G,R)$ is said to be regular \index{regular pair} if $G$ is a group with periodic cohomology and there exists a cohomological periodicity class $u\in \H^*(G, R_*)$ which is a permanent cycle in the HFPSS for $R^{hG}$.
\end{Definition}
\begin{Lemma} Let $(G,R)$ be a regular pair as in Definition \ref{regular pair} and $X$ be a $G$-$R$ spectrum. Suppose $u\in \H^k(G, R_*)$ is a cohomological periodicity class which is a permanent cycle in the HFPSS for $R^{hG}$. Then the $\mathrm{E}_r$-term of the  HFPSS for $X^{hG}$ has the following properties:
\begin{itemize}
	\item[(i)] All classes of cohomological filtration at least $k$ are divisible by $u$;
	\item[(ii)] All classes of cohomological filtration at least $r$ are $u$-free.
\end{itemize}
\end{Lemma}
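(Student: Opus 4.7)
The plan is to establish (i) and (ii) together by a simultaneous induction on $r$. The base case $r=2$ comes directly from the periodic cohomology of $G$: cup product with $u$ gives $\H^n(G,M)\to \H^{n+k}(G,M)$ which is an isomorphism for $n\geq 1$ and surjective already at $n=0$, the latter via the Tate factorisation $\H^0(G,M)=M^G\twoheadrightarrow \widehat{\H}^0(G,M)\xrightarrow[\cong]{\cdot u}\widehat{\H}^k(G,M)=\H^k(G,M)$. Applied with $M=R_tX$ in each internal degree, this yields (i) on $\E_2$ (surjectivity of $\cdot u$ in cohomological filtration $\geq k$) as well as a form of (ii) on $\E_2$ that is even stronger than needed (injectivity already in filtration $\geq 1$).

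The induction step rests on the observation that, since $u$ is a permanent cycle by hypothesis, $d_r(u)=0$ on every page, so by the Leibniz rule $\cdot u$ is a chain map $(\E_r,d_r)\to (\E_r,d_r)$ which descends to $\cdot u\colon \E_{r+1}\to \E_{r+1}$. Assuming (i) and (ii) at rank $r$, I would prove both at rank $r+1$ as follows. For (i): pick $y\in \E_{r+1}^{s,*}$ with $s\geq k$, lift to a $d_r$-cycle $\bar{y}\in \E_r^{s,*}$ representing $y$, and use (i) at rank $r$ to write $\bar{y}=u\bar{x}$; then $u\cdot d_r(\bar{x})=d_r(\bar{y})=0$ with $d_r(\bar{x})$ sitting in filtration $s-k+r\geq r$, so (ii) at rank $r$ forces $d_r(\bar{x})=0$, and the class $x:=[\bar{x}]\in \E_{r+1}^{s-k,*}$ satisfies $ux=y$. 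For (ii): pick $y\in \E_{r+1}^{s,*}$ with $s\geq r+1$ and $uy=0$, represented by some $d_r$-cycle $\bar{y}$, so that $u\bar{y}=d_r(w)$ for some $w\in \E_r^{s+k-r,*}$; since $s+k-r\geq k+1\geq k$, (i) at rank $r$ furnishes $w'$ with $w=uw'$, whence $u(\bar{y}-d_r(w'))=0$ in $\E_r$, and then (ii) at rank $r$ applied in filtration $s\geq r+1>r$ forces $\bar{y}=d_r(w')$ in $\E_r$, so $y=0$ in $\E_{r+1}$. A routine check using injectivity of $\cdot u$ in the relevant filtrations then promotes (ii) to the statement that the $\F[u]$-submodule generated by any nonzero $y$ in filtration $\geq r$ is actually $u$-torsion-free.

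The delicate point, and what makes the statement nontrivial, is the interlocking of the two thresholds: surjectivity on page $r+1$ needs injectivity on page $r$ (to guarantee that the would-be preimage $\bar{x}$ is itself a $d_r$-cycle), while injectivity on page $r+1$ needs surjectivity on page $r$ (to factor a $d_r$-boundary as a $u$-multiple). The numerics $s\geq k$ and $s\geq r$ are exactly calibrated so that each inductive step has the other property available in a sufficiently high filtration. Everything ultimately rides on $u$ being a permanent cycle, because only this makes $\cdot u$ a chain map on each page; remove that hypothesis and the induction collapses at the very first step.
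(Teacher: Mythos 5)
Your proposal is correct and follows essentially the same route as the paper: the base case via the comparison with Tate cohomology and the periodicity isomorphism, and the inductive step with exactly the same interlocking of divisibility and $u$-freeness across consecutive pages (using that $\cdot u$ is a chain map because $u$ is a permanent cycle). The only cosmetic difference is notational (the coefficients in the $\E_2$-term are $\pi_t X$ rather than $R_tX$).
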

\begin{proof} We will prove by induction on $r$ that the $\E_r$-term of the HFPSS for $X^{hG}$ has the properties $(i)$ and $(ii)$.
The $\E_2$-term is isomorphic to $\H^*(G, \pi_*(X))$. We recall that the natural map from the cohomology to the Tate cohomology $\iota: \H^s(G, \pi_tX)\rightarrow \hat{\H}^s(G, \pi_t(X))$ is an epimorphism and is an isomorphism when $s>0$, see \cite{Bro82}, Chapter VI. Because $G$ has periodic cohomology, we have $$\hat{\H}^s(G, \pi_tX) \cong \hat{\H}^s(G, \pi_tX)[u^{-1}],$$ which means that the group $\hat{\H}^s(G, \pi_tX)$ is $u$-free and is divisible by $u$. Since $\iota: \H^s(G, \pi_tX)\rightarrow \hat{\H}^s(G, \pi_t(X))$ is an isomorphism when $s>0$, all classes of positive cohomological degree of $\H^s(G, \pi_tX)$ are $u$-free. 

Now suppose $x$ is a class of $\H^s(G, \pi_tX)$  with $s\geq k$. Then the class $u^{-1}\iota(x)\in \hat{\H}^{s-k}(G,\pi_tX)$ has a pre-image $y\in \H^{s-k}(G,\pi_tX)$ (because $s-k\geq 0$), i.e. $$\iota(y) = u^{-1}\iota(x).$$ This implies that $$\iota(uy) = u\iota(y) = \iota(x),$$ and thus since $s>0$, $$uy=x.$$
%$$\xymatrix{ \H^s(G,\pi_tX) \ar[r]^{\cong} & \hat{\H}^s (G, \pi_tX)\\
%			\H^{s-k}(G,\pi_tX)\ar[u]^{\cup u}\ar@{->>}[r] & \hat{\H}^{s-k}(G,\pi_tX)\ar[u]_{\cup u}
%}$$
%the lower horizontal map is surjective because $s-k\geq$
Thus, the $\E_2$-term has the properties $(i)$ and $(ii)$. Suppose that the $\E_r$-term satisfies $(i)$ and $(ii)$. Let $[x]\in \E_{r+1}$ be a non-trivial class represented by $x\in \E_r$. Suppose that $x$ has its cohomological filtration $s \geq k$. By the induction hypothesis, there exists $y\in \E_r^{s-k, *}$ such that $uy = x$. We show that $y$ is a $d_r$-cycle. Because $x$ is a $d_r$-cycle, we have by $u$-linearity that $ud_r(y) = d_r(uy) = d_r(x) = 0$. However, the cohomological filtration of $d_r(y)$ is at least $r$, and so it is $u$-free by the induction hypothesis, and so $d_r(y)=0$. Therefore, $[x]$ is divisible by $u$.\\\\
\noindent
Now we prove that $\E_{r+1}$ has the property $(ii)$. Suppose that $[x]$ is $u$-torsion and has its cohomological filtration at least $r+1$. Without loss of generality, we can assume that $u[x]=0$. Then there exists $y\in \E_r$ such that $d_{r}(y) = u x$. The cohomological filtration of $y$ is at least to $r+1 + k - r = k+1$, hence $y$ is divisible by $u$, i.e., there exists $z\in \E_r$ such that $uz=y$, and then by $u$-linearity, $$ud_r(z) = d_r(uz) = d_r(y) = ux.$$ However, $d_r(z) - x$ has cohomolgical filtration at least $r+1$, it must be $u$-free by hypothesis $(ii)$, hence is equal to zero, i.e.,  $[x]$ is trivial in $\E_{r+1}$.\\\\
\noindent
We conclude that the $\E_{r+1}$-term satisfies $(i)$ and $(ii)$, thus finishing the proof by induction.
\end{proof}
\begin{Corollary}\phantomsection\label{Organ} Let $(G,R)$ be a regular pair and $X$ be a $G$-$R$ spectrum. Suppose $u\in \H^k(G, R_*)$ is a cohomological periodicity class which is a permanent cycle in the HFPSS for $R^{hG}$. Then we have, in the HFPSS for $X^{hG}$,
\begin{itemize}
	\item[1.] At the $\E_r$-term, $u$-torsion classes are permanent cycles.
	\item[2.] Any $u$-free tower is truncated by at most one other $u$-free tower by the same differential. More precisely, if $x$ is a class of cohomological filtration less than $k$, then there exists at most one class $y$ of cohomological filtration less than $k$ such that there exists an unique integer $l$ and a unique integer $r$ such that $d_r(u^my) = u^{m+l}x$ for all non-negative integers $m$. Moreover, all classes $u^{i}x$ for $i\in\{0, 1, ..., m-1\}$ survive the spectral sequence.
	\item[3.] Suppose some power of $u$ is hit by a differential in the HFPSS for $R^{hG}$. Then any $u$-free tower consisting of permanent cycles is truncated by a unique $u$-free tower. Moreover, the HFPSS has a horizontal vanishing line.
	\item[4.] Every element of $\pi_*(X^{hG})$ that is detected in filtration at least $k$ is divisible by $\overline{u}$ where $\overline{u}$ is an element of $\pi_*(R^{hG})$ detected by $u$. 
\end{itemize}
\end{Corollary}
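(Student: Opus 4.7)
The plan is to leverage the dichotomy established in the preceding Lemma on each page $\E_r$ of the HFPSS for $X^{hG}$: classes in cohomological filtration $\geq k$ are $u$-divisible, while classes in cohomological filtration $\geq r$ are $u$-free. Combined with the module structure of the HFPSS for $X^{hG}$ over the HFPSS for $R^{hG}$ (so that differentials are $u$-linear and obey the Leibniz rule), these two facts will drive all four statements.

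For parts 1 and 2 I would proceed as follows. In part 1, if $x$ at $\E_r$ is $u$-torsion, say $u^N x = 0$, and $s \geq r$, then $d_s(x)$ has filtration $\geq s$ and is therefore $u$-free by the Lemma, while $u^N d_s(x) = d_s(u^N x) = 0$ forces $d_s(x) = 0$; induction on $s$ shows $x$ is a permanent cycle. In part 2, if two towers $\{u^m y\}$ and $\{u^m y'\}$ in filtration $<k$ both truncate $\{u^m x\}$ with $d_r(y) = u^l x$ and $d_{r'}(y') = u^{l'} x$, a page-comparison rules out $r \neq r'$: if $r < r'$, then for $m$ large the target $u^{m+l'}x$ of $d_{r'}$ has already been killed at $\E_{r+1}$ by $d_r$, contradicting the non-triviality of $d_{r'}(u^m y')$, and symmetrically for $r > r'$. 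The remaining case $r = r'$ with (WLOG) $l \leq l'$ shows that $y' - u^{l'-l}y$ is a $d_r$-cycle, so $y'$ is not a distinct tower. Uniqueness of $l$ and $r$ given $y$, together with the survival of $u^i x$ for $i < l$, then follow by $u$-linearity and the fact that $y$ is killed at $\E_{r+1}$.

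For part 3, the hypothesis $d_r(z) = u^N$ in the HFPSS for $R^{hG}$ together with the Leibniz rule yields $d_r(z \cdot x) = u^N x$ whenever $x$ is a permanent cycle, exhibiting the truncating tower; uniqueness is part 2. For the horizontal vanishing line, I would take $w \in \E_{r+1}$ of filtration $\geq kN$ and iterate the Lemma's divisibility to write $w = u^N w'$ in $\E_r$; then $u^N d_r(w') = d_r(w) = 0$ combined with the $u$-freeness of $d_r(w')$ (filtration $\geq r$) forces $d_r(w') = 0$, so $w = d_r(z) \cdot w' = d_r(z \cdot w')$ vanishes in $\E_{r+1}$.

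Part 4 is the most delicate and will be the main obstacle. Given $y \in \pi_*(X^{hG})$ detected at filtration $f_0 \geq k$ by $x_0 \in \E_\infty$, the Lemma supplies $w_0 \in \E_\infty$ with $u w_0 = x_0$; the key verification is that any such $w_0$ is automatically a permanent cycle, since $d_s(w_0)$ has filtration $\geq s$ hence is $u$-free while $u \cdot d_s(w_0) = d_s(x_0) = 0$ forces its vanishing. Letting $\tilde w_0$ detect $w_0$, the element $\overline u \tilde w_0 - y$ is detected at filtration strictly greater than $f_0$ (but still $\geq k$), so the construction iterates; strong convergence of the HFPSS together with the vanishing line of part 3 ensures that the iteration stabilizes and produces a genuine $\overline u$-divisor of $y$. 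Managing this convergence correctly, without \emph{a priori} invoking the vanishing-line hypothesis of part 3, is the principal subtlety, and will likely require treating parts 3 and 4 in tandem or appealing to completeness of the filtration on $\pi_*(X^{hG})$ directly.
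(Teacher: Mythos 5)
The paper states this Corollary with no proof at all---it is presented as an immediate consequence of the preceding Lemma---so there is nothing to compare against except the intended bookkeeping, and your write-up supplies exactly that, correctly. Your part 1 ($u$-torsion classes have $u$-free differentials killed by $u$-linearity), your page-comparison in part 2 (a later differential cannot hit $u^{m+l'}x$ for large $m$ once the $d_r$-tower has killed everything above $u^l x$), and your boundary argument $w = u^N w' = d_r(z\cdot w')$ for the vanishing line (which is the same mechanism the paper uses concretely in Proposition \ref{Degen}) are all sound and use only the two properties $(i)$ and $(ii)$ of the Lemma together with the module structure over the HFPSS for $R^{hG}$. The one genuine subtlety is the one you identify yourself: part 4 requires passing $u$-divisibility to $\E_\infty$ and then summing an infinite series of corrections in $\pi_*(X^{hG})$, which needs either the horizontal vanishing line of part 3 (so the iteration terminates) or completeness of the filtration; in the paper's applications the vanishing line is available (Proposition \ref{Degen}), so this is a caveat about the general statement rather than a gap in its use. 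A minor point worth tightening in part 2: to conclude that $u^i x$ for $i<l$ is never \emph{hit}, you should also rule out sources of filtration $\geq k$, which you can do by dividing such a source by $u$ and using $u$-freeness of its differential to descend to a source of filtration $<k$, reducing to the two-tower comparison you already carried out.
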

\noindent
\begin{Remark} This situation turns out to be abundant once the group in question is a group with periodic cohomology. For example, all finite subgroups of $\G_C$ have these properties.
\end{Remark}
\noindent
We return to the HFPSS for $E_C^{hG_{24}}\wedge A_1$. We will call the set  $\{\overline{\kappa}^{l}x|l\in\N\}$ associated to a class $x$ in some page of the HFPSS the $\overline{\kappa}$-family of that class. 
\noindent
The following proposition gives us the horizontal vanishing line of the HFPSS for $E_{C}^{hG_{24}}\wedge A_1$.
\begin{Proposition}\phantomsection \label{Degen} The HFPSS for $E_{C}^{hG_{24}}\wedge A_1$ has a horizontal vanishing line of height $23$, i.e., $\E_{24}^{s,t} = 0$ if $s>23$. As a consequence, it collapses at the $\mathrm{E}_{24}$-term.
\end{Proposition}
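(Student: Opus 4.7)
The plan is to apply the general machinery for regular pairs developed in Corollary~\ref{Organ}. First I would confirm that $(G_{24},E_C)$ is a regular pair in the sense of Definition~\ref{regular pair}: the group $G_{24}\cong Q_8\rtimes C_3$ has periodic mod-$2$ cohomology (its Sylow $2$-subgroup $Q_8$ has periodic cohomology of period $4$), and the class $\overline{\kappa}\in\mathrm{H}^4(G_{24},(E_C)_{24})$ is a cohomological periodicity class which is a permanent cycle in the HFPSS for $E_C^{hG_{24}}$, as recalled in the preliminaries of this section. Since $A_1$ carries the trivial $G_{24}$-action, $E_C\wedge A_1$ is a $G_{24}$-$E_C$-module and Corollary~\ref{Organ} applies to it.

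The key numerical input is $\overline{\kappa}^6=0$ in $\pi_*(E_C^{hG_{24}}\wedge A_1)$. I would obtain this by comparison with $tmf$: the differential $d_3(w_2^2 v_2^4\eta)=g^6$ from Theorem~\ref{tmf'sdiff}(iii) gives $\overline{\kappa}^6=0$ in $\pi_*(tmf)$, and hence in $\pi_*(tmf\wedge A_1)$. Inverting $\Delta^8$ preserves this vanishing, and through the homotopy equivalence of Theorem~\ref{compar} together with the isomorphism of Corollary~\ref{Compar_alg}, the relation transfers to $\pi_*(E_C^{hG_{24}}\wedge A_1)$.

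Applying Corollary~\ref{Organ}(4) six times, any element of $\pi_*(E_C^{hG_{24}}\wedge A_1)$ detected in filtration at least $24=4\cdot 6$ is divisible by $\overline{\kappa}^6=0$, and hence is zero. This already gives $\mathrm{E}_\infty^{s,t}=0$ for $s\geq 24$. To promote this to the required statement $\mathrm{E}_{24}^{s,t}=0$ for $s\geq 24$, I would use the lemma preceding Corollary~\ref{Organ}: at the $\mathrm{E}_{24}$-page, every class in filtration $s\geq 24=r$ is simultaneously $\overline{\kappa}$-free (by part (ii) with $r=24$) and $\overline{\kappa}^6$-divisible (by part (i) iterated). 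Writing a hypothetical non-zero such class as $x=\overline{\kappa}^6 y$ and using the dichotomy of Corollary~\ref{Organ}(2) for $\overline{\kappa}$-free towers together with the $\mathrm{E}_\infty$-vanishing, one concludes that the unique differential truncating the tower through $x$ must already have occurred at some page $r\leq 23$, contradicting the $\overline{\kappa}$-freeness of $x$ at $\mathrm{E}_{24}$. Hence $\mathrm{E}_{24}^{s,t}=0$ for $s\geq 24$, and collapse at $\mathrm{E}_{24}$ follows immediately, since any $d_r$ with $r\geq 24$ targets a group in filtration $\geq r\geq 24$, which is now trivial.

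The main obstacle will be the last step, namely pulling the $\mathrm{E}_\infty$-vanishing back to the $\mathrm{E}_{24}$-page without losing a filtration degree: this requires careful use of the $\overline{\kappa}$-linearity of all differentials together with the injectivity of $\overline{\kappa}$-multiplication on the $\overline{\kappa}$-free part of each $\mathrm{E}_r$-page, encoded in the lemma preceding Corollary~\ref{Organ} and in Corollary~\ref{Organ}(2).
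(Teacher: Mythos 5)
Your setup is sound: $(G_{24},E_C)$ is indeed a regular pair, $\overline{\kappa}^{6}=0$ in $\pi_*(E_C^{hG_{24}})$ (the paper records this already in the recollection preceding the proposition, via the factorisation of the unit map through $tmf$), and the divisibility statements from the lemma preceding Corollary \ref{Organ} correctly give $\mathrm{E}_\infty^{s,t}=0$ for $s\geq 24$. The gap is in your last step, the bootstrap from $\mathrm{E}_\infty$ back to $\mathrm{E}_{24}$. Knowing that every class of filtration $\geq 24$ eventually dies does not tell you it dies by page $24$: a $\overline{\kappa}$-free tower based at a class $x_0$ of filtration $\leq 3$ could a priori be truncated exactly at $\overline{\kappa}^{6}x_0$ by a differential $d_r(y_0)=\overline{\kappa}^{6}x_0$ with $y_0$ of filtration $\leq 3$, and then $r=\mathrm{filt}(x_0)+24-\mathrm{filt}(y_0)$ can be $25$ or $27$. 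Nothing in Corollary \ref{Organ}(2) or in the $\mathrm{E}_\infty$-vanishing forces $r\leq 23$, so your claimed contradiction with the $\overline{\kappa}$-freeness of $x$ at $\mathrm{E}_{24}$ does not materialise; the argument as sketched would only yield a vanishing line at $\mathrm{E}_{28}$.

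The missing idea is to locate the death of $\overline{\kappa}^{6}$ inside the ring spectral sequence for $E_C^{hG_{24}}$ rather than in homotopy. There, $\overline{\kappa}^{6}$ sits in cohomological filtration exactly $24$ and in even internal degree; since $(E_C)_*$ is even, all even-length differentials vanish, so the differential hitting $\overline{\kappa}^{6}$ has odd length at most $24$, hence at most $23$. Therefore $\overline{\kappa}^{6}=0$ already on the $\mathrm{E}_{24}$-page of the HFPSS for $E_C^{hG_{24}}$, and by the module structure it annihilates the entire $\mathrm{E}_{24}$-page of the HFPSS for $E_C^{hG_{24}}\wedge A_1$. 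Combined with your (correct) observation that every class of filtration $\geq 24$ is divisible by $\overline{\kappa}^{6}$ on every page, this gives $\mathrm{E}_{24}^{s,t}=0$ for $s>23$ directly, which is the paper's argument. Note that the parity of the differentials, which your proposal never invokes, is exactly what makes the bound $23$ rather than $24$ come out.
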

\begin{proof} As $\overline{\kappa}^{6}=0$ in $\pi_{*}(E_{C}^{hG_{24}})$, the class $\overline{\kappa}^{6}$ must be hit by a differential which is of length at most $23$. This is because $\overline{\kappa}^{6}$ has cohomological filtration $24$ and all even differentials are trivial. Hence $\overline{\kappa}^{6}$ is trivial in the $\mathrm{E}_{24}$-term of the HFPSS for $E_{C}^{hG_{24}}$. Next, because the $\mathrm{E}_{24}$-term of the HFPSS for $E_{C}^{hG_{24}}\wedge A_1$ is a module over that for $E_{C}^{hG_{24}}$, the class $\overline{\kappa}^{6}$ acts trivially on the $\mathrm{E}_{24}$-term of the HFPSS for $E_{C}^{hG_{24}}\wedge A_1$. Since all classes which are not a multiple of $\overline{\kappa}$ have cohomological filtration at most $3$, the HFPSS has the horizontal vanishing line of height $23$.
\end{proof}
\begin{Proposition} \phantomsection \label{perm24}The following classes are permanent cycles $$e[0,0], e[1,5], e[0,6], e[1,11], e[1,15], e[1,17], e[1,21], e[1,23].$$
\end{Proposition}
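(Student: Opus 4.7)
The plan is to combine sparseness at the $\E_2$-page with a lifting argument through the comparison map $\Theta$ of Corollary \ref{Cor_Compare}. The eight classes split into three groups according to the flavour of the argument.

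First, $e[0,0]$ is the image under the unit map $S^0\to E_C^{hG_{24}}\wedge A_1$ of $1\in\pi_0(S^0)$, so it is trivially a permanent cycle. Next, I would treat $e[1,5]$ by sparseness: any $d_r$-differential on $e[1,5]$ at bidegree $(1,6)$ lands in stem $4$, and from the module description $\H^*(G_{24},(E_C)_*(A_1))\cong \F_4[v_2^{\pm 1},z,b]/(b^3)\{e[0,0],e[1,5]\}$ established in Subsection \ref{HFPSS-E_2-term}, every non-zero monomial has stem congruent modulo $6$ to one of $\{0,3,5,6,8,11\}$, shifted by $-4j$ for each factor of $z^j$. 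A direct parity and congruence check rules out stem $4$ entirely, so no target exists and $e[1,5]$ is a permanent cycle.

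For the remaining six classes $e[0,6], e[1,11], e[1,15], e[1,17], e[1,21], e[1,23]$, I would use the Adams spectral sequence for $tmf\wedge A_1$ to produce homotopy lifts. By Proposition \ref{Prop_AdamsA_1} and the differential analysis in Section \ref{G24 3} (Theorem \ref{ASS_d_2_bis} and Proposition \ref{d_3-diff}), the Adams classes $e[1,6], e[2,11], e[3,15], e[3,17], e[4,21], e[4,23]$ are permanent cycles: they lie outside the range of the computed differentials (all of which emanate from $w_2$-multiples), and an easy bidegree check confirms that none is a differential target. Each one therefore determines a non-zero class $\tilde x_t\in\pi_t(tmf\wedge A_1)$ for $t\in\{6,11,15,17,21,23\}$. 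By the injectivity of $\Theta$ (Corollary \ref{Cor_Compare}), each $\Theta(\tilde x_t)$ is a non-zero element of $\pi_t(E_C^{hG_{24}}\wedge A_1)$, detected by some non-zero permanent cycle $w_t$ at stem $t$ in the HFPSS; it suffices to identify $w_t$ with the claimed class.

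The identification is cleanest in stem $15$: an enumeration of the $\E_2$-term shows $e[1,15]$ is the unique non-zero class in that stem, and since no $d_r$ source exists at $(1-r,17-r)$ for $r\ge 2$, the class cannot be a boundary, so its non-vanishing in $\E_\infty$ is equivalent to being a permanent cycle. In the other stems, several $\E_2$-candidates exist, and I would rule out alternatives by exploiting the relation $b^3=0$ in the module structure together with the injectivity of $\Theta$ modulo $(\overline\kappa,\nu)$: if $w_t$ factors as $b^2\cdot(\cdot)$, then $bw_t=0$ contradicts the non-vanishing of $\nu\Theta(\tilde x_t)$ (which is detected by $bw_t$ and corresponds via $\Theta$ to the non-zero ASS product $h_2\cdot e[?,t]$); and if $w_t$ is a simultaneous $\overline\kappa$- and $\nu$-multiple, an Ext-level check that the relevant $\Ext_{\A(2)_*}$-groups vanish shows $\tilde x_t\notin(\overline\kappa,\nu)\pi_*(tmf\wedge A_1)$, contradicting Corollary \ref{Cor_Compare}. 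The hardest step will be the case analysis in stems $21$ and $23$, where \emph{every} $\E_2$-candidate is already a $\nu$- or $\overline\kappa$-multiple; there I would rely on the observation that alternative higher-filtration candidates either have their $b$-multiple annihilated by $b^3=0$ or miss the bidegree where $\nu\Theta(\tilde x_t)$ must be detected according to the parallel Adams computation, leaving $e[1,21]$ and $e[1,23]$ respectively as the only admissible detectors.
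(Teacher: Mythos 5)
There is a genuine gap, and it sits exactly where your argument departs from the paper's: the sparseness claim for $e[1,5]$. Stem $4$ of the $\E_2$-term is \emph{not} empty. In the notation $\H^*(G_{24},(E_C)_*(A_1))\cong\FF_4[v_2^{\pm1},z,a,b]/(a,b^3)\{e[0,0],e[1,5]\}$, the monomial $z b\,e[1,5]=\ovk\nu\Delta^{-1}e[1,5]$ has stem $-4+3+5=4$ and cohomological filtration $6$, which is precisely the target bidegree of a $d_5$ on $e[1,5]$; moreover it survives to the $\E_5$-page (only $\nu^2e[1,5]$ dies under $d_3$, and $\nu^2$ does not divide this class). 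Likewise $v_2^2z^2e[0,0]=\ovk^2\Delta^{-2}e[0,0]$ sits in stem $4$, filtration $8$, the target bidegree of a $d_7$. Your own congruence bookkeeping, done correctly, produces these: $0+6i-4j=4$ has the solution $(i,j)=(2,2)$ and $8+6i-4j=4$ has $(i,j)=(0,1)$. So no bidegree argument closes this case. The paper instead handles $e[1,5]$ (and $e[1,11]$) with a Toda/Massey bracket: since $\pi_0(tmf\wedge A_1)$ is killed by $\eta$, the bracket $\langle\nu,\eta,e[0,0]\rangle$ is defined, and $\eta\langle\nu,\eta,e[0,0]\rangle=\langle\eta,\nu,\eta\rangle e[0,0]=\nu^2e[0,0]\neq0$ is detected in filtration $2$; hence the bracket is a nonzero homotopy class detected in filtration at most $1$ in stem $5$, and $e[1,5]$ is the only candidate there. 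You will need this (or an equivalent) argument; note that the order count does not save you either, because $\Delta^{-1}\nu^2e[1,23]$ also lives in stem $5$, filtration $3$.

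Two secondary points. First, your claim that $e[1,15]$ is ``the unique non-zero class'' in stem $15$ at $\E_2$ is false ($\ovk^2\Delta^{-2}e[1,23]$, $\ovk^2\Delta^{-2}\nu^2e[1,17]$, etc.\ live there in filtrations $\geq 9$); what is true, and what the paper exploits via Corollary \ref{Organ}(4), is that $e[1,15]$ is the unique class in filtration $<4$, while everything in filtration $\geq 4$ is $\ovk$-divisible and therefore invisible in $\pi_{15}/(\ovk)$, so the order count from Corollary \ref{Cor_Compare} forces $e[1,15]$ to survive. Second, your $\Theta$-lifting for $e[1,11]$ must contend with $\nu^2e[1,5]$ in stem $11$, filtration $3$: the $d_3$-differential that kills it is only established in Proposition \ref{d_3}, whose proof relies on the present proposition, so you cannot invoke it here; and your ``$b^2$-factor'' criterion is not decisive, because a class detected in filtration $\geq 4$ is merely $\ovk$-divisible while Corollary \ref{Cor_Compare} controls injectivity only modulo $(\ovk,\nu)$, so no immediate contradiction results. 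The paper's bracket $\langle\nu,\eta,e[0,6]\rangle$ resolves $e[1,11]$ cleanly. Your treatment of $e[0,0]$ and of $e[0,6],e[1,15],e[1,17],e[1,21],e[1,23]$ is in the same spirit as the paper's and is salvageable, but the cases $e[1,5]$ and $e[1,11]$ as written do not go through.
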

\begin{proof} Firstly, the class $e[0,0]$ is a permanent cycle because it detects the inclusion $S^{0}\rightarrow A_1$ into the bottom cell of $A_1$. Next, we recapitulate, in the following table, the associated graded object with respect to the induced Adams filtration on the groups $\pi_{*}(tmf\wedge A_1)/(\overline{\kappa})$ in the following stems.
$$\begin{tabular}{| c | c | c | c |c | c|c }
Dim&6&15&17&21&23\\
\hline
Value& $\F\oplus \F$&$\F$&$\F$&$\F$&$\F\oplus \F $ \\	
\end{tabular}$$
By Corollary \ref{Cor_Compare}, the groups $\pi_{*}(E_{C}^{hG_{24}}\wedge A_1)/(\overline{\kappa})$ in these dimensions must have order twice as big as the respective groups. Inspection in the $\mathrm{E}_{2}$-term of the HFPSS through dimensions from $0$ to $23$ and in cohomological filtration less than $4$ show that the classes $ e[0,6], e[1,15], e[1,21], e[1,23]$ are permanent cycles. \\\\
\noindent
Note that the groups $\pi_{0}(tmf\wedge A_1)$ and $\pi_{6}(tmf\wedge A_1)$ are annihilated by $\eta$. This means that $e[0,0]$ and $e[0,6]$ detects two elements which are annihilated by $\eta$. It follows that the Toda brackets $\langle \nu,\eta, e[0,0]\rangle$ and  $\langle \nu,\eta, e[0,6]\rangle$ can be formed. By juggling, $$\eta\langle \nu,\eta, e[0,0]\rangle = \langle \eta,\nu,\eta\rangle e[0,0]= \nu^{2}e[0,0]$$ and $$\eta\langle \nu,\eta, e[0,6]\rangle = \langle \eta,\nu,\eta\rangle e[0,6]= \nu^{2}e[0,6].$$ 
Observe that $\nu^{2}e[0,0]$ and $\nu^{2}e[0,6]$ are nontrivial and are detected in cohomological filtration $2$. Consequently, both $\langle \nu,\eta, e[0,0]\rangle$ and $\langle \nu,\eta, e[0,6]\rangle$ are nontrivial and are represented by classes in cohomological filtration at most $1$. Therefore $e[1,5]$ and  $e[1,11]$ are permanent cycles. \\\\
\noindent
The unique nontrivial element of $\pi_{11}(tmf\wedge A_1)/(\overline{\kappa})$ is annihilated by $\nu^{2}$. This implies that the class $\nu^{2}e[1,11]$ is the target of some differential. Since $\pi_{17}(E_{C}^{hG_{24}}\wedge A_1)/(\overline{\kappa})$ has order at least equal to $4$, the class $e[1,17]$ must be a permanent cycle representing the only element in dimension $17$ of $\pi_{*}(E_{C}^{hG_{24}}\wedge A_1) /(\overline{\kappa})$.
\end{proof}
\noindent
$\underline{d_{3}-\mbox{differentials}}$
\begin{Proposition} As a module over $\FF_{4}[\Delta^{\pm 1}, \overline{\kappa}, \nu]/(\nu^{3})$, the term $\mathrm{E}_{2}=\mathrm{E}_{3}$ is free on the generators
 \begin{equation}\phantomsection \label{genE_3}
 e[0,0], e[1,5], e[0,6], e[1,11], e[0,12], e[1,17], e[0,18], e[1,23].
 \end{equation}
\end{Proposition}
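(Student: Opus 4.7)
The plan is to deduce the statement from two separate facts already laid down in Section~\ref{HFPSS-E_2-term}: the explicit description of the $\E_2$-term and the identification of how $\Delta,\overline{\kappa},\nu$ act via the projection $(E_C)_*\to\FF_4[u^{\pm 1}]$. First I would dispose of the equality $\E_2=\E_3$. Since $(E_C)_*(A_1)\cong\FF_4[u^{\pm1}]\{e_0,e_2\}$ is concentrated in even internal degree, the cohomology $\H^s(G_{24},(E_C)_t(A_1))$ vanishes for $t$ odd. The differential $d_r$ shifts $t$ by $r-1$, so all $d_r$ with $r$ even vanish automatically; in particular $d_2=0$.

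Next I would translate the ring action. By Proposition~\ref{induced-coh} (together with the discussion immediately following the $\H^*(G_{24},(E_C)_*(A_1))$-computation), the classes $\Delta,\overline{\kappa},\nu$ act on
\[
\H^*(G_{24},(E_C)_*(A_1))\cong \FF_4[v_2^{\pm1},z,b]/(b^3)\,\{e[0,0],\,e[1,5]\}
\]
as multiplication by $v_2^4$, $v_2^4 z$, and $b$ respectively. The relation $\nu^3=0$ is realized by $b^3=0$, and $\nu\overline{\kappa}=0$ is realized by $ab=0$ together with $a=0$; nothing further is needed in the quotient ring $\FF_4[\Delta^{\pm1},\overline{\kappa},\nu]/(\nu^3)$ beyond the relation $\nu^3=0$.

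The heart of the argument is the purely algebraic claim that $\FF_4[v_2^{\pm1},z]$ is free of rank~$4$ over $\FF_4[v_2^{\pm4},v_2^4 z]$ on the basis $\{1,v_2,v_2^2,v_2^3\}$. Given a monomial $v_2^a z^c$ with $a\in\Z$ and $c\ge 0$, write $a-4c=4q+r$ with $0\le r\le 3$; then
\[
v_2^a z^c=v_2^{a-4c}(v_2^4z)^c=(v_2^4)^q v_2^r (v_2^4 z)^c=\Delta^q\,\overline{\kappa}^c\,v_2^r,
\]
and uniqueness of the division $a-4c=4q+r$ together with linear independence of distinct monomials in $\FF_4[v_2^{\pm1},z]$ shows that this expression is unique. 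Tensoring the resulting free decomposition with $\FF_4[b]/(b^3)$ (i.e.\ with $\FF_4[\nu]/(\nu^3)$) on the one hand and with the two-dimensional $\FF_4$-vector space $\FF_4\{e[0,0],e[1,5]\}$ on the other yields that the $\E_2$-term is $\FF_4[\Delta^{\pm1},\overline{\kappa},\nu]/(\nu^3)$-free on the eight elements $v_2^i e[0,0]$ and $v_2^i e[1,5]$ for $0\le i\le 3$. A bidegree check identifies these with $e[0,0],e[0,6],e[0,12],e[0,18]$ and $e[1,5],e[1,11],e[1,17],e[1,23]$ respectively.

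Essentially no step is a genuine obstacle; the proof is mostly a careful bookkeeping exercise. The only point requiring any care is making sure that the generators $v_2^i e[\epsilon,\ast]$ really are the classes named $e[\epsilon,\ast]$ in the statement, which is a matter of checking stems: $|v_2|=6$ and $|\overline{\kappa}|=20$, $|\Delta|=24$, $|\nu|=3$, so $v_2^i e[1,5]$ lives in stem $5+6i$, matching $e[1,5], e[1,11], e[1,17], e[1,23]$, and similarly for the $e[0,\ast]$ generators.
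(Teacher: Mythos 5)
Your argument is correct and is exactly the bookkeeping the paper leaves implicit: the proposition is stated without proof, as an immediate reformulation of the preceding description of $\mathrm{H}^{*}(G_{24},(E_{C})_{*}(A_1))\cong\FF_{4}[v_{2}^{\pm 1},z,b]/(b^{3})\{e[0,0],e[1,5]\}$ together with the identification of $\Delta,\overline{\kappa},\nu$ with $v_{2}^{4}$, $v_{2}^{4}z$, $b$, and your even-degree argument for $d_2=0$ and your division-with-remainder decomposition of $\FF_{4}[v_{2}^{\pm1},z]$ over $\FF_{4}[v_{2}^{\pm4},v_{2}^{4}z]$ supply precisely the missing details. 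One parenthetical slip: the relation $\nu\overline{\kappa}=0$ is neither imposed in the ring $\FF_{4}[\Delta^{\pm1},\overline{\kappa},\nu]/(\nu^{3})$ nor true at the $\E_2$-term (where $\nu\overline{\kappa}$ corresponds to $v_{2}^{4}zb\neq 0$, consistent with freeness); it only appears from the $\E_6$-term on, after the $d_5$-differential, so that sentence should simply be deleted.
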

\begin{Proposition} \phantomsection \label{d_3}The $d_{3}$-differential in the HFPSS for $E_{C}^{hG_{24}}\wedge A_1$ is trivial on all of the generators of (\ref{genE_3}) with the exception of 
\begin{itemize} 
\item[i)]$d_{3}(e[0,12])=\nu^{2}e[1,5]$
\item[ii)]$d_{3}(e[0,18])=\nu^{2}e[1,11].$
\end{itemize}
\end{Proposition}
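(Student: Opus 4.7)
The first six generators of \eqref{genE_3}, namely $e[0,0]$, $e[1,5]$, $e[0,6]$, $e[1,11]$, $e[1,17]$, $e[1,23]$, are permanent cycles by Proposition \ref{perm24}, so $d_3$ vanishes on them immediately. The remaining task is to analyse $d_3$ on $e[0,12]$ and $e[0,18]$. A direct bidegree inspection in the $\mathrm{E}_3 = \mathrm{E}_2$-page, which is a free module over $\FF_4[\Delta^{\pm 1}, \overline{\kappa}, \nu]/(\nu^3)$ on the eight generators listed, shows that the only non-zero classes in the potential target bidegrees are $\nu^2 e[1,5]$ and $\nu^2 e[1,11]$ respectively. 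Thus these are the unique candidates for $d_3(e[0,12])$ and $d_3(e[0,18])$, so it suffices to show that $e[0,12]$ and $e[0,18]$ are not permanent cycles.

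The plan is to argue via the comparison with the ASS for $tmf\wedge A_1$. Corollary \ref{Cor_Compare} together with Proposition \ref{Period} identifies $\pi_*(\EG24\wedge A_1)$ with $(\Delta^8)^{-1}\W(\FF_4)\otimes\pi_*(tmf\wedge A_1)$. From Proposition \ref{Prop_AdamsA_1} one reads off that stems $12$ and $18$ of the $\mathrm{E}_2$-page of the ASS for $tmf\wedge A_1$ contain no generator; every class in these stems arises as a $\nu$- or higher-multiplication multiple of a generator in a lower stem and therefore lies in Adams filtration at least $3$. Since $\Delta^8$ acts freely on $\mathrm{E}_\infty$ (Proposition \ref{Period}), this filtration lower bound persists after $\Delta^8$-inversion. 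A class in HFPSS filtration $0$ can never be the target of a differential (differentials raise the cohomological filtration), so if $e[0,12]$ or $e[0,18]$ were a permanent cycle it would detect a class of HFPSS filtration $0$ in stem $12$ or $18$, contradicting the bound inherited from $tmf\wedge A_1$. Both generators therefore support a non-trivial $d_3$, which must be the unique possibility identified above.

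The principal technical obstacle is to make precise the identification of Adams filtration on $\pi_*(tmf\wedge A_1)$ with HFPSS filtration on $\pi_*(\EG24\wedge A_1)$ after $\Delta^8$-inversion. My plan is to route through the descent spectral sequence for $TMF\wedge A_1 = (\EG24)^{h\Gal}\wedge A_1$: the ASS for $tmf\wedge A_1$ maps to this descent spectral sequence via the unit $tmf\to TMF$ (inducing a $\Delta^8$-local equivalence by \eqref{tmf connec model}), the descent spectral sequence coincides with the HFPSS for $(\EG24)^{h\Gal}\wedge A_1$ by Theorem \ref{tmf-HFP}, and taking $\Gal$-fixed points relates this to the HFPSS for $\EG24\wedge A_1$ through Corollary \ref{Compar_alg}. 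Each step preserves filtration, so composing yields the desired filtration comparison and the non-triviality bound can be invoked as stated.
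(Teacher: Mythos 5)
Your treatment of the six generators via Proposition \ref{perm24} agrees with the paper, but the argument for $e[0,12]$ and $e[0,18]$ has two genuine gaps. First, the reduction ``it suffices to show $e[0,12]$ and $e[0,18]$ are not permanent cycles'' does not close: knowing that $e[0,12]$ fails to survive does not tell you \emph{which} differential it supports. The bidegrees $(s,t-s)=(r,11)$ are not all zero for $r>3$; for instance $\overline{\kappa}\Delta^{-1}e[1,15]$ is a non-zero class of the $\mathrm{E}_2$-term in $(5,11)$, so a priori $e[0,12]$ could survive to $\mathrm{E}_5$ and support $d_{5}(e[0,12])=\overline{\kappa}\Delta^{-1}e[1,15]$ rather than a $d_3$. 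Nothing in your argument excludes this, so even granting the non-survival of $e[0,12]$ you have not established $d_{3}(e[0,12])=\nu^{2}e[1,5]$.

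Second, and more seriously, the filtration comparison you plan to use does not exist. There is no filtration-preserving map from the Adams spectral sequence for $tmf\wedge A_1$ to the descent/homotopy fixed point spectral sequence: these are genuinely different filtrations. The element $\Delta^{8}$ already shows this --- it has Adams filtration $32$ (detected by $w_2^4$) but homotopy fixed point filtration $0$ --- so the unit $tmf\to TMF$ cannot induce the comparison you describe, and for the same reason the Adams-filtration lower bound in stems $12$ and $18$ does not ``persist after $\Delta^8$-inversion'' (the degree-$12$ piece of the localisation is a colimit over $\pi_{12+192k}(tmf\wedge A_1)$, and filtrations are shifted by multiples of $32$ along the way). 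At this stage of the paper the only available comparison is the injective, $\nu$- and $\overline{\kappa}$-linear map $\Theta$ of Corollary \ref{Cor_Compare}, which gives lower bounds on $\pi_*(E_{C}^{hG_{24}}\wedge A_1)$, not the upper bounds your argument requires. The paper's proof runs in the opposite direction and avoids both issues: since $\nu^{2}$ annihilates the elements of $\pi_5$ and $\pi_{11}$ of $tmf\wedge A_1$ detected by $e[1,5]$ and $e[2,11]$ (stems $11$ and $17$ contain nothing in Adams filtration $\geq 3$, resp.\ $\geq 4$), and $\Theta$ is $\nu$-linear with $\Theta(e[1,5])$, $\Theta(e[2,11])$ detected by the permanent cycles $e[1,5]$, $e[1,11]$, the classes $\nu^{2}e[1,5]$ and $\nu^{2}e[1,11]$ must be hit by differentials; a class in $(3,11)$ can only be hit by a $d_{3}$ from $(0,12)$, which forces $d_{3}(e[0,12])=\nu^{2}e[1,5]$, and likewise for $e[0,18]$. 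If you want to keep your direction of argument you would need the actual orders of $\pi_{11}$ and $\pi_{12}$ of $E_{C}^{hG_{24}}\wedge A_1$, i.e.\ surjectivity of $\Theta$ in those degrees, which is only established much later (Theorem \ref{connective-iso}) and whose proof depends on the present proposition.
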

\begin{proof} That $e[0,0], e[1,5], e[0,6], e[1,11], e[1,17], e[1,23]$ are $d_{3}$-cycles follows from Proposition \ref{perm24}. For the two other classes, the proof of Proposition \ref{perm24} implies that the elements $\Theta(e[1,5])$ and $\Theta(e[2,11])$ are detected by $e[1,5]$ and $e[1,11]$, respectively. Moreover, the elements $e[1,5]$ and $e[2,11]$ are annihilated by $\nu^{2}$ in $\pi_{*}(tmf\wedge A_1)$. It follows that, in the HFPSS, the classes $\nu^{2}e[1,5]$ and $\nu^{2}e[1,11]$ must be hit by some differentials. The only possibilities are $d_{3}(e[0,12])=\nu^{2}e[1,5]$ and $d_{3}(e[0,18])=\nu^{2}e[1,11]$.
\end{proof}
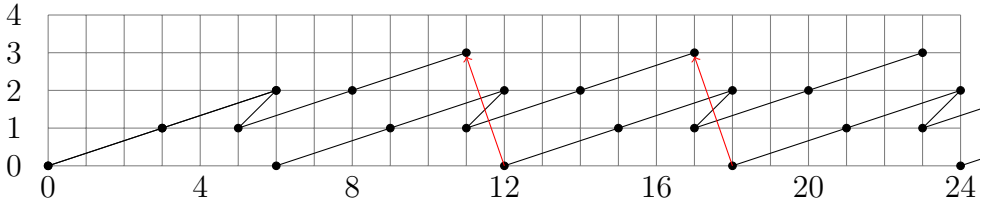
\begin{figure}[h!]
\begin{tikzpicture}[scale=0.5]
\clip(-1.5,-1.5) rectangle (24.5,4.5);
\draw[color=gray] (0,0) grid [step=1] (24,4);
\foreach \n in {0,4,...,24}
{
\def\nn{\n-0}
\node[below] at (\nn,0) {$\n$};
}
\foreach \s in {0,1,...,4}
{\def\ss{\s-0};
\node [left] at (-0.4,\ss,0){$\s$};
}
\draw [fill] ( 0.00, 0.00) circle [radius=0.1];
\draw [fill] (3,1) circle [radius=0.1];
\draw [fill] (6,2) circle [radius=0.1];
\draw [-] (0,0)--(3,1);
\draw [-] (3,1)--(6,2);
%\draw[->,red] (11.8,0.2)--(11.2,2.8);
\foreach \t in {0}
\foreach \s in {0,6,...,24}
{\def\ss{\s-0};
\draw [fill]  (\s-\t,\t) circle [radius=0.1];
\draw [fill] (\s+3-\t,1+\t) circle [radius=0.1];
\draw [-] (\s-\t,\t)--(\s+3-\t,1+\t);
\draw[fill] (\s+5-\t,1+\t) circle [radius=0.1];
\draw[fill] (\s+8-\t,2+\t) circle [radius=0.1];
\draw[-] (\s+5-\t,1+\t)--(\s+8-\t,2+\t); 
\draw [fill] (\s-\t+6,2+\t) circle [radius=0.1];
\draw [-] (\s+3-\t,1+\t)--(\s+6-\t,2+\t);
\draw[fill] (\s+11-\t,3+\t) circle [radius=0.1];
\draw[-] (\s+8-\t,2+\t)--(\s+11-\t,3+\t); 
\draw [-] (\s+5-\t,1+\t)--(\s-\t+6,2+\t);
}
%%%%%%%%
\foreach \t in {12,18}
\draw[->,red] (\t,0)--(\t-1,2.9);
\end{tikzpicture}
\caption{Differentials $d_{3}$}
\end{figure}
\begin{Corollary} As a module over $\mathbb{F}_{4}[\Delta^{\pm 1},\overline{\kappa},\nu]/(\nu^{3})$, the term $\mathrm{E}_{4}=\mathrm{E}_{5}$ is a direct sum of cyclic modules generated by the classes 
\begin{equation} \phantomsection \label{genE_5}
e[0,0],e[1,5],e[0,6],e[1,11],e[1,15], e[1,17], e[1,21], e[1,23]
\end{equation} 
with the relations 
\begin{equation}\phantomsection \label{relE_5}
\nu^{2}e[1,5] = \nu^{2}e[1,11] =\nu^{2}e[1,15] = \nu^{2}e[1,21] = 0.
\end{equation}
\end{Corollary}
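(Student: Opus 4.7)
The plan is to read off the $\mathrm{E}_4$-term directly from the description of $\mathrm{E}_3$ as a free module on the eight generators together with the two $d_3$-differentials of Proposition \ref{d_3}. First I would note that the HFPSS lives entirely in even internal degrees, so a $d_r$-differential, which changes bidegree by $(r, r-1)$, can only be non-trivial for $r$ odd; in particular $d_4 = 0$ and $\mathrm{E}_4 = \mathrm{E}_5$ automatically. Next, since $\Delta$, $\overline{\kappa}$ and $\nu$ are permanent cycles (of strictly positive cohomological filtration for the latter two, and a Hopf element for $\nu$), the differential $d_3$ is $R$-linear, where $R:=\mathbb{F}_4[\Delta^{\pm 1},\overline{\kappa},\nu]/(\nu^3)$.

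I would then compute the effect of $d_3$ on each of the eight free $R$-summands of $\mathrm{E}_3$. The generators $e[0,0]$, $e[0,6]$, $e[1,17]$ and $e[1,23]$ are $d_3$-cycles by Proposition \ref{d_3} and their $R$-submodules are untouched (no class of the appropriate bidegree can hit them), so each contributes a free rank-one $R$-submodule to $\mathrm{E}_4$. For $e[1,5]$ and $e[1,11]$, $R$-linearity combined with $d_3(e[0,12])=\nu^2 e[1,5]$ and $d_3(e[0,18])=\nu^2 e[1,11]$ shows that the submodules $\Delta^a\overline{\kappa}^b\nu^2 e[1,5]$ and $\Delta^a\overline{\kappa}^b\nu^2 e[1,11]$ are precisely the $d_3$-boundaries hitting these towers; thus $e[1,5]$ and $e[1,11]$ survive with annihilator enlarged to $(\nu^2)$, giving cyclic $R$-submodules isomorphic to $R/(\nu^2)$.

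The towers generated by $e[0,12]$ and $e[0,18]$ are the subtle ones. Here the Leibniz rule gives $d_3(\nu^i e[0,12]) = \nu^{i+2} e[1,5]$, which vanishes for $i\geq 1$ since $\nu^3 = 0$ in $R$, and likewise for $e[0,18]$. Hence $\Delta^a\overline{\kappa}^b\nu e[0,12]$ and $\Delta^a\overline{\kappa}^b\nu^2 e[0,12]$ all survive, and similarly for $e[0,18]$. Relabelling the surviving $d_3$-cycle $\nu e[0,12]$ (of bidegree $(1,16)$) as $e[1,15]$ and $\nu e[0,18]$ (of bidegree $(1,22)$) as $e[1,21]$, we see that these classes generate cyclic $R$-submodules $R/(\nu^2)$, and every surviving $\nu$-multiple is accounted for (namely $\nu e[1,15] = \nu^2 e[0,12]$ and $\nu e[1,21] = \nu^2 e[0,18]$).

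Assembling the eight summands gives exactly the stated direct-sum decomposition with the indicated annihilator relations. I do not anticipate any real obstacle here: the only potentially confusing point is keeping track of the two surviving $\nu$-multiples out of each truncated tower and recognising them as the new generators $e[1,15]$ and $e[1,21]$ of the corollary's notation; once this bookkeeping is done the statement follows from the computations already in place.
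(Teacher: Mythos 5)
Your proposal is correct and is precisely the argument the paper intends: the paper's own proof is the one-line remark that the corollary is "straightforward from Proposition \ref{d_3} and from the fact that $\Delta, \overline{\kappa}, \nu$ are $d_{3}$-cycles," and you have simply filled in that computation, including the correct identification of the surviving classes $\nu e[0,12]$ and $\nu e[0,18]$ with $e[1,15]$ and $e[1,21]$ and the evenness argument giving $\mathrm{E}_4=\mathrm{E}_5$. No gaps.
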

\begin{proof} This is straightforward from Proposition \ref{d_3} and from the fact that $\Delta, \overline{\kappa}, \nu$ are $d_{3}$-cycles in the HFPSS for $E_{C}^{hG_{24}}$ .
\end{proof}
\noindent
\underline{$d_{5}$-\mbox{differentials}.}
We need the $d_{5}$-differential, in the HFPSS for $E_{C}^{hG_{24}}$, $d_{5}(\Delta) = \overline{\kappa}\nu$ (see \cite{Bau08}, Section 8.3).
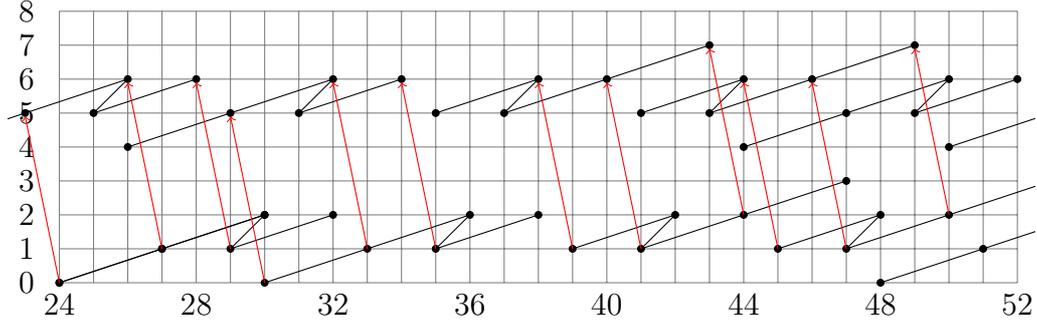
\begin{figure}[h!]
\begin{tikzpicture}[scale=0.45]
\clip(-1.5,-1.5) rectangle (28.5,8.5);
\draw[color=gray] (0,0) grid [step=1] (28,8);
\foreach \n in {24,28,...,54}
{
\def\nn{\n-0}
\node[below] at (\n-24,0) {$\n$};
}
\foreach \s in {0,1,...,8}
{\def\ss{\s-0};
\node [left] at (-0.4,\ss,0){$\s$};
}
\draw [fill] ( 0.00, 0.00) circle [radius=0.1];
\draw [fill] (3,1) circle [radius=0.1];
\draw [fill] (6,2) circle [radius=0.1];
\draw [-] (0,0)--(3,1);
\draw [-] (3,1)--(6,2);
%\draw[->,red] (11.8,0.2)--(11.2,2.8);
\foreach \t in {0,4}
\foreach \s in {0,6,24,30}
{\def\ss{\s-0};
\draw [fill]  (\s-\t,\t) circle [radius=0.1];
\draw [fill] (\s+3-\t,1+\t) circle [radius=0.1];
\draw [-] (\s-\t,\t)--(\s+3-\t,1+\t);
\draw[fill] (\s+5-\t,1+\t) circle [radius=0.1];
\draw[fill] (\s+8-\t,2+\t) circle [radius=0.1];
\draw[-] (\s+5-\t,1+\t)--(\s+8-\t,2+\t); 
\draw [fill] (\s-\t+6,2+\t) circle [radius=0.1];
\draw [-] (\s+3-\t,1+\t)--(\s+6-\t,2+\t);
%\draw[fill] (\s+11-\t,3+\t) circle [radius=0.1];
%\draw[-] (\s+8-\t,2+\t)--(\s+11-\t,3+\t); 
\draw [-] (\s+5-\t,1+\t)--(\s-\t+6,2+\t);
}

\foreach \t in {0,4}
\foreach \s in {12,18,36,42}
{\def\ss{\s-0};
%\draw [fill]  (\s-\t,\t) circle [radius=0.1];
\draw [fill] (\s+3-\t,1+\t) circle [radius=0.1];
%\draw [-] (\s-\t,\t)--(\s+3-\t,1+\t);
\draw[fill] (\s+5-\t,1+\t) circle [radius=0.1];
\draw[fill] (\s+8-\t,2+\t) circle [radius=0.1];
\draw[-] (\s+5-\t,1+\t)--(\s+8-\t,2+\t); 
\draw [fill] (\s-\t+6,2+\t) circle [radius=0.1];
\draw [-] (\s+3-\t,1+\t)--(\s+6-\t,2+\t);
\draw[fill] (\s+11-\t,3+\t) circle [radius=0.1];
\draw[-] (\s+8-\t,2+\t)--(\s+11-\t,3+\t); 
\draw [-] (\s+5-\t,1+\t)--(\s-\t+6,2+\t);
}

%%%%%%%%
\draw[->,red] (0,0)--(-1,4.9);
\draw[->,red] (3,1)--(2,5.9);
\draw[->,red] (5,1)--(4,5.9);
\draw[->,red] (6,0)--(5,4.9);
\draw[->,red] (9,1)--(8,5.9);
\draw[->,red] (11,1)--(10,5.9);
\draw[->,red] (15,1)--(14,5.9);
\draw[->,red] (17,1)--(16,5.9);
\draw[->,red] (20,2)--(19,6.9);
\draw[->,red] (21,1)--(20,5.9);
\draw[->,red] (23,1)--(22,5.9);
\draw[->,red] (26,2)--(25,6.9);
\end{tikzpicture}
\caption{Differentials $d_{5}$}
\end{figure}

\begin{Proposition}\phantomsection \label{E_7}
As a module over $\mathbb{F}_{4}[(\Delta^{8})^{\pm 1},\overline{\kappa},\nu]/(\overline{\kappa}\nu)$, $\mathrm{E}_{6}=\mathrm{E}_{7}$ is a direct sum of cyclic modules generated by the following classes for $i\in{0,2,4,6}$ with the respective annihilator ideal:
 
 $\begin{array}{lllllllllllllllll}
 generator& \Delta^{i}e[0,0]& \Delta^{i}e[1,5]&\Delta^{i}e[0,6]&\Delta^{i}e[1,11]\\
 ideal&(\nu^{3})&(\nu^{2})&(\nu^{3})&(\nu^{2})\\
 generator&\Delta^{i}e[1,15]&\Delta^{i}e[1,17]&\Delta^{i}e[1,21]&\Delta^{i}e[1,23]\\
 ideal&(\nu^{2})&(\nu^{3})&(\nu^{2})&(\nu^{3})\\
 generator&\Delta^{i}e[2,30]&\Delta^{i}e[2,32]&\Delta^{i}e[2,36]&\Delta^{i}e[2,38]\\
 ideal&(\nu)&(\nu)&(\nu)&(\nu)\\
 generator&\Delta^{i}e[2,42]&\Delta^{i}e[3,47]&\Delta^{i}e[2,48]&\Delta^{i}e[3,53]\\
 ideal&(\nu)&(\nu)&(\nu)&(\nu).\\
 \end{array} $
\end{Proposition}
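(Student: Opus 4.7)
The plan is to compute the $d_5$-differential on $\E_5$ and take its homology, after which the equality $\E_6=\E_7$ will be automatic: all even-indexed differentials in the HFPSS vanish because $\mathrm{H}^*(G_{24},(E_C)_*A_1)$ is concentrated in even internal degree $t$ by Lemma \ref{E-module}.

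The key input is the differential $d_5(\Delta)=\overline{\kappa}\nu$ in the HFPSS for $E_C^{hG_{24}}$ recalled above, combined with the fact that the HFPSS for $E_C^{hG_{24}}\wedge A_1$ is a spectral sequence of modules over the one for $E_C^{hG_{24}}$. Since the eight generators $e[s,t]$ listed in the description of $\E_5$ are all permanent cycles (Proposition \ref{perm24}), the Leibniz rule together with characteristic $2$ yields
\[ d_5(\Delta^{2l+1}\overline{\kappa}^j\nu^k\cdot e)=\Delta^{2l}\overline{\kappa}^{j+1}\nu^{k+1}\cdot e,\qquad d_5(\Delta^{2l}\overline{\kappa}^j\nu^k\cdot e)=0, \]
and this determines $d_5$ throughout $\E_5$.

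Next I compute the $d_5$-homology one cyclic summand at a time. For a generator $e$ with annihilator $(\nu^m)$ in $\E_5$ (where $m\in\{2,3\}$), the cycles on the odd-$\Delta$ strand are precisely the $\Delta^{2l+1}\nu^{m-1}e$ (since only the top $\nu$-power lands in zero under the formula above), whereas the $d_5$-image lies entirely on the even-$\Delta$ strand, as the $\overline{\kappa}\nu$-multiples. Setting $R':=\mathbb{F}_4[(\Delta^8)^{\pm 1},\overline{\kappa},\nu]/(\overline{\kappa}\nu)$, one then sees that the $e$-summand of $\E_6$ decomposes as a direct sum of two cyclic $R'$-modules: an ``old'' one generated by $e$ with annihilator $(\nu^m)$, where the relation $\overline{\kappa}\nu\cdot e=0$ is forced precisely by $d_5(\Delta\cdot e)=\overline{\kappa}\nu\cdot e$, and a ``new'' one generated by $\Delta\nu^{m-1}e$ with annihilator $(\nu)$. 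The $\Delta^8$-periodicity then accounts for the four values $i\in\{0,2,4,6\}$ in each case.

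Finally, a bidegree check identifies the new generators with the labels appearing in the table: $\Delta\nu^{m-1}e[s,t]$ lies in cohomological degree $s+m$ and stem $t-s+24+3(m-1)$, and a case-by-case verification over the eight original permanent cycles produces the eight new labels $e[2,30]$, $e[2,32]$, $e[2,36]$, $e[2,38]$, $e[2,42]$, $e[3,47]$, $e[2,48]$, $e[3,53]$ exactly as stated. The main care point — more a matter of careful bookkeeping than genuine difficulty — will be confirming that no further relations appear between the old and new summands; this follows because they are supported on disjoint (even- versus odd-$\Delta$) strands in $\E_5$, and the odd strand is $\nu$-torsion, hence automatically closed under the action of $\overline{\kappa}\nu=0$ in $R'$.
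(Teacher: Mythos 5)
Your argument is correct and is essentially the paper's own proof, spelled out in more detail: both rest on $d_5(\Delta)=\overline{\kappa}\nu$, the module structure of the spectral sequence over the HFPSS for $E_C^{hG_{24}}$, the permanence of the eight $\E_5$-generators, and the resulting $\Delta^2$-periodicity of $\E_6$, with the new $(\nu)$-torsion summands generated by $\Delta\nu^{m-1}e$. The only blemish is an off-by-one in your final bookkeeping formula --- $\Delta\nu^{m-1}e$ sits in cohomological degree $s+(m-1)$, not $s+m$, since $\Delta$ has filtration $0$ and $\nu$ filtration $1$ --- but the eight new labels $e[2,30],\dots,e[3,53]$ you list are exactly the correct ones, so nothing downstream is affected.
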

\begin{proof} Notice that, if $x$ is a class in the $\mathrm{E}_{5}$-term, then $d_{5}(\Delta^{2k}x) = \Delta^{2k}d_{5}(x)$  $\forall k\in\Z$. This says in particular that the $\mathrm{E}_{6}$-term is $\Delta^{2}$-periodic. Next, if $x$ is a $d_{5}$-cycle and is annihilated by $\nu^{i}$, then $d_{5}(\Delta x) = \overline{\kappa}\nu x$ and $d_{5}(\Delta\nu^{i-1}x) = 0$. Together with the fact that all of the generators of (\ref{genE_5}) are permanent cycles (Proposition \ref{perm24}), it is straightforward to verify that the classes together with their annihilation ideal given in the statement of the Proposition generate the $\mathrm{E}_{6}$-term as a module over $\mathbb{F}_{4}[(\Delta^{8})^{\pm 1},\overline{\kappa},\nu]/(\overline{\kappa}\nu)$.
\end{proof}
\begin{Remark}\label{Rmk_Org} Since  $\Delta^{8}$ is a permanent cycle in the HFPSS for $E_{C}^{hG_{24}}$, the HFPSS for $E_{C}^{hG_{24}}\wedge A_1$ is linear with respect to $\Delta^{8}$. Note that all $\overline{\kappa}$-free generators in the $\mathrm{E}_{7}$-term are of the form $(\Delta^{8})^{k} x$ where $k\in \Z$ and $x$ is one of the generators listed in Proposition \ref{E_7}. Then, by Corollary \ref{Organ}, these free $\overline{\kappa}$-families pair up so that each non-permanent $\overline{\kappa}$-family truncates one and only one permanent $\overline{\kappa}$-family. By $\Delta^{8}$-linearity, among these $64$ generators, only half of them are permanent cycles and the others support a differential. It reduces the problem into two steps: first identify all permanent $\overline{\kappa}$-families, then identify by which $\overline{\kappa}$-family they are truncated.
\end{Remark}
%%%%% 0--53
\begin{figure}[h!]
\begin{tikzpicture}[scale=0.24]
\clip(-1.5,-1.5) rectangle (55,5);
\draw[color=gray] (0,0) grid [step=1] (54,4);

\foreach \n in {0,4,...,54}
{
\def\nn{\n-0}
\node[below,scale=0.5] at (\nn,0) {$\n$};
}
\foreach \s in {0,2,...,4}
{\def\ss{\s-0};
\node [left,scale=0.5] at (-0.4,\ss,0){$\s$};
}
\draw [fill] ( 0.00, 0.00) circle [radius=0.1];
\draw [fill] (3,1) circle [radius=0.1];
\draw [fill] (6,2) circle [radius=0.1];
\draw [-] (0,0)--(3,1);
\draw [-] (3,1)--(6,2);
%\draw[->,red] (11.8,0.2)--(11.2,2.8);
\foreach \t in {0}
\foreach \s in {0,6}
{\def\ss{\s-0};
\draw [fill]  (\s-\t,\t) circle [radius=0.1];
\draw [fill] (\s+3-\t,1+\t) circle [radius=0.1];
\draw [-] (\s-\t,\t)--(\s+3-\t,1+\t);
\draw[fill] (\s+5-\t,1+\t) circle [radius=0.1];
\draw[fill] (\s+8-\t,2+\t) circle [radius=0.1];
\draw[-] (\s+5-\t,1+\t)--(\s+8-\t,2+\t); 
\draw [fill] (\s-\t+6,2+\t) circle [radius=0.1];
\draw [-] (\s+3-\t,1+\t)--(\s+6-\t,2+\t);
%\draw[fill] (\s+11-\t,3+\t) circle [radius=0.1];
%\draw[-] (\s+8-\t,2+\t)--(\s+11-\t,3+\t); 
\draw [-] (\s+5-\t,1+\t)--(\s-\t+6,2+\t);
}

\foreach \t in {0}
\foreach \s in {12,18}
{\def\ss{\s-0};
%\draw [fill]  (\s-\t,\t) circle [radius=0.1];
\draw [fill] (\s+3-\t,1+\t) circle [radius=0.1];
%\draw [-] (\s-\t,\t)--(\s+3-\t,1+\t);
\draw[fill] (\s+5-\t,1+\t) circle [radius=0.1];
\draw[fill] (\s+8-\t,2+\t) circle [radius=0.1];
\draw[-] (\s+5-\t,1+\t)--(\s+8-\t,2+\t); 
\draw [fill] (\s-\t+6,2+\t) circle [radius=0.1];
\draw [-] (\s+3-\t,1+\t)--(\s+6-\t,2+\t);
\draw[fill] (\s+11-\t,3+\t) circle [radius=0.1];
\draw[-] (\s+8-\t,2+\t)--(\s+11-\t,3+\t); 
\draw [-] (\s+5-\t,1+\t)--(\s-\t+6,2+\t);
}
\draw[fill] (0,2) circle [radius=0.1];
\draw[fill] (5,3) circle [radius=0.1];
\draw[fill] (30,2) circle [radius=0.1];
\draw[fill] (32,2) circle [radius=0.1];
\draw[fill] (36,2) circle [radius=0.1];
\draw[fill] (38,2) circle [radius=0.1];
\draw[fill] (42,2) circle [radius=0.1];
\draw[fill] (47,3) circle [radius=0.1];
\draw[fill] (48,2) circle [radius=0.1];
\draw[fill] (53,3) circle [radius=0.1];
\draw[fill] (48,0) circle[radius=0.1];
\draw[fill] (51,1) circle[radius =0.1];
\draw[-] (48,0)--(51,1);
\draw[-] (51,1)--(54,2);
\draw[fill] (54,2) circle[radius=0.1];
\draw[fill] (53,1) circle[radius=0.1];
\draw[-](53,1)--(54,2);
\end{tikzpicture}
\caption{The $\mathrm{E}_{7}$-term for $s\leq 3$ and $t-s\leq 54$ }
\end{figure}
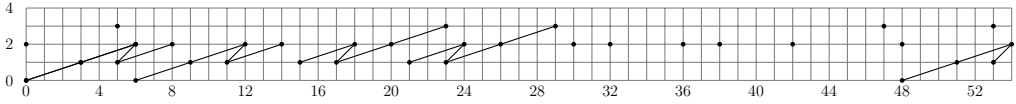
%%%%%%%%
\noindent

\begin{Proposition} The generators $$e[2,30], e[2,32], e[2,36], e[2,38], e[2,42], e[3,47], e[2,48], e[3,53]$$ are permanent cycles.
\end{Proposition}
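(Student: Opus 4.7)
The plan is a finite bookkeeping argument based on Corollary \ref{Organ}: for each of the eight generators $y\in\{e[2,30],e[2,32],e[2,36],e[2,38],e[2,42],e[3,47],e[2,48],e[3,53]\}$, I will rule out the existence of any non-trivial differential $d_r$ with $r\geq 7$ supported by $y$. This suffices because these classes live in the $\mathrm{E}_7$-term, and the spectral sequence collapses at the $\mathrm{E}_{24}$-term by Proposition \ref{Degen}.

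I apply Corollary \ref{Organ} to the regular pair $(G_{24},E_C)$ with periodicity class $\overline{\kappa}\in\mathrm{H}^4(G_{24},(E_C)_{24})$, so that $k=4$ in the notation of the corollary. Each $y$ above has cohomological filtration at most $3$ and is annihilated by $\nu$ (rather than by $\overline{\kappa}$) in view of Proposition \ref{E_7}, so $y$ generates a $\overline{\kappa}$-free tower of classes of cohomological filtration less than $4$. By part~$2$ of the corollary, if $y$ is not a permanent cycle, then there exists a unique pair $(r,l)$ with $l\geq 0$ and a unique $\overline{\kappa}$-free generator $x$ of cohomological filtration less than $4$ such that $d_r(y)=\overline{\kappa}^l x$. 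Comparing filtrations and stems (using $|\overline{\kappa}|=20$) yields
\[
r=\mathrm{filt}(x)-\mathrm{filt}(y)+4l,\qquad \mathrm{stem}(x)=\mathrm{stem}(y)-1-20l.
\]

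For each $y$ above, I would enumerate $l\geq 0$ and check, against the explicit list of $\overline{\kappa}$-free generators of the form $\Delta^{8k}\Delta^i e[a',b']$ with $k\in\Z$, $i\in\{0,2,4,6\}$, and $(a',b')$ drawn from Proposition \ref{E_7}, whether a candidate target $x$ exists in the required stem, and if so whether the filtration relation is compatible with $r\geq 7$. For instance, for $y=e[2,48]$ only $l=0$ gives a generator in the required stem $47$, namely $x=e[3,47]$, but this forces $r=1<7$; for every $l\geq 1$ the required residue $47-20l\pmod{24}$ does not appear among the residues of the stems $b'$ listed in Proposition \ref{E_7}. The seven other cases are handled by the same routine enumeration, using the fact that the stems $b'$ of Proposition \ref{E_7} cover only a small finite subset of residue classes modulo $24$.

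The work is purely computational; the main subtlety is being exhaustive when enumerating all $\Delta$-twisted generators $\Delta^{8k}\Delta^i e[a',b']$ (with $k$ ranging over all of $\Z$), so as not to overlook a possible target. Once this is done, the combination of the constraint $r\geq 7$, the filtration formula, and the sparsity of generator stems modulo $24$ eliminates every candidate, so each of the eight classes is forced to be a permanent cycle.
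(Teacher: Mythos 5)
Your strategy---reduce to ruling out differentials supported by the eight generators and then eliminate every candidate target by stem-and-filtration arithmetic---is genuinely different from the paper's, but it does not close: the arithmetic does not eliminate all candidates. Take your own test case $y=e[2,48]$. For $l=4$ the required target stem is $47-80=-33$, and $\Delta^{-2}e[1,15]=\Delta^{-8}\Delta^{6}e[1,15]$ is one of the $\Delta^{8k}$-twisted generators of Proposition \ref{E_7}, sitting in stem $15-48=-33$ and cohomological filtration $1$; the differential $d_{15}(e[2,48])=\overline{\kappa}^{4}\Delta^{-2}e[1,15]$ is therefore numerically admissible ($2+15=17=4\cdot4+1$). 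Your assertion that $47-20l\pmod{24}$ misses all generator residues for $l\geq 1$ is simply false: $47-80\equiv 15$ and $47-60\equiv 11 \pmod{24}$ both occur among the stems of Proposition \ref{E_7}. Worse, this candidate is not an artifact of careless bookkeeping that a finer count would remove: the translate $d_{15}(\Delta^{2}e[2,48])=\overline{\kappa}^{4}e[1,15]$ is an actual non-trivial differential in the spectral sequence for $A_1[01]$ and $A_1[10]$, and since the full set of generators $\{\Delta^{2m}e[a,b]\mid m\in\Z\}$ is invariant under the degree shift by $\Delta^{\pm2}$, no argument using only stems and filtrations can separate the permanent cycle $e[2,48]$ from the non-permanent cycle $\Delta^{2}e[2,48]$. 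The same obstruction appears for the other classes, e.g. $d_{11}(e[2,30])=\overline{\kappa}^{3}\Delta^{-2}e[1,17]$ and $d_{15}(e[2,38])=\overline{\kappa}^{4}\Delta^{-2}e[1,5]$ are numerically admissible. Sparseness alone cannot prove the proposition.

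The paper supplies the missing input through multiplicative structure rather than degree counting: using $d_{5}(\Delta)=\overline{\kappa}\nu$ and the relations $\nu^{j+1}e[a,b]=0$, each of the eight classes is identified with a Massey product $\langle\overline{\kappa},\nu,\nu^{j}e[a,b]\rangle$ with zero indeterminacy in the $\E_{6}$-term; the corresponding Toda bracket can be formed in homotopy because $\nu\overline{\kappa}=0$ in $\pi_{*}(E_{C}^{hG_{24}})$ and the needed $\nu$-annihilation relations hold in $\pi_{*}(tmf\wedge A_1)$ (imported via Corollary \ref{Cor_Compare}); Moss's convergence theorem then forces the bracket, hence the class, to be a permanent cycle. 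To salvage your approach you would need comparable external input of this kind; the filtration formula and the residue pattern of the stems are not enough.
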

\begin{proof} We give the proof for $e[2,30]$ and the other generators are proven in a similar manner. In the $\mathrm{E}_{6}$-term, the Massey product $\langle  \overline{\kappa},\nu,\nu^{2}e[0,0]\rangle$ can be formed. Since $d_{5}(\Delta) = \overline{\kappa}\nu$ and $\nu^3e[0,0] =0 \in \E_5$, we see that 
$$e[2,30] = \Delta\nu^2e[0,0]\in \langle \overline{\kappa},\nu,\nu^{2} e[0,0]\rangle.$$ The indeterminacy consists of $\ovk\E_6^{-2, 8} + \E_6^{0,26}\nu^2e[0,0]$, where $\E_6^{-2,8}$ is in the $\E_6$-term of the HFPSS for $E_C^{hG_{24}}\wedge A_1$ and $\E_6^{0,26}$ for $E_C^{hG_{24}}$. The latter are zero groups, hence the indeterminacy is zero. Thus, $$\langle \overline{\kappa},\nu,\nu^{2} e[0,0]\rangle = e[2,30].$$ At the level of the homotopy groups of $\pi_{*}(E_{C}^{hG_{24}}\wedge A_1)$ one can form the corresponding Toda bracket $\langle \overline{\kappa},\nu,\nu^{2} e[0,0]\rangle$ because $\nu\overline{\kappa} =0$ in $\pi_{*}(E_{C}^{hG_{24}})$ and inspection in $\pi_{*}(tmf\wedge A_1)$ tells us that $\nu^{3}e[0,0] =0$. Furthermore, all hypotheses of Moss's convergence theorem are verified. Therefore, $e[2,30]$ is a permanent cycle representing the Toda bracket $\langle e[0,0],\nu^{3},\overline{\kappa}\rangle$. For the sake of completeness, we record the Toda bracket expressions for the other elements $$\langle \overline{\kappa}, \nu, \nu e[1,5]\rangle = e[2,32],\ \langle \overline{\kappa},\nu, \nu^{2}e[0,6]\rangle = e[2,36],$$ $$\langle \overline{\kappa},\nu,\nu e[1,11]\rangle = e[2,38],\ \langle \overline{\kappa},\nu,\nu e[1,15] \rangle = e[2,42],$$ $$\langle \overline{\kappa},\nu, \nu^{2}e[1,17]\rangle = e[3,47], \ \langle \overline{\kappa}, \nu, \nu e[1,21]\rangle = e[2,48],$$ $$\langle \overline{\kappa}, \nu, \nu^{2}e[2,23]\rangle = e[3,53].$$
\end{proof}
\noindent
We have already identified $16$ out of $32$ permanent cycles. The next $16$ ones are not the same for different versions of $A_1$. The difference reflects the different behavior of the  $d_{2}$-differential in the ASS for different models of $A_1$ (see Proposition \ref{ASS_d_2_bis}). 
\begin{Proposition} In the HFPSS for all four versions of $A_1$, the following $12$ generators are permanent cycles :
$$\Delta^{2}e[0,0], \Delta^{2}e[1,5], \Delta^{2}e[0,6], \Delta^{2}e[1,11], \Delta^{2}e[1,15], \Delta^{2}e[1,17]$$
$$\Delta^{2}e[1,21], \Delta^{2}e[2,30], \Delta^{2}e[2,32], \Delta^{2}e[2,36],, \Delta^{2}e[2,42], \Delta^{2}e[3,47].$$
The remaining four permanent cycles for $A_{1}[00]$ and $A_{1}[11]$ are 
 $$\Delta^{2}e[1,23], \Delta^{2}e[2,38], \Delta^{2}e[2,48], \Delta^{2}e[3,53],$$ 
 whereas the remaining four permanent cycles for $A_{1}[10]$ and $A_{1}[01]$ are

$$\Delta^{4}e[1,15], \Delta^{4}e[0,0], \Delta^{4}e[1,5], \Delta^{4}e[2,30].$$ 

\end{Proposition}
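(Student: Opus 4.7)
The plan is to transport permanent cycles from the ASS for $tmf\wedge A_1$ to the HFPSS via the injection $\Theta$ of Corollary \ref{Cor_Compare}, exploiting the stem-matching between the ASS class $w_{2}^{k}e[s',t']$ and the HFPSS class $\Delta^{2k}e[s,t]$ (both have stem $48k+t'$ since $|w_{2}|=48$ and $|\Delta^{2}|=48$). By Remark \ref{Rmk_Org}, the problem reduces to identifying $16$ additional permanent generators beyond the $16$ at $\Delta^{0}$-shift already established; these must come from $\Delta^{2i}$-shifts for $i\in\{1,2,3\}$. For each non-zero $y\in\pi_{n}(tmf\wedge A_{1})$, the element $\Theta(y)\in\pi_{n}(E_{C}^{hG_{24}}\wedge A_{1})$ must be detected by some HFPSS class; inspecting the $\E_{7}$-term in the appropriate stem and filtration (using Proposition \ref{E_7}) then identifies the detector.

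For the $12$ common permanent cycles, I examine the ASS for $tmf\wedge A_1$ and argue that the corresponding $w_{2}e[s',t']$ classes (at stems matching the $12$ listed $\Delta^{2}e[s,t]$) are $d_{2}$-cycles regardless of $\lambda_{i,j}$, since Theorem \ref{ASS_d_2_bis} lists only four exceptional $d_{2}$-targets. A bidegree inspection of Figures \ref{tmf-A_1} and \ref{tmf-A_1-2}, combined with Proposition \ref{d_3-diff}, rules out higher differentials on these twelve classes. Each therefore survives to $\E_{\infty}$ and detects a non-trivial element of $\pi_{*}(tmf\wedge A_{1})$; via $\Theta$ this element is non-trivial in $\pi_{*}(E_{C}^{hG_{24}}\wedge A_{1})$, and in each case $\Delta^{2}e[s,t]$ is the unique candidate detector in $\E_{7}$, forcing it to be a permanent cycle.

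For the version-dependent classes I split by $\lambda_{i,j}$. When $\lambda_{i,j}=0$ (the versions $A_{1}[00]$ and $A_{1}[11]$), the four $d_{2}$-differentials of Theorem \ref{ASS_d_2_bis} vanish, so $w_{2}e[4,23]$, $w_{2}e[7,38]$, $w_{2}e[9,48]$, $w_{2}e[10,53]$ become $d_{2}$-cycles. Combined with Proposition \ref{Period} (the $w_{2}^{4}$-freeness) and the restrictions on higher differentials provided by Proposition \ref{d_3-diff} and Corollary \ref{Toda bracket}, these are permanent cycles in the ASS. Their $\Theta$-images land at stems $71,86,96,101$, where the unique $\E_{7}$-detectors are respectively $\Delta^{2}e[1,23]$, $\Delta^{2}e[2,38]$, $\Delta^{2}e[2,48]$, $\Delta^{2}e[3,53]$. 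When $\lambda_{i,j}=1$ (the versions $A_{1}[10]$ and $A_{1}[01]$), these $w_{2}$-multiples die, so the corresponding $\Delta^{2}e[\cdot,\cdot]$ cannot survive. Instead, following the chain of deductions in the proof of Proposition \ref{alternative}, the classes $w_{2}^{2}e[0,0]$, $w_{2}^{2}e[1,5]$, $w_{2}^{2}e[3,15]$, $w_{2}^{2}e[6,30]$ become permanent cycles in the ASS. Transferring via $\Theta$, their images land at stems $96,101,111,126$, uniquely detected by $\Delta^{4}e[0,0]$, $\Delta^{4}e[1,5]$, $\Delta^{4}e[1,15]$, $\Delta^{4}e[2,30]$ in the HFPSS.

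The main obstacle is verifying that the specific $w_{2}$- and $w_{2}^{2}$-multiples identified above survive all higher differentials in the ASS for $tmf\wedge A_{1}$. This requires combining: the $d_{3}$-information of Proposition \ref{d_3-diff}, the Toda bracket structure of Corollary \ref{Toda bracket}, the $\overline{\kappa}^{6}=0$ relation, and the $w_{2}^{4}$-freeness of Proposition \ref{Period}. One must also confirm the uniqueness of the HFPSS detector in each stem by inspecting the $\E_{7}$-term of Proposition \ref{E_7}; fortunately, in each of the $16$ stems under consideration the $\E_{7}$-term is concentrated in a single $\overline{\kappa}$-free orbit, leaving no ambiguity. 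The case analysis is somewhat intricate because eight of the sixteen orbits behave identically for all four versions of $A_1$, four behave one way for $\{A_{1}[00],A_{1}[11]\}$ and the opposite for $\{A_{1}[10],A_{1}[01]\}$, and the bookkeeping must respect the overall count of $32$ permanent cycles required by the horizontal vanishing line of Proposition \ref{Degen}.
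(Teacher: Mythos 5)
Your overall strategy---transporting survivors of the ASS for $tmf\wedge A_1$ into the HFPSS via the injection $\Theta$ and matching stems between $w_2^k e[s',t']$ and $\Delta^{2k}e[s,t]$---is exactly the paper's, and your identification of which ASS classes survive in each version (via the vanishing or non-vanishing of $\lambda_{i,j}$ and Theorem \ref{ASS_d_2_bis}) is correct. The stem bookkeeping ($71, 86, 96, 101$ versus $96, 101, 111, 126$) also checks out.

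The gap is your claim that ``in each of the $16$ stems under consideration the $\E_{7}$-term is concentrated in a single $\overline{\kappa}$-free orbit, leaving no ambiguity.'' This is false precisely where it matters most. In stem $96$ the $\E_7$-term of Proposition \ref{E_7} contains two $\overline{\kappa}$-free generators, $\Delta^{2}e[2,48]$ (filtration $2$, annihilator $(\nu)$) and $\Delta^{4}e[0,0]$ (filtration $0$, annihilator $(\nu^{3})$); likewise in stem $101$ one has both $\Delta^{2}e[3,53]$ and $\Delta^{4}e[1,5]$. These are exactly the two stems in which the answer depends on the version of $A_1$, so a pure stem count cannot decide which of the two candidates is the permanent cycle. (The same non-uniqueness occurs in stems $48$ and $53$ for the twelve common classes, but there it is harmless because both candidates survive, as forced by the rank-two groups $\pi_{48}$ and $\pi_{53}$ of $tmf\wedge A_1$ modulo $(\overline{\kappa},\nu)$.) To close the gap in stems $96$ and $101$ you must bring in the $\nu$-module structure, as the paper does: $w_{2}e[9,48]$ and $w_{2}e[10,53]$ are annihilated by $\nu$ and so can only be detected by the candidates with annihilator $(\nu)$, namely $\Delta^{2}e[2,48]$ and $\Delta^{2}e[3,53]$; whereas $\nu w_{2}^{2}e[0,0]$ and $\nu w_{2}^{2}e[1,5]$ are nonzero and not divisible by $\overline{\kappa}$, which forces their carriers to be $\Delta^{4}e[0,0]$ and $\Delta^{4}e[1,5]$. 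Without this extra step the case distinction between the two families of versions is not actually established.
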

\begin{proof} The graded associated object of the groups $\pi_{*}(tmf\wedge A_1)/(\overline{\kappa},\nu)$, with respect to the Adams filtration, in the following stems are given in the following table:
$$
\begin{tabular}{c | c | c | c | c |c | c|c|c|c|c|c|c }
Stem&48&53&54&59&63&65&69&78&80&84&90&95\\
\hline
Value&$\F\oplus\F$&$\F\oplus\F$&$\F$&$\F$&$\F$&$\F$&$\F$&$\F$&$\F$&$\F$&$\F$&$\F$ \\	
\end{tabular}
$$
In view of Corollary \ref{Cor_Compare} and Corollary \ref{Organ}, inspection in the $\mathrm{E}_{7}$-term shows that the following $12$ classes are permanent cycles in the HFPSS for all four versions of $A_1$.
$$\Delta^{2}e[0,0], \Delta^{2}e[1,5], \Delta^{2}e[0,6], \Delta^{2}e[1,11], \Delta^{2}e[1,15], \Delta^{2}e[1,17],$$
$$\Delta^{2}e[1,21], \Delta^{2}e[2,30], \Delta^{2}e[2,32], \Delta^{2}e[2,36],, \Delta^{2}e[2,42], \Delta^{2}e[3,47].$$
Next, in the ASS for $tmf\wedge A_{1}[00]$ and $tmf\wedge A_{1}[11]$, there is no differential until stem $96$. Again, inspection in the $\E_{2}$-term shows that $$\pi_{71}(tmf\wedge A_{1}[00])/(\overline{\kappa},\nu)=\pi_{71}(tmf\wedge A_{1}[11])/(\overline{\kappa},\nu) \cong \F$$ and 
$$\pi_{86}(tmf\wedge A_{1}[00])/(\overline{\kappa},\nu)=\pi_{86}(tmf\wedge A_{1}[11])/(\overline{\kappa},\nu) \cong \F$$
It follows that the classes $\Delta^{2}e[1,23]$ and $\Delta^{2}e[2,38]$ are permanent cycles in the HFPSS for $E_{C}^{hG_{24}}\wedge A_{1}[00]$ and $E_{C}^{hG_{24}}\wedge A_{1}[11]$.\\\\\
\noindent
On the other hand, in the ASS for $tmf\wedge A_{1}[10]$ and $tmf\wedge A_{1}[01]$, Lemma \ref{ASS_d_2_bis} and $g$-linearity imply that $d_{2}(g^{2}w_{2}e[4,23]) = g^{4}e[6,30]$ and $d_{2}(g^{2}w_{2}e[7,38])=g^{6}e[1,5]$. Hence, $w_{2}^{2}e[3,15]$ and $w_{2}^{2}e[6,30]$ survive to the $\E_{\infty}$-term, by sparseness. It then follows that $\Delta^{4}e[1,15]$ and $\Delta^{4}e[2,30]$ are permanent cycles in the HFPSS for $A_{1}[10]$ and $A_{1}[01]$.\\\\
\noindent
For $A_{1}[00]$ and $A_{1}[11]$, the classes $w_{2}e[9,48]$ and $w_{2}e[10,53]$ do not support differentials, by Lemma \ref{ASS_d_2_bis}, hence persist to the $E_{\infty}$-term, by sparseness. They are also not divisible neither by $\overline{\kappa}$ nor by $\nu$. Lastly, both $w_{2}e[9, 48]$ and $w_{2}e[10, 53]$ are annihilated by $\nu$. The only classes in the HFPSS that match those properties are $\Delta^{2}e[2,48]$ and $\Delta^{2}e[3,53]$, respectively. Thus, the latter are the last two of the $32$ permanent cycles in the HFPSS for $A_{1}[00]$ and $A_{1}[11]$.\\\\
\noindent
For $A_{1}[10]$ and $A_{1}[01]$, the classes $w_{2}e[9,48]$ and $w_{2}e[10,53]$ support nontrivial $d_{2}$ differentials. Thus $w_{2}^{2}e[0,0]$ and $w_{2}^{2}e[1,5]$ survive to the $E_{\infty}$-term. For degree reasons, both $w_{2}^{2}e[0,0]$ and $w_{2}^{2}e[1,5]$ are not divisible either by $\overline{\kappa}$ or by $\nu$, and moreover their multiples by $\nu$ are not divisible by $\overline{\kappa}$. In the HFPSS for $E_C^{hG_{24}}\wedge A_1[10]$ and $E_C^{hG_{24}}\wedge A_1[10]$, $\Delta^{4}e[0,0]$ and $\Delta^{4}e[1,5]$ are the only classes verifying the respective properties, hence are permanent cycles. 

\end{proof}
\noindent 
Having determined all permanent $\overline{\kappa}$-families, we consider differentials. We recall, from Remark \ref{Rmk_Org}, that each permanent $\overline{\kappa}$-family is truncated by one and only one non-permanent $\overline{\kappa}$-family. We can proceed as follows: take a permanent cycle, say $x$; then locate all non-permanent classes that can support a differential killing $\overline{\kappa}^{n}x$ for some $n\leq 6$. Precisely, one of the following situations will happen:

1) There is no ambiguity: i.e., there is only one generator that can support a differential killing $\overline{\kappa}^{n}x$ for some $n\leq 6$, so this differential occurs.

2) There are two generators that can support a differential killing multiples of $x$ by different powers of $\overline{\kappa}$. In order to decide, we inspect the $\overline{\kappa}$-exponent of $x$ using the ASS.

3) There are two generators that can support a differential killing the multiple of $x$ by the same power of $\overline{\kappa}$. In this case, inspection on the $\overline{\kappa}$-exponent of $x$ does not help. We will treat each of the particularity case by case. Some Toda brackets will be involved to resolve these cases.\\\\
\noindent
A permanent cycle is said to be of type 1, 2, 3 respectively if its $\overline{\kappa}$-family is as in the situation 1, 2, 3 above respectively. The HFPSS for different versions of $A_1$ do not behave in the same manner. It turns out the HFPSS for the versions $A_{1}[10]$ and $A_{1}[01]$ behave in the same way and $A_1[00]$ and $A_1[11]$ in the same way. We will treat the HFPSS for $A_{1}[10]$ and $A_{1}[01]$ in detail and then point out the changes needed for $A_{1}[00]$ and $A_{1}[11]$.\\\\
\noindent
\textbf{Differentials (continued) for $A_{1}[01]$ and $A_{1}[10]$.} The reader is invited to follow the discussion of the differentials using Figures (\ref{0-48}) to (\ref{144-197}) below.

\underline{The $d_{9}$-differentials}

\begin{Proposition} There are the following $d_{9}$-differentials:
 \begin{itemize}
  \item[(1)] $d_{9}(\Delta^{2}e[1,23]) = \overline{\kappa}^{2}e[2,30]$
  \item[(2)] $d_{9}(\Delta^{6}e[1,23]) = \overline{\kappa}^{2}\Delta^{4}e[2,30].$
 \end{itemize}
\end{Proposition}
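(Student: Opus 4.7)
The plan is to establish both differentials by bidegree enumeration on the $E_r$-page.

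First I would observe that, for versions $A_1[10]$ and $A_1[01]$, both $\Delta^2 e[1,23]$ and $\Delta^6 e[1,23]$ appear in the explicit list of $32$ non-permanent generators established in the preceding discussion, so each must support a nontrivial differential $d_r$. The length $r$ is odd (the $E_2$-term is concentrated in even $t$, so all even-length differentials vanish) and is at most $21$ thanks to the horizontal vanishing line of Proposition \ref{Degen}. Using $d_r : (s, t-s) \to (s+r, t-s-1)$, the target of $d_r(\Delta^2 e[1,23])$ lies in bidegree $(1+r, 70)$ and that of $d_r(\Delta^6 e[1,23])$ in $(1+r, 166)$.

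Next I would enumerate the generators of $E_7$ at those bidegrees. By Proposition \ref{E_7}, every $\overline{\kappa}$-free class has the form $\overline{\kappa}^k\,\Delta^{2i}\,e[s_0, t_0-s_0]$ with $i \in \{0,1,2,3\}$, $(s_0, t_0-s_0)$ among the $16$ listed pairs, and $k \geq 0$. Bidegree matching yields the system
\[
s_0 + 4k = 1+r, \qquad (t_0-s_0) + 48 i + 20 k = 70 \text{ (resp.\ $166$)}.
\]
Over the finite list of $16$ generator bidegrees, this system admits a unique solution in each case: $(r, i, k) = (9, 0, 2)$, producing the target $\overline{\kappa}^2 e[2,30]$ for (1); and $(r, i, k) = (9, 2, 2)$, producing $\overline{\kappa}^2 \Delta^4 e[2,30]$ for (2). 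A parallel but smaller enumeration of possible sources of incoming $d_{r'}$-differentials with $r' < 9$ shows that neither target is hit by a prior differential, so both survive to page $9$, establishing the claimed $d_9$'s.

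The principal obstacle is the bookkeeping in the bidegree enumeration; although mechanical, it must be carried out carefully over the $16$ generators, the four values of $\Delta^{2i}$, and the relation $\overline{\kappa}\nu = 0$, which keeps the list of candidates short. An alternative derivation of (2) from (1) would be to multiply by $\Delta^4$: this requires only that $\Delta^4$ be a $d_9$-cycle in the HFPSS for $\EG24$, which is standard from Bauer's analysis, in which the first differential on $\Delta^4$ has length $11$.
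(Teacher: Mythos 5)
There is a genuine gap: the claimed uniqueness of the bidegree enumeration fails. Take part (2). The source $\Delta^{6}e[1,23]$ sits in $(s,t-s)=(1,167)$, so a $d_{r}$ lands in $(1+r,166)$. Your own system $(t_0-s_0)+48i+20k=166$, $s_0+4k=1+r$ has, besides $(r,i,k)=(9,2,2)$, the solution $(r,i,k)=(17,1,4)$ coming from the generator $e[2,38]$: the class $\overline{\kappa}^{4}\Delta^{2}e[2,38]$ lies in $(2+16,38+48+80)=(18,166)$ and is nonzero on the $\E_{7}$-term (the annihilator of $\Delta^{2}e[2,38]$ is $(\nu)$, so it is $\overline{\kappa}$-free). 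For part (1) the same phenomenon appears once you remember that the $\E_{7}$-term of Proposition \ref{E_7} is a module over $\FF_4[(\Delta^{8})^{\pm1},\overline{\kappa},\nu]/(\overline{\kappa}\nu)$, so $\Delta^{8m}$-translates with $m<0$ must be included: $\overline{\kappa}^{4}\Delta^{-2}e[2,38]$ sits in $(18,70)$ and is a second candidate target, via a $d_{17}$, for the tower on $\Delta^{2}e[1,23]$. So pure bidegree counting of targets does not pin down the differentials; to discard the $e[2,38]$-candidates you need the extra structural input of Remark \ref{Rmk_Org} (the $32$ non-permanent $\overline{\kappa}$-families truncate the $32$ permanent ones bijectively, so no non-permanent family is ever a target) together with the fact that $\Delta^{2}e[2,38]$ and $\Delta^{6}e[2,38]$ are \emph{not} in the list of permanent generators for $A_1[10]$ and $A_1[01]$. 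Your write-up never invokes this, and without it the argument does not close.

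Note that the paper runs the enumeration in the opposite direction, which is why it avoids this ambiguity: it fixes the \emph{permanent} target towers $e[2,30]$ and $\Delta^{4}e[2,30]$ (established as permanent earlier) and enumerates the non-permanent generators that could support a differential truncating them; that list is a singleton ("type $1$"), namely $\Delta^{2}e[1,23]$, resp.\ $\Delta^{6}e[1,23]$, via $d_{9}$. Your source-based enumeration can be repaired by adding the permanence restriction on targets, but as stated the uniqueness claim is false. (Your closing alternative, deducing (2) from (1) by $\Delta^{4}$-multiplication, is a reasonable idea but is itself only as good as the unproved assertion that $\Delta^{4}$ survives to the $\E_{9}$-page of the HFPSS for $E_C^{hG_{24}}$ as a $d_{9}$-cycle; in the paper's framework the cleaner statement to use is $\Delta^{8}$-linearity, which relates (2) to the differential on $\Delta^{-2}e[1,23]$ rather than to (1).)
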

\begin{proof} The classes $e[2,30]$ and $\Delta^{4}e[2,30]$ are of type $1$ and the only possibilities are $d_{9}(\Delta^{2}e[1,23]) = \overline{\kappa}^{2}e[2,30]$ and $d_{9}(\Delta^{6}e[1,23]) = \overline{\kappa}^{2}\Delta^{4}e[2,30]$, respectively. 
\end{proof}
%\begin{Corollary} The Toda bracket $\langle \nu,\overline{\kappa}^{2}, e[2,30]\rangle$ is nontrivial and is represented by $e[2,74]$ which is $\nu\Delta^{2}e[2,23]$ in the $\mathrm{E}_{7}$-term.  The Toda bracket $\langle\overline{\kappa}^{2},\nu, \Delta^{4}e[2,30]\rangle$ is nontrivial and is represented by $e[2,170]$ which is $\nu\Delta^{6}e[2,23]$ in the $\mathrm{E}_{7}$-term. 
%\end{Corollary}
%\begin{Remark} It is worth pointing out that $\nu\Delta^{2}e[2,23]$ is not a multiple of $\nu$ in the homotopy groups because $\Delta^{2}e[2,23]$ is not permanent cycle.
%\end{Remark}
\underline{The $d_{15}$-differentials}

\begin{Proposition} There are the following $d_{15}$-differentials:
 \begin{itemize}
   \item[(1)] $d_{15}(\Delta^{2}e[2,38]) = \overline{\kappa}^{4}e[1,5]$
   \item[(2)] $d_{15}(\Delta^{2}e[2,48]) = \overline{\kappa}^{4}e[1,15]$
   \item[(3)] $d_{15}(\Delta^{6}e[2,38]) = \overline{\kappa}^{4}\Delta^{4}e[1,5]$
   \item[(4)] $d_{15}(\Delta^{6}e[2,48]) = \overline{\kappa}^{4}\Delta^{4}e[1,15].$
   \end{itemize}
\end{Proposition}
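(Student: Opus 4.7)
The plan is to combine Theorem \ref{ASS_d_2_bis} (specialised to $\lambda_{i,j}=1$, i.e., to $A_1[01]$ and $A_1[10]$) with the injectivity of the comparison $\Theta$ from Corollary \ref{Cor_Compare}, mirroring the strategy used for the $d_9$-differentials: for each permanent cycle in the HFPSS, I would detect the $\ovk$-power at which its $\ovk$-family dies by transporting the information from the Adams spectral sequence for $tmf\wedge A_1$ via $\Theta$.

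First, I would record the two $d_2$-differentials $d_2(w_2e[7,38]) = g^4 e[1,5]$ and $d_2(w_2e[9,48]) = g^4 e[3,15]$ from Theorem \ref{ASS_d_2_bis}. Since $w_2^2$ is a $d_2$-cycle (the Leibniz rule gives $d_2(w_2^2)=2w_2 d_2(w_2)\equiv 0$ mod $2$), applying $w_2^2$ to the above relations yields the companion differentials $d_2(w_2^3e[7,38]) = g^4 w_2^2 e[1,5]$ and $d_2(w_2^3e[9,48]) = g^4 w_2^2 e[3,15]$. Inspection of the $\E_\infty$-page of the ASS for $tmf\wedge A_1$ (Figures \ref{tmf-A_1} and \ref{tmf-A_1-2}) reveals no class of strictly higher Adams filtration in the four relevant stems, so the four elements of $\pi_*(tmf\wedge A_1)$ targeted by these differentials really vanish. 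Applying $\Theta$, the four corresponding elements of $\pi_*(E_C^{hG_{24}}\wedge A_1)$---namely the elements detected by the permanent cycles $\ovk^4 e[1,5]$, $\ovk^4 e[1,15]$, $\ovk^4\Delta^4 e[1,5]$ and $\ovk^4\Delta^4 e[1,15]$ of the HFPSS---also vanish.

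Since each of these four HFPSS classes is a non-zero permanent cycle representing zero in homotopy, each must be the boundary of some differential. For each target I would enumerate the non-permanent generators of the $\E_7$-term from Proposition \ref{E_7} having the appropriate bidegree. The targets sit at $(s,t-s) = (17,85)$, $(17,95)$, $(17,181)$ and $(17,191)$; modulo $\Delta^8$-linearity, the only candidate sources are, respectively, $\Delta^2e[2,38]$, $\Delta^2e[2,48]$, $\Delta^6e[2,38]$ and $\Delta^6e[2,48]$, each in cohomological filtration $2$. This forces each of the four differentials to have length $17-2 = 15$, which is the content of the proposition. The configuration is type-$1$ throughout, so no further ambiguity arises.

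The main subtlety will be the Adams-filtration shift between the ASS for $tmf\wedge A_1$ and the HFPSS: the ASS class $e[3,15]$ lives in Adams filtration $3$, while the corresponding HFPSS class $e[1,15]$ lives in filtration $1$, even though they are identified under $\Theta$. I would need to verify case by case that the vanishing of each ASS boundary class really translates into the vanishing of the HFPSS element read off above, and not merely of some higher-filtration companion. This is a bookkeeping exercise with the ASS charts and with the description of $\H^*(G_{24},(E_C)_*(A_1))$ given in Subsection \ref{HFPSS-E_2-term}; once dispatched, the enumeration of candidate sources completes the proof.
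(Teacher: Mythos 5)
Your overall strategy --- pin down the exact power of $\ovk$ at which each family dies by transporting the $d_2$-differentials of Theorem \ref{ASS_d_2_bis} through $\Theta$, and then locate the source by a bidegree count --- is viable and genuinely different from the paper's proof, which is a pure counting argument: the paper observes that $e[1,5]$, $e[1,15]$, $\Delta^4e[1,5]$, $\Delta^4e[1,15]$ are of type $1$, i.e.\ only one non-permanent generator can truncate their $\ovk$-families at \emph{any} power of $\ovk$, so the differential is forced without knowing in advance that the truncation happens at $\ovk^4$. However, your enumeration of candidate sources is incomplete in a way that breaks part (2). For the target $\ovk^4e[1,15]$ at $(s,t-s)=(17,95)$, the generators in stem $96$ are not only $\Delta^2e[2,48]$ in filtration $2$ but also $\Delta^4e[0,0]$ in filtration $0$, which could hit the same class by a $d_{17}$; this is not a phantom possibility, since for $A_1[00]$ and $A_1[11]$ it is precisely $d_{17}(\Delta^4e[0,0])=\ovk^4e[1,15]$ that occurs. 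Your ASS input shows only that $\ovk^4e[1,15]$ dies, not which of the two candidates kills it. To conclude you must invoke the fact, established in the preceding proposition for $A_1[01]$ and $A_1[10]$, that $\Delta^4e[0,0]$ is a permanent cycle (because $w_2^2e[0,0]$ survives the ASS and is divisible by neither $\ovk$ nor $\nu$). The analogous extra candidate $\Delta^8e[0,0]$ for the target at $(17,191)$ in part (4) is disposed of by $\Delta^8$-linearity, but it should still be named.

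A second, more minor issue: your claim that the $\E_\infty$-page of the ASS has no class of strictly higher Adams filtration in the relevant stems cannot be read off Figures \ref{tmf-A_1} and \ref{tmf-A_1-2}, which stop at stem $101$. In stems $181$ and $191$ the $\E_2$-term also contains $g^8e[4,21]$ (filtration $36$) and $g^9e[2,11]$ (filtration $38$), both above the filtrations $33$ and $35$ of your target classes. They do die, but you need to say why --- for instance, they are divisible by $g^6$, and $\ovk^6=0$ in $\pi_*(tmf)$ forces every $g^6$-multiple of a permanent cycle to vanish on the $\E_\infty$-page. With these two repairs the argument goes through.
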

\begin{proof} It is readily checked from the chart that all $e[1,5]$, $e[1,15]$, $\Delta^{4}e[1,5]$, $\Delta^{4}e[1,15]$ are of type $1$ and their $\overline{\kappa}$-family is truncated as indicated in the proposition.
\end{proof}

\underline{The $d_{17}$-differentials}

\begin{Proposition} \phantomsection \label{d_17}There are the following $d_{17}$-differentials:
 \begin{itemize} 
    \item[(1)] $d_{17}(\Delta^{2}e[3,53]) = \overline{\kappa}^{5}e[0,0]$
    \item[(2)] $d_{17}(\Delta^{4}e[0,6]) = \overline{\kappa}^{4}e[1,21]$
    \item[(3)] $d_{17}(\Delta^{4}e[1,17]) = \overline{\kappa}^{4}e[2,32]$
    \item[(4)] $d_{17}(\Delta^{4}e[1,21]) = \overline{\kappa}^{4}e[2,36]$
     \item[(5)] $d_{17}(\Delta^{4}e[2,32]) = \overline{\kappa}^{4}e[3,47]$
     \item[(6)] $d_{17}(\Delta^{6}e[0,6]) = \overline{\kappa}^{4}\Delta^{2}e[1,21]$
     \item[(7)]$d_{17}(\Delta^{6}e[1,17]) = \overline{\kappa}^{4}\Delta^{2}e[2,32]$
     \item[(8)]$d_{17}(\Delta^{6}e[1,21])=\overline{\kappa}^{4}\Delta^{2}e[2,36]$
    \item[(9)]$d_{17}(\Delta^{6}e[2,32]) = \overline{\kappa}^{4}\Delta^{2}e[3,47]$
    \item[(10)]$d_{17}(\Delta^{6}e[3,53]) = \overline{\kappa}^{5}\Delta^{4}e[0,0]$
    
    \item[(11)] $d_{17}(\Delta^{4}e[1,23]) = \overline{\kappa}^{4}e[2,38]$
   
    \item[(12)] $d_{17}(\Delta^{4}e[2,38]) = \overline{\kappa}^{4}e[3,53]$
    \item[(13)] $d_{17}(\Delta^{6}e[0,0]) = \overline{\kappa}^{4}\Delta^{2}e[1,15]$
    
    \item[(14)]$d_{17}(\Delta^{6}e[1,15]) = \overline{\kappa}^{4}\Delta^{2}e[2,30].$

 \end{itemize}
\end{Proposition}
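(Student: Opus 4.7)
The overall strategy is governed by Corollary \ref{Organ} together with the $\Delta^8$-linearity of the HFPSS. Of the $64$ $\overline{\kappa}$-free generators of $\E_7$ described in Proposition \ref{E_7}, exactly $32$ are permanent cycles (identified in the preceding propositions), so the remaining $32$ each support exactly one differential truncating a unique permanent $\overline{\kappa}$-tower. Previous propositions have already disposed of the $d_9$- and $d_{15}$-truncations, so after the $\E_{16}$-page the residual pairings are forced to be realised by $d_{17}$-differentials. In particular, $d_r$ for $r\in\{11,13\}$ is trivial on the listed sources for bidegree reasons, so the only remaining differential available is $d_{17}$.

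I would process the $14$ assertions by matching each target $\overline{\kappa}^k(\text{permanent cycle})$ to the set of $\overline{\kappa}$-free generators in cohomological filtration $s-16$ lying in the appropriate total degree. For most of the items, e.g. $d_{17}(\Delta^{4}e[1,17]) = \overline{\kappa}^{4}e[2,32]$, $d_{17}(\Delta^{4}e[2,32]) = \overline{\kappa}^{4}e[3,47]$, $d_{17}(\Delta^{4}e[1,21]) = \overline{\kappa}^{4}e[2,36]$, and similarly for the $\Delta^6$-analogues (items (6)--(9)), there is a unique remaining non-permanent generator in the correct bidegree, so the differential is forced (these are ``type 1'' in the paper's terminology). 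Items (11), (12), (13), (14), involving the pairings between $\Delta^{2k}e[0,0]$, $\Delta^{2k}e[1,15]$, $\Delta^{2k}e[1,23]$, $\Delta^{2k}e[2,30]$, $\Delta^{2k}e[2,38]$, $\Delta^{2k}e[3,53]$, are more delicate, because two candidate $\overline{\kappa}$-towers can \emph{a priori} interact.

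The resolution of the ambiguous cases would be done via the comparison homomorphism
$$\Theta : \mathbb{W}(\mathbb{F}_4)\otimes_{\Z_2}\pi_*(tmf\wedge A_1) \rightarrow \pi_*(E_C^{hG_{24}}\wedge A_1)$$
of \eqref{map_compa}. By Corollary \ref{Cor_Compare}, $\Theta$ is injective, and stays injective modulo $(\overline{\kappa},\nu)$. In the ASS for $tmf\wedge A_1[10]$ (and $A_1[01]$), the $d_2$-differentials of Theorem \ref{ASS_d_2_bis} together with the $d_3$-differentials of Proposition \ref{d_3-diff} force $w_2^2 e[0,0]$ and $w_2^2 e[1,5]$ to survive, while $w_2^2 e[3,15]$ and $w_2^2 e[6,30]$ also persist to $\E_\infty$; these survivors are exactly the permanent cycles $\Delta^4 e[0,0]$, $\Delta^4 e[1,5]$, $\Delta^4 e[1,15]$, $\Delta^4 e[2,30]$. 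Reading the $\overline{\kappa}$-exponent of their lifts in $\pi_*(tmf\wedge A_1)$ against the $g^k$-periodicity in the ASS (bounded above by $g^5$ because $g^6$ is annihilated), one obtains the precise power of $\overline{\kappa}$ at which each of these towers terminates, and this pinpoints the differentials (1), (2), (3), (4), (10). The remaining items (5), (11), (12) are then forced by $\Delta^2$-linearity combined with the now-resolved ambiguities.

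The main obstacle I anticipate is item (1), $d_{17}(\Delta^2 e[3,53]) = \overline{\kappa}^5 e[0,0]$, because it pairs the bottom-cell tower of the spectrum with a high-filtration class and, a priori, could be confused with the $\Delta^6 e[3,53]$ tower. To settle this I would invoke the Toda-bracket description from Corollary \ref{Toda bracket} together with Moss's convergence theorem: the class $\Delta^2 e[3,53]$ is the unique lift of (an element detecting) $\langle g^5, e[9,48], \nu\rangle$ under $\Theta$, and the fact that every representative of this bracket is $g$-divisible at the ASS level forces the $d_{17}$-target on the $\Theta$-image to be $\overline{\kappa}^5 e[0,0]$ rather than a $\Delta^4$-multiple. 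Combined with $\Delta^8$-linearity, this simultaneously fixes item (10). A parallel juggling argument, using that $e[2,38]$ and $e[3,53]$ lie in Toda brackets of the form $\langle \overline{\kappa},\nu,\nu e[1,11]\rangle$ and $\langle \overline{\kappa},\nu,\nu^2 e[1,23]\rangle$ respectively, handles (11) and (12). All other items then follow either by direct inspection or by multiplication by $\Delta^4$.
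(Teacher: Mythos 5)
Your overall framework -- pairing the $32$ non-permanent $\overline{\kappa}$-free towers with the $32$ permanent ones via Corollary \ref{Organ} and $\Delta^{8}$-linearity, then locating sources by bidegree -- is the same as the paper's, and your identification of the unambiguous ("type 1") cases covers most of items (1)--(10) correctly. However, there are three concrete problems. First, you assert that items (5), (11), (12) "are forced by $\Delta^{2}$-linearity." The spectral sequence is \emph{not} $\Delta^{2}$-linear beyond the $\E_{5}$-page: only $\Delta^{8}$ is a permanent cycle, and indeed $d_{17}(\Delta^{2}e[3,53])=\overline{\kappa}^{5}e[0,0]$ while $d_{19}(\Delta^{4}e[3,53])=\overline{\kappa}^{5}e[2,48]$ (Proposition \ref{d_19}), so multiplying a $d_{17}$-source by $\Delta^{2}$ can change both the length and the target of the differential. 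The paper instead settles (11) and (12) by observing that the only competing truncation of the towers on $e[2,38]$ and $e[3,53]$ would be a $d_{25}$, which is excluded by the horizontal vanishing line of Proposition \ref{Degen}. Second, your blanket claim that after disposing of $d_{9}$ and $d_{15}$ "the residual pairings are forced to be realised by $d_{17}$" is false: $d_{19}$- and $d_{23}$-differentials remain and truncate other towers, so each case must still be matched individually.

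The most serious gap is item (13). There, $\Delta^{2}e[1,15]$ is of "type 3": the two candidate sources, $\Delta^{6}e[0,0]$ (via $d_{17}$) and $\Delta^{4}e[2,48]$ (via $d_{15}$), would kill the \emph{same} class $\overline{\kappa}^{4}\Delta^{2}e[1,15]$, so your proposed method of reading off the $\overline{\kappa}$-exponent from $\pi_{*}(tmf\wedge A_1)$ through $\Theta$ cannot distinguish them by construction. The paper resolves this by a global bookkeeping contradiction: if $d_{15}(\Delta^{4}e[2,48])=\overline{\kappa}^{4}\Delta^{2}e[1,15]$, then the permanent tower on $e[1,23]$ could only be truncated by $d_{25}(\Delta^{6}e[0,0])=\overline{\kappa}^{6}e[1,23]$, contradicting collapse at $\E_{24}$. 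Item (14) is likewise left unaddressed in your write-up; it requires ruling out a competing $d_{9}$ on $\Delta^{4}e[1,23]$, which follows only once (11) is established. Your Toda-bracket discussion of item (1) is unnecessary (the target $e[0,0]$ is of type 1, and no $\overline{\kappa}$-power of $e[0,0]$ lies in the correct total degree to be hit from $\Delta^{6}e[3,53]$), and Corollary \ref{Toda bracket} is actually needed later, for the $d_{19}$-differential on $\Delta^{6}e[1,5]$, not here.
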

\begin{proof} 
\begin{itemize}
  \item[(1)-(10)] All of the generators of 

  $$e[0,0], e[1,21], e[2,32], e[2,36], e[3,47] $$
   $$\Delta^{2}e[1,21], \Delta^{2}e[2,32], \Delta^{2}e[2,36], \Delta^{2}e[3,47], \Delta^{4}e[0,0] $$
  are of type $1$.
  \item[(11)] $e[2,38]$ is of type $2$. The differentials that can truncate its $\overline{\kappa}$-family are $d_{17}(\Delta^{4}e[1,23]) = \overline{\kappa}^{4}e[2,38]$ and $d_{25}(\Delta^{6}e[1,15]) = \overline{\kappa}^{6}e[2,38]$. The latter can not happen because the spectral sequence collapses at the $\mathrm{E}_{24}$-term. Therefore, one must have that $d_{17}(\Delta^{4}e[1,23]) = \overline{\kappa}^{4}e[2,38]$.
   \item[(12)] $e[3,53]$ is of type $2$. Its $\overline{\kappa}$-family can be truncated by $d_{17}(\Delta^{4}e[2,38]) = \overline{\kappa}^{4}e[3,53]$ or $d_{25}(\Delta^{6}e[2,30])=\overline{\kappa}^{6}e[3,53]$. As above, there can not be any $d_{25}$-differential in the spectral sequence. Hence, one must have that $d_{17}(\Delta^{4}e[2,38]) = \overline{\kappa}^{4}e[3,53]$.
  \item[(13)] $\Delta^{2}e[1,15]$ is of type $3$. In its $\overline{\kappa}$-family, only $\overline{\kappa}^{4}\Delta^{2}e[1,15]$ can be a target of differentials, $d_{17}(\Delta^{6}e[0,0]) = \overline{\kappa}^{4}\Delta^{2}e[1,15]$ and $d_{15}(\Delta^{4}e[2,48]) = \overline{\kappa}^{4}\Delta^{2}e[1,15]$. However, if $d_{15}(\Delta^{4}e[2,48]) = \overline{\kappa}^{4}\Delta^{2}e[1,15]$ then the only class that can truncate the $\overline{\kappa}$-family of $e[1,23]$ is $\Delta^{6}e[0,0]$ and by a $d_{25}$-differential: $d_{25}(\Delta^{6}e[0,0]) = \overline{\kappa}^{6}e[1,23]$. This contradicts the fact that the spectral sequence collapses at the $\mathrm{E}_{24}$-term. Thus, one must have that $d_{17}(\Delta^{6}e[0,0]) = \overline{\kappa}^{4}\Delta^{2}e[1,15]$.
 
   \item[(14)] $\Delta^{2}e[2,30]$ is of type 2. Its $\overline{\kappa}$-family can be truncated by a $d_{9}$-differential on $\Delta^{4}e[1,23]$ or by  a $d_{17}$-differential on $\Delta^{6}e[1,15]$. However, the former possibility can not occur because of part $(11)$. Therefore, $d_{17}(\Delta^{6}e[1,15]) = \overline{\kappa}^{4}\Delta^{2}e[2,30]$.
  
\end{itemize}
\end{proof}
\underline{The $d_{19}$-differentials}
\begin{Proposition} \phantomsection \label{d_19}There are the following $d_{19}$-differentials:
   \begin{itemize}
       \item[(1)] $d_{19}(\Delta^{4}e[1,11])=\overline{\kappa}^{5}e[0,6]$
       \item[(2)] $d_{19}(\Delta^{4}e[3,47])=\overline{\kappa}^{5}e[2,42]$
        \item[(3)] $d_{19}(\Delta^{6}e[1,11]) = \overline{\kappa}^{5}\Delta^{2}e[0,6]$
       \item[(4)] $d_{19}(\Delta^{6}e[3,47]) = \overline{\kappa}^{5}\Delta^{2}e[2,42]$
       \item[(5)] $d_{19}(\Delta^{6}e[1,5])=\overline{\kappa}^{5}\Delta^{2}e[0,0]$
       \item[(6)] $d_{19}(\Delta^{4}e[3,53])= \overline{\kappa}^{5}e[2,48].$
      
   \end{itemize}
\end{Proposition}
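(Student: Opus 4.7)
The plan is to determine the six differentials by the same strategy used for the $d_{17}$-differentials in Proposition \ref{d_17}, namely: for each permanent cycle $x$ whose $\overline{\kappa}$-family must be truncated, enumerate the non-permanent generators that could conceivably support a differential hitting some $\overline{\kappa}^n x$ (with $n\le 5$, since higher multiples already lie above the horizontal vanishing line of height $23$ given in Proposition \ref{Degen}), and then eliminate all but one candidate. The six permanent cycles whose truncations we need to identify are $e[0,6]$, $e[2,42]$, $\Delta^{2}e[0,6]$, $\Delta^{2}e[2,42]$, $\Delta^{2}e[0,0]$ and $e[2,48]$, and in each case the proposed truncating class has cohomological filtration larger by $18$ and internal degree larger by $1$ after multiplication by $\overline{\kappa}^5$, which forces a $d_{19}$ after all earlier differentials have been ruled out.

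My first step is to classify each of the six permanent cycles as being of type $1$, $2$ or $3$ in the sense explained before Proposition \ref{d_17}. By inspection of the $\mathrm{E}_7$-chart together with the differentials already established in the preceding propositions, I expect parts (1), (2), (3), (4) to be of type $1$: the classes $\Delta^{4}e[1,11]$, $\Delta^{4}e[3,47]$, $\Delta^{6}e[1,11]$ and $\Delta^{6}e[3,47]$ should be the unique surviving candidates in the appropriate tridegrees, all other potential sources having been used up in Propositions \ref{d_17} etc. These four parts will therefore be immediate.

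For parts (5) and (6) I expect genuine ambiguity. For (5), the class $\Delta^{2}e[0,0]$ has a priori at least two candidate sources, namely $\Delta^{6}e[1,5]$ via a $d_{19}$ and $\Delta^{4}e[2,48]$ via a differential of smaller length; part of the argument should parallel Proposition \ref{d_17}(13), using the fact that otherwise the $\overline{\kappa}$-family of some type-$2$ class (most likely $\Delta^{2}e[1,23]$ or $\Delta^{4}e[0,6]$) would be forced to be truncated by a differential of length $\geq 25$, contradicting the vanishing line. For (6), the class $e[2,48]$ lies in the bidegree whose only remaining possible source after Proposition \ref{d_17} is $\Delta^{4}e[3,53]$, so this will again be a type-$1$ deduction but it must be sequenced after establishing (1)--(5). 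Throughout I will use $\Delta^{8}$-linearity freely, together with the fact, from Theorem \ref{tmf'sdiff} and Proposition \ref{induced-coh}, that $\Delta^{8}$ and $\overline{\kappa}$ are permanent cycles.

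The hardest step is likely to be (5): ruling out the competing $d_{15}$-differential on $\Delta^{4}e[2,48]$ requires showing that $\Delta^{4}e[2,48]$ is in fact the source of another differential (or survives), and the cleanest route is the contrapositive argument used in the proof of Proposition \ref{d_17}(13), propagating a hypothetical wrong differential and deriving a contradiction with the vanishing line at $\mathrm{E}_{24}$. If elimination by the vanishing line is not sharp enough in some sub-case, I will fall back on the Toda-bracket technique illustrated by Corollary \ref{Toda bracket} and the convergence theorem of Moss, using $\overline{\kappa}$-divisibility of $\Theta(x)$ (in the sense of Proposition \ref{Period+}) transported from $\pi_{*}(tmf\wedge A_1)$ via the injection $\Theta$ of Corollary \ref{Cor_Compare}. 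Once (5) and (6) are settled, the remaining $\overline{\kappa}$-families in the relevant range are fully accounted for, completing the $d_{19}$ page for $A_{1}[10]$ and $A_{1}[01]$.
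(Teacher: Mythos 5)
Your overall bookkeeping strategy (pairing permanent $\overline{\kappa}$-families with truncating families, the type $1$/$2$/$3$ trichotomy, and $\Delta^{8}$-linearity) is the same as the paper's, and parts (1)--(4) and (6) are handled as in the paper: the four targets $e[0,6]$, $e[2,42]$, $\Delta^{2}e[0,6]$, $\Delta^{2}e[2,42]$ are of type $1$, and (6) is resolved once (5) is known. However, your plan for part (5) has a genuine gap, in two respects. First, the competing source for $\overline{\kappa}^{5}\Delta^{2}e[0,0]$ is not $\Delta^{4}e[2,48]$ (which sits in stem $144$ and cannot hit any $\overline{\kappa}$-multiple of $\Delta^{2}e[0,0]$ for degree reasons, since $48+20n=143$ has no solution) but $\Delta^{4}e[3,53]$, in stem $149$ and filtration $3$, via a $d_{17}$. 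Second, and more importantly, your primary argument --- propagating the wrong differential to a contradiction with the horizontal vanishing line, as in part (13) of the $d_{17}$ proposition --- does not close here. If one supposes $d_{17}(\Delta^{4}e[3,53])=\overline{\kappa}^{5}\Delta^{2}e[0,0]$, the displaced family is that of $e[2,48]$, whose only remaining truncation is $d_{21}(\Delta^{6}e[1,5])=\overline{\kappa}^{5}e[2,48]$; this is a length-$21$ differential hitting filtration $22$, which is perfectly consistent with collapse at $\mathrm{E}_{24}$ and the vanishing line of height $23$. No contradiction arises from filtration counting alone.

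The paper resolves (5) precisely by the Toda-bracket route you relegate to a fallback: the hypothetical $d_{21}(\Delta^{6}e[1,5])=\overline{\kappa}^{5}e[2,48]$ would produce, in the $\mathrm{E}_{22}$-term, the Massey product $\langle \overline{\kappa}^{5},e[2,48],\nu\rangle=\nu\Delta^{6}e[1,5]$ with zero indeterminacy; Moss's convergence theorem then forces the corresponding Toda bracket to contain an element detected in filtration $2$, hence not divisible by $\overline{\kappa}$, contradicting Corollary \ref{Toda bracket}. So the essential content of part (5) is exactly the step you hoped to avoid, and your fallback, while naming the right tools, neither identifies the correct bracket $\langle \overline{\kappa}^{5},e[2,48],\nu\rangle$ nor explains why the alternative scenario feeds into it. As written, the proposal would not complete part (5), and hence not part (6) either.
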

\begin{proof} 
\begin{itemize}
   \item[(1)-(4)] All of the classes $$e[0,6], e[2,42], \Delta^{2}e[0,6], \Delta^{2}e[2,42]$$ are of type $1$.
   \item[(5)] The class $\Delta^{2}e[0,0]$ is of type $3$ and its $\overline{\kappa}$-family can be truncated either by $d_{17}(\Delta^{4}e[3,53])=\overline{\kappa}^{5}\Delta^{2}e[0,0]$ or by $d_{19}(\Delta^{6}e[1,5]) =\overline{\kappa}^{5}\Delta^{2}e[0,0] $. Suppose $d_{17}(\Delta^{4}e[3,53]) = \overline{\kappa}^{5}\Delta^{2}e[0,0]$. This would leave us with the differential $d_{21}(\Delta^{6}e[1,5]) = \overline{\kappa}^{5}e[2,48]$. It would imply the Massey product in the $\mathrm{E}_{22}$-term $$\langle \overline{\kappa}^{5},e[2,48],\nu\rangle = \nu\Delta^{6}e[1,5]$$ with zero indeterminacy in the $\mathrm{E}_{22}$-term. All conditions of Moss's convergence theorem are met, the Toda bracket $\langle \overline{\kappa}^{5},e[2,48],\nu\rangle$ could then be formed and would contain an element represented by $\nu\Delta^{6}e[1,5]$. This contradicts Corollary \ref{Toda bracket}. This contradiction proves that $$d_{19}(\Delta^{6}e[1,5])=\overline{\kappa}^{5}\Delta^{2}e[0,0].$$
   \item[(6)] The class $e[2,48]$ is of type $2$ and its $\overline{\kappa}$-family is truncated either by $d_{19}(\Delta^{4}e[3,53]) = \overline{\kappa}^{5}e[2,48]$ or by $d_{21}(\Delta^{6}e[1,5]) = \overline{\kappa}^{5}e[2,48]$. However, part (5) of Proposition \ref{d_19} rules out the latter.
\end{itemize}
\end{proof}
\underline{The $d_{23}$-differentials}
\begin{Proposition} There are the following $d_{23}$-differentials:
  \begin{itemize} 
  	\item[(1)] $d_{23}(\Delta^{4}e[2,36]) = \overline{\kappa}^{6}e[1,11]$
	\item[(2)] $d_{23}(\Delta^{4}e[2,42]) = \overline{\kappa}^{6}e[1,17]$
	\item[(3)] $d_{23}(\Delta^{4}e[2,48]) = \overline{\kappa}^{6}e[1,23]$
	
	\item[(4)] $d_{23}(\Delta^{6}e[2,36]) = \overline{\kappa}^{6}\Delta^{2}e[1,11]$
	\item[(5)] $d_{23}(\Delta^{6}e[2,42]) = \overline{\kappa}^{6}\Delta^{2}e[1,17]$
	\item[(6)] $d_{23}(\Delta^{6}e[2,30]) = \overline{\kappa}^{6}\Delta^{2}e[1,5].$
  \end{itemize}
\end{Proposition}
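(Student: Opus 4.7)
The plan is to invoke the general organizing principle of Corollary \ref{Organ} together with the horizontal vanishing line of Proposition \ref{Degen} to force the six $d_{23}$-differentials by elimination, with bidegree matching giving the unique pairing between the remaining permanent cycles and the remaining non-permanent $\overline{\kappa}$-free generators.

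First I would tabulate the bookkeeping per $\Delta^8$-period. The $E_7$-term has $64$ $\overline{\kappa}$-free generators (Proposition \ref{E_7}); for $A_1[01]$ and $A_1[10]$ exactly $32$ are permanent cycles (the $16$ base classes $e[s,t]$, the $12$ classes $\Delta^2 e[s,t]$ from the list common to all versions, and the four exceptional classes $\Delta^4 e[0,0]$, $\Delta^4 e[1,5]$, $\Delta^4 e[1,15]$, $\Delta^4 e[2,30]$). After crossing off, in the differentials $d_9,d_{15},d_{17},d_{19}$ already established, every class used as source and every $\overline{\kappa}$-family truncated, I claim that exactly six permanent $\overline{\kappa}$-families remain untruncated, namely those of $e[1,11]$, $e[1,17]$, $e[1,23]$, $\Delta^2 e[1,5]$, $\Delta^2 e[1,11]$, $\Delta^2 e[1,17]$, and exactly six non-permanent generators remain unused as potential sources, namely $\Delta^4 e[2,36]$, $\Delta^4 e[2,42]$, $\Delta^4 e[2,48]$, $\Delta^6 e[2,30]$, $\Delta^6 e[2,36]$, $\Delta^6 e[2,42]$. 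This checklist is the essential first step.

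Next, by Corollary \ref{Organ}(3), applied to the regular pair $(G_{24}, E_C)$ with the cohomological periodicity class $\overline{\kappa}$, every permanent $\overline{\kappa}$-free tower must be truncated by one and only one non-permanent $\overline{\kappa}$-free tower. Proposition \ref{Degen} forces every outstanding truncation to occur no later than the $d_{23}$-page. Each of the six remaining permanent cycles lives in cohomological filtration $1$, so the classes $\overline{\kappa}^{6} e[1,t]$ and $\overline{\kappa}^{6}\Delta^{2} e[1,t]$ that must still be killed sit in filtration $25$; a differential from filtration $s$ landing in filtration $25$ with $r\leq 23$ forces $s=2$ and $r=23$. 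The six candidate sources all sit in filtration $2$, and a direct check of internal degrees shows, for instance, that $\Delta^{4}e[2,36]$ sits in stem $132$ and can only target $\overline{\kappa}^{6} e[1,11]$ in stem $131$, while $\Delta^{6}e[2,30]$ sits in stem $174$ and can only target $\overline{\kappa}^{6}\Delta^{2} e[1,5]$ in stem $173$; analogously for the other four pairings. This bijective matching then forces the six $d_{23}$-differentials.

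The main obstacle will be guaranteeing the inventory in the first step: one must confirm that none of the six allegedly remaining non-permanent generators has already been consumed by a hidden differential, and that no $\nu$-multiple or lower $\overline{\kappa}$-power of any of the six allegedly remaining permanent families has sneaked in as an alternative target. I would handle this by a careful case-by-case inspection of the previously established differentials and of the annihilator relations in Proposition \ref{E_7}, relying on the fact that the identified differentials in Propositions on $d_9$, $d_{15}$, $d_{17}$, $d_{19}$ already exhausted all non-permanent generators whose source filtration is incompatible with a surviving $d_{23}$. Once this bookkeeping is secure, the proposition follows with no extension-problem or Toda-bracket input beyond what the preceding sections have supplied.
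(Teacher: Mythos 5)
Your proposal is correct and is essentially the paper's own argument: the paper also deduces these differentials by elimination, declaring $e[1,11]$, $e[1,17]$, $e[1,23]$, $\Delta^{2}e[1,11]$, $\Delta^{2}e[1,17]$ to be of ``type $1$'' (only one possible truncating source) and resolving the one ambiguous case $\Delta^{2}e[1,5]$ by noting that the competing source $\Delta^{4}e[2,38]$ was already consumed by the $d_{17}$-differential of Proposition \ref{d_17}(12). Your global inventory of the six unused sources and six untruncated permanent $\overline{\kappa}$-families, matched bijectively by stem and filtration, is the same mechanism (Corollary \ref{Organ} plus the vanishing line of Proposition \ref{Degen}) packaged as one bookkeeping step, and it absorbs the paper's case-by-case disambiguation automatically; the inventory itself checks out against the established $d_{9}$, $d_{15}$, $d_{17}$, $d_{19}$ lists.
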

\begin{proof}
  \begin{itemize} 
       \item[(1)-(5)] All of the classes $$e[1,11], e[1,17], e[1,23], \Delta^{2}e[1,11], \Delta^{2}e[1,17]$$ are of type $1$.
       \item[(6)] The class $\Delta^{2}e[1,5]$ is of type $2$. The two possibilities are $d_{15}(\Delta^{4}e[2,38])=\overline{\kappa}^{4}\Delta^{2}e[1,5]$ and $d_{23}(\Delta^{6}e[2,30]) = \overline{\kappa}^{6}\Delta^{2}e[1,5]$. However, part $(12)$ of Proposition \ref{d_17} rules out the former because the class $\Delta^{4}e[2,38]$ must pair up with the class $e[3,38]$, by a $d_{17}$-differential $d_{17}(\Delta^{4}e[2,38]) = \overline{\kappa}^{4}e[3,53] $.  
       \end{itemize}
\end{proof}
\noindent
 The above differentials from $d_{9}$ to $d_{23}$, together with the $\overline{\kappa}$-  and $\Delta^{8}$-linearity exhaust all differentials. In the statement of Theorem \ref{Einfty A01} and \ref{Einfty A11}, we write $e_{t-s}$ for the permanent cycle $e[s,t-s]$ in bidegree $(s,t)$ listed in Proposition \ref{E_7}, for the sake of presentation. 
\begin{Theorem}\label{Einfty A01} As a module over $\FF_4[\Delta^{\pm 8}, \overline{\kappa},\nu]/(\ovk\nu)$, the $\E_{\infty}$-term of the HFPSS for $E_C^{hG_{24}}\wedge A_1$ for $A_1 = A_1[10]$ and $A_1[01]$ is a direct sum of cyclic modules generated by the following elements and with the respective annihilator ideal:
%$$\begin{array}{llllllll}
%
% (0,0)&(1,5)&(0,6)&(1,11)&(1,15)&(1,17)&(1,21)&(1,23)\\
% e[0,0]&e[1,5]&e[0,6]&e[1,11]&e[1,15]&e[1,17]&e[1,21]&e[1,23]\\
% (\overline{\kappa}^5, \nu^3)&(\overline{\kappa}^4,\nu^2)&(\overline{\kappa}^5,\nu^3)&(\ovk^6,\nu^2)&(\ovk^4,\nu^2)&(\ovk^6,\nu^3)&(\ovk^4,\nu^2)&(\ovk^6,\nu^3)\\
% \\
%(2,30)&(2,32)&(2,36)&(2,38)&(2,42)&(3,47)&(2,48)&(3,53)\\
%e[2,30]&e[2,32]&e[2,36]&e[2,38]&e[2,42]&e[3,47]&e[2,48]&e[3,53]\\
%(\ovk^2,\nu)&(\ovk^4,\nu)&(\ovk^4,\nu)&(\ovk^4,\nu)&(\ovk^5,\nu)&(\ovk^4,\nu)&(\ovk^5,\nu)&(\ovk^4,\nu)\\
% \end{array}$$
 $$\begin{array}{llllllll}

 (0,0)&(1,5)&(0,6)&(1,11)&(1,15)&(1,17)&(1,21)&(1,23)\\
 e_0&e_{5}&e_{6}&e_{11}&e_{15}&e_{17}&e_{21}&e_{23}\\
 (\overline{\kappa}^5, \nu^3)&(\overline{\kappa}^4,\nu^2)&(\overline{\kappa}^5,\nu^3)&(\ovk^6,\nu^2)&(\ovk^4,\nu^2)&(\ovk^6,\nu^3)&(\ovk^4,\nu^2)&(\ovk^6,\nu^3)\\
 \\
(2,30)&(2,32)&(2,36)&(2,38)&(2,42)&(3,47)&(2,48)&(3,53)\\
e_{30}&e_{32}&e_{36}&e_{38}&e_{42}&e_{47}&e_{48}&e_{53}\\
(\ovk^2,\nu)&(\ovk^4,\nu)&(\ovk^4,\nu)&(\ovk^4,\nu)&(\ovk^5,\nu)&(\ovk^4,\nu)&(\ovk^5,\nu)&(\ovk^4,\nu)\\
\\
%%%%%%%%%%%%%%%%%%%%%%%%%%%%%%
(0,48)&(1,53)&(0,54)&(1,59)&(1,63)&(1,65)&(1,69)&(2,74)\\
 \Delta^2e_0& \Delta^2e_{5}& \Delta^2e_{6}& \Delta^2e_{11}& \Delta^2e_{15}& \Delta^2e_{17}& \Delta^2e_{21}& \Delta^2\nu e_{23}\\
 (\ovk^5,\nu^3)& (\ovk^6,\nu^2)& (\ovk^5,\nu^3)& (\ovk^6,\nu^2)& (\ovk^4,\nu^2)& (\ovk^6,\nu^3)& (\ovk^4,\nu^2)& (\ovk,\nu^2)\\
\\
 %%%%%%%%%%%%%%%%%%%%%%%%%%%%%%
(2,78)&(2,80)&(2,84)&(2,90)&(3,95)&(0,96)&(1,101)\\
\Delta^2e_{30}&\Delta^2e_{32}&\Delta^2e_{36}&\Delta^2e_{42}&\Delta^2e_{47}&\Delta^4e_{0}&\Delta^4e_{5}\\
(\ovk^4,\nu)&(\ovk^4,\nu)&(\ovk^4,\nu)&(\ovk^5,\nu)&(\ovk^4,\nu)&(\ovk^5,\nu^3)&(\ovk^4,\nu^2)\\
\\
%%%%%%%%%%%%%%%%%%%%%%%%%%%%%%%
(1,105)&(2,110)&(1,111)&(2,116)&(2,120)&(2,122)&(2,126)\\
 \Delta^4\nu e_{6}& \Delta^4\nu e_{11}& \Delta^4 e_{15}& \Delta^4\nu e_{17}& \Delta^4\nu e_{21}& \Delta^4\nu e_{23}&(\Delta^4 e_{30})\\
 (\ovk,\nu^2)& (\ovk,\nu)& (\ovk^4,\nu^2)& (\ovk,\nu^2)& (\ovk,\nu)& (\ovk,\nu^2)&(\ovk^2, \nu)\\
\\
%%%%%%%%%%%%%%%%%%%%%%%%%%%%%%
(1,147)&(2,152)&(1,153)&(2,158)&(2,162)&(2,164)&(2,168)&(2,170)\\
 \Delta^6\nu e_0& \Delta^6\nu e_{5}& \Delta^6\nu e_{6}& \Delta^6\nu e_{11}& \Delta^6\nu e_{15}& \Delta^6\nu e_{17}& \Delta^6\nu e_{21}& \Delta^6\nu e_{23}\\
 (\ovk,\nu^2)& (\ovk,\nu)& (\ovk,\nu^2)& (\ovk,\nu)& (\ovk,\nu)& (\ovk,\nu^2)& (\ovk,\nu)& (\ovk,\nu^2).\\
 \end{array}$$
\end{Theorem}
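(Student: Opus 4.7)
The plan is to assemble the $\E_\infty$-term by bookkeeping: the hard work—identifying permanent cycles and the differentials among $\ovk$-free towers—has already been carried out in the preceding propositions, so the theorem is essentially a collation. First I would take as starting point the description of $\E_6 = \E_7$ from Proposition \ref{E_7}: as a module over $\FF_4[(\Delta^8)^{\pm 1}, \ovk, \nu]/(\ovk\nu)$, it is a direct sum of $64$ cyclic modules indexed by the $16$ generators $\{e[0,0],\ldots,e[3,53]\}$ multiplied by $\Delta^{2i}$ for $i\in\{0,2,4,6\}$. By Proposition \ref{Degen} the spectral sequence collapses at the $\E_{24}$-page, and parity forces even differentials to vanish, so only $d_r$ for $r\in\{7,9,\ldots,23\}$ can contribute.

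Next I would invoke the organisation principle from Corollary \ref{Organ} and Remark \ref{Rmk_Org}: since $\ovk^6 = 0$ in $\pi_*(E_C^{hG_{24}})$, each permanent $\ovk$-tower is truncated by exactly one non-permanent $\ovk$-tower, so among the $64$ generators precisely $32$ are permanent cycles and $32$ support a nontrivial differential. For the versions $A_1[10]$ and $A_1[01]$, the $32$ permanent cycles are those identified in the propositions preceding the theorem, and in particular the four exceptional ones $\Delta^4 e[0,0]$, $\Delta^4 e[1,5]$, $\Delta^4 e[1,15]$, $\Delta^4 e[2,30]$ arise from the nontrivial $d_2$-differentials of Theorem \ref{ASS_d_2_bis}, propagated to the HFPSS via the injectivity of $\Theta$ modulo $(\ovk,\nu)$ in Corollary \ref{Cor_Compare}.

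Then I would list, in $r$-order, the differentials already computed: the $d_9$ pair hitting the $e[2,30]$-family, the four $d_{15}$'s, the fourteen $d_{17}$'s, the six $d_{19}$'s, and the six $d_{23}$'s. For each of the $32$ non-permanent generators one such differential is recorded, and $\Delta^8$- and $\ovk$-linearity propagate these truncations throughout the whole tower. Reading off the surviving classes, the annihilator of each permanent generator $\Delta^{2i}x$ in $\E_\infty$ is $(\ovk^{j},\nu^{\ell})$ where $\ovk^{j}x$ first gets hit by one of the differentials listed above and $\nu^{\ell}$ is already the annihilator at $\E_7$; the $\nu$-generators $\Delta^{2i}\nu e_{?}$ appearing for $i\in\{2,4,6\}$ are the residues of $\ovk$-torsion forced by the differentials $d_9$, $d_{15}$, $d_{17}$, $d_{19}$, $d_{23}$ on the $\nu$-multiples of non-permanent generators.

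The only non-mechanical step is the original determination of the differentials, where several type-$3$ generators admit a priori two candidate truncating differentials. The decisive obstacle there—already handled in the proofs of Propositions~\ref{d_17} and \ref{d_19}—is the Moss convergence argument for the Toda bracket $\langle \ovk^5, e[9,48], \nu\rangle$ of Corollary \ref{Toda bracket} together with the horizontal vanishing line of Proposition~\ref{Degen}, which together rule out the putative $d_{21}$ and $d_{25}$ differentials; once these are excluded, the remaining pairings are forced. Hence $\E_\infty$ is precisely the direct sum of cyclic modules tabulated in the theorem.
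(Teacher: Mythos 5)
Your proposal is correct and follows essentially the same route as the paper: Theorem \ref{Einfty A01} is indeed obtained by collating the $d_9$ through $d_{23}$ differentials established in the preceding propositions (together with $\ovk$- and $\Delta^8$-linearity, the pairing principle of Corollary \ref{Organ}/Remark \ref{Rmk_Org}, the vanishing line of Proposition \ref{Degen}, and the Moss/Toda-bracket argument resolving the type-$3$ ambiguities). Only minor notational slips appear (e.g.\ the generators of $\E_7$ are $\Delta^i x$ for $i\in\{0,2,4,6\}$, and the relevant Toda bracket in the HFPSS is $\langle \ovk^5, e[2,48],\nu\rangle$, with Corollary \ref{Toda bracket} stated for $\langle g^5, e[9,48],\nu\rangle$ in the ASS); the substance matches the paper.
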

\noindent
\textbf{The case of $A_{1}[00]$ and $A_{1}[11]$.} The analysis of the HFPSS for $A_{1}[00]$ and $A_{1}[11]$ can be done in the same manner as that for $A_{1}[10]$ and $A_{1}[01]$. All differentials are identical except for $8$ ones involving $16$ of the generators of Proposition \ref{E_7}. We will be content to point out all modifications, see Figures from \ref{0-48 nondual} to \ref{144-197 nondual}. 

$$d_{17}(\Delta^{4}e[1,15]) = \overline{\kappa}^{4}e[2,30]\ \mbox {instead of } d_{9}(\Delta^{2}e[1,23]) = \overline{\kappa}^{2}e[2,30],$$
$$d_{17}(\Delta^{6}e[1,23]) = \overline{\kappa}^{4}\Delta^{2}e[2,38]\ \mbox {instead of } d_{9}(\Delta^{6}e[1,23]) = \overline{\kappa}^{2}\Delta^{4}e[2,30],$$
$$d_{17}(\Delta^{4}e[0,0]) = \overline{\kappa}^{4}e[1,15]\ \mbox {instead of } d_{15}(\Delta^{2}e[2,48]) = \overline{\kappa}^{4}e[1,15],$$
$$d_{17}(\Delta^{6}e[2,38]) = \overline{\kappa}^{4}\Delta^{2}e[3,53]\ \mbox {instead of } d_{15}(\Delta^{6}e[2,38]) = \overline{\kappa}^{4}\Delta^{2}e[1,5],$$
$$d_{19}(\Delta^{4}e[1,5]) = \overline{\kappa}^{5}e[0,0]\ \mbox {instead of } d_{17}(\Delta^{2}e[3,53]) = \overline{\kappa}^{5}e[0,0],$$
$$d_{19}(\Delta^{6}e[3,53]) = \overline{\kappa}^{5}\Delta^{2}e[2,48]\ \mbox {instead of } d_{17}(\Delta^{6}e[3,53]) = \overline{\kappa}^{5}\Delta^{4}e[0,0],$$
$$d_{23}(\Delta^{6}e[2,48]) = \overline{\kappa}^{6}\Delta^{2}e[1,23]\ \mbox {instead of } d_{15}(\Delta^{6}e[2,48]) = \overline{\kappa}^{4}\Delta^{4}e[1,15],$$
$$d_{23}(\Delta^{4}e[2,30]) = \overline{\kappa}^{6}e[1,5]\ \mbox {instead of } d_{15}(\Delta^{2}e[2,38]) = \overline{\kappa}^{4}e[1,5].$$
%%%%%%%%%%
\noindent
\begin{Theorem}\label{Einfty A11} As a module over $\FF_4[\Delta^{\pm 8}, \overline{\kappa},\nu]/(\ovk\nu)$, the $\E_{\infty}$-term of the HFPSS for $E_C^{hG_{24}}\wedge A_1$ for $A_1 = A_1[00]$ and $A_1[11]$ is a direct sum of cyclic modules generated by the following elements and with the respective annihilator ideals:
$$\begin{array}{llllllll}
 (0,0)&(1,5)&(0,6)&(1,11)&(1,15)&(1,17)&(1,21)&(1,23)\\
 e_0&e_{5}&e_{6}&e_{11}&e_{15}&e_{17}&e_{21}&e_{23}\\
 (\overline{\kappa}^5, \nu^3)&(\overline{\kappa}^6,\nu^2)&(\overline{\kappa}^5,\nu^3)&(\ovk^6,\nu^2)&(\ovk^4,\nu^2)&(\ovk^6,\nu^3)&(\ovk^4,\nu^2)&(\ovk^6,\nu^3)\\
 \\
(2,30)&(2,32)&(2,36)&(2,38)&(2,42)&(3,47)&(2,48)&(3,53)\\
e_{30}&e_{32}&e_{36}&e_{38}&e_{42}&e_{47}&e_{48}&e_{53}\\
(\ovk^4,\nu)&(\ovk^4,\nu)&(\ovk^4,\nu)&(\ovk^4,\nu)&(\ovk^5,\nu)&(\ovk^4,\nu)&(\ovk^5,\nu)&(\ovk^4,\nu)\\
\\
%%%%%%%%%%%%%%%%%%%%%%%%%%%%%%
(0,48)&(1,53)&(0,54)&(1,59)&(1,63)&(1,65)&(1,69)&(1,71)\\
 \Delta^2e_0& \Delta^2e_{5}& \Delta^2e_{6}& \Delta^2e_{11}& \Delta^2e_{15}& \Delta^2e_{17}& \Delta^2e_{21}& \Delta^2e_{23}\\
 (\ovk^5,\nu^3)& (\ovk^6,\nu^2)& (\ovk^5,\nu^3)& (\ovk^6,\nu^2)& (\ovk^4,\nu^2)& (\ovk^6,\nu^3)& (\ovk^4,\nu^2)& (\ovk^6,\nu^3)\\
\\
 %%%%%%%%%%%%%%%%%%%%%%%%%%%%%%
(2,78)&(2,80)&(2,84)&(2,86)&(2,90)&(3,95)&(2,96)&(3,101)\\
\Delta^2e_{30}&\Delta^2e_{32}&\Delta^2e_{36}&\Delta^2e_{38}&\Delta^2e_{42}&\Delta^2e_{47}&\Delta^2e_{48}&\Delta^2e_{53}\\
(\ovk^4,\nu)&(\ovk^4,\nu)&(\ovk^4,\nu)&(\ovk^4,\nu)&(\ovk^5,\nu)&(\ovk^4,\nu)&(\ovk^5,\nu)&(\ovk^4,\nu)\\
\\
%%%%%%%%%%%%%%%%%%%%%%%%%%%%%%%
(1,99)&(2,104)&(1,105)&(2,110)&(2,114)&(2,116)&(2,120)&(2,122)\\
 \Delta^4\nu e_0& \Delta^4\nu e_{5}& \Delta^4\nu e_{6}& \Delta^4\nu e_{11}& \Delta^4\nu e_{15}& \Delta^4\nu e_{17}& \Delta^4\nu e_{21}& \Delta^4\nu e_{23}\\
 (\ovk,\nu^2)& (\ovk,\nu)& (\ovk,\nu^2)& (\ovk,\nu)& (\ovk,\nu)& (\ovk,\nu^2)& (\ovk,\nu)& (\ovk,\nu^2)\\
\\
%%%%%%%%%%%%%%%%%%%%%%%%%%%%%%
(1,147)&(2,152)&(1,153)&(2,158)&(2,162)&(2,164)&(2,168)&(2,170)\\
 \Delta^6\nu e_0& \Delta^6\nu e_{5}& \Delta^6\nu e_{6}& \Delta^6\nu e_{11}& \Delta^6\nu e_{15}& \Delta^6\nu e_{17}& \Delta^6\nu e_{21}& \Delta^6\nu e_{23}\\
 (\ovk,\nu^2)& (\ovk,\nu)& (\ovk,\nu^2)& (\ovk,\nu)& (\ovk,\nu)& (\ovk,\nu^2)& (\ovk,\nu)& (\ovk,\nu^2).\\
\end{array}$$
\end{Theorem}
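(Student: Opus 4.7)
The plan is to run the HFPSS for $E_C^{hG_{24}}\wedge A_1[ij]$ with $(i,j)\in\{(0,0),(1,1)\}$ in complete parallel to the analysis already carried out for $A_1[01]$ and $A_1[10]$, using only those inputs that depend on $\lambda_{i,j}$ as the source of divergence. The $\E_2$-term computation in Subsection~\ref{HFPSS-E_2-term}, the $d_3$-differentials of Proposition~\ref{d_3}, and the $d_5$-differentials induced from $d_5(\Delta)=\ovk\nu$ in the HFPSS for $E_C^{hG_{24}}$ are independent of the version of $A_1$; they all apply verbatim and yield the same $\E_6=\E_7$-term listed in Proposition~\ref{E_7}, together with the organizational principle of Corollary~\ref{Organ} (pairing up $\ovk$-free towers) and the horizontal vanishing line at $s=23$ of Proposition~\ref{Degen}.

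The key input that distinguishes the two families of versions is Theorem~\ref{ASS_d_2_bis}: for $A_1[00]$ and $A_1[11]$ one has $\lambda_{i,j}=0$, so the four $d_2$-differentials in the ASS for $tmf\wedge A_1$ all vanish. As a consequence, the classes $w_2 e[9,48]$ and $w_2 e[10,53]$ survive to the $\E_\infty$-term of the ASS and, via Corollary~\ref{Cor_Compare} and Proposition~\ref{Period+}, this changes which classes in the $\E_7$-term of the HFPSS must be permanent cycles. First I would redo the counting argument from the proof of the permanent-cycle list in the $A_1[01]/A_1[10]$ case with these new inputs, checking in particular that $\Delta^2 e[1,23]$, $\Delta^2 e[2,38]$, $\Delta^2 e[2,48]$, $\Delta^2 e[3,53]$ are forced to be permanent cycles (rather than $\Delta^4 e[1,15]$, $\Delta^4 e[0,0]$, $\Delta^4 e[1,5]$, $\Delta^4 e[2,30]$ as in the other case). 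This gives the $32$ generators of $\ovk$-towers together with their persistence structure.

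Next, I would match up the non-permanent $\ovk$-towers with their targets using the same three types of arguments as in Propositions~\ref{d_17}, \ref{d_19} and the subsequent $d_9$, $d_{15}$, $d_{23}$ analysis: (type~1) unique available differential by bidegree inspection; (type~2) reduction to a unique possibility by excluding $d_{\geq 24}$-differentials (which vanish by the horizontal vanishing line); (type~3) finer arguments using Toda brackets and Moss's convergence theorem, in particular the Toda-bracket constraint of Corollary~\ref{Toda bracket}. The eight modified differentials displayed at the end of the section (replacing, e.g., $d_9(\Delta^2 e[1,23])=\ovk^2 e[2,30]$ by $d_{17}(\Delta^4 e[1,15])=\ovk^4 e[2,30]$, and similarly for the other seven pairs) are precisely the differentials forced by this revised pairing, and all remaining differentials are identical to the $A_1[01]/A_1[10]$ case by $\ovk$- and $\Delta^8$-linearity.

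The main obstacle I anticipate is the type-3 case: exactly as in part~(5) of Proposition~\ref{d_19}, several $\ovk$-families admit a priori two candidate truncating differentials of different lengths hitting the same class. Resolving these ambiguities requires assembling a Toda bracket in the appropriate $\E_r$-term with vanishing indeterminacy, invoking Moss's theorem to lift it to $\pi_*(E_C^{hG_{24}}\wedge A_1)$, and then obtaining a contradiction with Corollary~\ref{Toda bracket} (or with the $\ovk$-exponent information coming from $\pi_*(tmf\wedge A_1)$ through Corollary~\ref{Cor_Compare} and Proposition~\ref{Period+}). Once each of the eight reassignments is pinned down this way, the remaining differentials follow by $\ovk$-linearity and $\Delta^8$-periodicity, and assembling the surviving classes produces exactly the $48$ cyclic summands with the annihilator ideals listed in the statement.
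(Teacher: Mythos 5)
Your proposal follows essentially the same route as the paper: everything through the $\E_7$-term is version-independent, the vanishing of $\lambda_{i,j}$ in Theorem \ref{ASS_d_2_bis} changes the list of the last four permanent cycles to $\Delta^2e[1,23]$, $\Delta^2e[2,38]$, $\Delta^2e[2,48]$, $\Delta^2e[3,53]$, and the non-permanent $\ovk$-towers are then paired off by the same type-1/2/3 arguments (bidegree inspection, the vanishing line of Proposition \ref{Degen}, and Toda-bracket/Moss arguments), yielding exactly the eight modified differentials the paper records. This is the argument the paper itself gives, merely spelled out in slightly more detail.
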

\begin{Remark} We emphasise that the relations given in Theorem \ref{Einfty A01} and \ref{Einfty A11} are only the relations in the $\E_{\infty}$-term. In fact, we can see by sparseness that, the annihilator exponents of $\ovk$ are still true in $\pi_*(E_C^{hG_{24}}\wedge A_1)$. Whereas, there are exotic extensions by $\nu$, i.e., multiplications by $\nu$ that are not detected in the $\E_\infty$-term. These can be determined by two different methods: by using the Tate spectral sequence as in \cite{BO16}, Section 2.3 or by computing the Gross-Hopkins dual of $E_C^{hG_{24}}\wedge A_1$; however, we do not discuss this point here.
\end{Remark}
\noindent
Using the structure of the $\mathrm{E}_\infty$-term, we can read off the action of the ideal $(\overline{\kappa}, \nu)$ on $\pi_*(E_C^{hG_{24}}\wedge A_1)$. From this, we obtain the following Corollary.
 \begin{Theorem}\label{connective-iso} We have
 
 a) The map $$\Theta^{'}: \W(\FF_4)\otimes_{\Z_{2}}\pi_k(tmf\wedge A_1)/(\overline{\kappa},\nu)\rightarrow \pi_{k}(E_C^{hG_{24}}\wedge A_1)/(\overline{\kappa},\nu),$$  induced by $\Theta$ in (\ref{map_compa}), is an isomorphism for $k\geq 0$, independent of the version of $A_1$.
 
 b) The map $$\Theta : \W(\FF_4)\otimes_{\Z_2}\pi_k(tmf\wedge A_1)\rightarrow \pi_k(E_C^{hG_{24}}\wedge A_1)$$ is also an isomorphism for $k\geq 0$, independent of the version of $A_1$.
 
 c) Multiplication by $\Delta^8$ induces isomorphisms $$\pi_k(tmf\wedge A_1)\rightarrow \pi_{k+192}(tmf\wedge A_1)$$ and $$\pi_k(tmf\wedge A_1)/(\ovk,\nu)\rightarrow \pi_{k+192}(tmf\wedge A_1)/(\ovk,\nu)$$ for $k\geq 0$.
\end{Theorem}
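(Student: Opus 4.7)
The idea is to prove (a) by a direct dimension count against the explicit $\E_\infty$-terms, deduce (b) from (a) by induction along a finite $(\ovk,\nu)$-adic filtration, and obtain (c) as a formal consequence of (b) together with the periodicity on the fixed-point side.

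For (a), fix a version of $A_1$. The map $\Theta'$ is injective in every degree by Corollary \ref{Cor_Compare}, so it suffices to compare $\F$-dimensions stem by stem. The right-hand side is read off from the appropriate one of Theorems \ref{Einfty A01} and \ref{Einfty A11}: after quotienting the $\E_\infty$-term by $(\ovk,\nu)$, one obtains a finite, $\Delta^8$-periodic list of $\FF_4$-summands located in stems between $0$ and $170$. On the left-hand side, the $\E_\infty$-term of the ASS for $tmf\wedge A_1$ is fully determined by Proposition \ref{Prop_AdamsA_1}, Theorem \ref{ASS_d_2_bis}, and Proposition \ref{d_3-diff}; thanks to Theorem \ref{tmf'sdiff}(iv) and Proposition \ref{Period} it is $\F[\Delta^8]$-free in nonnegative stems, so after tensoring with $\W(\FF_4)$ (which is free of rank two over $\Z_2$) one obtains a free $\FF_4[\Delta^8]$-module. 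A stem-by-stem comparison, carried out separately for the two pairs $\{A_1[10],A_1[01]\}$ and $\{A_1[00],A_1[11]\}$, shows the dimensions agree, yielding surjectivity and hence (a).

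For (b), I would filter both sides by the powers of the ideal $(\ovk,\nu)\subset \pi_*(S^0)$. By the cyclic module descriptions of Theorems \ref{Einfty A01} and \ref{Einfty A11}, every annihilator ideal contains both a power of $\ovk$ and a power of $\nu$, and combined with the relation $\ovk\nu=0$ this makes the filtration finite. Since $\Theta$ is a homomorphism of $\pi_*(S^0)$-modules, it respects this filtration, and the induced map on associated graded pieces is, degree by degree, a shift of the map $\Theta'$ treated in (a). A standard five-lemma induction up the filtration then promotes the mod-$(\ovk,\nu)$ isomorphism to an integral one in nonnegative degrees. For (c), Corollary \ref{Compar_alg} gives that $\Delta^8$ acts invertibly on $\pi_*(E_C^{hG_{24}}\wedge A_1)$; combined with (b), this forces multiplication by $\Delta^8$ to be surjective on $\W(\FF_4)\otimes_{\Z_2}\pi_k(tmf\wedge A_1)$, hence on $\pi_k(tmf\wedge A_1)$ itself (since $\W(\FF_4)$ is faithfully flat over $\Z_2$), while injectivity is already provided by Proposition \ref{Period}. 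The mod-$(\ovk,\nu)$ variant is deduced the same way from (a).

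The principal obstacle is the stem-by-stem matching in (a): the four versions of $A_1$ give two genuinely distinct $\E_\infty$-patterns in the HFPSS, and the Adams $\E_\infty$ is also version-dependent through Theorem \ref{ASS_d_2_bis}. The decisive structural observation is that in each pair the HFPSS differentials established in Section \ref{G24 4} mirror the Adams differentials under the comparison map $\Theta$, so that the dimensions of $\W(\FF_4)\otimes \pi_k(tmf\wedge A_1)/(\ovk,\nu)$ and $\pi_k(E_C^{hG_{24}}\wedge A_1)/(\ovk,\nu)$ match in every nonnegative stem; this reflection is what ultimately powers the count.
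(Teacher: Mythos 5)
Your proposal follows essentially the same route as the paper: injectivity of $\Theta'$ from Corollary \ref{Cor_Compare} plus a stem-by-stem order count against Theorems \ref{Einfty A01} and \ref{Einfty A11} for (a), a finite-filtration/bounded-below argument promoting (a) to (b), and invertibility of $\Delta^8$ on the fixed-point side for (c). The one caution is that the Adams $\E_\infty$-term is not ``fully determined'' by Propositions \ref{Prop_AdamsA_1}, \ref{ASS_d_2_bis} and \ref{d_3-diff} alone --- in stems $144$ to $192$ the paper must rule out potential $d_4$-differentials by feeding the $\ovk$-exponents of the already-computed HFPSS back into the ASS via $\Theta$, which is exactly the ``mirroring'' you allude to in your final paragraph and which needs to be made explicit for the count to close.
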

\begin{proof} For part a), Corollary \ref{Cor_Compare} asserts that $\Theta^{'}$ is injective. To show that the latter is surjective, it suffices to show that its source and target have the same order. The order of the target can be seen from Theorem \ref{Einfty A01} and \ref{Einfty A11}; in particular, it has order $0$ or $4$ in all stems, except for the stems $48$ and $53$ modulo $192$, in which it has order $8$. The remaining part of the proof is an inspection of the ASS for $tmf\wedge A_1$, together with the fact that $\Theta$ is injective, by Corollary \ref{Cor_Compare}, and is linear with respect to $\overline{\kappa}$ and $\nu$, to show that $\W\otimes_{\Z_2}\pi_*(tmf\wedge A_1)$ has the same order as of $\pi_*(E_C^{hG_{24}}\wedge A_1)$, in non-negative stems. Because of the dependance of the structure of $\pi_*(E_C^{hG_{24}}\wedge A_1)$ on the version of $A_1$, we consider them separately: we only give a detailed treatment for $A_1[00]$ and $A_1[11]$ and claim that the treatment for $A_1[01]$ and $A_1[10]$ is completely similar. For the remaining part of the proof, $A_1$ will be $A_1[00]$ or $A_1[11]$.\\\\
By sparseness and part $(i)$ of Theorem \ref{ASS_d_2_bis}, all classes $w_2^le[i,j]$ for $l=0,1$ and $e[i,j]$, the classes in the table of Proposition \ref{Prop_AdamsA_1} survive to the $\E_\infty$-term of the ASS for $tmf\wedge A_1$. Moreover, for degree reasons, these classes must converge to non-trivial elements of $\pi_*(tmf\wedge A_1)/(\ovk,\nu)$ in the appropriate stems. Therefore, $\W\otimes_{\Z_2}\pi_*(tmf\wedge A_1)/(\ovk,\nu)$ has the same order as of $\pi_*(E_C^{hG_{24}}\wedge A_1)$ up to stem $96$ and in stem $101$.\\\\
All of the classes $$w_2^2e[0,0], w_2^2e[1,5], w_2^2e[1,6], w_2^2e[2,11],$$ $$w_2^2e[3,15], w_2^2e[3,17], w_2^2e[4,21], w_2^2e[4,23]$$
are $d_2$-cycles in the ASS and the $d_3$-differentials on them can only hit $g$-multiple classes. Thus, by $\nu$-linearity and the fact that $g\nu=0$ in $\Ext_{\A(2)_*}^{5,28}(\F)$, the classes 
$$\nu w_2^2e[0,0], \nu w_2^2e[1,5], \nu w_2^2e[1,6], \nu w_2^2e[2,11],$$ $$\nu w_2^2e[3,15], \nu w_2^2e[3,17], \nu w_2^2e[4,21], \nu w_2^2e[4,23]$$ are $d_3$-cycles and hence survive to the $\E_\infty$-term, by sparseness. As above, these classes must converge to non-trivial elements of $\pi_*(tmf\wedge A_1)/(\ovk,\nu)$ in the appropriate stems. It follows that $\W\otimes_{\Z_2}\pi_*(tmf\wedge A_1)/(\ovk,\nu)$ has the same order as of $\pi_*(E_C^{hG_{24}}\wedge A_1)$ for stems from $96$ to $144$.\\\\
Consider the classes
\begin{align}\label{144-192}
\nu w_2^3e[0,0], \nu w_2^3e[1,5], \nu w_2^3e[1,6], \nu w_2^3e[2,11], \nonumber \\
\nu w_2^3e[3,15], \nu w_2^3e[3,17], \nu w_2^3e[4,21], \nu w_2^3e[4,23].
\end{align}
As above, these classes survive to the $\E_4$-term of the ASS for $tmf\wedge A_1$. By sparseness, $\nu w_2^3e[4,23]$ survives to the $\E_\infty$-term and converges to a non-trivial element of $\pi_{170}(tmf\wedge A_1)/(\ovk,\nu)$. By sparseness, the other classes can only support $d_4$-differentials hitting the classes
$$g^7e[1,6], g^{7}e[2,11], g^6e[6,32], g^7e[3,17], g^7e[4,21], g^7e[4,23], g^6e[9,47], $$ respectively. However, the class $g^ke[i,j]$ for $(i,j)\in$ $ \{(1,6), (2,11), (6,32), (3,17),$ $(4,21), (4,23), (9,47)\}$ is killed by a differential for a certain integer $k$ less than $7$, hence $g^7e[i,j]$ for $(i,j)\in \{(1,6), (2,11), (6,32), (3,17), (4,21), (4,23), (9,47)\}$ is killed by a differential on a certain $g$-multiple class. This means that $$\nu w_2^3e[0,0], \nu w_2^3e[1,5], \nu w_2^3e[2,11], \nu w_2^3e[3,15], \nu w_2^3e[3,17]$$ survive to the $\E_\infty$-term, hence, as above, to non-trivial elements of $\pi_*(tmf\wedge A_1)/(\ovk,\nu)$. Next, the map $\Theta$ sends $e[6,32]$ and $e[9,47]$ to $e[2,32]$ and $e[3,47]$, respectively. The latter are both annihilated by $\ovk^4$, so that $g^4e[6,32]$ and $g^4e[9,47]$ are hit by certain differentials in the ASS, hence $g^6e[6,32]$ and $g^6e[9,47]$ are hit by differentials supported on $g$-multiple classes. As above, this implies that $\nu w_2^3e[1,6]$ and $\nu w_2^3 e[4,23]$ survive to non-trivial elements of $\pi_*(tmf\wedge A_1)/(\ovk,\nu)$. In total, we have proved that all classes of (\ref{144-192}) converge to non-trivial elements of $\pi_*(tmf\wedge A_1)/(\ovk,\nu)$; as a consequence, $\W\otimes_{\Z_2}\pi_*(tmf\wedge A_1)/(\ovk,\nu)$ has the same order as of $\pi_*(E_C^{hG_{24}}\wedge A_1)/(\ovk,\nu)$ in stems from $144$ to $192$. \\\\
Together with the fact that $\pi_*(E_C^{hG_{24}}\wedge A_1)/(\ovk,\nu)$ is $\Delta^8$-periodic, we conclude that $\Theta^{'}$ is a surjection, hence is an isomorphism.\\\\
For part b), there is a commutative diagram
$$\xymatrix{ \W(\FF_4)\otimes_{\Z_{2}}\pi_*(tmf\wedge A_1)\ar[r]^{\Theta}\ar[d]& \pi_*(E_C^{hG_{24}}\wedge A_1)\ar[d]\\
 \W(\FF_4)\otimes_{\Z_2}\pi_*(tmf\wedge A_1)/(\ovk,\nu)\ar[r]^{\Theta^{'}}& \pi_*(E_C^{hG_{24}}\wedge A_1)/(\ovk,\nu).
}$$
Part b) then follows from part a) and the fact that $\pi_*(tmf\wedge A_1)$ is bounded below.
\\\\
Part c) follows from part a) and part b) and the fact that $\Delta^8$ is invertible in $\pi_*(E_C^{hG_{24}}).$
\end{proof}
\begin{landscape}
The Figures (\ref{0-48}) to (\ref{144-197}) represent the HFPSS for $E_{C}^{hG_{24}}\wedge A_{1}[10]$ and $E_{C}^{hG_{24}}\wedge A_{1}[01]$ from the $\mathrm{E}_{7}$-term on. Each black dot $\bullet$ represents a class generating a group $\FF_{4}$ which survives to the $\mathrm{E}_{\infty}$-term. Each circle $\circ$ represent a class which either is hit by a differential or supports a differential higher than $d_{5}$. We only represent the differentials on generators listed in Proposition \ref{E_7} but not those generated by $\overline{\kappa}$-linearity.
\begin{figure}[h!]
\begin{tikzpicture}[scale=0.42]
\clip(-1.5,-1.5) rectangle (49,11);
\draw[color=gray] (0,0) grid [step=1] (48,10);

\foreach \n in {0,4,...,48}
{
\def\nn{\n-0}
\node[below,scale=0.5] at (\nn,0) {$\n$};
}
\foreach \s in {0,2,...,10}
{\def\ss{\s-0};
\node [left,scale=0.5] at (-0.4,\ss,0){$\s$};
}
\draw [fill] ( 0.00, 0.00) circle [radius=0.2];
\draw [fill] (3,1) circle [radius=0.2];
\draw [fill] (6,2) circle [radius=0.2];
\draw [-] (0,0)--(3,1);
\draw [-] (3,1)--(6,2);
%\draw[->,red] (11.8,0.2)--(11.2,2.8);
\foreach \t in {0}
\foreach \s in {0,6}
{\def\ss{\s-0};
\draw [fill]  (\s-\t,\t) circle [radius=0.2];
\draw [fill] (\s+3-\t,1+\t) circle [radius=0.2];
\draw [-] (\s-\t,\t)--(\s+3-\t,1+\t);
\draw[fill] (\s+5-\t,1+\t) circle [radius=0.2];
\draw[fill] (\s+8-\t,2+\t) circle [radius=0.2];
\draw[-] (\s+5-\t,1+\t)--(\s+8-\t,2+\t); 
\draw [fill] (\s-\t+6,2+\t) circle [radius=0.2];
\draw [-] (\s+3-\t,1+\t)--(\s+6-\t,2+\t);
%\draw[fill] (\s+11-\t,3+\t) circle [radius=0.1];
%\draw[-] (\s+8-\t,2+\t)--(\s+11-\t,3+\t); 
\draw [-] (\s+5-\t,1+\t)--(\s-\t+6,2+\t);
}

\foreach \t in {0}
\foreach \s in {12,18}
{\def\ss{\s-0};
%\draw [fill]  (\s-\t,\t) circle [radius=0.1];
\draw [fill] (\s+3-\t,1+\t) circle [radius=0.2];
%\draw [-] (\s-\t,\t)--(\s+3-\t,1+\t);
\draw[fill] (\s+5-\t,1+\t) circle [radius=0.2];
\draw[fill] (\s+8-\t,2+\t) circle [radius=0.2];
\draw[-] (\s+5-\t,1+\t)--(\s+8-\t,2+\t); 
\draw [fill] (\s-\t+6,2+\t) circle [radius=0.2];
\draw [-] (\s+3-\t,1+\t)--(\s+6-\t,2+\t);
\draw[fill] (\s+11-\t,3+\t) circle [radius=0.2];
\draw[-] (\s+8-\t,2+\t)--(\s+11-\t,3+\t); 
\draw [-] (\s+5-\t,1+\t)--(\s-\t+6,2+\t);
}
%\draw[fill] (0,2) circle [radius=0.2];
%\draw[fill] (5,3) circle [radius=0.2];
\draw[fill] (30,2) circle [radius=0.2];
\draw[fill] (32,2) circle [radius=0.2];
\draw[fill] (36,2) circle [radius=0.2];
\draw[fill] (38,2) circle [radius=0.2];
\draw[fill] (42,2) circle [radius=0.2];
\draw[fill] (47,3) circle [radius=0.2];
\draw[fill] (48,2) circle [radius=0.2];
\draw[fill] (53,3) circle [radius=0.2];
\draw[fill] (48,0) circle[radius=0.2];

%%%multiple of \ove{\kappa}
\draw[fill] (20,4) circle [radius=0.2]; %e0
\draw[fill] (25,5) circle [radius=0.2];%e5
\draw[fill] (26,4) circle [radius=0.2];%e6
\draw[fill] (31,5) circle [radius=0.2];%e11
\draw[fill] (35,5) circle [radius=0.2];%e15
\draw[fill] (37,5) circle [radius=0.2];%e17
\draw[fill] (41,5) circle [radius=0.2];%e21
\draw[fill] (43,5) circle [radius=0.2];%e23
\draw[fill] (50,6) circle [radius=0.2];%e30
\draw[fill] (52,6) circle [radius=0.2];%e32
%%%multiple of \kappa^2
\draw[fill] (40,8) circle [radius=0.2];%e0
\draw[fill] (45,9) circle [radius=0.2];%e5
\draw[fill] (46,8) circle [radius=0.2];%e6
\end{tikzpicture}
\caption{HFPSS for $A_{1}[10]\ \mbox{and}\ A_{1}[01$] from $\mathrm{E}_{7}$-term with $0\leq t-s\leq 48$}
\phantomsection \label{0-48}
\end{figure}
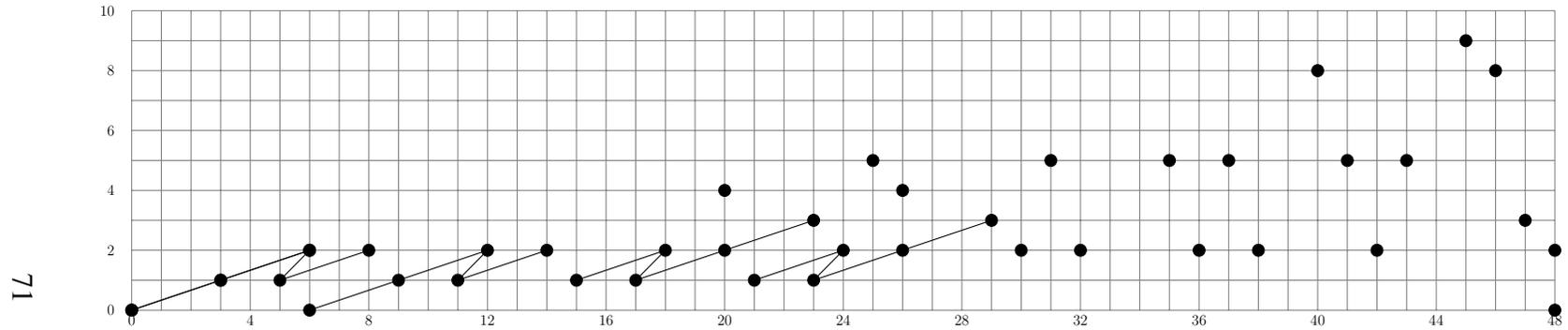
%%%%48-96
\begin{figure}[h!]
\begin{tikzpicture}[scale=0.42]
\clip(-1.5,-1.5) rectangle (49,19);
\draw[color=gray] (0,0) grid [step=1] (48,18);

\foreach \n in {48,52,...,96}
{
\def\nn{\n-0}
\node[below,scale=0.5] at (\nn-48,0) {$\n$};
}
\foreach \s in {0,2,...,18}
{\def\ss{\s-0};
\node [left,scale=0.5] at (-0.4,\ss,0){$\s$};
}
\draw [fill] ( 0.00, 0.00) circle [radius=0.2];
\draw [fill] (3,1) circle [radius=0.2];
\draw [fill] (6,2) circle [radius=0.2];
\draw [-] (0,0)--(3,1);
\draw [-] (3,1)--(6,2);
%\draw[->,red] (11.8,0.2)--(11.2,2.8);
\foreach \t in {0}
\foreach \s in {0,6}
{\def\ss{\s-0};
\draw [fill]  (\s-\t,\t) circle [radius=0.2];
\draw [fill] (\s+3-\t,1+\t) circle [radius=0.2];
\draw [-] (\s-\t,\t)--(\s+3-\t,1+\t);
\draw[fill] (\s+5-\t,1+\t) circle [radius=0.2];
\draw[fill] (\s+8-\t,2+\t) circle [radius=0.2];
\draw[-] (\s+5-\t,1+\t)--(\s+8-\t,2+\t); 
\draw [fill] (\s-\t+6,2+\t) circle [radius=0.2];
\draw [-] (\s+3-\t,1+\t)--(\s+6-\t,2+\t);
%\draw[fill] (\s+11-\t,3+\t) circle [radius=0.1];
%\draw[-] (\s+8-\t,2+\t)--(\s+11-\t,3+\t); 
\draw [-] (\s+5-\t,1+\t)--(\s-\t+6,2+\t);
}

\foreach \t in {0}
\foreach \s in {12,18}
{\def\ss{\s-0};
%\draw [fill]  (\s-\t,\t) circle [radius=0.1];
\draw [fill] (\s+3-\t,1+\t) circle [radius=0.2];
%\draw [-] (\s-\t,\t)--(\s+3-\t,1+\t);
\draw[] (\s+5-\t,1+\t) circle [radius=0.2];
\draw[fill] (12+5,1) circle [radius=0.2];
\draw[fill] (\s+8-\t,2+\t) circle [radius=0.2];
\draw[-] (\s+5-\t,1+\t)--(\s+8-\t,2+\t); 
\draw [fill] (\s-\t+6,2+\t) circle [radius=0.2];
\draw [-] (\s+3-\t,1+\t)--(\s+6-\t,2+\t);
\draw[fill] (\s+11-\t,3+\t) circle [radius=0.2];
\draw[-] (\s+8-\t,2+\t)--(\s+11-\t,3+\t); 
\draw [-] (\s+5-\t,1+\t)--(\s-\t+6,2+\t);
}
\draw[fill] (0,2) circle [radius=0.2];
\draw[fill] (5,3) circle [radius=0.2];
\draw[fill] (30,2) circle [radius=0.2];
\draw[fill] (32,2) circle [radius=0.2];
\draw[fill] (36,2) circle [radius=0.2];
\draw[] (38,2) circle [radius=0.2];
\draw[fill] (42,2) circle [radius=0.2];
\draw[fill] (47,3) circle [radius=0.2];
\draw[] (48,2) circle [radius=0.2];
\draw[fill] (53,3) circle [radius=0.2];
\draw[fill] (48,0) circle[radius=0.2];
%%%multiple of \ove{\kappa}
\draw[fill] (50-48,6) circle [radius=0.2]; % e30
\draw[fill] (52-48,6) circle [radius=0.2]; % e32
\draw[fill] (56-48,6) circle [radius=0.2]; % e36
\draw[fill] (58-48,6) circle [radius=0.2]; % e38
\draw[fill] (62-48,6) circle [radius=0.2]; % e42
\draw[fill] (67-48,7) circle [radius=0.2]; % e47
\draw[fill] (68-48,6) circle [radius=0.2]; % e48
\draw[fill] (73-48,7) circle [radius=0.2]; % e53
\draw[fill] (20,4) circle [radius=0.2]; %delta^2 e0
\draw[fill] (25,5) circle [radius=0.2];%delta^2 e5
\draw[fill] (26,4) circle [radius=0.2];%delta^2 e6
\draw[fill] (31,5) circle [radius=0.2];%delta^2 e11
\draw[fill] (35,5) circle [radius=0.2];%delta^2 e15
\draw[fill] (37,5) circle [radius=0.2];%delta^2 e17
\draw[fill] (41,5) circle [radius=0.2];%delta^2 e21
\draw[] (43,5) circle [radius=0.2];%delta^2 e23

%%%multiple of \kappa^2
\draw[fill] (51-48,9) circle [radius=0.2];% e11
\draw[fill] (55-48,9) circle [radius=0.2];% e15
\draw[fill] (57-48,9) circle [radius=0.2];% e17
\draw[fill] (61-48,9) circle [radius=0.2];% e21
\draw[fill] (63-48,9) circle [radius=0.2];% e23
\draw[] (70-48,10) circle [radius=0.2]; % e30
\draw[fill] (72-48,10) circle [radius=0.2]; % e32
\draw[fill] (76-48,10) circle [radius=0.2]; % e36
\draw[fill] (78-48,10) circle [radius=0.2]; % e38
\draw[fill] (82-48,10) circle [radius=0.2]; % e42
\draw[fill] (87-48,11) circle [radius=0.2]; % e47
\draw[fill] (88-48,10) circle [radius=0.2]; % e48
\draw[fill] (93-48,11) circle [radius=0.2]; % e53
\draw[fill] (40,8) circle [radius=0.2];%\delta^2 e0
\draw[fill] (45,9) circle [radius=0.2];%delta^2 e5
\draw[fill] (46,8) circle [radius=0.2];%delta^2 e6
%%%multiple of \kappa^3
\draw[fill] (60-48,12) circle [radius=0.2]; %e0
\draw[fill] (65-48,13) circle [radius=0.2];%e5
\draw[fill] (66-48,12) circle [radius=0.2];%e6
\draw[fill] (71-48,13) circle [radius=0.2];% e11
\draw[fill] (75-48,13) circle [radius=0.2];% e15
\draw[fill] (77-48,13) circle [radius=0.2];% e17
\draw[fill] (81-48,13) circle [radius=0.2];% e21
\draw[fill] (83-48,13) circle [radius=0.2];% e23
\draw[] (90-48,14) circle [radius=0.2]; % e30
\draw[fill] (92-48,14) circle [radius=0.2]; % e32
\draw[fill] (96-48,14) circle [radius=0.2]; % e36
%%%multiple of \kappa^4
\draw[fill] (80-48,16) circle [radius=0.2]; %e0
\draw[] (85-48,17) circle [radius=0.2];%e5
\draw[fill] (86-48,16) circle [radius=0.2];%e6
\draw[fill] (91-48,17) circle [radius=0.2];% e11
\draw[] (95-48,17) circle [radius=0.2];% e15
%%%%%%differentials
\draw[->] (71-48,1)--(70-48+0.05,10-0.2); %d9(delta^2e23)=k^2e30
\draw[->] (38,2)--(37+0.04,17-0.2);%d15(delta^2 e38)
\draw[->] (48,2)--(47+0.04,17-0.2);%d15(delta^2 e48)
\end{tikzpicture}
\caption{HFPSS for $A_{1}[10]\ \mbox{and} A_{1}[01]$ from $\mathrm{E}_{7}$-term with $48\leq t-s\leq 96$}
\phantomsection \label{48-96}
\end{figure}

%%%%96-144
\begin{figure}[h!]
\begin{tikzpicture}[scale=0.42]
\clip(-1.5,-1.5) rectangle (49,27);
\draw[color=gray] (0,0) grid [step=1] (48,26);

\foreach \n in {96,100,...,144}
{
\def\nn{\n-0}
\node[below,scale=0.5] at (\nn-96,0) {$\n$};
}
\foreach \s in {0,2,...,26}
{\def\ss{\s-0};
\node [left,scale=0.5] at (-0.4,\ss,0){$\s$};
}
\draw [fill] ( 0.00, 0.00) circle [radius=0.2];
\draw [fill] (3,1) circle [radius=0.2];
\draw [fill] (6,2) circle [radius=0.2];
\draw [-] (0,0)--(3,1);
\draw [-] (3,1)--(6,2);
%\draw[->,red] (11.8,0.2)--(11.2,2.8);
\foreach \t in {0}
\foreach \s in {0,6}
{\def\ss{\s-0};
\draw[]  (\s-\t,\t) circle [radius=0.2];
\draw [fill]  (0,0) circle [radius=0.2];
\draw [fill] (\s+3-\t,1+\t) circle [radius=0.2];
\draw [-] (\s-\t,\t)--(\s+3-\t,1+\t);
\draw[] (\s+5-\t,1+\t) circle [radius=0.2];
\draw[fill] (5,1) circle [radius=0.2];
\draw[fill] (\s+8-\t,2+\t) circle [radius=0.2];
\draw[-] (\s+5-\t,1+\t)--(\s+8-\t,2+\t); 
\draw [fill] (\s-\t+6,2+\t) circle [radius=0.2];
\draw [-] (\s+3-\t,1+\t)--(\s+6-\t,2+\t);
%\draw[fill] (\s+11-\t,3+\t) circle [radius=0.1];
%\draw[-] (\s+8-\t,2+\t)--(\s+11-\t,3+\t); 
\draw [-] (\s+5-\t,1+\t)--(\s-\t+6,2+\t);
}

\foreach \t in {0}
\foreach \s in {12,18}
{\def\ss{\s-0};
%\draw [fill]  (\s-\t,\t) circle [radius=0.1];
\draw[] (\s+3-\t,1+\t) circle [radius=0.2];
\draw [fill] (12+3,1) circle [radius=0.2];
\draw[] (\s+5-\t,1+\t) circle [radius=0.2];
\draw[fill] (\s+8-\t,2+\t) circle [radius=0.2];
\draw[-] (\s+5-\t,1+\t)--(\s+8-\t,2+\t); 
\draw [fill] (\s-\t+6,2+\t) circle [radius=0.2];
\draw [-] (\s+3-\t,1+\t)--(\s+6-\t,2+\t);
\draw[fill] (\s+11-\t,3+\t) circle [radius=0.2];
\draw[-] (\s+8-\t,2+\t)--(\s+11-\t,3+\t); 
\draw [-] (\s+5-\t,1+\t)--(\s-\t+6,2+\t);
}
\draw[] (0,2) circle [radius=0.2];
\draw[] (5,3) circle [radius=0.2];%delta^2e53
\draw[fill] (30,2) circle [radius=0.2];
\draw[] (32,2) circle [radius=0.2];
\draw[] (36,2) circle [radius=0.2];
\draw[] (38,2) circle [radius=0.2];
\draw[] (42,2) circle [radius=0.2];
\draw[] (47,3) circle [radius=0.2];
\draw[] (48,2) circle [radius=0.2];
\draw[fill] (53,3) circle [radius=0.2];
\draw[] (48,0) circle[radius=0.2];

%%%multiple of \ove{\kappa}
\draw[fill] (50-48,6) circle [radius=0.2]; %delta^2 e30
\draw[fill] (52-48,6) circle [radius=0.2]; %delta^2 e32
\draw[fill] (56-48,6) circle [radius=0.2]; %delta^2 e36
\draw[] (58-48,6) circle [radius=0.2]; %delta^2 e38
\draw[fill] (62-48,6) circle [radius=0.2]; %delta^2 e42
\draw[fill] (67-48,7) circle [radius=0.2]; %delta^2 e47
\draw[] (68-48,6) circle [radius=0.2]; %delta^2 e48
\draw[] (73-48,7) circle [radius=0.2]; %delta^2 e53
\draw[fill] (20,4) circle [radius=0.2]; %delta^4 e0
\draw[fill] (25,5) circle [radius=0.2];%delta^4 e5
\draw[] (26,4) circle [radius=0.2];%delta^4 e6
\draw[] (31,5) circle [radius=0.2];%delta^4 e11
\draw[fill] (35,5) circle [radius=0.2];%delta^4 e15
\draw[] (37,5) circle [radius=0.2];%delta^4 e17
\draw[] (41,5) circle [radius=0.2];%delta^4 e21
\draw[] (43,5) circle [radius=0.2];%delta^4 e23

%%%multiple of \kappa^2
\draw[fill] (51-48,9) circle [radius=0.2];% delta^2 e11
\draw[fill] (55-48,9) circle [radius=0.2];% delta^2 e15
\draw[fill] (57-48,9) circle [radius=0.2];% delta^2 e17
\draw[fill] (61-48,9) circle [radius=0.2];% delta^2 e21
\draw[] (63-48,9) circle [radius=0.2];% delta^2 e23
\draw[fill] (70-48,10) circle [radius=0.2]; % delta^2 e30
\draw[fill] (72-48,10) circle [radius=0.2]; % delta^2 e32
\draw[fill] (76-48,10) circle [radius=0.2]; % delta^2 e36
\draw[] (78-48,10) circle [radius=0.2]; % delta^2 e38
\draw[fill] (82-48,10) circle [radius=0.2]; % delta^2 e42
\draw[fill] (87-48,11) circle [radius=0.2]; % delta^2 e47
\draw[] (88-48,10) circle [radius=0.2]; % delta^2 e48
\draw[] (93-48,11) circle [radius=0.2]; % delta^2 e53
\draw[fill] (40,8) circle [radius=0.2];%\delta^4 e0
\draw[fill] (45,9) circle [radius=0.2];%delta^4 e5
\draw[] (46,8) circle [radius=0.2];%delta^4 e6
%%%multiple of \kappa^3
\draw[fill] (96-96,14) circle [radius=0.2]; %e36
\draw[fill] (98-96,14) circle [radius=0.2]; %e38
\draw[fill] (102-96,14) circle [radius=0.2]; %e42
\draw[fill] (107-96,15) circle [radius=0.2]; %e47
\draw[fill] (108-96,14) circle [radius=0.2]; %e48
\draw[fill] (113-96,15) circle [radius=0.2]; %e53
\draw[fill] (60-48,12) circle [radius=0.2];%delta^2 e0
\draw[fill] (65-48,13) circle [radius=0.2];%delta^2 e5
\draw[fill] (66-48,12) circle [radius=0.2];%delta^2 e6
\draw[fill] (71-48,13) circle [radius=0.2];% delta^2 e11
\draw[fill] (75-48,13) circle [radius=0.2];% delta^2 e15
\draw[fill] (77-48,13) circle [radius=0.2];% delta^2 e17
\draw[fill] (81-48,13) circle [radius=0.2];% delta^2 e21
\draw[] (83-48,13) circle [radius=0.2];% delta^2 e23
\draw[fill] (90-48,14) circle [radius=0.2]; % delta^2e30
\draw[fill] (92-48,14) circle [radius=0.2]; % delta^2 e32
\draw[fill] (96-48,14) circle [radius=0.2]; % delta^2 e36
%%%multiple of \kappa^4
\draw[fill] (80-48,16) circle [radius=0.2]; %delta^2 e0
\draw[fill] (85-48,17) circle [radius=0.2];%delta^2 e5
\draw[fill] (86-48,16) circle [radius=0.2];%delta^2 e6
\draw[fill] (91-48,17) circle [radius=0.2];%delta^2 e11
\draw[] (95-48,17) circle [radius=0.2];%delta^2 e15
\draw[fill] (97-96,17) circle [radius=0.2];%e17
\draw[] (101-96,17) circle [radius=0.2];%e21
\draw[fill] (103-96,17) circle [radius=0.2];%  e23
\draw[] (110-96,18) circle [radius=0.2]; %  e30
\draw[] (112-96,18) circle [radius=0.2]; %  e32
\draw[] (116-96,18) circle [radius=0.2]; % e36
\draw[] (118-96,18) circle [radius=0.2]; % e38
\draw[fill] (122-96,18) circle [radius=0.2]; %  e42
\draw[] (127-96,19) circle [radius=0.2]; % e47
\draw[fill] (128-96,18) circle [radius=0.2]; % e48
\draw[] (133-96,19) circle [radius=0.2]; % e53
%%%multiple of \ove{\kappa}^5
\draw[] (100-96,20) circle [radius=0.2]; %e0
\draw[] (105-96,21) circle [radius=0.2];%e5
\draw[] (106-96,20) circle [radius=0.2];%e6
\draw[fill] (111-96,21) circle [radius=0.2];%e11
\draw[] (115-96,21) circle [radius=0.2];%e15
\draw[fill] (117-96,21) circle [radius=0.2];%e17
\draw[] (121-96,21) circle [radius=0.2];%e21
\draw[fill] (123-96,21) circle [radius=0.2];%e23
\draw[] (130-96,22) circle [radius=0.2];%e30
\draw[] (132-96,22) circle [radius=0.2];%e32
\draw[] (136-96,22) circle [radius=0.2]; % e36
\draw[] (138-96,22) circle [radius=0.2]; % e38
\draw[] (142-96,22) circle [radius=0.2]; % e42
%%%multiple of \ove{\kappa}^6
\draw[] (120-96,24) circle [radius=0.2]; %e0
\draw[] (125-96,25) circle [radius=0.2];%e5
\draw[] (126-96,24) circle [radius=0.2];%e6
\draw[] (131-96,25) circle [radius=0.2];%e11
\draw[] (135-96,25) circle [radius=0.2];%e15
\draw[] (137-96,25) circle [radius=0.2];%e17
\draw[] (141-96,25) circle [radius=0.2];%e21
\draw[] (143-96,25) circle [radius=0.2];%e23
%%%%%differentials
\draw[->] (5,3)--(4+0.03,20-0.2);%d17(delta^2 e53)
\draw[->] (6,0)--(5+0.03,17-0.2);%d17(delta^4 e6)
\draw[->] (17,1)--(16+0.03,18-0.2);%d17(delta^4 e17)
\draw[->] (21,1)--(20+0.03,18-0.2);%d17(delta^4 e21)
\draw[->] (32,2)--(31+0.03,19-0.2);%d17(delta^4 e32)
\draw[->] (23,1)--(22+0.03,18-0.2);%d17(delta^4 e23)
\draw[->] (38,2)--(37+0.03,19-0.2);%d17(delta^4 e38)
\draw[->] (48,0)--(47+0.03,17-0.2);%d17(delta^6 e0)
\draw[->] (11,1)--(10+0.02,20-0.2);%d19(delta^4 e11)
\draw[->] (47,3)--(46+0.02,22-0.2);%d19(delta^4 e47)
\draw[->] (36,2)--(35+0.01,25-0.2);%d23(delta^4 e36)
\draw[->] (42,2)--(41+0.01,25-0.2);%d23(delta^4 e42)
\draw[->] (48,2)--(47+0.01,25-0.2);%d23(delta^4 e48)
\end{tikzpicture}
\caption{HFPSS for $A_{1}[10]\  \mbox{and} A_{1}[01]$ from $\mathrm{E}_{7}$-term with $96\leq t-s\leq 144$}
\phantomsection \label{96-144}
\end{figure}
%%%%144-192
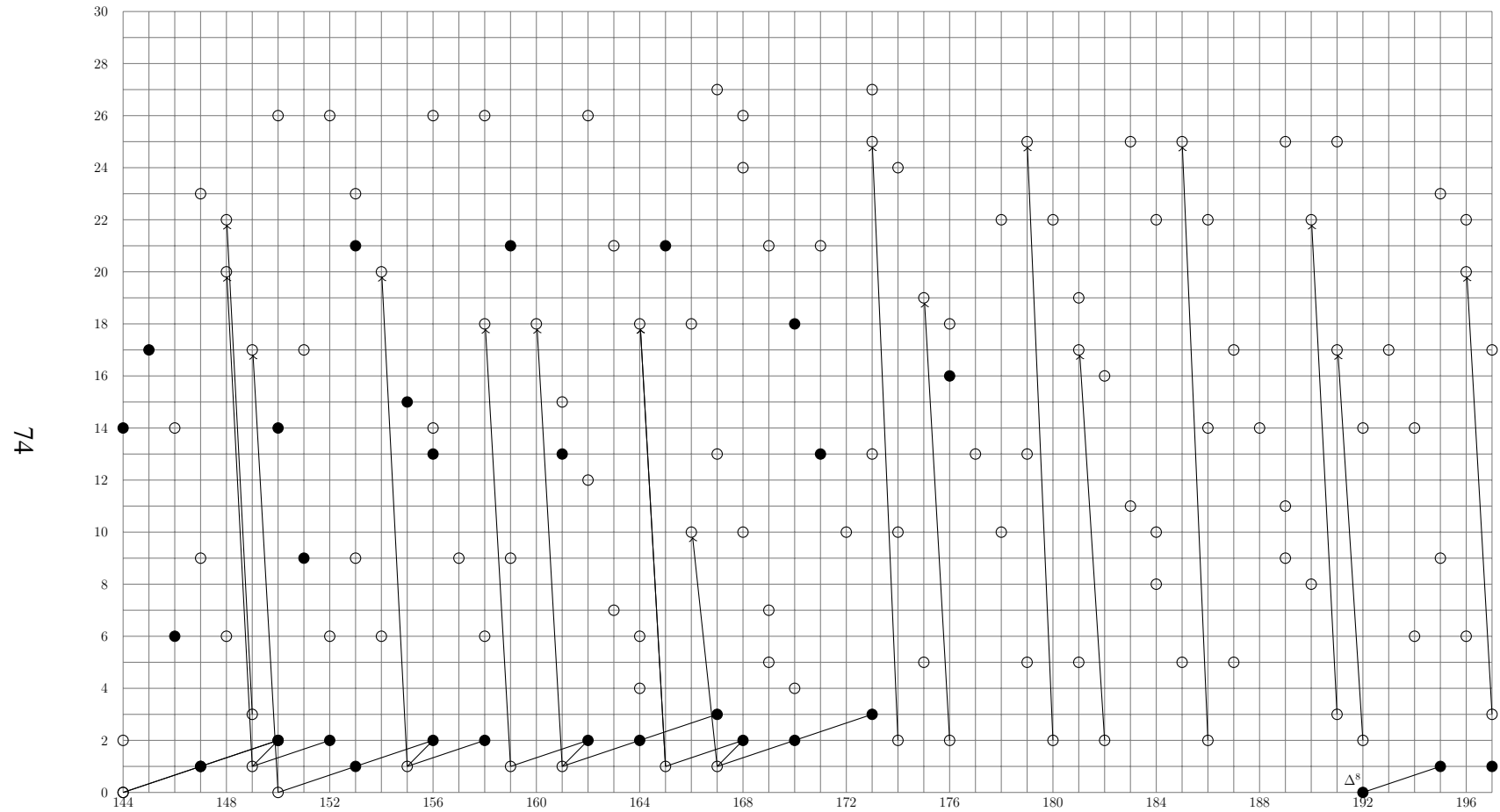
\begin{figure}[h!]
\begin{tikzpicture}[scale=0.39]
\clip(-1.5,-1.5) rectangle (54,31);
\draw[color=gray] (0,0) grid [step=1] (53,30);

\foreach \n in {144,148,...,197}
{
\def\nn{\n-0}
\node[below,scale=0.5] at (\nn-144,0) {$\n$};
}
\foreach \s in {0,2,...,30}
{\def\ss{\s-0};
\node [left,scale=0.5] at (-0.4,\ss,0){$\s$};
}
\draw [] ( 0.00, 0.00) circle [radius=0.2];
\draw [fill] (3,1) circle [radius=0.2];
\draw [fill] (6,2) circle [radius=0.2];
\draw [-] (0,0)--(3,1);
\draw [-] (3,1)--(6,2);
%\draw[->,red] (11.8,0.2)--(11.2,2.8);
\foreach \t in {0}
\foreach \s in {0,6}
{\def\ss{\s-0};
\draw[]  (\s-\t,\t) circle [radius=0.2];
\draw [fill] (\s+3-\t,1+\t) circle [radius=0.2];
\draw [-] (\s-\t,\t)--(\s+3-\t,1+\t);
\draw[] (\s+5-\t,1+\t) circle [radius=0.2];
\draw[fill] (\s+8-\t,2+\t) circle [radius=0.2];
\draw[-] (\s+5-\t,1+\t)--(\s+8-\t,2+\t); 
\draw [fill] (\s-\t+6,2+\t) circle [radius=0.2];
\draw [-] (\s+3-\t,1+\t)--(\s+6-\t,2+\t);
%\draw[fill] (\s+11-\t,3+\t) circle [radius=0.1];
%\draw[-] (\s+8-\t,2+\t)--(\s+11-\t,3+\t); 
\draw [-] (\s+5-\t,1+\t)--(\s-\t+6,2+\t);
}

\foreach \t in {0}
\foreach \s in {12,18}
{\def\ss{\s-0};
%\draw [fill]  (\s-\t,\t) circle [radius=0.1];
\draw[] (\s+3-\t,1+\t) circle [radius=0.2];
\draw[] (\s+5-\t,1+\t) circle [radius=0.2];
\draw[fill] (\s+8-\t,2+\t) circle [radius=0.2];
\draw[-] (\s+5-\t,1+\t)--(\s+8-\t,2+\t); 
\draw [fill] (\s-\t+6,2+\t) circle [radius=0.2];
\draw [-] (\s+3-\t,1+\t)--(\s+6-\t,2+\t);
\draw[fill] (\s+11-\t,3+\t) circle [radius=0.2];
\draw[-] (\s+8-\t,2+\t)--(\s+11-\t,3+\t); 
\draw [-] (\s+5-\t,1+\t)--(\s-\t+6,2+\t);
}
\draw[] (0,2) circle [radius=0.2];
\draw[] (5,3) circle [radius=0.2];
\draw[] (30,2) circle [radius=0.2];
\draw[] (32,2) circle [radius=0.2];
\draw[] (36,2) circle [radius=0.2];
\draw[] (38,2) circle [radius=0.2];
\draw[] (42,2) circle [radius=0.2];
\draw[] (47,3) circle [radius=0.2];
\draw[] (48,2) circle [radius=0.2];
\draw[] (53,3) circle [radius=0.2];
\draw[fill] (48,0) circle[radius=0.2];
\draw[fill] (51,1) circle[radius=0.2];
\draw[-](48,0)--(51,1);
\draw[fill] (53,1) circle[radius=0.2];
\node[scale = 0.5] at (47.6,0.5) {$\Delta^8$};
%%%multiple of \ove{\kappa}
\draw[fill] (50-48,6) circle [radius=0.2]; %delta^4 e30
\draw[] (52-48,6) circle [radius=0.2]; %delta^4 e32
\draw[] (56-48,6) circle [radius=0.2]; %delta^4 e36
\draw[] (58-48,6) circle [radius=0.2]; %delta^4 e38
\draw[] (62-48,6) circle [radius=0.2]; %delta^4 e42
\draw[] (67-48,7) circle [radius=0.2]; %delta^4 e47
\draw[] (68-48,6) circle [radius=0.2]; %delta^4 e48
\draw[] (73-48,7) circle [radius=0.2]; %delta^4 e53
\draw[] (20,4) circle [radius=0.2]; %delta^6 e0
\draw[] (25,5) circle [radius=0.2];%delta^6 e5
\draw[] (26,4) circle [radius=0.2];%delta^6 e6
\draw[] (31,5) circle [radius=0.2];%delta^6 e11
\draw[] (35,5) circle [radius=0.2];%delta^6 e15
\draw[] (37,5) circle [radius=0.2];%delta^6 e17
\draw[] (41,5) circle [radius=0.2];%delta^6 e21
\draw[] (43,5) circle [radius=0.2];%delta^6 e23
\draw[] (50,6) circle [radius=0.2];%delta^6 e30
\draw[] (52,6) circle [radius=0.2];%delta^6 e32
%%%multiple of \kappa^2
\draw[] (51-48,9) circle [radius=0.2];% delta^4 e11
\draw[fill] (55-48,9) circle [radius=0.2];% delta^4 e15
\draw[] (57-48,9) circle [radius=0.2];% delta^4 e17
\draw[] (61-48,9) circle [radius=0.2];% delta^4 e21
\draw[] (63-48,9) circle [radius=0.2];% delta^4 e23
\draw[] (70-48,10) circle [radius=0.2]; % delta^4 e30
\draw[] (72-48,10) circle [radius=0.2]; % delta^4 e32
\draw[] (76-48,10) circle [radius=0.2]; % delta^4 e36
\draw[] (78-48,10) circle [radius=0.2]; %delta^4 e38
\draw[] (82-48,10) circle [radius=0.2]; % delta^4 e42
\draw[] (87-48,11) circle [radius=0.2]; %delta^4 e47
\draw[] (88-48,10) circle [radius=0.2]; % delta^4 e48
\draw[] (93-48,11) circle [radius=0.2]; % delta^4 e53
\draw[] (40,8) circle [radius=0.2];%delta^6 e0
\draw[] (45,9) circle [radius=0.2];%delta^6 e5
\draw[] (46,8) circle [radius=0.2];%delta^6 e6
\draw[] (51,9) circle [radius=0.2];%delta^6 e11
%%%multiple of \kappa^3
\draw[fill] (96-96,14) circle [radius=0.2]; %delta^2e36
\draw[] (98-96,14) circle [radius=0.2]; %delta^2e38
\draw[fill] (102-96,14) circle [radius=0.2]; %delta^2 e42
\draw[fill] (107-96,15) circle [radius=0.2]; %delta^2 e47
\draw[] (108-96,14) circle [radius=0.2]; %delta^2 e48
\draw[] (113-96,15) circle [radius=0.2]; %delta^2 e53
\draw[fill] (60-48,13) circle [radius=0.2];%delta^4 e0
\draw[fill] (65-48,13) circle [radius=0.2];%delta^4 e5
\draw[] (66-48,12) circle [radius=0.2];%delta^4 e6
\draw[] (71-48,13) circle [radius=0.2];% delta^4 e11
\draw[fill] (75-48,13) circle [radius=0.2];% delta^4 e15
\draw[] (77-48,13) circle [radius=0.2];% delta^4 e17
\draw[] (81-48,13) circle [radius=0.2];% delta^4 e21
\draw[] (83-48,13) circle [radius=0.2];% delta^4 e23
\draw[] (90-48,14) circle [radius=0.2]; % delta^4e30
\draw[] (92-48,14) circle [radius=0.2]; % delta^4 e32
\draw[] (96-48,14) circle [radius=0.2]; % delta^4 e36
\draw[] (98-48,14) circle [radius=0.2]; %delta^4 e38

%%%multiple of \kappa^4
\draw[fill] (80-48,16) circle [radius=0.2]; %delta^4 e0
\draw[] (85-48,17) circle [radius=0.2];%delta^4 e5
\draw[] (86-48,16) circle [radius=0.2];%delta^4 e6
\draw[] (91-48,17) circle [radius=0.2];%delta^4 e11
\draw[] (95-48,17) circle [radius=0.2];%delta^4 e15
\draw[] (97-48,17) circle [radius=0.2];%delta^4 e17
\draw[] (101-48,17) circle [radius=0.2];%delta^4 e21
\draw[fill] (97-96,17) circle [radius=0.2];%  delta^2 e17
\draw[] (101-96,17) circle [radius=0.2];% delta^2 e21
\draw[] (103-96,17) circle [radius=0.2];% delta^2 e23
\draw[] (110-96,18) circle [radius=0.2]; % delta^2 e30
\draw[] (112-96,18) circle [radius=0.2]; % delta^2 e32
\draw[] (116-96,18) circle [radius=0.2]; % delta^2 e36
\draw[] (118-96,18) circle [radius=0.2]; % delta^2 e38
\draw[fill] (122-96,18) circle [radius=0.2]; % delta^2 e42
\draw[] (127-96,19) circle [radius=0.2]; % delta^2 e47
\draw[] (128-96,18) circle [radius=0.2]; %delta^2 e48
\draw[] (133-96,19) circle [radius=0.2]; % delta^2 e53

%%%multiple of \ove{\kappa}^5
\draw[] (147-144,23) circle [radius=0.2]; %e47
\draw[] (148-144,22) circle [radius=0.2]; %e48
\draw[] (153-144,23) circle [radius=0.2]; %e53
\draw[] (100-96,20) circle [radius=0.2]; %delta^2 e0
\draw[fill] (105-96,21) circle [radius=0.2];%delta^2 e5
\draw[] (106-96,20) circle [radius=0.2];%delta^2 e6
\draw[fill] (111-96,21) circle [radius=0.2];%delta^2 e11
\draw[] (115-96,21) circle [radius=0.2];%delta^2 e15
\draw[fill] (117-96,21) circle [radius=0.2];%delta^2 e17
\draw[] (121-96,21) circle [radius=0.2];%delta^2 e21
\draw[] (123-96,21) circle [radius=0.2];%delta^2 e23
\draw[] (130-96,22) circle [radius=0.2];%delta^2 e30
\draw[] (132-96,22) circle [radius=0.2];%delta^2 e32
\draw[] (136-96,22) circle [radius=0.2]; % delta^2 e36
\draw[] (138-96,22) circle [radius=0.2]; % delta^2 e38
\draw[] (142-96,22) circle [radius=0.2]; % delta^2 e42
\draw[] (147-96,23) circle [radius=0.2]; % delta^2 e47
\draw[] (148-96,22) circle [radius=0.2]; % delta^2 e48
\draw[] (148-96,20) circle [radius=0.2]; % delta^4 e0
%%%multiple of \ove{\kappa}^6
\draw[] (150-144,26) circle [radius=0.2]; % e30
\draw[] (152-144,26) circle [radius=0.2]; % e32
\draw[] (156-144,26) circle [radius=0.2]; % e36
\draw[] (158-144,26) circle [radius=0.2]; % e38
\draw[] (162-144,26) circle [radius=0.2]; % e42
\draw[] (167-144,27) circle [radius=0.2]; % e47
\draw[] (168-144,26) circle [radius=0.2]; % e48
\draw[] (173-144,27) circle [radius=0.2]; % e53
\draw[] (168-144,24) circle [radius=0.2]; %delta^2 e0
\draw[] (125+48-144,25) circle [radius=0.2];%delta^2e5
\draw[] (126+48-144,24) circle [radius=0.2];%delta^2 e6
\draw[] (131+48-144,25) circle [radius=0.2];%delta^2e11
\draw[] (135+48-144,25) circle [radius=0.2];%delta^2 e15
\draw[] (137+48-144,25) circle [radius=0.2];%delta^2e17
\draw[] (141+48-144,25) circle [radius=0.2];%delta^2 e21
\draw[] (143+48-144,25) circle [radius=0.2];%delta^2e23
%%%differentials
\draw[->] (23,1)--(22+0.05,10-0.2);%d9(delta^6 e23)
\draw[->] (38,2)--(37+0.04,17-0.2);%d15(delta^6 e38)
\draw[->] (48,2)--(47+0.04,17-0.2);%d15(delta^6 e48)
\draw[->] (6,0)--(5+0.03,17-0.2);%d17(delta^6 e6)
\draw[->] (17,1)--(16+0.03,18-0.2);%d17(delta^6 e17)
\draw[->] (21,1)--(20+0.03,18-0.2);%d17(delta^6 e21)
\draw[->] (21,1)--(20+0.03,18-0.2);%d17(delta^6 e21)
\draw[->] (32,2)--(31+0.03,19-0.2);%d17(delta^6 e32)
\draw[->] (53,3)--(52+0.03,20-0.2);%d17(delta^6 e53)
\draw[->] (15,1)--(14+0.03,18-0.2);%d17(delta^6 e53)
\draw[->] (11,1)--(10+0.02,20-0.2);%d19(delta^6 e11)
\draw[->] (47,3)--(46+0.02,22-0.2);%d19(delta^6 e47)
\draw[->] (5,1)--(4+0.02,20-0.2);%d19(delta^6 e5)
\draw[->] (5,3)--(4+0.02,22-0.2);%d19(delta^4 e53)
\draw[->] (36,2)--(35+0.01,25-0.2);%d23(delta^6 e36)
\draw[->] (42,2)--(41+0.01,25-0.2);%d23(delta^6 e42)
\draw[->] (30,2)--(29+0.01,25-0.2);%d23(delta^6 e30)
\end{tikzpicture}
\caption{HFPSS for $A_{1}[10]\ \mbox{and} \ A_{1}[01]$ from $\mathrm{E}_{7}$-term with $144\leq t-s\leq 197$}
\phantomsection \label{144-197}
\end{figure}
\end{landscape}

\begin{landscape}
The Figures (\ref{0-48 nondual}) to (\ref{144-197 nondual}) represent the HFPSS for $E_{C}^{hG_{24}}\wedge A_{1}[00]$ and $E_{C}^{hG_{24}}\wedge A_{1}[11]$ from the $\mathrm{E}_{7}$-term on. Each black dot $\bullet$ represents a class generating a group $\FF_{4}$ which survives to the $\mathrm{E}_{\infty}$-term. Each circle $\circ$ represent a class which either is hit by a differential or supports a differential higher than $d_{5}$. We only represent the differentials on generators listed in Proposition \ref{E_7} but not those generated by $\overline{\kappa}$-linearity.

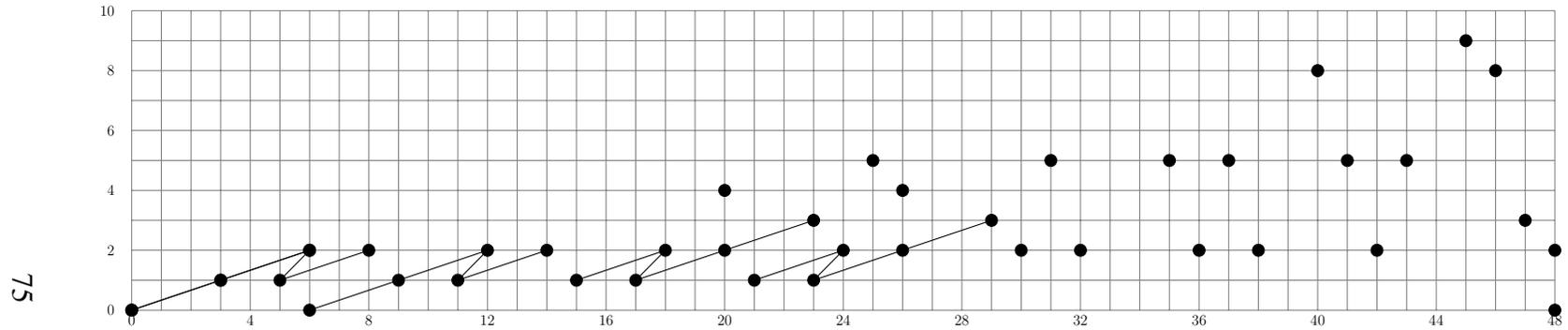
\begin{figure}[h!]
\begin{tikzpicture}[scale=0.42]
\clip(-1.5,-1.5) rectangle (49,11);
\draw[color=gray] (0,0) grid [step=1] (48,10);

\foreach \n in {0,4,...,48}
{
\def\nn{\n-0}
\node[below,scale=0.5] at (\nn,0) {$\n$};
}
\foreach \s in {0,2,...,10}
{\def\ss{\s-0};
\node [left,scale=0.5] at (-0.4,\ss,0){$\s$};
}
\draw [fill] ( 0.00, 0.00) circle [radius=0.2];
\draw [fill] (3,1) circle [radius=0.2];
\draw [fill] (6,2) circle [radius=0.2];
\draw [-] (0,0)--(3,1);
\draw [-] (3,1)--(6,2);
%\draw[->,red] (11.8,0.2)--(11.2,2.8);
\foreach \t in {0}
\foreach \s in {0,6}
{\def\ss{\s-0};
\draw [fill]  (\s-\t,\t) circle [radius=0.2];
\draw [fill] (\s+3-\t,1+\t) circle [radius=0.2];
\draw [-] (\s-\t,\t)--(\s+3-\t,1+\t);
\draw[fill] (\s+5-\t,1+\t) circle [radius=0.2];
\draw[fill] (\s+8-\t,2+\t) circle [radius=0.2];
\draw[-] (\s+5-\t,1+\t)--(\s+8-\t,2+\t); 
\draw [fill] (\s-\t+6,2+\t) circle [radius=0.2];
\draw [-] (\s+3-\t,1+\t)--(\s+6-\t,2+\t);
%\draw[fill] (\s+11-\t,3+\t) circle [radius=0.1];
%\draw[-] (\s+8-\t,2+\t)--(\s+11-\t,3+\t); 
\draw [-] (\s+5-\t,1+\t)--(\s-\t+6,2+\t);
}

\foreach \t in {0}
\foreach \s in {12,18}
{\def\ss{\s-0};
%\draw [fill]  (\s-\t,\t) circle [radius=0.1];
\draw [fill] (\s+3-\t,1+\t) circle [radius=0.2];
%\draw [-] (\s-\t,\t)--(\s+3-\t,1+\t);
\draw[fill] (\s+5-\t,1+\t) circle [radius=0.2];
\draw[fill] (\s+8-\t,2+\t) circle [radius=0.2];
\draw[-] (\s+5-\t,1+\t)--(\s+8-\t,2+\t); 
\draw [fill] (\s-\t+6,2+\t) circle [radius=0.2];
\draw [-] (\s+3-\t,1+\t)--(\s+6-\t,2+\t);
\draw[fill] (\s+11-\t,3+\t) circle [radius=0.2];
\draw[-] (\s+8-\t,2+\t)--(\s+11-\t,3+\t); 
\draw [-] (\s+5-\t,1+\t)--(\s-\t+6,2+\t);
}
%\draw[fill] (0,2) circle [radius=0.2];
%\draw[fill] (5,3) circle [radius=0.2];
\draw[fill] (30,2) circle [radius=0.2];
\draw[fill] (32,2) circle [radius=0.2];
\draw[fill] (36,2) circle [radius=0.2];
\draw[fill] (38,2) circle [radius=0.2];
\draw[fill] (42,2) circle [radius=0.2];
\draw[fill] (47,3) circle [radius=0.2];
\draw[fill] (48,2) circle [radius=0.2];
\draw[fill] (53,3) circle [radius=0.2];
\draw[fill] (48,0) circle[radius=0.2];

%%%multiple of \ove{\kappa}
\draw[fill] (20,4) circle [radius=0.2]; %e0
\draw[fill] (25,5) circle [radius=0.2];%e5
\draw[fill] (26,4) circle [radius=0.2];%e6
\draw[fill] (31,5) circle [radius=0.2];%e11
\draw[fill] (35,5) circle [radius=0.2];%e15
\draw[fill] (37,5) circle [radius=0.2];%e17
\draw[fill] (41,5) circle [radius=0.2];%e21
\draw[fill] (43,5) circle [radius=0.2];%e23
\draw[fill] (50,6) circle [radius=0.2];%e30
\draw[fill] (52,6) circle [radius=0.2];%e32
%%%multiple of \kappa^2
\draw[fill] (40,8) circle [radius=0.2];%e0
\draw[fill] (45,9) circle [radius=0.2];%e5
\draw[fill] (46,8) circle [radius=0.2];%e6
\end{tikzpicture}
\caption{HFPSS for $A_{1}[00]\ \mbox{and}\ A_{1}[11]$ from $\mathrm{E}_{7}$-term with $0\leq t-s\leq 48$}
\phantomsection \label{0-48 nondual}
\end{figure}
\end{landscape}

%%%%48-96
\begin{landscape}
\begin{figure}[h!]
\begin{tikzpicture}[scale=0.42]
\clip(-1.5,-1.5) rectangle (49,19);
\draw[color=gray] (0,0) grid [step=1] (48,18);

\foreach \n in {48,52,...,96}
{
\def\nn{\n-0}
\node[below,scale=0.5] at (\nn-48,0) {$\n$};
}
\foreach \s in {0,2,...,18}
{\def\ss{\s-0};
\node [left,scale=0.5] at (-0.4,\ss,0){$\s$};
}
\draw [fill] ( 0.00, 0.00) circle [radius=0.2];
\draw [fill] (3,1) circle [radius=0.2];
\draw [fill] (6,2) circle [radius=0.2];
\draw [-] (0,0)--(3,1);
\draw [-] (3,1)--(6,2);
%\draw[->,red] (11.8,0.2)--(11.2,2.8);
\foreach \t in {0}
\foreach \s in {0,6}
{\def\ss{\s-0};
\draw [fill]  (\s-\t,\t) circle [radius=0.2];
\draw [fill] (\s+3-\t,1+\t) circle [radius=0.2];
\draw [-] (\s-\t,\t)--(\s+3-\t,1+\t);
\draw[fill] (\s+5-\t,1+\t) circle [radius=0.2];
\draw[fill] (\s+8-\t,2+\t) circle [radius=0.2];
\draw[-] (\s+5-\t,1+\t)--(\s+8-\t,2+\t); 
\draw [fill] (\s-\t+6,2+\t) circle [radius=0.2];
\draw [-] (\s+3-\t,1+\t)--(\s+6-\t,2+\t);
%\draw[fill] (\s+11-\t,3+\t) circle [radius=0.1];
%\draw[-] (\s+8-\t,2+\t)--(\s+11-\t,3+\t); 
\draw [-] (\s+5-\t,1+\t)--(\s-\t+6,2+\t);
}

\foreach \t in {0}
\foreach \s in {12,18}
{\def\ss{\s-0};
%\draw [fill]  (\s-\t,\t) circle [radius=0.1];
\draw [fill] (\s+3-\t,1+\t) circle [radius=0.2];
%\draw [-] (\s-\t,\t)--(\s+3-\t,1+\t);
\draw[fill] (\s+5-\t,1+\t) circle [radius=0.2];
\draw[fill] (12+5,1) circle [radius=0.2];
\draw[fill] (\s+8-\t,2+\t) circle [radius=0.2];
\draw[-] (\s+5-\t,1+\t)--(\s+8-\t,2+\t); 
\draw [fill] (\s-\t+6,2+\t) circle [radius=0.2];
\draw [-] (\s+3-\t,1+\t)--(\s+6-\t,2+\t);
\draw[fill] (\s+11-\t,3+\t) circle [radius=0.2];
\draw[-] (\s+8-\t,2+\t)--(\s+11-\t,3+\t); 
\draw [-] (\s+5-\t,1+\t)--(\s-\t+6,2+\t);
}
\draw[fill] (0,2) circle [radius=0.2];
\draw[fill] (5,3) circle [radius=0.2];
\draw[fill] (30,2) circle [radius=0.2];
\draw[fill] (32,2) circle [radius=0.2];
\draw[fill] (36,2) circle [radius=0.2];
\draw[fill] (38,2) circle [radius=0.2];
\draw[fill] (42,2) circle [radius=0.2];
\draw[fill] (47,3) circle [radius=0.2];
\draw[fill] (48,2) circle [radius=0.2];
\draw[fill] (53,3) circle [radius=0.2];
\draw[] (48,0) circle[radius=0.2];
%%%multiple of \ove{\kappa}
\draw[fill] (50-48,6) circle [radius=0.2]; % e30
\draw[fill] (52-48,6) circle [radius=0.2]; % e32
\draw[fill] (56-48,6) circle [radius=0.2]; % e36
\draw[fill] (58-48,6) circle [radius=0.2]; % e38
\draw[fill] (62-48,6) circle [radius=0.2]; % e42
\draw[fill] (67-48,7) circle [radius=0.2]; % e47
\draw[fill] (68-48,6) circle [radius=0.2]; % e48
\draw[fill] (73-48,7) circle [radius=0.2]; % e53
\draw[fill] (20,4) circle [radius=0.2]; %delta^2 e0
\draw[fill] (25,5) circle [radius=0.2];%delta^2 e5
\draw[fill] (26,4) circle [radius=0.2];%delta^2 e6
\draw[fill] (31,5) circle [radius=0.2];%delta^2 e11
\draw[fill] (35,5) circle [radius=0.2];%delta^2 e15
\draw[fill] (37,5) circle [radius=0.2];%delta^2 e17
\draw[fill] (41,5) circle [radius=0.2];%delta^2 e21
\draw[fill] (43,5) circle [radius=0.2];%delta^2 e23

%%%multiple of \kappa^2
\draw[fill] (51-48,9) circle [radius=0.2];% e11
\draw[fill] (55-48,9) circle [radius=0.2];% e15
\draw[fill] (57-48,9) circle [radius=0.2];% e17
\draw[fill] (61-48,9) circle [radius=0.2];% e21
\draw[fill] (63-48,9) circle [radius=0.2];% e23
\draw[fill] (70-48,10) circle [radius=0.2]; % e30
\draw[fill] (72-48,10) circle [radius=0.2]; % e32
\draw[fill] (76-48,10) circle [radius=0.2]; % e36
\draw[fill] (78-48,10) circle [radius=0.2]; % e38
\draw[fill] (82-48,10) circle [radius=0.2]; % e42
\draw[fill] (87-48,11) circle [radius=0.2]; % e47
\draw[fill] (88-48,10) circle [radius=0.2]; % e48
\draw[fill] (93-48,11) circle [radius=0.2]; % e53
\draw[fill] (40,8) circle [radius=0.2];%\delta^2 e0
\draw[fill] (45,9) circle [radius=0.2];%delta^2 e5
\draw[fill] (46,8) circle [radius=0.2];%delta^2 e6
%%%multiple of \kappa^3
\draw[fill] (60-48,12) circle [radius=0.2]; %e0
\draw[fill] (65-48,13) circle [radius=0.2];%e5
\draw[fill] (66-48,12) circle [radius=0.2];%e6
\draw[fill] (71-48,13) circle [radius=0.2];% e11
\draw[fill] (75-48,13) circle [radius=0.2];% e15
\draw[fill] (77-48,13) circle [radius=0.2];% e17
\draw[fill] (81-48,13) circle [radius=0.2];% e21
\draw[fill] (83-48,13) circle [radius=0.2];% e23
\draw[fill] (90-48,14) circle [radius=0.2]; % e30
\draw[fill] (92-48,14) circle [radius=0.2]; % e32
\draw[fill] (96-48,14) circle [radius=0.2]; % e36
%%%multiple of \kappa^4
\draw[fill] (80-48,16) circle [radius=0.2]; %e0
\draw[fill] (85-48,17) circle [radius=0.2];%e5
\draw[fill] (86-48,16) circle [radius=0.2];%e6
\draw[fill] (91-48,17) circle [radius=0.2];% e11
\draw[fill] (95-48,17) circle [radius=0.2];% e15
%%%%%%differentials
%\draw[->] (71-48,1)--(70-48+0.05,10-0.2); %d9(delta^2e23)=k^2e30
%\draw[->] (38,2)--(37+0.04,17-0.2);%d15(delta^2 e38)
\draw[->] (48,0)--(47+0.04,17-0.2);%d1(delta^2 e48)
\end{tikzpicture}
\caption{HFPSS for $A_{1}[00]\ \mbox{and} A_{1}[11]$ from $\mathrm{E}_{7}$-term with $48\leq t-s\leq 96$}
\phantomsection \label{48-96 nondual}
\end{figure}
\end{landscape}
%%%%96-144

\begin{landscape}
\begin{figure}[h!]
\begin{tikzpicture}[scale=0.42]
\clip(-1.5,-1.5) rectangle (49,27);
\draw[color=gray] (0,0) grid [step=1] (48,26);

\foreach \n in {96,100,...,144}
{
\def\nn{\n-0}
\node[below,scale=0.5] at (\nn-96,0) {$\n$};
}
\foreach \s in {0,2,...,26}
{\def\ss{\s-0};
\node [left,scale=0.5] at (-0.4,\ss,0){$\s$};
}
%\draw [fill] ( 0.00, 0.00) circle [radius=0.2];
\draw [fill] (3,1) circle [radius=0.2];
\draw [fill] (6,2) circle [radius=0.2];
\draw [-] (0,0)--(3,1);
\draw [-] (3,1)--(6,2);
%\draw[->,red] (11.8,0.2)--(11.2,2.8);
\foreach \t in {0}
\foreach \s in {0,6}
{\def\ss{\s-0};
\draw[]  (\s-\t,\t) circle [radius=0.2];
%\draw [fill]  (0,0) circle [radius=0.2];
\draw [fill] (\s+3-\t,1+\t) circle [radius=0.2];
\draw [-] (\s-\t,\t)--(\s+3-\t,1+\t);
\draw[] (\s+5-\t,1+\t) circle [radius=0.2];
%\draw[fill] (5,1) circle [radius=0.2];
\draw[fill] (\s+8-\t,2+\t) circle [radius=0.2];
\draw[-] (\s+5-\t,1+\t)--(\s+8-\t,2+\t); 
\draw [fill] (\s-\t+6,2+\t) circle [radius=0.2];
\draw [-] (\s+3-\t,1+\t)--(\s+6-\t,2+\t);
%\draw[fill] (\s+11-\t,3+\t) circle [radius=0.1];
%\draw[-] (\s+8-\t,2+\t)--(\s+11-\t,3+\t); 
\draw [-] (\s+5-\t,1+\t)--(\s-\t+6,2+\t);
}

\foreach \t in {0}
\foreach \s in {12,18}
{\def\ss{\s-0};
%\draw [fill]  (\s-\t,\t) circle [radius=0.1];
\draw[] (\s+3-\t,1+\t) circle [radius=0.2];
\draw [] (12+3,1) circle [radius=0.2];
\draw[] (\s+5-\t,1+\t) circle [radius=0.2];
\draw[fill] (\s+8-\t,2+\t) circle [radius=0.2];
\draw[-] (\s+5-\t,1+\t)--(\s+8-\t,2+\t); 
\draw [fill] (\s-\t+6,2+\t) circle [radius=0.2];
\draw [-] (\s+3-\t,1+\t)--(\s+6-\t,2+\t);
\draw[fill] (\s+11-\t,3+\t) circle [radius=0.2];
\draw[-] (\s+8-\t,2+\t)--(\s+11-\t,3+\t); 
\draw [-] (\s+5-\t,1+\t)--(\s-\t+6,2+\t);
}
\draw[fill] (0,2) circle [radius=0.2];
\draw[fill] (5,3) circle [radius=0.2];%delta^2e53
\draw[] (30,2) circle [radius=0.2];
\draw[] (32,2) circle [radius=0.2];
\draw[] (36,2) circle [radius=0.2];
\draw[] (38,2) circle [radius=0.2];
\draw[] (42,2) circle [radius=0.2];
\draw[] (47,3) circle [radius=0.2];
\draw[] (48,2) circle [radius=0.2];
\draw[fill] (53,3) circle [radius=0.2];
\draw[] (48,0) circle[radius=0.2];

%%%multiple of \ove{\kappa}
\draw[fill] (50-48,6) circle [radius=0.2]; %delta^2 e30
\draw[fill] (52-48,6) circle [radius=0.2]; %delta^2 e32
\draw[fill] (56-48,6) circle [radius=0.2]; %delta^2 e36
\draw[fill] (58-48,6) circle [radius=0.2]; %delta^2 e38
\draw[fill] (62-48,6) circle [radius=0.2]; %delta^2 e42
\draw[fill] (67-48,7) circle [radius=0.2]; %delta^2 e47
\draw[] (68-48,6) circle [radius=0.2]; %delta^2 e48
\draw[] (73-48,7) circle [radius=0.2]; %delta^2 e53
\draw[fill] (20,4) circle [radius=0.2]; %delta^4 e0
\draw[fill] (25,5) circle [radius=0.2];%delta^4 e5
\draw[] (26,4) circle [radius=0.2];%delta^4 e6
\draw[] (31,5) circle [radius=0.2];%delta^4 e11
\draw[fill] (35,5) circle [radius=0.2];%delta^4 e15
\draw[] (37,5) circle [radius=0.2];%delta^4 e17
\draw[] (41,5) circle [radius=0.2];%delta^4 e21
\draw[] (43,5) circle [radius=0.2];%delta^4 e23

%%%multiple of \kappa^2
\draw[fill] (51-48,9) circle [radius=0.2];% delta^2 e11
\draw[fill] (55-48,9) circle [radius=0.2];% delta^2 e15
\draw[fill] (57-48,9) circle [radius=0.2];% delta^2 e17
\draw[fill] (61-48,9) circle [radius=0.2];% delta^2 e21
\draw[fill] (63-48,9) circle [radius=0.2];% delta^2 e23
\draw[fill] (70-48,10) circle [radius=0.2]; % delta^2 e30
\draw[fill] (72-48,10) circle [radius=0.2]; % delta^2 e32
\draw[fill] (76-48,10) circle [radius=0.2]; % delta^2 e36
\draw[fill] (78-48,10) circle [radius=0.2]; % delta^2 e38
\draw[fill] (82-48,10) circle [radius=0.2]; % delta^2 e42
\draw[fill] (87-48,11) circle [radius=0.2]; % delta^2 e47
\draw[] (88-48,10) circle [radius=0.2]; % delta^2 e48
\draw[] (93-48,11) circle [radius=0.2]; % delta^2 e53
\draw[fill] (40,8) circle [radius=0.2];%\delta^4 e0
\draw[fill] (45,9) circle [radius=0.2];%delta^4 e5
\draw[] (46,8) circle [radius=0.2];%delta^4 e6
%%%multiple of \kappa^3
\draw[fill] (96-96,14) circle [radius=0.2]; %e36
\draw[fill] (98-96,14) circle [radius=0.2]; %e38
\draw[fill] (102-96,14) circle [radius=0.2]; %e42
\draw[fill] (107-96,15) circle [radius=0.2]; %e47
\draw[fill] (108-96,14) circle [radius=0.2]; %e48
\draw[fill] (113-96,15) circle [radius=0.2]; %e53
\draw[fill] (60-48,13) circle [radius=0.2];%delta^2 e0
\draw[fill] (65-48,13) circle [radius=0.2];%delta^2 e5
\draw[fill] (66-48,12) circle [radius=0.2];%delta^2 e6
\draw[fill] (71-48,13) circle [radius=0.2];% delta^2 e11
\draw[fill] (75-48,13) circle [radius=0.2];% delta^2 e15
\draw[fill] (77-48,13) circle [radius=0.2];% delta^2 e17
\draw[fill] (81-48,13) circle [radius=0.2];% delta^2 e21
\draw[fill] (83-48,13) circle [radius=0.2];% delta^2 e23
\draw[fill] (90-48,14) circle [radius=0.2]; % delta^2e30
\draw[fill] (92-48,14) circle [radius=0.2]; % delta^2 e32
\draw[fill] (96-48,14) circle [radius=0.2]; % delta^2 e36
%%%multiple of \kappa^4
\draw[fill] (80-48,16) circle [radius=0.2]; %delta^2 e0
\draw[fill] (85-48,17) circle [radius=0.2];%delta^2 e5
\draw[fill] (86-48,16) circle [radius=0.2];%delta^2 e6
\draw[fill] (91-48,17) circle [radius=0.2];%delta^2 e11
\draw[] (95-48,17) circle [radius=0.2];%delta^2 e15
\draw[fill] (97-96,17) circle [radius=0.2];%e17
\draw[] (101-96,17) circle [radius=0.2];%e21
\draw[fill] (103-96,17) circle [radius=0.2];%  e23
\draw[] (110-96,18) circle [radius=0.2]; %  e30
\draw[] (112-96,18) circle [radius=0.2]; %  e32
\draw[] (116-96,18) circle [radius=0.2]; % e36
\draw[] (118-96,18) circle [radius=0.2]; % e38
\draw[fill] (122-96,18) circle [radius=0.2]; %  e42
\draw[] (127-96,19) circle [radius=0.2]; % e47
\draw[fill] (128-96,18) circle [radius=0.2]; % e48
\draw[] (133-96,19) circle [radius=0.2]; % e53
%%%multiple of \ove{\kappa}^5
\draw[] (100-96,20) circle [radius=0.2]; %e0
\draw[fill] (105-96,21) circle [radius=0.2];%e5
\draw[] (106-96,20) circle [radius=0.2];%e6
\draw[fill] (111-96,21) circle [radius=0.2];%e11
\draw[] (115-96,21) circle [radius=0.2];%e15
\draw[fill] (117-96,21) circle [radius=0.2];%e17
\draw[] (121-96,21) circle [radius=0.2];%e21
\draw[fill] (123-96,21) circle [radius=0.2];%e23
\draw[] (130-96,22) circle [radius=0.2];%e30
\draw[] (132-96,22) circle [radius=0.2];%e32
\draw[] (136-96,22) circle [radius=0.2]; % e36
\draw[] (138-96,22) circle [radius=0.2]; % e38
\draw[] (142-96,22) circle [radius=0.2]; % e42
%%%multiple of \ove{\kappa}^6
\draw[] (120-96,24) circle [radius=0.2]; %e0
\draw[] (125-96,25) circle [radius=0.2];%e5
\draw[] (126-96,24) circle [radius=0.2];%e6
\draw[] (131-96,25) circle [radius=0.2];%e11
\draw[] (135-96,25) circle [radius=0.2];%e15
\draw[] (137-96,25) circle [radius=0.2];%e17
\draw[] (141-96,25) circle [radius=0.2];%e21
\draw[] (143-96,25) circle [radius=0.2];%e23
%%%%%differentials
\draw[->] (5,1)--(4+0.03,20-0.2);%d17(delta^2 e53)
\draw[->] (6,0)--(5+0.03,17-0.2);%d17(delta^4 e6)
\draw[->] (15,1)--(14+0.03,18-0.2);%d17(delta^4 e17)
\draw[->] (17,1)--(16+0.03,18-0.2);%d17(delta^4 e17)
\draw[->] (21,1)--(20+0.03,18-0.2);%d17(delta^4 e21)
\draw[->] (32,2)--(31+0.03,19-0.2);%d17(delta^4 e32)
\draw[->] (23,1)--(22+0.03,18-0.2);%d17(delta^4 e23)
\draw[->] (30,2)--(29+0.03,25-0.2);%d17(delta^4 e38)
\draw[->] (38,2)--(37+0.03,19-0.2);%d17(delta^4 e38)
\draw[->] (48,0)--(47+0.03,17-0.2);%d17(delta^6 e0)
\draw[->] (11,1)--(10+0.02,20-0.2);%d19(delta^4 e11)
\draw[->] (47,3)--(46+0.02,22-0.2);%d19(delta^4 e47)
\draw[->] (36,2)--(35+0.01,25-0.2);%d23(delta^4 e36)
\draw[->] (42,2)--(41+0.01,25-0.2);%d23(delta^4 e42)
\draw[->] (48,2)--(47+0.01,25-0.2);%d23(delta^4 e48)
\end{tikzpicture}
\caption{HFPSS for $A_{1}[00]\  \mbox{and} A_{1}[11]$ from $\mathrm{E}_{7}$-term with $96\leq t-s\leq 144$}
\phantomsection \label{96-144 nondual}
\end{figure}
\end{landscape}

%%%%144-192

\begin{landscape}
\begin{figure}[h!]
\begin{tikzpicture}[scale=0.39]
\clip(-1.5,-1.5) rectangle (54,31);
\draw[color=gray] (0,0) grid [step=1] (53,30);

\foreach \n in {144,148,...,197}
{
\def\nn{\n-0}
\node[below,scale=0.5] at (\nn-144,0) {$\n$};
}
\foreach \s in {0,2,...,30}
{\def\ss{\s-0};
\node [left,scale=0.5] at (-0.4,\ss,0){$\s$};
}
\draw [] ( 0.00, 0.00) circle [radius=0.2];
\draw [fill] (3,1) circle [radius=0.2];
\draw [fill] (6,2) circle [radius=0.2];
\draw [-] (0,0)--(3,1);
\draw [-] (3,1)--(6,2);
%\draw[->,red] (11.8,0.2)--(11.2,2.8);
\foreach \t in {0}
\foreach \s in {0,6}
{\def\ss{\s-0};
\draw[]  (\s-\t,\t) circle [radius=0.2];
\draw [fill] (\s+3-\t,1+\t) circle [radius=0.2];
\draw [-] (\s-\t,\t)--(\s+3-\t,1+\t);
\draw[] (\s+5-\t,1+\t) circle [radius=0.2];
\draw[fill] (\s+8-\t,2+\t) circle [radius=0.2];
\draw[-] (\s+5-\t,1+\t)--(\s+8-\t,2+\t); 
\draw [fill] (\s-\t+6,2+\t) circle [radius=0.2];
\draw [-] (\s+3-\t,1+\t)--(\s+6-\t,2+\t);
%\draw[fill] (\s+11-\t,3+\t) circle [radius=0.1];
%\draw[-] (\s+8-\t,2+\t)--(\s+11-\t,3+\t); 
\draw [-] (\s+5-\t,1+\t)--(\s-\t+6,2+\t);
}

\foreach \t in {0}
\foreach \s in {12,18}
{\def\ss{\s-0};
%\draw [fill]  (\s-\t,\t) circle [radius=0.1];
\draw[] (\s+3-\t,1+\t) circle [radius=0.2];
\draw[] (\s+5-\t,1+\t) circle [radius=0.2];
\draw[fill] (\s+8-\t,2+\t) circle [radius=0.2];
\draw[-] (\s+5-\t,1+\t)--(\s+8-\t,2+\t); 
\draw [fill] (\s-\t+6,2+\t) circle [radius=0.2];
\draw [-] (\s+3-\t,1+\t)--(\s+6-\t,2+\t);
\draw[fill] (\s+11-\t,3+\t) circle [radius=0.2];
\draw[-] (\s+8-\t,2+\t)--(\s+11-\t,3+\t); 
\draw [-] (\s+5-\t,1+\t)--(\s-\t+6,2+\t);
}
\draw[] (0,2) circle [radius=0.2];
\draw[] (5,3) circle [radius=0.2];
\draw[] (30,2) circle [radius=0.2];
\draw[] (32,2) circle [radius=0.2];
\draw[] (36,2) circle [radius=0.2];
\draw[] (38,2) circle [radius=0.2];
\draw[] (42,2) circle [radius=0.2];
\draw[] (47,3) circle [radius=0.2];
\draw[] (48,2) circle [radius=0.2];
\draw[] (53,3) circle [radius=0.2];
\draw[fill] (48,0) circle[radius=0.2];
\draw[fill] (51,1) circle[radius=0.2];
\draw[-](48,0)--(51,1);
\draw[fill] (53,1) circle[radius=0.2];
\node[scale = 0.5] at (47.6,0.5) {$\Delta^8$};
%%%multiple of \ove{\kappa}
\draw[fill] (50-48,6) circle [radius=0.2]; %delta^4 e30
\draw[] (52-48,6) circle [radius=0.2]; %delta^4 e32
\draw[] (56-48,6) circle [radius=0.2]; %delta^4 e36
\draw[] (58-48,6) circle [radius=0.2]; %delta^4 e38
\draw[] (62-48,6) circle [radius=0.2]; %delta^4 e42
\draw[] (67-48,7) circle [radius=0.2]; %delta^4 e47
\draw[] (68-48,6) circle [radius=0.2]; %delta^4 e48
\draw[] (73-48,7) circle [radius=0.2]; %delta^4 e53
\draw[] (20,4) circle [radius=0.2]; %delta^6 e0
\draw[] (25,5) circle [radius=0.2];%delta^6 e5
\draw[] (26,4) circle [radius=0.2];%delta^6 e6
\draw[] (31,5) circle [radius=0.2];%delta^6 e11
\draw[] (35,5) circle [radius=0.2];%delta^6 e15
\draw[] (37,5) circle [radius=0.2];%delta^6 e17
\draw[] (41,5) circle [radius=0.2];%delta^6 e21
\draw[] (43,5) circle [radius=0.2];%delta^6 e23
\draw[] (50,6) circle [radius=0.2];%delta^6 e30
\draw[] (52,6) circle [radius=0.2];%delta^6 e32
%%%multiple of \kappa^2
\draw[] (51-48,9) circle [radius=0.2];% delta^4 e11
\draw[fill] (55-48,9) circle [radius=0.2];% delta^4 e15
\draw[] (57-48,9) circle [radius=0.2];% delta^4 e17
\draw[] (61-48,9) circle [radius=0.2];% delta^4 e21
\draw[] (63-48,9) circle [radius=0.2];% delta^4 e23
\draw[] (70-48,10) circle [radius=0.2]; % delta^4 e30
\draw[] (72-48,10) circle [radius=0.2]; % delta^4 e32
\draw[] (76-48,10) circle [radius=0.2]; % delta^4 e36
\draw[] (78-48,10) circle [radius=0.2]; %delta^4 e38
\draw[] (82-48,10) circle [radius=0.2]; % delta^4 e42
\draw[] (87-48,11) circle [radius=0.2]; %delta^4 e47
\draw[] (88-48,10) circle [radius=0.2]; % delta^4 e48
\draw[] (93-48,11) circle [radius=0.2]; % delta^4 e53
\draw[] (40,8) circle [radius=0.2];%delta^6 e0
\draw[] (45,9) circle [radius=0.2];%delta^6 e5
\draw[] (46,8) circle [radius=0.2];%delta^6 e6
\draw[] (51,9) circle [radius=0.2];%delta^6 e11
%%%multiple of \kappa^3
\draw[fill] (96-96,14) circle [radius=0.2]; %delta^2e36
\draw[fill] (98-96,14) circle [radius=0.2]; %delta^2e38
\draw[fill] (102-96,14) circle [radius=0.2]; %delta^2 e42
\draw[fill] (107-96,15) circle [radius=0.2]; %delta^2 e47
\draw[] (108-96,14) circle [radius=0.2]; %delta^2 e48
\draw[] (113-96,15) circle [radius=0.2]; %delta^2 e53
\draw[fill] (60-48,13) circle [radius=0.2];%delta^4 e0
\draw[fill] (65-48,13) circle [radius=0.2];%delta^4 e5
\draw[] (66-48,12) circle [radius=0.2];%delta^4 e6
\draw[] (71-48,13) circle [radius=0.2];% delta^4 e11
\draw[fill] (75-48,13) circle [radius=0.2];% delta^4 e15
\draw[] (77-48,13) circle [radius=0.2];% delta^4 e17
\draw[] (81-48,13) circle [radius=0.2];% delta^4 e21
\draw[] (83-48,13) circle [radius=0.2];% delta^4 e23
\draw[] (90-48,14) circle [radius=0.2]; % delta^4e30
\draw[] (92-48,14) circle [radius=0.2]; % delta^4 e32
\draw[] (96-48,14) circle [radius=0.2]; % delta^4 e36
\draw[] (98-48,14) circle [radius=0.2]; %delta^4 e38

%%%multiple of \kappa^4
\draw[fill] (80-48,16) circle [radius=0.2]; %delta^4 e0
\draw[] (85-48,17) circle [radius=0.2];%delta^4 e5
\draw[] (86-48,16) circle [radius=0.2];%delta^4 e6
\draw[] (91-48,17) circle [radius=0.2];%delta^4 e11
\draw[] (95-48,17) circle [radius=0.2];%delta^4 e15
\draw[] (97-48,17) circle [radius=0.2];%delta^4 e17
\draw[] (101-48,17) circle [radius=0.2];%delta^4 e21
\draw[fill] (97-96,17) circle [radius=0.2];%  delta^2 e17
\draw[] (101-96,17) circle [radius=0.2];% delta^2 e21
\draw[fill] (103-96,17) circle [radius=0.2];% delta^2 e23
\draw[] (110-96,18) circle [radius=0.2]; % delta^2 e30
\draw[] (112-96,18) circle [radius=0.2]; % delta^2 e32
\draw[] (116-96,18) circle [radius=0.2]; % delta^2 e36
\draw[] (118-96,18) circle [radius=0.2]; % delta^2 e38
\draw[fill] (122-96,18) circle [radius=0.2]; % delta^2 e42
\draw[] (127-96,19) circle [radius=0.2]; % delta^2 e47
\draw[] (128-96,18) circle [radius=0.2]; %delta^2 e48
\draw[] (133-96,19) circle [radius=0.2]; % delta^2 e53

%%%multiple of \ove{\kappa}^5
\draw[] (147-144,23) circle [radius=0.2]; %e47
\draw[] (148-144,22) circle [radius=0.2]; %e48
\draw[] (153-144,23) circle [radius=0.2]; %e53
\draw[] (100-96,20) circle [radius=0.2]; %delta^2 e0
\draw[fill] (105-96,21) circle [radius=0.2];%delta^2 e5
\draw[] (106-96,20) circle [radius=0.2];%delta^2 e6
\draw[fill] (111-96,21) circle [radius=0.2];%delta^2 e11
\draw[] (115-96,21) circle [radius=0.2];%delta^2 e15
\draw[fill] (117-96,21) circle [radius=0.2];%delta^2 e17
\draw[] (121-96,21) circle [radius=0.2];%delta^2 e21
\draw[fill] (123-96,21) circle [radius=0.2];%delta^2 e23
\draw[] (130-96,22) circle [radius=0.2];%delta^2 e30
\draw[] (132-96,22) circle [radius=0.2];%delta^2 e32
\draw[] (136-96,22) circle [radius=0.2]; % delta^2 e36
\draw[] (138-96,22) circle [radius=0.2]; % delta^2 e38
\draw[] (142-96,22) circle [radius=0.2]; % delta^2 e42
\draw[] (147-96,23) circle [radius=0.2]; % delta^2 e47
\draw[] (148-96,22) circle [radius=0.2]; % delta^2 e48
\draw[] (148-96,20) circle [radius=0.2]; % delta^4 e0
%%%multiple of \ove{\kappa}^6
\draw[] (150-144,26) circle [radius=0.2]; % e30
\draw[] (152-144,26) circle [radius=0.2]; % e32
\draw[] (156-144,26) circle [radius=0.2]; % e36
\draw[] (158-144,26) circle [radius=0.2]; % e38
\draw[] (162-144,26) circle [radius=0.2]; % e42
\draw[] (167-144,27) circle [radius=0.2]; % e47
\draw[] (168-144,26) circle [radius=0.2]; % e48
\draw[] (173-144,27) circle [radius=0.2]; % e53
\draw[] (168-144,24) circle [radius=0.2]; %delta^2 e0
\draw[] (125+48-144,25) circle [radius=0.2];%delta^2e5
\draw[] (126+48-144,24) circle [radius=0.2];%delta^2 e6
\draw[] (131+48-144,25) circle [radius=0.2];%delta^2e11
\draw[] (135+48-144,25) circle [radius=0.2];%delta^2 e15
\draw[] (137+48-144,25) circle [radius=0.2];%delta^2e17
\draw[] (141+48-144,25) circle [radius=0.2];%delta^2 e21
\draw[] (143+48-144,25) circle [radius=0.2];%delta^2e23
%%%%%%%%%%%%%%%%%%%%%%%%%%%%%

%%%differentials
\draw[->] (23,1)--(22+0.05,18-0.2);%d17(delta^6 e23)
\draw[->] (38,2)--(37+0.04,19-0.2);%d17(delta^6 e38)
\draw[->] (48,2)--(47+0.04,25-0.2);%d23(delta^6 e48)
\draw[->] (6,0)--(5+0.03,17-0.2);%d17(delta^6 e6)
\draw[->] (17,1)--(16+0.03,18-0.2);%d17(delta^6 e17)
\draw[->] (21,1)--(20+0.03,18-0.2);%d17(delta^6 e21)
\draw[->] (21,1)--(20+0.03,18-0.2);%d17(delta^6 e21)
\draw[->] (32,2)--(31+0.03,19-0.2);%d17(delta^6 e32)
\draw[->] (53,3)--(52+0.03,22-0.2);%d17(delta^6 e53)
\draw[->] (15,1)--(14+0.03,18-0.2);%d17(delta^6 e15)
\draw[->] (11,1)--(10+0.02,20-0.2);%d19(delta^6 e11)
\draw[->] (47,3)--(46+0.02,22-0.2);%d19(delta^6 e47)
\draw[->] (5,1)--(4+0.02,20-0.2);%d19(delta^6 e5)
\draw[->] (5,3)--(4+0.02,22-0.2);%d19(delta^4 e53)
\draw[->] (36,2)--(35+0.01,25-0.2);%d23(delta^6 e36)
\draw[->] (42,2)--(41+0.01,25-0.2);%d23(delta^6 e42)
\draw[->] (30,2)--(29+0.01,25-0.2);%d23(delta^4 e30)
%%%%%%%%%%%%%%%%%
\end{tikzpicture}
\caption{HFPSS for $A_{1}[00]\ \mbox{and} \ A_{1}[11]$ from $\mathrm{E}_{7}$-term with $144\leq t-s\leq 197$}
\phantomsection \label{144-197 nondual}
\end{figure}
\end{landscape}
%\end{document}%\addcontentsline{toc}{chapter}{Bibliography}
\bibliographystyle{alpha}
\bibliography{mybibs}{}

\end{document}